\author{Johannes Sj\"ostrand\\
\small IMB, Universit\'e de Bourgogne\\
\small 9, Av. A. Savary, BP 47870\\
\small FR-21078 Dijon C\'edex\\
\footnotesize  and UMR 5584, CNRS\\
\footnotesize johannes.sjostrand@u-bourgogne.fr
} \date{}
\title{Spectral properties of non-self-adjoint operators.}
\newtheorem{dref}{Definition}[section] \newtheorem{lemma}[dref]{Lemma}
\newtheorem{theo}[dref]{Theorem} \newtheorem{prop}[dref]{Proposition}
\newtheorem{remark}[dref]{Remark} \newtheorem{ex}[dref]{Example}
\newenvironment{proof}{\par\noindent{{\bf Proof.}}}{\hfill$\Box$
\medskip} 
\newenvironment{proofof}{\par\noindent{{\bf Proof} of }}{\hfill$\Box$
\medskip} 
\newenvironment{outline}{\par\noindent{{\bf Outline of the Proof.}}}{\hfill$\Box$
\medskip} 
 \newcommand\R{\mathbb{R}}
\newcommand{\ekv}[2]{\begin{equation}\label{#1}#2\end{equation}}
\newcommand{\eekv}[3]{\begin{eqnarray}\label{#1}#2 \\ #3
\nonumber\end{eqnarray}}
\newcommand{\eeekv}[4]{\begin{eqnarray}\label{#1}#2 \\ #3
\nonumber\\#4\nonumber\end{eqnarray}}
\newcommand{\eeeekv}[5]{\begin{eqnarray}\label{#1}#2 \\ #3
\nonumber\\#4\nonumber\\#5\nonumber\end{eqnarray}}
  \newcommand\iint{\int\hskip -2mm\int}
\newcommand{\no}[1]{(\ref{#1})} \newcommand\trans[1]{{^t\hskip -2pt
#1}}
\begin{document}

\maketitle
\begin{abstract} This text is a slightly expanded version of my 6 hour mini-course at the PDE-meeting in \'Evian-les-Bains in June 2009. The first part gives some old and recent results on non-self-adjoint differential operators. The second part is devoted to recent results about Weyl distribution of eigenvalues of elliptic operators with small random perturbations. 

\medskip\centerline{\bf R\'esum\'e}

\smallskip
Ce texte est une version l\'eg\`erement complet\'ee de mon cours de 6 heures au colloque d'\'equations aux d\'eriv\'ees partielles \`a \'Evian-les-Bains en juin 2009. Dans la premi\`ere partie on expose quelques r\'esultats anciens et r\'ecents sur les op\'erateurs non-autoadjoints. La deuxi\`eme partie est consacr\'ee aux r\'esultats r\'ecents sur la distribution de Weyl des valeurs propres des op\'erateurs elliptiques avec des petites perturbations al\'eatoires. 
\end{abstract}

\tableofcontents

\section{Introduction}\label{In}
\setcounter{equation}{0}
\par For self-adjoint and more generally normal operators on some complex 
Hilbert space ${\cal H}$ we have a nice theory, including the spectral 
theorem and the wellknown and important resolvent estimate,
\ekv{0.1}
{\Vert (z-P)^{-1}\Vert \le ({\rm dist\,}(z,\sigma (P)))^{-1},}
where $\sigma (P)$ denotes the spectrum of $P$. The spectral theorem also gives very nice control over functions of self-adjoint operators, so for instance if $P$ is self-adjoint with spectrum contained in the half interval $[\lambda _0,+\infty [$, then 
\ekv{0.2}
{\Vert e^{-tP}\Vert \le e^{-\lambda _0t},\ t\ge 0 .}

However, non-normal operators appear frequently in different problems; Scattering poles, 
Convection-diffu\-sion problems, Kramers-Fokker-Planck equation, damped wave 
equations, linearized operators in fluid dynamics. Then typically, $\Vert 
(z-P)^{-1}\Vert $ may be very large even when $z$ is far from the 
spectrum and this implies mathematical difficulties:\\
-- When studying the distribution of eigenvalues,\\
-- When studying functions of the operator, like $ 
e^{-tP}$ and its norm.

The largeness of the norm of the resolvent far away from the spectrum also makes the eigenvalues very unstable under small perturbations of the operator and this is a source of mathematical and numerical difficulties.  

\par There are two natural reactions to this problem:
\begin{itemize}
\item Change the Hilbert space norm to make the operators look 
more normal. This is quite natural to do when there is no clear unique choice of the ambient Hilbert space, like in problems for scattering poles (resonances).    
\item Recognize that the region of the $z$-plane where $\Vert 
(z-P)^{-1}\Vert $ is large, has its own interest. One can then introduce the $\epsilon $-pseudospectrum which is the set of points $z\in {\bf C}$, which are either in the spectrum or such that $\| (z-P)^{-1}\|>\epsilon $. and study this set in its own right.
\end{itemize}
\par The first point of view will be illustrated in the first part of these notes, and is at the basis of a whole range of methods from that of analytic ditations in the study of resonances to more microlocal methods.
\par The second point of view has been promoted by numerical analysts like L.N.~Trefethen and then made its way into analysis through contributions by E.B.~Davies, M.~Zworski and others. That spectral instability is not only a nuisance, but can be at the origin of nice and previously unexpected results, will hopefully be clear from the second and main part of these lecture notes, where we shall describe some results about Weyl asymptotics of the distribution of eigenvalues of elliptic operators with small random perturbations. These results have been obtained in recent works by M.~Hager \cite{Ha05, Ha06a, Ha06b}, Hager and the author \cite{HaSj08}, W.~Bordeaux Montrieux \cite{Bo}, the author \cite{Sj08a, Sj08b} and Bordeaux Montriex and the author \cite{BoSj09}.

\par The results in the second part of the lectures rely on microlocal analysis, combined with quite classical methods for non-self-adjoint operators and holomorphic functions of one variable and some elementary probability theory. It therefore was natural to include a first part that treats some older and newer results and methods about non-self-adjoint operators. Many of these methods combined with microlocal analysis have been used with great succes in resonance theory, but that is beyond the scope of these lectures.. Let us nevertheless mention results by T.~Christiansen \cite{Chr05, Chr06} and Christiansen--P.~Hislop \cite{ChrHi05} that establish Weyl type lower bounds for the number of scattering poles in large discs in generic situations. 

Here is the plan of the notes:

\medskip\par
Part I is devoted to some old and recent general results for non-self-adjoint differential operators.

\smallskip\par
In Section \ref{ell} we describe some classical results for the distribution of eigenvalues of elliptic operators, starting with a result of T.~Carleman about Weyl asymptotics of the eigenvalues for operators with real principal symbol. We also give a result by S.~Agmon about
the completeness of the set of generalized eigenvalues as a well as further results by M.S.~Agranovich, A.S.~Markus, V.I.~Matseev.

In Section \ref{ps}, we start by giving some basic definitions and facts about the $\epsilon $-pseudospectrum, then go on by describing the Davies--H\"ormander quasi-mode construction under the assumption that a certain Poisson bracket is non-zero and of suitable sign.

In Section \ref{bdy}, we describe some estimates on the size of norm of the resolvent when the spectral parameter is close to the range of the semi-classical principal symbol. These estimates are closely related to subellipticity estimates for operators of principal type and the results here are due to Dencker--Sj\"ostrand--Zworski with some recent partial improvement of the author. 

In Section \ref{sur}, we discuss some recent results from two different areas; the Kramers-Fokker-Planck operator and spectral asymptotics for analytic operators in two dimensions. The first topic (based on joint works with F.~H\' erau--C.~Stolk, F.~H\' erau--M.~Hitrik and inspired by works of H\' erau--F.~Nier, B.~Helffer--F.~Nier) was chosen because it has a very concrete importance, while the second topic (based on joint works with M.~Hitrik and S.~V{\~ u} Ng{\d o}c) is of interest as an example of how  to get precise information about individual eigenvalues. In both situations the methods exploit suitable changes of the Hilbert space norms.

\smallskip
Part II is devoted to Weyl asymptotics for the eigenvalues of elliptic operators with small random perturbations. 

\smallskip
\par In Section \ref{ze}, we give a result, that generalizes and improves earlier results by Hager and Hager--Sj\"ostrand about the number of zeros of holomorphic functions of exponential growth, which is clearly close to classical results for entire functions but that we have not found in the literature. This result is used in an essential way in the sequel.

In Section \ref{one} we treat the one-dimensional semi-classical case very much in the spirit of Hager. This case is easier than the general case, and it has some special features that permit to have more precise results, and is most likely the first testing case for more refined questions about statistics and correlation of eigenvalues.

In Section \ref{mult} we establish Weyl asymptotics in the multi-dimensional semi-classical case, by combination of complex analysis (Section \ref{ze}), microlocal analysis spectral theory and probabilistic arguments.

In Section
\ref{alm}, we consider the large eigenvalues of elliptic operators. In the semi-classical case the results say that we have Weyl asymptotics with a probability tending to 1 very fast when Planck's constant tends to zero. The study of large eigenvalues of elliptic operators can often be reduced to a semi-classical study, and by performing such a reduction and applying the Borel-Cantelli lemma, we show that Weyl asymptotics holds almost surely. Such a result was obtained by W.~Bordeaux Montrieux for elliptic operators on $S^1$ using the results and the approach of Hager. Here we mainly describe a corresponding multi-dimensional result, obtained jointly with Bordeaux Montrieux,  where the semi-classical part is the one described in Section \ref{mult}.

In Section \ref{open} we formulate some open problems.

\part{Some general results}

\section{Elliptic non-self-adjoint operators, some classical results}\label{ell}
\setcounter{equation}{0}

This is a classical area. T.~Carleman \cite{Ca36} considered the Dirichlet realization $P$ of a second order elliptic operator in a bounded domain $\Omega \Subset {\bf R}^3$, assuming enough smoothnes on the coefficients and on the boundary, and he also assumed that the principal symbol is strictly positive so that the non-self-adjointness can come only from the lower order symbols. In this case we see easily that the spectrum consists of isolated eigenvalues of finite algebraic multiplicity and we will allways count the eigenvalues with their multiplicity. Carleman showed that the eigenvalues are contained in a parabolic neighborhood of the positive real axis and that the real parts are distributed according to the Weyl asymptotics, i.e.
\ekv{ell.1}
{
\# \{\mu  \in \sigma (P);\, \Re \mu \le \lambda \} = \frac{1}{2\pi }\lambda ^{3/2}(\mathrm{vol\,} \{ (x,\xi )\in T^*\Omega ;\, p(x,\xi )\le 1\} +o(1)),\ \lambda \to \infty 
}
His method of proof consisted in studying the trace
of $(P+\kappa ^2)^{-1}-(P+\kappa _0^2)^{-1}$ for a fixed $\kappa _0$ in the limit when $\kappa \to \infty $ and to apply a Tauberian argument.

After Carleman there have been important results of Keldysch which have inspired later workers in the field, like Agmon, Agranovich, Markus and Matseev. In a spirit similar to that of Carleman, M.S.~Agranovich and A.S.~Markus \cite{AgMa89} considered a non-self-adjoint elliptic classical pseudodifferential operator on a compact manifold $\Omega $ of dimension $n$, of order $m>0$. Let $p$ denote the principal symbol. Assume that the range of $p$ is in a sector $\{ z\in {\bf C};\, |\mathrm{arg\,}p|<\theta \}$ where $\theta <\pi $. If the quantity
\ekv{ell.2}
{
d:= \frac{1}{(2\pi )^nn}\int _{\Omega }\int_{|\xi |=1}p(x,\xi )^{-n/m}S(d\xi) dx
} 
is $\ne 0$ then the authors show that 
\ekv{ell.3}
{
N(\lambda )\asymp \lambda ^{n/m},\ \lambda \to \infty 
}
where $N(\lambda )=\# (\sigma (P)\cap D(0,\lambda ))$ and $D(0,\lambda )$ denotes the open disc of radius $\lambda $ centered at 0. They also obtain some more precise inequalities for the quantities $\limsup_{\lambda \to \infty }\lambda ^{-n/m}N(\lambda )$ and $\liminf_{\lambda \to \infty }\lambda ^{-n/m}N(\lambda )$. Especially when these two limits are equal, then the corresponding quantity is in the interval $[|d|,\Delta ]$, where 
\ekv{ell.4}
{
\Delta := \frac{1}{(2\pi )^nn}\int _{\Omega }\int_{|\xi |=1}|p(x,\xi )|^{-n/m}d\xi dx.
}
Here it is interesting to notice that $d\ne 0$ when the angle $2\theta $ is smaller than $\pi m/n$. To prove these results, the authors first establish the asymptotic trace formula
\ekv{ell.4.2}
{
\mathrm{tr\,}((\mu +A)^{-\ell})\sim \mathrm{Const.}\mu ^{n/m-\ell},\ \mu \to +\infty 
}
for $\ell m>n$, where the constant can be expressed with the help of $d$, and then apply a Tauberian argument. Here $(\mu +A)^{-\ell}$ is of trace class when $\ell m>n$.

The operator $P=f(x)D_x+g(x)$ on $S^1$ is elliptic when $f\ne 0$, and its spectral behaviour can be studied explicitly. When the range of $f$ is in a sector of angle less than $\pi $, then we have a nice spectrum sitting on a line, while for larger values, strange things may happen (\cite{Bo}, \cite{Se86}) and the spectrum may be either equal to ${\bf C}$ or empty.

Another interesting and potentially important question is that of the completeness of the set of generalized eigenfunctions. This question was studied by Keldysh and Agmon. Agmon studied elliptic boundary value problems, let us here formulate his result in the case of elliptic differential operators on manifolds without boundary \cite{Ag62} (see also \cite{Ag65}). Assume that $P$ is such an operator of even order $m>0$ and assume that the symbol $p$ takes its values away from a finite union of closed half-rays $e^{i\theta _j}[0,+\infty [$, where $\theta _1<\theta _2<...<\theta _N$ belong to $[0,2\pi [$ and assume that the angles $\theta _{j+1}-\theta _j$ are all strictly smaller than $m\pi /n$ (with the convention that $\theta _{N+1}=\theta _1+2\pi $). Then the generalized eigenfunctions of $P$ are complete in $L^2$, in the sense that they span a dense subspace. 

The proof uses the following ingredients: 
\begin{itemize}
\item First we show that the resolvent $(z -P)^{-1}$ is well-defined and of norm ${\cal O}(|z|^{-1})$ when $z$ tends to infinity along one of the half rays given by $\mathrm{arg\,}z=\theta _j$. In particular the resolvent exists for at least one value of $z$ and by analytic Fredholm theory this implies that the spectrum of $P$ consists of isolated eigenvalues and each such value is of finite algebraic multiplicity. 
\item Using a suitable functional determinant which is an entire function of fractional exponential grow and whose zeros are the eigenvalues, we get a polynomial control over the number $N(\lambda )$ when $\lambda \to \infty $ and a corresponding exponential control over the resolvent on a family of circles $\partial D(0,r _j)$, $r_j\to \infty $, when $j\to \infty $: 
\ekv{ell.6}
{
\Vert (z-P)^{-1}\Vert \le {\cal O}(1)\exp (|z|^{\frac{n}{m}+\epsilon }),\ |z|=\rho _j, 
}
where $\epsilon >0$ can be chosen arbitrarily small. 
\item Of course, $(z-P)^{-1}$ will have poles at the eigenvalues, but if $f_0\in L^2$ is orthogonal to all the generalized eigenfunctions, then $F_f(z):=((z-P)^{-1}f|f_0)_{L^2}$ turns out to be an entire function for every $f\in L^2$. Thanks to the condition on the angles, and the exponential control of the resolvent, we can apply the Phragm\'en Lindel\"of theorem to conclude that $F_f(z)$ is actually constant. By restricting to a ray of minimal growth we see that the constant has to be zero, and varying $f$ it is not hard to see from this that $f_0$ has to be zero.
\end{itemize}

Let us mention that rays of minimal growth of the resolvent have also been used by R.~Seeley \cite{Se66}. We could also point out that under the same ellipticity condition and the assumption that the range of $p$ is not equal to all of ${\bf C}$, we have the classical upper bound 
\ekv{ell.7}
{
N(\lambda )={\cal O}(\lambda ^{n/m}),\ \lambda \to +\infty .
}
This can be proved in a similar manner using a relative determinant and the Jensen formula. A more elementary proof is the following, that was pointed out to me by M.S.~Agranovich \cite{Ag09}: After replacing $P$ by $P-\lambda _0$ for a sufficiently large $\lambda _0$ on a ray of minimal growth, we may assume that $P$ is bijective. Let $s_1(P^{-1})\ge s_2(P^{-1})\ge ...$ be the singular values of the inverse $P^{-1}$ (i.e. the decreasing sequence of eigenvalues of $(P^*P)^{-1/2}$ which is bounded from $L ^2$ to $H^m$). If $C>0$ is large enough we have $(P^*P)^{-1}\le C(1-\Delta )^{-m}$ where $\Delta $ is the Laplace Beltrami operator on $X$ for some smooth Riemannian metric and this implies corresponding inequalities for the eigenvalues. Applying the Weyl asymptotics for $-\Delta $, we conclude that $s_j(P^{-1})\le {\cal O}(j^{-m/n})$. Then Corollary 3.2 in Chapter III of \cite{GoKr} implies that $\lambda _j(P^{-1})={\cal O}(j^{-m/n})$, where $\lambda _j(P^{-1})$ denote the eigenvalues of $P^{-1}$ arranged so that $j\mapsto |\lambda _j(P^{-1})|$ is decreasing. Then (\ref{ell.7}) follows.

Markus and V.I.~Matseev \cite{MaMa79} have established interesting estimates on the difference of the counting function for a self-adjoint (or more generally normal) operator and the counting function for the real parts of the eigenvalues of a small perturbation of that operator. The proofs are based on the use of relative determinants. We will not state the general results, but simply mention a corollary of the general result: Let $P$ be an elliptic differential operator of order $m$ on a compact manifold with positive principal symbol $p(x,\xi )$. Then the eigenvalues are contained in a thin neighborhood of the positive real axis and if $N(\lambda )$ denotes the number of such eigenvalues with real part $\le \lambda $, then $N(\lambda )=(2\pi )^{-n}\mathrm{Vol\,}p^{-1}([0,1])\lambda ^{n/m}+{\cal O}(\lambda ^{(n-1)/m})$. Notice that the remainder estimate is the same as in the general result of Avakumovi\'c \cite{Av56}, Levitan \cite{Lev56} ($m=2$) and H\"ormander \cite{Ho68} (general $m$) for self-adjoint elliptic operators which in turn depend on trace formulas, not for resolvents as in Carleman's approach but for hyperbolic evolution problems.

\section{Pseudospectrum, quasi-modes and spectral instability}\label{ps}
\setcounter{equation}{0}
Let ${\cal H}$ be a complex Hilbert space and let $P:{\cal H}\to {\cal
H}$ be a closed densely defined operator. Recall that the resolvent
set is defined as
$$
\rho (P)=\{ z\in {\bf C};\, P-z:{\cal D}(P)\to {\cal H}\hbox{ has a bounded
2-sided inverse}\}.
$$
It is an open set, if $z\in \rho (P)$ and $\Vert (z-P)^{-1}\Vert =
1/\epsilon $, then the open disc $D(z,\epsilon )$ is contained in
$\rho (P)$. The spectrum of $P$ is the closed set 
$$
\sigma (P)={\bf C}\setminus \rho (P).
$$
Following Trefethen--M.~Embree \cite{TrEm} we define, for $\epsilon >0$,
the $\epsilon $-pseudospectrum  as the open set
\ekv{ps.1}
{
\sigma _\epsilon (P)=\sigma (P)\cup \{ z\in \rho (P);\, \Vert
(z-P)^{-1}\Vert>1/\epsilon \} .
}
Unlike the spectrum, the $\epsilon $-pseudospectrum will change if we replace the given norm on ${\cal H}$ by an equivalent one.

\par $\sigma _\epsilon (P)$ can be characterized as a set of spectral
instability, by the following simplified version of a theorem of Roch
and Silberman:
\ekv{ps.2}
{
\sigma _\epsilon (P)=\bigcup_{Q\in {\cal L } ({\cal H},{\cal H})\atop 
\Vert Q\Vert <\epsilon }\sigma (P+Q).}
The result becomes more sublte if we use the more traditional
definition with a non-strict inequality in (\ref{ps.1}).

\begin{proof}
Let $\widetilde{\sigma }_\epsilon (P)$ denote the right hand side in
(\ref{ps.2}). If $z\in {\bf C}\setminus \sigma _\epsilon (P)$, then by
a perturbation argument, we see that $z\in {\bf C}\setminus
\widetilde{\sigma }_\epsilon (P)$.

\par Let $z\in \sigma _\epsilon (P)$. If $z\in \sigma (P)$ we also have $z\in \widetilde{\sigma }_\epsilon (P)$, so we may assume that $z\in \rho (P)$. Then $\exists$ $u\in {\cal
D}(P)$, $v\in {\cal H}$ such that $\Vert u\Vert =1$, $\Vert v\Vert
<\epsilon $, $(P-z)u=v$. Let $Q$ be the rank one operator from ${\cal
H}$ to ${\cal H}$, given by $Q\phi =-(\phi |u)v$. Then $\Vert Q\Vert =
\Vert u\Vert \Vert v\Vert <\epsilon $ and
$
(P+Q-z)u=v+Qu=v-v=0
$, so $z\in \sigma (P+Q)$, and $z\in \widetilde{\sigma }_\epsilon (P)$.
\end{proof}

Using the subharmonicity of the function $z\mapsto \Vert
(z-P)^{-1}\Vert$ we notice that every bounded connected component of
$\sigma _\epsilon (P)$ contains an element of $\sigma (P)$.

We next discuss the construction of {\it quasimodes} for non-normal
differential operators which shows that very often we get large
$\epsilon $-pseudospectra. The background and starting point is a
result by E.B.~Davies \cite{Da} for non-selfadjoint Schr\"o\-dinger
operators in dimension 1. M.~Zworski \cite{Zw} observed that this is
essentially an old result of H\"ormander \cite{Ho60a, Ho60b}(1960), and that we have the
following generalization, with $\{ a,b\} =a'_\xi \cdot b'_x-a'_x \cdot
b'_\xi =H_a(b) $ denoting the Poisson bracket of $a=a(x,\xi )$,
$b(x,\xi )$.  
\begin{theo}\label{ps1}  Let 
\ekv{ps.3}
{
P(x,hD_x)=\sum_{\vert \alpha \vert \le m}a_\alpha (x)(hD_x)^\alpha ,\
D_x=\frac{1}{i}{\partial \over\partial x}
}
have smooth coefficients in the open set $\Omega \subset{\bf R}^n$. Put 
$p(x,\xi )=\sum_{\vert \alpha \vert \le m}a_\alpha (x)\xi ^\alpha $. Assume 
$z=p(x_0,\xi _0)$ with  
${1\over i}\{ p,\overline{p}\}(x_0,\xi_0) >0$. Then $\exists$ $u=u_h$,
with $\Vert u\Vert=1$, 
$\Vert (P-z)u\Vert ={\cal O}(h^\infty )$, when $h\to 0$.

\end{theo}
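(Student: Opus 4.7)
The plan is to build a WKB quasimode concentrated at $x_0$ of the form
\[
u_h(x)=h^{-n/4}\chi(x)\,a(x;h)\,e^{i\phi(x)/h},
\]
where $\chi\in C_c^\infty$ equals $1$ near $x_0$ and $a(x;h)\sim a_0(x)+ha_1(x)+\dots$ is a classical symbol. After translating we may take $x_0=0$, $\xi_0=0$, $z=p(0,0)=0$. A direct computation (expansion of the symbol via Taylor's formula) gives
\[
(P-z)u_h=h^{-n/4}e^{i\phi/h}\sum_{j\ge 0} h^j r_j(x),
\]
with $r_0=p(x,\phi'(x))a_0$ and $r_j=p(x,\phi'(x))a_j+L_j(a_0,\dots,a_{j-1})$, where $L_j$ is a linear differential expression in the lower-order amplitudes whose principal part involves the vector field $\sum p'_{\xi_k}(x,\phi'(x))\partial_{x_k}$.

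The core of the argument is the construction of a complex phase $\phi$, defined modulo $O(|x|^\infty)$, solving the eikonal equation $p(x,\phi'(x))=0$ near $x=0$, with $\phi(0)=0$, $\phi'(0)=0$, and with $\Im\phi$ having a positive definite Hessian at $0$. This is where the Poisson bracket hypothesis enters, and I expect it to be the main technical point. In the $C^\infty$ setting, one determines $\phi$ as a formal Taylor series: setting $\phi(x)=\tfrac12\langle Qx,x\rangle+O(|x|^3)$, the quadratic eikonal $p'_x(0)+p'_\xi(0)\cdot Q=0$ together with the non-degeneracy condition forces $Q$ to satisfy a quadratic matrix equation whose solvability with $\Im Q>0$ is precisely equivalent to the condition $\frac{1}{i}\{p,\bar p\}(0,0)>0$ (this is the Hörmander--Davies--Zworski observation; one checks that $\frac{1}{i}\{p,\bar p\}=-2\Im(\overline{p'_x}\cdot p'_\xi)$ at a real zero of $p$ and then that the bracket condition ensures one can diagonalize the relevant Hamilton map to produce a stable Lagrangian with positive imaginary part). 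Once $Q$ is fixed, the higher-order Taylor coefficients of $\phi$ are determined inductively by purely algebraic relations coming from further differentiation of the eikonal, and Borel's theorem provides an actual $\phi\in C^\infty$ realizing this jet.

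With $\phi$ in hand, the transport equations $r_j=0$ are solved successively: $r_0=0$ is automatic since $p(x,\phi'(x))$ vanishes to infinite order at $0$ (so one can divide by this symbol in the sense of formal series near $0$, or equivalently set $a_0(0)=1$ and solve the transport PDE along the vector field $p'_\xi(x,\phi'(x))\cdot\partial_x$, which has real part transverse to $\{x=0\}$ thanks to $\Im Q>0$); each subsequent $a_j$ is obtained by solving a first-order linear PDE with prescribed initial value at $0$. Borel summation yields $a(x;h)\sim\sum h^j a_j(x)$.

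Finally one checks the norms. Since $\Im\phi(x)\ge c|x|^2$ near $0$, Laplace/Gaussian-integral asymptotics give $\|h^{-n/4}\chi a e^{i\phi/h}\|_{L^2}\to |a_0(0)|\,(\det(\Im Q/\pi))^{-1/4}$ as $h\to 0$, which is a positive constant, so after rescaling $\|u_h\|=1$. For the error, on $\mathrm{supp}\,\chi'$ we gain $e^{-c/h}$ from the exponential decay, while on $\mathrm{supp}\,\chi$ the residual from truncating the amplitude at order $N$ contributes $O(h^N)$ in $L^2$ by the same Gaussian estimate; letting $N\to\infty$ gives $\|(P-z)u_h\|=O(h^\infty)$.
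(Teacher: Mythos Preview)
Your overall framework (complex-phase WKB ansatz, eikonal plus transport hierarchy, Gaussian norm asymptotics) is exactly right and matches the paper. The gap is in the construction of the phase $\phi$ in dimension $n\ge 2$, which is the heart of the matter.

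The equation you write, $p'_x(0)+p'_\xi(0)\cdot Q=0$, is the first-order jet of the eikonal; it is $n$ \emph{linear} scalar constraints on the $n(n+1)/2$ complex entries of the symmetric matrix $Q$, so it does not determine $Q$. There is no ``quadratic matrix equation'' for $Q$ here: a Riccati equation arises for symbols of the form $\xi^2+V(x)$, but for a general first-order (in $\xi$) germ of $p$ at $(x_0,\xi_0)$ the higher-order eikonal conditions couple $Q$ with the cubic and higher terms of $\phi$ and never close up into an equation for $Q$ alone. Your appeal to ``diagonalizing the relevant Hamilton map'' is misplaced --- we are not at a critical point of $p$ (indeed $dp(x_0,\xi_0)\ne 0$ since the bracket is nonzero), so there is no fundamental matrix/Hamilton map governing the problem.

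What is actually true, and what the paper does, is that the eikonal equation leaves you with the freedom of choosing a complex Lagrangian $\Lambda_\phi\subset\{p=z\}_{\bf C}$ through $(x_0,\xi_0)$, and the bracket hypothesis is what lets you pick one with the strict positivity $\frac{1}{i}\sigma(t,\bar t)>0$ on its tangent space (equivalently $\Im\phi''(x_0)>0$). Concretely: the real characteristic set $\Sigma=p^{-1}(z)\cap T^*\Omega$ is a $(2n-2)$-dimensional \emph{symplectic} submanifold near $(x_0,\xi_0)$ precisely because $\frac{1}{i}\{p,\bar p\}\ne 0$; by Darboux one then takes any strictly positive Lagrangian $\Lambda'$ in the complexification of $\Sigma$ (this is the easy $(n-1)$-dimensional step), and sets $\Lambda=\{\exp(sH_p)(\rho):s\in{\bf C}\hbox{ small},\ \rho\in\Lambda'\}$. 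The remaining direction $H_p$ contributes positivity exactly through $\frac{1}{i}\sigma(H_p,\overline{H_p})=\frac{1}{i}\{p,\bar p\}>0$, and one checks $\Lambda$ is Lagrangian and sits in $\{p=z\}$ to infinite order. This geometric step is what your algebraic sketch is missing; once you have it, the Taylor-series realization of $\phi$ via Borel and the transport/norm arguments go through as you wrote. (Incidentally, your sign in $\frac{1}{i}\{p,\bar p\}=-2\,\Im(\overline{p'_x}\cdot p'_\xi)$ is off; it is $+2\,\Im(\overline{p'_x}\cdot p'_\xi)$. In dimension $1$ your direct approach does work, since there the eikonal determines $\phi'$ and the bracket sign is exactly $\Im\phi''(0)>0$.)
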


In the case when the coefficients are all analytic we can replace ``$h^\infty $'' by ``$e^{-1/Ch}$ for some $C>0$''.

\par 
Notice that this implies that if the resolvent $(P-z)^{-1}$ exists
then its norm is greater than any negative power of $h$ when $h\to 0$ (and even exponentially large in the analytic case).

In the case $n\ge
2$, we noticed with A.~Melin in \cite{MeSj02} that if $z=p(\rho )$ and $\Re p$, $\Im
p$ are independent at $\rho $, then $\frac{1}{i}\{ p,\overline{p}\}$
times the natural Liouville measure is equal to a constant times the
restriction to $p^{-1}(z)$ of $\sigma ^{n-1}$ which is a closed
form. It follows that if $\Gamma $ is a compact connected component of
$p^{-1}(z)$ on which $d\Re p$ and $d\Im p$ are pointwise independent,
then the average of $\frac{1}{i}\{ p,\overline{p}\}$ over $\Gamma $
with respect to the Liouville measure has to vanish. Hence if there is
a point on $\Gamma $ where the Poisson bracket is $\ne 0$ then there
is also point where it is positive. In the case $n=1$ we have a
similar phenomenon: If (for instance thanks to suitable ellipticity
assumption) we know that $p^{-1}(z)$ is finite and that $\frac{1}{i}\{
p,\overline{p}\}$ is $\ne 0$ everywhere on that set, then this set is
finite and if it is contained in  the interior of a connected bounded
set $\Omega $ in
phase space with smooth boundary such that the variation of
$\mathrm{arg\,}(p-z)$ along that boundary is equal to zero, then we
have to have an equal number of points in $p^{-1}(z)$ where
$\frac{1}{i}\{ p,\overline{p}\}$ is positive and where it is
negative. This follows from the observation that the argument
variation of $p-z$ along a small positively oriented circle around a
point in $p^{-1}(z)$ is $\mp 2\pi $ when $\pm \frac{1}{i}\{
p,\overline{p}\}>0$ at that point.

\begin{ex}\label{ps2} \rm $P=-h^2\Delta +V(x)$, $p(x,\xi )=\xi^2+V(x)$,
${1\over i}\{p,\overline{p}\}=-4\xi \cdot \Im V'(x)$.\end{ex}

More recently K.~Pravda-Starov \cite{Pr} improved this result by
adapting a more refined quasi-mode construction of R.~Moyer (in 2
dimensions) and H\"ormander \cite{Ho} for adjoints of operators that
do not satisfy the Nirenberg-Tr\`eves condition $(\Psi )$ for local 
solvability. 

The proof in the $C^\infty $-case in \cite{Zw} is by a standard
reduction of semi-classical results to classical results in ordinary
microlocal analysis. In \cite{DeSjZw} we gave a direct proof and also
treated the case of analytic coefficients, which is also essentially
quite old. Here is a brief outline of a
\begin{proof} In the following we use the notation $\mathrm{neigh\, }(a,A)$ for ``some neighborhood of $a$ in $A$''. If $\phi \in C^\infty
(\mathrm{neigh\,}(x_0,{\bf R}^n))$ satisfies $\phi '(_0):\xi _0\in {\bf
R}^n$, and 
\ekv{ps.4}{\Im \phi ''(x_0)>0,} then we can define the
complex Lagrangian manifold \ekv{ps.5} { \Lambda _\phi :=\{ (x,\phi
'(x));\, x\in \mathrm{neigh\,}(x_0,{\bf C}^n)\} } where we extend $\phi
$ to a complex neighborhood by taking an almost holomorphic extension,
i.e. a smooth extension such that $\overline{\partial }\phi ={\cal
O}((\Im x)^\infty )$. In this case we can content ourselves with
working with formal Taylor expansions at $x_0$, and then $\Lambda
_\phi $ can be viewed as an equivalence class of real submanifolds of
the complexified phase space ${\bf C}^{2n}$ where two submanifolds are
equivalent if they agree to infinite order at $(x_0,\xi _0)$. As
observed by H\"ormander \cite{Ho71b} and developped a lot by the
author with A.~Melin in \cite{MeSj76} and in other works, the
positivity assumption (\ref{ps.4}) can be formulated equivalently by
saying that 
\ekv{ps.6} { \frac{1}{i}\sigma (t,\overline{t})>0,\ 0\ne t\in
T_{(x_0,\xi _0) } (\Lambda _\phi ), } 
where $\sigma $ denotes the
symplectic 2-form, here viewed as a bilinear form on the complexified
tangentspace of the cotangent space at $(x_0,\xi _0)$.  

\par Let $z,
p, (x_0,\xi _0)$ be as in the theorem. Then we observe that
$\frac{1}{i}\sigma (H_p,\overline{H_p})=\frac{1}{i}\{
p,\overline{p}\}>0$. Moreover, the real set $\Sigma :=p^{-1}(z)$ is a
smooth symplectic manifold near $(x_0,\xi _0)$ and using the Darboux
theorem, we can identify it with ${\bf R}^{2(n-1)}$ and hence find a
Lagrangian submanifold $\Lambda '$ in its compexification passing
through $(x_0,\xi _0)$ that satisfies the positivity condition
(\ref{ps.6}). Viewing the complexification of $\Sigma $ as a
submanifold of ${\bf C}^{2n}$, we can take $\Lambda =\{ \exp sH_p(\rho
);\, s\in \mathrm{neigh\,}(0,{\bf C}),\, \rho \in
\mathrm{neigh\,}((x_0,\xi _0),{\bf C}^{2n})\}$. Using that $H_p$ is
symplectically orthogonal to the tangent space of $\Sigma $ it is then
quite easy to verify that $\Lambda $ is a complex Lagrangian manifold
to $\infty $ order at $(x_0,\xi _0)$ contained (to infinite order) in
the complex characteristic hypersurface $\{ \rho \in
\mathrm{neigh\,}((x_0,\xi _0),{\bf C}^{2n});\, p(\rho )=0\}$ and
satisfying the positivity condition (\ref{ps.6}). Hence to infinite
order, $\Lambda $ is of the form $\Lambda _\phi $ for a function $\phi
$ as in (\ref{ps.4}), (\ref{ps.5}), which also fulfills the eiconal
equation
\ekv{ps.7}
{
p(x,\phi '(x))={\cal O}(|x-x_0|^\infty ).
}
We normalize $\phi $ by requiring that $\phi (x_0)=0$. Then the
function $e^{i\phi (x)/h}$ is rapidly decreasing with all its
derivatives away from any neighborhood of $x_0$, and by a complex
version of the standard WKB-construction we can construct an elliptic
symbol $a(x;h)\asymp a_0(x)+ha_1(x)+...,$ by solving the suitable
transport equations to infinite order at $x_0$, such that if $\chi \in
C_0^\infty (\mathrm{neigh\,}(x_0,{\bf R}^n))$ is equal to 1 near
$x_0$, then $u(x;h)=\chi (x)h^{-n/4}a(x;h)e^{i\phi (x)/h}$ has the
required properties. \end{proof}

\section{Boundary estimates of the resolvent}\label{bdy}
\setcounter{equation}{0}

\subsection{Introduction}\label{inbdy}

In this section we are interested in bounds on the resolvent of an $h$-pseudo\-differential operator when $z$ is close to the boundary of the range of $p$. As with the quasi-mode construction this question is closely related to classical results in the general theory of linear PDE, and with N.~Dencker and Zworski (\cite{DeSjZw}) we were able to find quite general results closely related to the classical topic of subellipticity for pseudodifferential operators of principal type, studied by Egorov, H\"ormander and others. See \cite{Ho}.

In \cite{DeSjZw} we obtained resolvent estimates at certain boundary points,\\
(A) under a non-trapping condition,\\
and\\
(B) under a stronger ``subellipticity condition''.

\par In case (A) we could apply quite general and simple arguments related to the propagation of regularity and in case (B) we were able to adapt general Weyl-H\"ormander calculus and H\"ormander's treatment of subellipticity for operators of principal type (\cite{Ho}). 
In the first case we obtained that the resolvent extends and has temperate growth in $1/h$ in discs of radius ${\cal O}(h\ln 1/h)$ centered at the appropriate boundary points, while in case (B) we got the corresponding extension up to distance ${\cal O}(h^{k/(k+1)})$, where the integer $k\ge 2$ is determined by a condition of ``subellipticity type''. 

Using a method based on semi-groups led to a strengthened result in case (B): The resolvent can be extended to a disc of radius ${\cal O}((h\ln 1/h)^{k/(k+1)})$ around the appropriate boundary points.

\par Let $X$ be equal to ${\bf R}^n$ or equal to a compact smooth
manifold of dimension $n$. 

\par In the first case, let $m\in C^\infty ({\bf R}^{2n};[1,+\infty
[)$ be an order function (see \cite{DiSj99} for more details about the
pseudodifferential calculus) in the sense that for some $C_0,N_0>0$,
\ekv{inbdy.1}
{
m(\rho )\le C_0\langle \rho -\mu \rangle^{N_0}m(\mu ),\ \rho ,\mu \in
{\bf R}^{2n},
}
where $\langle \rho -\mu \rangle= (1+|\rho -\mu |^2)^{1/2}$. Let
$P=P(x,\xi ;h)\in S(m)$, meaning that $P$ is smooth in $x,\xi $ and
satisfies 
\ekv{inbdy.2}
{
|\partial _{x,\xi }^\alpha P(x,\xi ;h)|\le C_\alpha m(x,\xi ),\ (x,\xi
)\in {\bf R}^{2n},\, \alpha
\in {\bf N}^{2n},
}
where $C_\alpha $ is independent of $h$. We also assume that 
\ekv{inbdy.3}
{
P(x,\xi ;h)\sim p_0(x,\xi )+hp_1(x,\xi )+...,\hbox{ in }S(m),
}
and write $p=p_0$ for the principal symbol. We impose the ellipticity
assumption
\ekv{inbdy.4}
{
\exists w\in {\bf C},\, C>0,\hbox{ such that }|p(\rho )-w|\ge m(\rho )/C,\
\forall \rho \in {\bf R}^{2n},\,|\rho |\ge C.
}
In this case we let 
\ekv{inbdy.5}
{
P=P^w(x,hD_x;h)=\mathrm{Op}(P(x,h\xi ;h))
}
be the Weyl quantization of the symbol $P(x,h\xi ;h)$ that we can view as a
closed unbounded operator on $L^2({\bf R}^n)$.

\par In the second case when $X$ is compact manifold, we let $P\in
S^m_{1,0}(T^*X)$ (the classical H\"ormander symbol space ) of order
$m\ge 0$, meaning that
\ekv{inbdy.6}
{
|\partial _x^\alpha \partial _{\xi }^\beta P(x,\xi ;h)|\le C_{\alpha
  ,\beta }\langle \xi \rangle^{m-|\beta |},\ (x,\xi )\in T^*X, } where $C_{\alpha ,\beta }$ are independent of $h$. We also assume that we have an expansion of the type (\ref{inbdy.3}), now in the sense that 
\ekv{inbdy.7} { P(x,\xi ;h)-\sum_0^{N-1}h^jp_j(x,\xi )\in h^NS^{m-N}_{1,0}(T^*X),\ N=1,2,...  } and we quantize the symbol $P(x,h\xi ;h)$ in the standard (non-unique) way, by doing it for various local coordinates and paste the quantizations together by means of a partition of unity. In the case $m>0$ we impose the ellipticity condition \ekv{inbdy.8} { \exists C>0,\hbox{ such that }|p(x,\xi )|\ge \frac{\langle
  \xi \rangle^m}{C},\ |\xi |\ge C.
}
\par Let $\Sigma (p)=\overline{p^*(T^*X)}$ and let $\Sigma _\infty
(p)$ be the set of accumulation points of $p(\rho _j)$ for all
sequences $\rho _j\in T^*X$, $j=1,2,3,..$ that tend to infinity.
By pseudodifferential calculus, if $K\subset {\bf C}\setminus \Sigma (p)$ is compact, then $(z-P)^{-1}$ exists and has uniformly bounded operator norm for $z\in K$, $0<h\ll 1$. 

 The following
theorem (\cite{Sj09a}) is a partial improvement of corresponding results in
\cite{DeSjZw}.
\begin{theo}\label{inbdy1}
We adopt the general assumptions above. Let $z_0\in \partial \Sigma
(p)\setminus \Sigma _\infty (p)$ and assume that $dp\ne 0$ at every
point of $p^{-1}(z_0)$. Then for every such point $\rho $ there exists
$\theta \in {\bf R}$ (unique up to a multiple of $\pi $) such that
$d(e^{-i\theta }(p-z_0))$ is real at $\rho $. We write $\theta
=\theta (\rho )$. Consider the following two cases:
\begin{itemize}
\item (A) For every $\rho \in p^{-1}(z_0)$, the maximal integral curve
  of $H_{\Re (e^{-i\theta (\rho )}p)}$ through the point $\rho $ is not
  contained in $p^{-1}(z_0)$. 
\item (B) There exists an integer $k\ge 1$ such that for every $\rho
  \in p^{-1}(z_0)$, there exists $j\in \{ 1,2,..,k\}$ such that 
$$p^*(\exp tH_p(\rho ))=at^j+{\cal O}(t^{j+1}),\ t\to 0,$$
where $a=a(\rho )\ne 0$. Here $p$ also denotes an almost holomorphic extension to a complex neighborhood of $\rho $ and we put $p^*(\mu )=\overline{p(\overline{\mu })}$. Equivalently, $H_p^j(\overline{p})(\rho )/(j!)=a\ne 0$.
\end{itemize}

\par Then, in case (A), there exists a constant $C_0>0$ such that for
every constant $C_1>0$ there is a constant $C_2>0$ such that the
resolvent $(z-P)^{-1}$ is well-defined for $|z-z_0|<C_1h\ln
\frac{1}{h}$, $h<\frac{1}{C_2}$, and satisfies the estimate 
\ekv{inbdy.9}
{
\| (z-P)^{-1}\|\le \frac{C_0}{h}\exp ( \frac{C_0}{h}|z-z_0|).
}

\par In case (B), there exists a constant $C_0>0$ such that for
every constant $C_1>0$ there is a constant $C_2>0$ such that the
resolvent $(z-P)^{-1}$ is well-defined for $|z-z_0|<C_1(h\ln
\frac{1}{h})^{k/(k+1)}$, $h<\frac{1}{C_2}$ and satisfies the estimate 
\ekv{inbdy.10}
{
\| (z-P)^{-1}\|\le \frac{C_0}{h^{\frac{k}{k+1}}}
\exp (\frac{C_0}{h}|z-z_0|^{\frac{k+1}{k}}).
}
\end{theo}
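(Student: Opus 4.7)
The plan is to combine the escape-function technique of \cite{DeSjZw} with a careful choice of scales that converts conjugation-induced growth into the logarithmic enlargement of the resolvent disc stated in the theorem.

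\medskip
\noindent\emph{Reduction and normal form.} The ellipticity hypothesis (\ref{inbdy.4}) or (\ref{inbdy.8}), together with $z_0\not\in \Sigma _\infty (p)$, implies that $p-z_0$ is elliptic outside a compact subset of $T^*X$, so a standard parametrix reduces the problem to a microlocal estimate in a neighborhood of the compact set $p^{-1}(z_0)$. Near each $\rho _0\in p^{-1}(z_0)$, multiplication by $e^{-i\theta (\rho _0)}$ and translation brings the local model into the form $z_0=0$, $dp(\rho _0)\in T^*_{\rho _0}(T^*X)\otimes \R$, and $\Im p\ge 0$ on a neighborhood of $\rho _0$ (the last being automatic since $z_0\in \partial \Sigma (p)$).

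\medskip
\noindent\emph{Escape function.} In case (A), the non-trapping hypothesis yields, by averaging a bump along the flow of $H_{\Re p}$ over an $O(1)$ escape time, a bounded real symbol $G\in C_0^\infty$ supported near $p^{-1}(0)$ with $|G|\le C$ and $H_{\Re p}G\ge 1$ on a neighborhood of $p^{-1}(0)$. In case (B), the hypothesis $H_p^j\overline{p}(\rho )\ne 0$ for some $j\in \{1,\dots ,k\}$ at each $\rho \in p^{-1}(0)$ gives the quantitative statement that $\Im p$ reaches the level $c\delta ^k$ within flow time $O(\delta )$ along bicharacteristics of $\Re p$ through $p^{-1}(0)$; for each small $\delta >0$, averaging a bump over this time produces a real symbol $G_\delta $ with $|G_\delta |\le C\delta $ and $H_{\Re p}G_\delta \ge 1$ on $\{\mathrm{dist}(\cdot ,p^{-1}(0))\le \delta \}\cap \{\Im p\le c\delta ^k\}$.

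\medskip
\noindent\emph{Conjugation, optimization, and main obstacle.} For a parameter $t>0$ (in case (B), set $t:=\delta ^k$), form $P_t:=e^{tG^w/h}(P-z_0)e^{-tG^w/h}$ (with $G_\delta $ in place of $G$ in case (B)). Symbol calculus gives $\Im p_t\ge \Im p+tH_{\Re p}G-O(t^2)-O(h)\ge ct-O(h)$ uniformly: in case (A) by the lower bound on $H_{\Re p}G$, and in case (B) by combining the escape-function gain on $\{\Im p\le c\delta ^k\}$ with $\Im p\ge c\delta ^k$ outside. Sharp G\aa rding together with $\|e^{\pm tG^w/h}\|\le \exp (Ct\|G\|_\infty /h)$ yields
\ekv{myplan}{
\|(z-P)^{-1}\|\le \frac{\exp (Ct\|G\|_\infty /h)}{ct-Ch-|\Im (e^{-i\theta }(z-z_0))|},
}
valid for $|\Im (e^{-i\theta }(z-z_0))|\le ct/2$. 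Writing $r:=|z-z_0|$ and choosing $t=\kappa r$ with $\kappa $ large, the denominator is of order $r$, while the exponential equals $e^{Cr/h}$ in case (A) and, via $t=\delta ^k$ (so $\delta =t^{1/k}=r^{1/k}$ and $t\|G_\delta \|_\infty \asymp t\delta = r^{(k+1)/k}$), equals $\exp (Cr^{(k+1)/k}/h)$ in case (B). The discs in (\ref{inbdy.9})--(\ref{inbdy.10}) are exactly those on which this exponential stays polynomial in $h^{-1}$, namely $r\le Ch\log (1/h)$ resp.\ $r\le C(h\log (1/h))^{k/(k+1)}$. The main technical hurdle is to justify the conjugation $e^{\pm tG^w/h}$ when $t\|G\|_\infty /h\gg 1$: then $\pm tG/h$ is no longer an ordinary semiclassical symbol, and one must work in an exotic symbol class adapted to the scale $(t,\delta ,h)$, verify that $P_t$ lies in this class with the Weyl symbol above, and ensure that sharp G\aa rding and the remainder estimates behave uniformly in that class. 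It is this bookkeeping that forces the specific exponents $k/(k+1)$ and the precise logarithmic factor, by balancing the conjugation cost $\exp (Ct\|G\|_\infty /h)$ against the resolvent lower bound $ct$ in (\ref{myplan}).
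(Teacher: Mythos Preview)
For case (A) your approach coincides with the paper's (and with \cite{DeSjZw}): conjugation by $e^{tG^w/h}$ with a bounded escape function $G$ and $t\lesssim h\ln(1/h)$, so the weight stays polynomial in $h^{-1}$.

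For case (B) the paper takes a genuinely different route. After the microlocal factorization you describe (reducing to the setting of Theorem~\ref{inbdy2} with $\Re p\ge 0$, $d\Im p\ne 0$), the paper does \emph{not} build an escape function at all. Instead it constructs the semigroup $e^{-tP/h}$, $0\le t\ll 1$, microlocally as a Fourier integral operator with complex phase (equivalently, on the FBI side via the real Hamilton--Jacobi flow (\ref{inbdy.20}) for a family of weights $\Phi_t$). Condition (B) translates into $\Im\phi(t,\cdot)\ge t^{k+1}/C$, hence $\|e^{-tP/h}\|\le C\exp(-t^{k+1}/(Ch))$, and the Laplace representation (\ref{inbdy.16}) then yields (\ref{inbdy.14}) and (\ref{inbdy.10}) by a saddle-point computation. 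No conjugation by operators with large exponents appears, so the ``main technical hurdle'' you flag simply does not arise in the paper's argument.

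Your own route has a concrete gap at the escape-function step. ``Averaging a bump over time $\delta$'' does not give $H_{\Re p}G_\delta\ge 1$ near $p^{-1}(0)$: if $\chi$ is a fixed cutoff equal to $1$ there and $G_\delta=\int_0^\delta\chi\circ\Phi_{-s}\,ds$, then $H_{\Re p}G_\delta=\chi-\chi\circ\Phi_{-\delta}=O(\delta)$ on that set, not $\ge 1$. A construction that does achieve the needed positivity is the averaging weight $G_\delta=-\int J(s/\delta)\,f\circ\Phi_s\,ds$ with $J$ as in (\ref{pR.2}) and $f$ a truncation of $\Im p$ at level $\asymp\delta^k$; it gives $\Im p+H_{\Re p}G_\delta=\Im p-f+\langle f\rangle_\delta\gtrsim\delta^k$ and $|G_\delta|=O(\delta^{k+1})$, so the weight is $\exp(O(\delta^{k+1})/h)$---polynomial exactly on the stated disc. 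But the truncation at scale $\delta^k$ forces $G_\delta$ into an exotic class (each derivative loses a power of $\delta$), and you must then verify that the conjugated operator admits a G\aa rding-type lower bound with error $o(\delta^k)$ in that class. You acknowledge this as the main hurdle but do not carry it out; the semigroup method is precisely what the paper uses to bypass it.
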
 

\par In \cite{DeSjZw} we obtained (\ref{inbdy.10}) for
$z=z_0$, implying that the resolvent exists and satisfies the same
bound for $|z-z_0|\le h^{k/(k+1)}/{\cal O}(1)$ in case (B) and with
$k/(k+1)$ replaced by 1 in case (A). In case (A) we also showed that
the resolvent  exists with norm bounded by a negative power of $h$
in any disc $D(z_0,C_1h\ln (1/h))$. (The condition in case (B) was formulated a little differently in \cite{DeSjZw}, but the two conditions lead to the same microlocal models and hence they are equivalent.) 
The case (A) of the theorem is basically identical with the corresponding result in \cite{DeSjZw} and was proved using weighted estimates with weights that have at most polynomial growth in $h$. We will not disuss that in detail here and instead we concentrate on the partially new result in the case (B).

When $k=2$ more direct methods are available and more precise bounds can be given, at least in special cases. Such results have been obtained by J.~Martinet \cite{Mart09}, Y.~Almog, B.~Helffer, X.~Pan, see \cite{He09} and W.~Bordeaux Montrieux \cite{Bo}.

Let us now consider the special situation of potential interest for
evolution equations, namely the case when
\ekv{inbdy.11}{z_0\in i{\bf R},}
\ekv{inbdy.12}{
\Re p(\rho )\ge 0\hbox{ in }\mathrm{neigh\,}(p^{-1}(z_0),T^*X).
}
\begin{theo}\label{inbdy2}
We adopt the general assumptions above. Let $z_0\in \partial \Sigma
(p)\setminus \Sigma _\infty (p)$ and assume (\ref{inbdy.11}),
(\ref{inbdy.12}). Also assume that
$dp\ne 0$ on $p^{-1}(z_0)$, so that $d\Im p\ne 0$, $d\Re p=0$ on that set.
 Consider the two cases of Theorem \ref{inbdy1}:
\begin{itemize}
\item (A) For every $\rho \in p^{-1}(z_0)$, the maximal integral curve
  of $H_{\Im p}$ through the point $\rho $ contains a point where $\Re
  p>0$. 
\item (B) There exists an integer $k\ge 1$ such that for every $\rho
  \in p^{-1}(z_0)$, we have $H_{\Im p}^j\Re p(\rho )\ne 0$ for some
  $j\in \{ 1,2,...,k\}$.
\end{itemize}

\par Then, in case (A), there exists a constant $C_0>0$ such that for
every constant $C_1>0$ there is a constant $C_2>0$ such that the
resolvent $(z-P)^{-1}$ is well-defined for 
$$
|\Im (z-z_0)|<\frac{1}{C_0},\ \frac{-1}{C_0}<\Re
z<C_1h\ln\frac{1}{h},\ h<\frac{1}{C_2},
$$
 and satisfies the estimate 
\ekv{inbdy.13}
{
\| (z-P)^{-1}\|\le \cases{ \frac{C_0}{|\Re z|},\ \Re z\le -h,\cr
  \frac{C_0}{h}\exp (\frac{C_0}{h}\Re z), \Re z\ge -h.
 }
}

\par In case (B), there exists a constant $C_0>0$ such that for
every constant $C_1>0$ there is a constant $C_2>0$ such that the
resolvent $(z-P)^{-1}$ is well-defined for 
\ekv{inbdy.13.5}
{|\Im (z-z_0)|<\frac{1}{C_0},\ \frac{-1}{C_0}<\Re
z<C_1(h\ln\frac{1}{h})^{\frac{k}{k+1}},\ h<\frac{1}{C_2},}
 and satisfies the estimate 
\ekv{inbdy.14}
{
\| (z-P)^{-1}\|\le \cases{ \frac{C_0}{|\Re z|},\ \Re z\le -h^{\frac{k}{k+1}},\cr
  \frac{C_0}{h^{\frac{k}{k+1}}}\exp (\frac{C_0}{h}(\Re z)_+^{^{\frac{k}{k+1}}}), \Re z\ge -h^{\frac{k}{k+1}}.
 }
}
\end{theo}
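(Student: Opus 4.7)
The estimates in \eqref{inbdy.13}, \eqref{inbdy.14} split naturally into two regimes separated by the threshold $\Re z \simeq -h$ (respectively $-h^{k/(k+1)}$). For $\Re z$ below the threshold I want to exploit \eqref{inbdy.12} to get the dissipative bound $\|(z-P)^{-1}\|\lesssim 1/|\Re z|$; above the threshold I would like to reduce everything to Theorem~\ref{inbdy1} applied at $z_0$ (or at nearby points of $\partial \Sigma(p)$). First, I would check that the hypotheses of Theorem~\ref{inbdy2} imply those of Theorem~\ref{inbdy1}: the identity $H_{\Im p}^j \Re p = \tfrac{1}{2}\Im(H_p^j\overline{p})$ when $d\Re p(\rho)=0$ (which holds on $p^{-1}(z_0)$ since $\Re p\ge 0$ attains its minimum there) shows that the condition in case (B) of Theorem~\ref{inbdy2} forces $H_p^j\overline{p}(\rho)\ne 0$ for the same $j$, and similarly the trajectory assumption in case (A) transfers. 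In particular $z_0$ falls under case (B) of Theorem~\ref{inbdy1} with $k\ge 2$.

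\textbf{The dissipative regime.} For $\Re z\le -h^{k/(k+1)}$ in case (B) (respectively $-h$ in case (A)) I would fix a real cutoff $\chi\in C_0^\infty(T^*X)$, equal to $1$ near $p^{-1}(z_0)$ and supported in the neighbourhood where $\Re p\ge 0$. On the complement of $\{\chi=1\}$ the symbol $p-z$ is bounded away from $0$ uniformly in the strip $|z-z_0|\le 1/C_0$, so a standard elliptic parametrix yields an inverse of $(P-z)$ microlocally there, with $\mathcal{O}(1)$ norm. On $\mathrm{supp}\,\chi$ the sharp (or Fefferman--Phong) Gårding inequality applied to $\Re(P-z)=\Re P-\Re z$ gives
\[
\Re \langle (P-z)u,u\rangle \ge (|\Re z|-C h)\|u\|^2
\]
for $u$ suitably microlocalized. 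Combining the two pieces via a quadratic partition of unity and Cauchy--Schwarz produces $\|(P-z)u\|\,\|u\|\ge c|\Re z|\,\|u\|^2$ whenever $|\Re z|\gg h^{k/(k+1)}$, which is the claimed bound $\|(z-P)^{-1}\|\le C_0/|\Re z|$.

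\textbf{The boundary-layer regime.} For $-h^{k/(k+1)}\le \Re z$ inside the strip I intend to invoke Theorem~\ref{inbdy1}. When $|\Im(z-z_0)|\le C_1(h\log\tfrac{1}{h})^{k/(k+1)}$ this is immediate: the bound \eqref{inbdy.10} with $|z-z_0|^{(k+1)/k}\asymp (\Re z)_+^{(k+1)/k}+|\Im(z-z_0)|^{(k+1)/k}$ absorbs the imaginary part. For larger $|\Im(z-z_0)|$ (up to $1/C_0$) I would cover $\partial\Sigma(p)\cap\{|\Im(w-z_0)|<1/C_0\}$ by finitely many points $z_0'$ at which Theorem~\ref{inbdy1} applies: by continuity of the Poisson-bracket conditions and since $\Sigma_\infty(p)$ is avoided in a full neighbourhood of $z_0$, the same integer $k$ and the same hypothesis of case (B) persist at every $z_0'$ close enough to $z_0$; choosing $1/C_0$ small brings us inside this stable zone. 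The corresponding discs given by Theorem~\ref{inbdy1} then cover the remaining part of the strip with uniform constants, and for $z$ further inside $\{\Re z<0\}$ than these discs reach, the dissipative bound of Step~2 is already available.

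\textbf{Main obstacle.} The delicate point is the matching at $\Re z\simeq -h^{k/(k+1)}$ between the Gårding argument and the consequence of Theorem~\ref{inbdy1}: one must track the constants carefully because both bounds must agree up to a multiplicative $C_0$, and the Gårding estimate only gives meaningful information once $|\Re z|$ exceeds a fixed multiple of $h$, which is weaker than $h^{k/(k+1)}$ in case (B). Upgrading from $|\Re z|\gtrsim h$ to $|\Re z|\gtrsim h^{k/(k+1)}$ should follow from exploiting the extra commutator vanishings (Hörmander's subelliptic brackets) exactly as in the proof of \eqref{inbdy.10} in \cite{DeSjZw}, but weaving that refinement into the Gårding step while keeping the constants uniform in $\Im z$ across the strip is the part where the argument requires genuine care.
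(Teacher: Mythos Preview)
Your strategy runs opposite to the paper's logical order. In the paper the reduction goes the other way: after microlocalizing near $p^{-1}(z_0)$ and a standard factorization of $P-z$, the proof of Theorem~\ref{inbdy1} is \emph{reduced to} that of Theorem~\ref{inbdy2}, and Theorem~\ref{inbdy2} is then proved directly by a semigroup argument. Concretely, one shows (microlocally near $p^{-1}(z_0)$) the decay
\[
\Big\|\exp\Big(-\tfrac{tP}{h}\Big)\Big\|\le C\exp\Big(-\tfrac{t^{k+1}}{Ch}\Big),\qquad 0\le t\ll 1,
\]
by realizing $e^{-tP/h}$ as a Fourier integral operator with complex phase (equivalently, by conjugating with an FBI transform and evolving the weight $\Phi_t$ via the real Hamilton--Jacobi equation $\partial_t\Phi_t+\Re\widetilde p(x,\tfrac{2}{i}\partial_x\Phi_t)=0$); the bracket condition (B) forces $\Phi_t\le \Phi_0 - t^{k+1}/C$. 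The resolvent bound \eqref{inbdy.14} then drops out of the Laplace formula $(z-P)^{-1}=-h^{-1}\int_0^\infty e^{t(z-P)/h}\,dt$, which simultaneously yields both branches of \eqref{inbdy.14} without any separate G\aa rding step. So within the paper's framework, invoking Theorem~\ref{inbdy1} to prove Theorem~\ref{inbdy2} is circular.

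Separately, your ``main obstacle'' is not an obstacle. For $0<h<1$ and $k\ge 1$ one has $h^{k/(k+1)}>h$, so the sharp G\aa rding threshold $|\Re z|\gtrsim h$ is already comfortably satisfied on the whole dissipative range $\Re z\le -h^{k/(k+1)}$; no subelliptic ``upgrade'' is needed there. If one were granted Theorem~\ref{inbdy1} as an independent black box, your patching of G\aa rding (for $\Re z\le -h^{k/(k+1)}$) with discs from Theorem~\ref{inbdy1} centered at nearby boundary points $z_0'\in\partial\Sigma(p)$ (for $\Re z\ge -h^{k/(k+1)}$) can be made to work: the constants are uniform by compactness once $1/C_0$ is small, and when $m(\Im z):=\min_{\Im p=\Im z}\Re p$ exceeds $h^{k/(k+1)}$ the point $z$ lies at elliptic distance from $\Sigma(p)$ and the trivial bound $\mathcal O(1/m(\Im z))$ already beats the target. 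But this does not give a self-contained proof in the present paper, whose whole point in case~(B) is the semigroup estimate above.
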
 

\subsection{Outline of the proof in case (B)}

Away from the set $p^{-1}(z_0)$ we can use ellipticity, so the problem is to obtain microlocal estimates near a point $\rho_0 \in p^{-1}(z_0)$. After a standard factorization of $P-z$ in such a region, we can further reduce the proof of the first theorem to that of the second one.

\par The main (quite standard) idea of the proof of Theorem \ref{inbdy2} is to study $\exp (-tP/h)$ (microlocally) for $0\le t\ll 1$ and to show that in this case
\ekv{inbdy.15}
{
\| \exp -\frac{tP}{h}\| \le C\exp (-\frac{t^{k+1}}{Ch}),
}
for some constant $C>0$. Noting that that implies that $\| \exp -\frac{tP}{h}\| ={\cal O}(h^\infty )$ for $t\ge h^\delta $ when $\delta (k+1)<1$, and using the formula
\ekv{inbdy.16}
{
(z-P)^{-1}=-\frac{1}{h}\int_0^\infty \exp (\frac{t(z-P)}{h}) dt,
}
we get to (\ref{inbdy.14}).

The most direct way of studying $\exp (-tP/h)$, or rather a microlocal version of that operator, is to view it as a Fourier integral operator with complex phase (\cite{Mas, Ku, MeSj76, Ma}) of the form
\ekv{inbdy.17}
{
U(t)u(x)=\frac{1}{(2\pi h)^n}\iint e^{\frac{i}{h}(\phi (t,x,\eta )-y\cdot \eta )}a(t,x,\eta ;h) u(y)dy d\eta ,
}
where the phase $\phi $ should have a non-negative imaginary part and satisfy the Hamilton-Jacobi equation:
\ekv{inbdy.18}
{
i\partial _t\phi +p(x,\partial _x\phi )={\cal O}((\Im \phi )^\infty ), \hbox{ locally uniformly,}
}
with the initial condition 
\ekv{inbdy.19}
{
\phi (0,x,\eta )=x\cdot \eta .
}
The amplitude $a$ will be bounded with all its derivatives and has an asymptotic expansion where the terms are determined by transport equations. This can indeed be carried out in a classical manner for instance by adapting the method of \cite{MeSj76} to the case of non-homogeneous symbols following a reduction used in \cite{MenSj, Ma}. It is based on making estimates on the fonction 
$$
S_\gamma (t)=\Im (\int_0^t \xi (s)\cdot dx(s))-\Re \xi (t)\cdot \Im x(t)+\Re \xi (0)\cdot \Im x(0)
$$ 
along the complex integral curves $\gamma :[0,T]\ni s\mapsto (x(s),\xi (s))$ of the Hamilton field of $p$. Notice that here and already in (\ref{inbdy.18}), we need to take an almost holomorphic extension of $p$. Using the property (B) one can show that $\Im \phi (t,x,\eta )\ge C^{-1}t^{k+1}$ and from that we can obtain (a microlocalized version of) (\ref{inbdy.15}) quite easily. 

Finally, we prefered a variant: Let 
$$
Tu(x)=Ch^{-\frac{3n}{4}}\int e^{\frac{i}{h}\phi (x,y)}u(y)dy,
$$
be an FBI -- or (generalized) Bargmann-Segal transform that we treat in the spirit of Fourier integral operators with complex phase as in 
\cite{Sj82}. Here $\phi $ is holomorphic in a neighborhood of $(x_0,y_0)\in {\bf C}^n\times {\bf R}^n$, and $-\phi '_y(x_0,y_0)=\eta _0\in {\bf R}^n$, $\Im \phi ''_{y,y}(x_0,y_0)>0$, $\det \phi ''_{x,y}(x_0,y_0)\ne 0$. Let $\kappa _t:\, (y,-\phi '_y(x,y))\mapsto (x,\phi '_x(x,y))$ be the associated canonical transformation. Then microlocally, $T$ is bounded $L^2\to H_{\Phi _0}:=\mathrm{Hol\,}(\Omega )\cap L^2(\Omega ,e^{-2\Phi _0/h}L(dx))$ and has (microlocally) a bounded inverse, where $\Omega $ is a small complex neighborhood of $x_0$ in ${\bf C}^n$. Here the weight $\Phi _0$ is smooth and strictly pluri-subharmonic. If $\Lambda _{\Phi _0}:=\{ (x,\frac{2}{i}\frac{\partial \Phi _0}{\partial x});\, x\in \mathrm{neigh\,}(x_0)\}$, then (in the sense of germs) $\Lambda _{\Phi _0}=\kappa _T(T^*X)$. The conjugated operator $\widetilde{P}=TPT^{-1}$ can be defined locally modulo ${\cal O}(h^\infty )$ (see also \cite{LaSj}) as a bounded operator from $H_\Phi \to H_\Phi $ provided that the weight $\Phi $ is smooth and satisfies $\Phi '-\Phi _0'={\cal O}(h^\delta) $ for some $\delta >0$. (In the analytic frame work this condition can be relaxed.) Egorov's theorem applies in this situation, so the leading symbol $\widetilde{p}$ of $\widetilde{P}$ is given by $\widetilde{p}\circ \kappa _T=p$. Thus (under the assumptions of Theorem \ref{inbdy2}) we have ${{\Re \widetilde{p}}_\vert}_{\Lambda _{\Phi _0}}\ge 0$, which in turn can be used to see that for $0\le t\le h^\delta $, we have $e^{-t\widetilde{P}/h}={\cal O}(1)$: 
$H_{\Phi _0}\to H_{\Phi _t}$, where $\Phi _t\le \Phi _0$ is determined by the real Hamilton-Jacobi problem
\ekv{inbdy.20}
{
\frac{\partial \Phi _t}{\partial t}+\Re \widetilde{p}(x,\frac{2}{i}\frac{\partial \Phi _t}{\partial x})=0,\ \Phi _{t=0}=\Phi _0.
}

Here is a somewhat formal derivation of (\ref{inbdy.20}):

\par Consider formally:
$$
(e^{-t\widetilde{P}/h}u|e^{-t\widetilde{P}/h}u)_{H_{\Phi _t}}=(u_t|u_t)_{H_{\Phi _t}},\ u\in H_{\Phi _0},
$$
and try to choose $\Phi _t$ so that the time derivative of this expression
vanishes to leading order. We get
\begin{eqnarray*}0&\approx&h\partial _t\int u_t\overline{u}_te^{-2\Phi
    _t/h}L(dx)\\
&=&-\left( (\widetilde{P}u_t|u_t)_{H_{\Phi _t}}+
(u_t|\widetilde{P}u_t)_{H_{\Phi _t}}+\int 2\frac{\partial \Phi
  _t}{\partial t}(x)|u|^2 e^{-2\Phi _t/h}L(dx)
\right).
\end{eqnarray*}
Here 
$$
(\widetilde{P}u_t|u_t)_{H_{\Phi _t}}=\int
(\widetilde{p}_{\vert_
{\Lambda _{\Phi _t}}}+{\cal
  O}(h))|u_t|^2e^{-2\Phi _t/h}L(dx),
$$
and similarly for $(u_t|\widetilde{P}u_t)_{H_{\Phi _t}}$, so we would
like to have
$$
0\approx
\int (2\frac{\partial \Phi _t}{\partial t}+2\Re 
\widetilde{p}_{\vert_
{\Lambda _{\Phi _t}}}+{\cal O}(h))|u_t|^2
e^{-2\Phi _t/h}L(dx).
$$

\par We choose $\Phi _t$ to be the solution of (\ref{inbdy.20}). 
 Then the preceding
discussion again shows that $e^{-t\widetilde{P}/h}={\cal O}(1): H_{\Phi
  _0}\to H_{\Phi _t}$. 

\par To get (\ref{inbdy.15}), it suffices to show that $\Phi _t\le \Phi _0-t^{k+1}/C$ for $0\le t\ll 1$. By a geometric discussion, this follows from 
\ekv{inbdy.21}
{
G_t(\rho )\le -t^{k+1}/C,
}
where $G_t$ is a smooth function in a real neighborhood of $\rho _0$, given by
\ekv{inbdy.22}
{
\frac{\partial G_t(\rho )}{\partial t}+\Re p(\rho +iH_{G_t}(\rho ))=0,\ G_0=0.
}
The behaviour of $G_t$ is easy to understand by means of Taylor expansion of the first equation in (\ref{inbdy.22}) at the point $\rho $.

\subsection{Examples}\label{ex}

Consider 
\ekv{ex.1}
{
P=-h^2\Delta +iV(x),\ V\in C^\infty (X;{\bf R}),
}
where either $X$ is a smooth compact manifold of dimension $n$ or $X={\bf R}^n$. In the second case we assume that $p=\xi ^2+iV(x)$ belongs to a symbol space $S(m)$ where $m\ge 1$ is an order function. It is easy to give quite general sufficient condition for this to happen, let us just mention that if $V\in C_b^\infty ({\bf R}^2)$ then we can take $m=1+\xi ^2$ and if $\partial ^\alpha V(x)={\cal O}((1+|x|)^2)$ for all $\alpha \in {\bf N}^n$ and satisfies the ellipticity condition $|V(x)|\ge C^{-1}|x|^2$ for $|x|\ge C$, for some constant $C>0$, then we can take $m=1+\xi ^2+x^2$.

We have $\Sigma (p)=[0,\infty [+i\overline{V(X)}$. When $X$ is compact then $\Sigma _\infty (p)$ is empty and when $X={\bf R}^n$, we have $\Sigma _\infty (p)=[0,\infty [+i\Sigma _\infty (V)$, where $\Sigma _\infty (V)$ is the set of accumulation points at infinity of $V$. 

Let $z_0=x_0+iy_0\in \partial \Sigma (p)\setminus \Sigma _\infty (p)$.
\begin{itemize}
\item In the case $x_0=0$ we see that Theorem \ref{inbdy2} (B) is applicable with $k=2$, provided that $y_0$ is not a critical value of $V$. This is close to problems from fluid dynamics, studied by I.~Gallagher, T.~Gallay, F.~Nier \cite{GaGaNi08}.
\item Now assume that $x_0>0$ and that $y_0$ is either the maximum or the minimum of $V$. In both cases, assume that $V^{-1}(y_0)$ is finite and 
that each element of that set is a non-degenerate maximum or minimum. Then Theorem \ref{inbdy2} (B) is applicable to $\pm iP$. By allowing a more complicated behaviour of $V$ near its extreme points, we can produce examples where \ref{inbdy2} (B) applies with $k>2$.
\end{itemize}

\par Now, consider the non-self-adjoint harmonic oscillator 
\ekv{ex.2}{
Q=-\frac{d^2}{dy^2}+iy^2
}
on the real line, studied by Boulton \cite{Bou} and Davies \cite{Da2}, K.~Pravda Starov \cite{Pr06}. Consider a large spectral parameter $E=i\lambda +\mu $ where $\lambda \gg 1$ and $|\mu |\ll \lambda $. The change of variables $y=\sqrt{\lambda }x$ permits us to identify $Q$ with 
$Q=\lambda P$, where $P=-h^2\frac{d^2}{dx^2}+ix^2$ and $h=1/\lambda \to 0$. 
Hence $Q-E=\lambda (P-(i+\frac{\mu }{\lambda }))$ and Theorem \ref{inbdy2} (B)
 is applicable with $k=2$. We conclude that $(Q-E)^{-1}$ is well-defined and of polynomial growth in $\lambda $ (which can be specified further) when 
$$
\frac{\mu }{\lambda }\le C_1 (\lambda ^{-1}\ln \lambda )^{\frac{2}{3}},
$$ 
for any fixed $C_1>0$,
i.e. when 
\ekv{ex.3}
{
\mu \le C_1\lambda ^{\frac{1}{3}}(\ln \lambda )^{\frac{2}{3}}.
}

M.~Hitrik and K.~Pravda Starov \cite{HiPr07} have obtained interesting results on the characterization of the exponential decay of the the semi-groups generated by differential operators with quadratic symbols.

\section{Survey of some recent results}\label{sur}
\setcounter{equation}{0}

\subsection{Introduction}

In this section we survey some recent results about Kramers-Fokker-Planck type operators and about Bohr-Sommerfeld quantization conditions in dimension 2. In both cases our approach uses quite essentially the possibility of modifying the Hilbert space structure by means of exponential weights. 

In recent years there has been many works that apply the commutator methods developed by Kohn for subelliptic operators to equations of Fokker-Planck type and non-equilibriaum stat physics models. Especially there is the work by F.~H\'erau--F.~Nier \cite{HeNi04} devoted to the Kramers-Fokker-Planck operator 
\ekv{sur.1}
{
P=y\cdot h\partial _x-V'(x)\cdot h\partial _y+
\frac{1}{2}(y-h\partial _y)\cdot (y+h\partial _y),\ (x,y)\in {\bf R}^{2n}={\bf R}_x^n\times {\bf R}^n_y.
}  
Symbol: $p=ip_2+p_1$, where
\ekv{sur.2}
{
p_1(x,y,\xi ,\eta )=\frac{1}{2}(y^2+\eta ^2),\ p_2(x,y,\xi ,\eta )= y\cdot \xi -V'(x)\cdot \eta 
}
Here we have suppressed some physical parameters and only kept $h$ which is proportional to the temperature. Using commutator techniques H\'erau and Nier establish an interesting link to a Witten Laplacian and under various general symbol type assumtions on $V$ they show:
\begin{itemize}
\item global subellipticity ($P$ is not elliptic even locally), and $m$-accretivity,
\item absence of large eigenvalues and corresponding power-decay of the resolvent in certain ``parabolic'' neighborhoods of $i{\bf R}$,
\item Estimates on the smallest non-vanishing eigenvalue (relating it to the corresponding quantity for the Witten laplacian), and especially estimates on this quantity in the high  and low temperature limit.  
\item Precise estimates on the return to equilibrium (when $0$ is an eigenvalue) or simply decay when $0$ is not an eigenvalue, including estimates on the exponential rate of convergence. 
\end{itemize}
With F.~H\'erau and C.~Stolk \cite{HeSjSt05} we applied microlocal methods in order to study the semi-classical (low temperature) limit, 
the results are fewer than the ones in \cite{HeNi04} and for me easier to describe.  

Assume that $V$ is a Morse function, such that
\ekv{sur.3}
{
|V'(x)|\ge 1/C \hbox{ when }|x|>C,\quad \partial ^\alpha V''(x)={\cal O}(1), \forall \alpha \in {\bf N}^n.
}
We showed that $\exists c>0$ such that for any $C\ge 1$ we have have for $h>0$ small enough:
\begin{itemize}
\item The eigenvalues in the disc $D(0,Ch)$ are of the form $\mu =\mu (\lambda )=\lambda h+o(h)$, where $\lambda $ are the  eigenvalues of the quadratic approximation of ${{P}_\vert}_{h=1}$ at the critical points of $V$. 
\item For $|z|\ge Ch$ and $\Re z\le c|z|^{1/3}h^{2/3}$ the resolvent exists and satisfies the estimate 
$$
\| (z-P)^{-1}\|\le \frac{C}{|z|^{1/3}h^{2/3}}.
$$
\end{itemize}

This is similar to Theorem \ref{inbdy2} (B) in the case $k=2$. with an essential difference: $p$ has no values in $i{\bf R}\setminus \{0\}$, so $\partial \Sigma \subset \Sigma _\infty $! Nevertheless we can compute 
$H_{p_2}^2p_1=V'(x)^2+\xi ^2-(V''(x)y\cdot y+V''(x)\eta \cdot \eta )$
and see that when $p_1$ is small, then $H_{p_2}^2p_1>0$ except near points where $\xi =V'(x)=0$. This means that where $p_1$ is very small, the short time averages are not small. This can be exploited by conjugating the operator by an operator which is bounded and has a bounded inverse and for which the conjugated operator has a symbol with a larger real part. 

Below we shall discuss a more general supersymmetric situation and also give detailed results on the return to equilibrium for the associated heat equation, following two joint works with F.~H\'erau and M.~Hitrik.

In the case of the KFP equation we make a moderate change of the Hilbert space norm in order to increase the real part of the operator away from certain critical points of the symbol. When we have analyticity assumptions such changes of the norm can be much larger and sometime allow us to study all eigenvalues also at fixed distance inside the range of $p$. Here is one such result by M.~Hitrik \cite{Hi04}: 

{Let $P=P(x,hD_x;h)$ on ${\bf R}$ satisfying the general conditions of section \ref{bdy}. Also assume that $P$ has a holomorphic extension to a tubular neighborhood of ${\bf R}^2$ in ${\bf C}^2$ which is still ${\cal O}(m(\Re (x,\xi ))$ Let $z_0\in \partial \Sigma \setminus \Sigma _\infty $ be a point such that $p^{-1}(z_0)=\{(x_0,\xi _0)\}$ and such that $|p(x,\xi )-z_0|\asymp |(x-x_0,\xi -\xi _0)|^2$ for $(x,\xi )$ in a neighborhood of $(x_0,\xi _0)$. Also assume that there is a truncated sector $z_0+]0,\epsilon _0]e^{i[\theta _0-\epsilon _0,\theta _0+\epsilon _0]}$ which is disjoint from $\Sigma (p)$.} {Then in a small but fixed neighborhood of $z_0$, the spectrum is given by the set of values $z_0+G((k+\frac{1}{2})h;h)+{\cal O}(h^{\infty })$, where $G(\cdot ;h)$ is holomorphic and $\sim \sum_0^\infty h^jG_j(\cdot ;h)$, $k=0,1,2,...$, where $G_0(0)=0 $, $G_0'(0)\ne 0$.}

 The main idea is to construct
an IR-manifold $\Lambda $ (a complex deformation of real phase space), containing $(x_0,\xi _0)$, such that on $\Lambda $, $p-z_0$ is elliptic outside an arbitrarily
small neighborhood of $(x_0,\xi _0)$ and such that near that point, the restriction of the quadratic
part of $p$ to $\Lambda$  takes its values along a ray in the complex plane. To implement this picture, we make a Bargmann transform, mapping $L^2$ into an exponentially weighted space of holomorphic functions, then deform the weight. 

In the second part of this section we discuss precise Bohr-Sommerfeld rules in dimension 2 for non-self-adjoint operators in the semi-classical limit. What is remarkable here is that thanks to the non-self-adjointness we get better results than what would be possible in the self-adjoint case. Again analyticity assumptions and the use of exponential weights on the Bargmann transform side is essential. We will follow recent works with M.~Hitrik and S.~V\~u Ng\d{o}c \cite{HiSjVu07, HiSj08}

\subsection{Kramers-Fokker-Planck type operators, spectrum and return to
equilibrium}\label{kfp} \setcounter{equation}{0}
\subsubsection{Introduction}
\par There has been a renewed interest in the problem of {``return to
equilibrium''} for various 2nd order operators.  One example is the
Kramers-Fokker-Planck operator: \ekv{0.1kfp} {P= y\cdot h\partial
_x-V'(x)\cdot h\partial _y+{\gamma \over 2}(-h\partial _y+y) \cdot
(h\partial _y+y), } where $x,y\in {\bf R}^n$ correspond respectively
to position and speed of the particles and $h>0$ corresponds to
temperature. The constant $\gamma >0$ is the friction. (Since we will
only discuss $L^2$ aspects we here present right away an adapted
version of the operator, obtained after conjugation by a Maxwellian
factor.)

\par The associated evolution equation is:
$$
(h\partial _t+P)u(t,x,y)=0.
$$
{\it Problem of return to equilibrium:} Study the rate of convergence
of $u(t,x,y)$ to a multiple of the ``ground state''
$u_0(x,y)=e^{-(y^2/2+V(x))/h}$ when $t\to +\infty $, assuming that
$V(x)\to +\infty $ sufficiently fast when $x\to \infty $ so that
$u_0\in L^2({\bf R}^{2n})$. Notice here that $P(u_0)=0$ and that the
vector field part of $P$ is $h$ times the Hamilton field of
$y^2/2+V(x)$, when we identify ${\bf R}_{x,y}^{2n}$ with the cotangent
space of ${\bf R}^n_x$.

\par A closely related problem is to study the difference between the
first eigenvalue (0) and the next one, $\mu (h)$. (Since our operator
is non-self-adjoint, this is only a very approximate formulation
however.)

Some contributions: L.~Desvillettes--C.~Villani \cite{DeVi},
J.P.~Eckmann--M.~Hairer \cite{EcHa}, F.~H\'erau--F.~Nier \cite{HeNi04},
B.~Helffer--F.~Nier \cite{HelNi}, Villani \cite{Vi}. In the work
\cite{HeNi04} precise estimates on the exponential rates of return to
equilibrium were obtained with methods close to those used in
hypoellipticity studies and this work was our starting point. With
H\'erau and C.Stolk \cite{HeSjSt05} we made a study in the
semi-classical limit and studied small eigenvalues modulo ${\cal
O}(h^{\infty })$. More recently with H\'erau and M.~Hitrik
\cite{HeHiSj08a} we have made a precise study of the exponential decay of
$\mu (h)$ when $V$ has two local minima (and in that case $\mu (h)$
turns out to be real). This involves tunneling, i.e. the study of the
exponential decay of eigenfunctions. As an application we have a
precise result on the return to equilibrium \cite{HeHiSj08b}.  This has
many similarities with older work on the tunnel effect for
Schr\"odinger operators in the semi-classical limit by
B.~Helffer--Sj\"ostrand \cite{HeSj84, HelSj2} and B.~Simon \cite{Si84}
but for the Kramers-Fokker-Planck operator the problem is richer and
more difficult since $P$ is neither elliptic nor self-adjoint. We have
used a supersymmetry observation of J.M.~Bismut \cite{Bi05} and
J.~Tailleur--S.~Tanase-Nicola--J.~Kurchan \cite{TaTaKu06}, allowing
arguments similar to those for the standard Witten complex
\cite{HelSj2}.
\subsubsection{Statement of the main results}\label{kfpst}

 Let $P$ be given by (\ref{0.1kfp}) where $V\in C^\infty ({\bf R}^n;{\bf
R})$, and \ekv{1.1} {\partial ^\alpha V(x)={\cal O}(1),\ \vert \alpha
\vert\ge 2,} \ekv{1.2} { \vert \nabla V(x) \vert\ge 1/C,\ \vert x
\vert \ge C, } \ekv{1.3} { V\hbox{ is a Morse function.}  } We also
let $P$ denote the graph closure of $P$ from ${\cal S}({\bf R}^{2n})$
which coincides with the maximal extension of $P$ in $L^2$ (see
\cite{HeNi04, HelNi, HeHiSj08a}).  We have $\Re P\ge 0$ and the spectrum
of $P$ is contained in the right half plane. In \cite{HeSjSt05} the
spectrum in any strip $0\le \Re z\le Ch$ (and actually in a larger
parabolic neighborhood of the imaginary axis, in the spirit of
\cite{HeNi04}) was determined asymptotically ${\rm mod\,}({\cal
O}(h^\infty ))$. It is discrete and contained in a sector $\vert \Im z
\vert\le C\Re z+{\cal O}(h^\infty )$:
\begin{theo}\label{kfp1} The eigenvalues in the strip $0\le \Re
z\le Ch$ are of the form \ekv{1.4} { \lambda
_{j,k}(h)\sim h(\mu _{j,k}+h^{1/N_{j,k}}\mu _{j,k,1}+ h^{2/N_{j,k}}\mu
_{j,k,2}+..)  } where $\mu _{j,k}$ are the eigenvalues of the
quadratic approximation (``non-selfadjoint oscillator'')
$$
y\cdot \partial _x-V''(x_j)x\cdot \partial _y+{\gamma \over
2}(-\partial _y+y)\cdot (\partial _y+y),
$$
at the points $(x_j,0)$, where $x_j$ are the critical points of $V$.
\end{theo}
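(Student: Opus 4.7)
The strategy is a two-scale reduction: a global step confining eigenvalues in the strip to small neighborhoods of the critical points $(x_j,0)\in T^*{\bf R}^n$ of $y^2/2+V(x)$, followed by a local semiclassical analysis around each such point and finally a Grushin problem giving the asymptotic expansion.

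First I would establish that any eigenfunction with spectral parameter in $\{0\le \Re z\le Ch\}$ is microlocally concentrated near the finite set $\bigcup_j\{(x_j,0,0,0)\}$. Away from these points one of two mechanisms provides control: either $p_1=(y^2+\eta ^2)/2$ is bounded below (giving an elliptic contribution to $\Re P$), or, on $\{y=\eta =0\}$ where $p_1=0$, we have $\{p_2,p_1\}\ne 0$ because $V'(x)\ne 0$, so a short-time average of $p_1$ along the $H_{p_2}$-flow is bounded below. This is precisely the hypoelliptic/subelliptic mechanism of \cite{HeNi04, HelNi, HeSjSt05}, and a commutator/multiplier argument upgrades it to a resolvent bound of polynomial type in $1/h$ outside small discs around the expected eigenvalues, yielding discreteness of the spectrum in the strip and confinement of eigenfunctions near the critical points.

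Next, near each $(x_j,0,0,0)$ I would use the parabolic scaling $(x,y,\xi ,\eta )=(x_j,0,0,0)+h^{1/2}(\tilde x,\tilde y,\tilde\xi ,\tilde\eta )$. The symbol of $P$ expands as $h\, q_j+h^{3/2}r_1+h^2r_2+\ldots$, where $q_j$ is the Weyl symbol of the quadratic oscillator
$$Q_j=y\cdot \partial _x-V''(x_j)x\cdot \partial _y+\frac{\gamma }{2}(-\partial _y+y)\cdot (\partial _y+y).$$
The spectrum of $Q_j$ is computed explicitly from the fundamental matrix $F_j$ of $q_j$ in the spirit of Sj\"ostrand's theory of non-selfadjoint quadratic operators: the eigenvalues $\mu _{j,k}$ are of the form $\sum _\ell (\alpha _\ell +\tfrac12)\lambda _{j,\ell}$, $\alpha \in {\bf N}^n$, where $\lambda _{j,\ell}$ are the eigenvalues of $F_j/i$ with positive imaginary part. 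Using these eigenfunctions (and generalized ones where there are Jordan blocks) of $Q_j$ and its adjoint as the operators $R_\pm $, I would set up a Grushin problem for $P-z$ in a small disc $D(h\mu _{j,k},h/C)$. The confinement from step~1 gives invertibility outside all such discs; inside, the problem reduces to an effective Hamiltonian $E_{-+}(z;h)$, a matrix of size equal to the multiplicity, whose zeros are the eigenvalues of $P$. Standard transport/WKB arguments applied to the $h^{1/2}$-expansion yield $E_{-+}(z;h)\sim \sum _{\ell\ge 0}h^{\ell/2}E^{(\ell)}_{-+}(z)$.

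\textbf{Main obstacle.} The delicate point is the last one: the fractional expansions $h^{1/N_{j,k}}$ reflect the possible non-diagonalizability of $Q_j$. Generically $N_{j,k}=1$ and everything is analytic in $h$, but when the generalized eigenspace of $Q_j$ at $\mu _{j,k}$ carries a nontrivial Jordan block, the $h^{1/2}$-perturbation splits it via a Puiseux-type bifurcation, and one must show that the integer $N_{j,k}$ appearing in (\ref{1.4}) equals the size of that block, i.e.\ that neither the $h^{1/2}$-term nor the higher corrections conspire to cancel the branch point structure. This is precisely where the non-selfadjointness of $P$ is essential: in the selfadjoint case the eigenvalues would be real-analytic in $h$ and no fractional exponents could appear. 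The global confinement step is also nontrivial, because $P$ is neither elliptic nor selfadjoint, so the H\"ormander-type hypoelliptic estimates must be combined with control of the averaged symbol along the $H_{p_2}$-flow, as in \cite{HeSjSt05}.
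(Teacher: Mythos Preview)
Your two-scale plan---global confinement to the critical set followed by local quadratic approximation and a Grushin reduction---is exactly the architecture of the paper's proof (which in turn follows \cite{HeSjSt05}; the paper sketches it in Subsubsection~\ref{kfpav} as the proof of the generalization, Theorem~\ref{kfp3}). Your identification of the averaging mechanism (short-time averages of $p_1$ along $H_{p_2}$ are bounded below away from the critical points) and of the Puiseux structure coming from possible Jordan blocks in $Q_j$ is correct.

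The one place where your implementation diverges from the paper is the global step. You invoke a ``commutator/multiplier argument'' in the spirit of \cite{HeNi04}. The paper instead builds a bounded microlocal weight $\psi _\epsilon =-\int J(t/T_0)\,\widetilde p_\epsilon \circ \exp (tH_{p_2})\,dt$ with $\epsilon =Ah$, so that $H_{p_2}\psi _\epsilon =\langle \widetilde p_\epsilon \rangle _{T_0}-\widetilde p_\epsilon $, and then \emph{conjugates} $P$ by $e^{\psi _\epsilon /h}$ (implemented via FIOs with complex phase). This replaces the symbol by $p(\exp iH_{\psi _\epsilon }(\rho ))$, whose real part is $\gtrsim \epsilon $ away from $\mathcal C$ and $\asymp \mathrm{dist}(\rho ,\mathcal C)^2$ in a $\sqrt{\epsilon }$-neighborhood; the operator then becomes genuinely elliptic outside a $\sqrt h$-neighborhood of the critical points, and no separate resolvent bound is needed. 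Both routes encode the same averaging, but the conjugation method is what makes the non-self-adjoint localization clean here (it yields an equivalent Hilbert-space norm rather than an a priori estimate), and it is also what feeds directly into the local exponential estimates of Theorem~\ref{kfp4}. If you pursue the commutator route you will have to be careful that the estimates you obtain are strong enough to justify the Grushin reduction with errors $o(h)$, since $P$ is not elliptic even locally.
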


 The $\mu _{j,k}$ are known explicitly and it follows that when $x_j$
is not a local minimum, then $\Re\lambda _{j,k}\ge h/C$ for some
$C>0$.  When $x_j$ is a local minimum, then precisely one of the
$\lambda_{j,k}$ is ${\cal O}(h^\infty )$ while the others have real
part $\ge h/C$. Furthermore, when $V\to +\infty $ as $x\to \infty $,
then $0$ is a simple eigenvalue. {\it In particular, if $V$ has only
one local minimum, then
$$
\inf \Re (\sigma (P)\setminus \{ 0\})\sim h(\mu _1+h\mu _2+\dots
),\quad \mu _1>0.
$$
}(or possibly an expansion in fractional powers) and we obtained a
corresponding result for the problem of return to equilibrium. It
should be added that when $\mu _{j,k}$ is a simple eigenvalue of the
quadratic approximation then $N_{j,k}=1$ so there are no fractional
powers of $h$ in (\ref{1.4}).

\par The following is the main new result that we obtained with
F.~H\'erau and M.~Hitrik in \cite{HeHiSj08a}:
\begin{theo}\label{kfp2} Assume that $V$ has precisely 3 critical
points; 2 local minima, $x_{\pm 1}$ and one ``saddle point'', $x_0$ of
index 1. Then for $C>0$ sufficiently large and $h$ sufficiently small,
$P$ has precisely 2 eigenvalues in the strip $0\le \Re z\le h/C$,
namely $0$ and $\mu (h)$, where $\mu (h)$ is real and of the form
\ekv{1.5} { \mu (h)=h(a_1(h)e^{-2S_1/h}+a_{-1}(h)e^{-2S_{-1}/h}), }
where $a_j$ are real,
$$
a_j(h)\sim a_{j,0}+ha_{j,1}+...,\ h\to 0,\quad a_{j,0}>0,
$$
$$S_j=V(x_0)-V(x_j).$$\end{theo}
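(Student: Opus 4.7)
The plan is to combine Theorem \ref{kfp1} (which produces the two candidate low-lying eigenvalues from the quadratic approximations at $x_{\pm 1}$) with a tunneling analysis modelled on the Helffer--Sj\"ostrand treatment of the double-well Schr\"odinger operator, the crucial new ingredient being the supersymmetric factorisation of the Kramers--Fokker--Planck operator due to Bismut and Tailleur--Tanase-Nicola--Kurchan. With two minima and one index-$1$ saddle, Theorem \ref{kfp1} shows that in the strip $0\le \Re z\le h/C$ there are exactly two eigenvalues: the Maxwellian $u_0$ gives $0$, and a single additional exponentially small eigenvalue $\mu(h)$ is contributed by the two minima (one per well), while all other quadratic eigenvalues -- in particular those coming from the saddle -- have real part $\ge h/C$. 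The task is then to compute $\mu(h)$.

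First I would set up the supersymmetric complex: there exist first order operators $d^{(0)}$, $d^{(1)}$ acting on the scalar and $1$-form analogues such that $P=(d^{(0)})^\sharp d^{(0)}$ on $0$-forms and a companion operator $\widetilde{P}$ acts on $1$-forms, with $d^{(0)}$ intertwining $P$ and $\widetilde{P}$. The Maxwellian $u_0=e^{-(y^2/2+V(x))/h}$ lies in $\ker d^{(0)}$; the nonzero small eigenvalue $\mu(h)$ of $P$ is identified, via the intertwining, with a small eigenvalue of $\widetilde{P}$ restricted to an at-most-two-dimensional spectral subspace. Because $\widetilde{P}$ has the shape $A^\sharp A$ in the relevant spectral window, the small eigenvalue is real and non-negative; this is what forces $\mu(h)\in \R$.

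Next I would construct WKB quasimodes $e_{\pm 1}$ for $P$ concentrated at the minima $x_{\pm 1}$ and quasimodes $f_0$ for $\widetilde{P}$ concentrated near the saddle $x_0$. Decay away from the critical points is obtained from weighted $L^2$ (Agmon) estimates with a weight $e^{\Phi/h}$, where $\Phi$ is the appropriate Agmon-type distance associated with the degenerate, non-elliptic symbol of $P$. A Grushin (Schur complement) reduction on the two-dimensional space spanned by $e_{\pm 1}$ then represents the small eigenvalues of $P$ as those of a $2\times 2$ effective matrix whose entries are interaction integrals of the form $(d^{(0)}e_j\mid f_0)$ localised near $x_0$. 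Evaluating these interactions by the Laplace method at the saddle produces the leading exponentials $e^{-S_{\pm 1}/h}$ with explicit positive prefactors $a_{\pm 1,0}$ expressed via the Hessians of $V$ at $x_0$ and $x_{\pm 1}$. Diagonalising the resulting $2\times 2$ matrix (one of its eigenvalues being $0$ by construction, since $u_0$ is an exact zero mode) yields exactly the formula (\ref{1.5}); the exponent $2S_j/h$ -- rather than $S_j/h$ -- appears because $\mu(h)$ is a squared small singular value through the intertwining.

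The genuinely hard part is the Agmon/WKB theory itself: the KFP principal symbol is not elliptic, the operator is not self-adjoint, and the natural Agmon distance is built from a non-symmetric symbol, so one cannot simply transplant the Helffer--Sj\"ostrand proof. This is precisely where the supersymmetric factorisation is essential: it reduces the construction of WKB zero modes and their exponential decay to a more tractable problem for the $1$-form operator $\widetilde{P}$, which is closer (in the relevant directions) to an elliptic self-adjoint Witten Laplacian for which the classical tunneling machinery does apply. A secondary technical obstacle is verifying that the small nonzero eigenvalue of the $2\times 2$ interaction matrix is of the claimed real non-negative form; this ultimately rests on the positivity built into the $A^\sharp A$ structure rather than on any symmetry of $P$ itself.
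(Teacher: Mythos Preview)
Your outline is essentially the paper's own proof: supersymmetric factorisation \`a la Bismut/Tailleur--Tanase-Nicola--Kurchan realising $P$ as $-\Delta_A^{(0)}=d_\phi^{A,*}d_\phi$ on $0$-forms; identification via intertwining of the nonzero small eigenvalue of $P$ with that of the $1$-form Laplacian $-\Delta_A^{(1)}$; quasimode bases $e_{\pm 1}$ for the $2$-dimensional low-lying spectral space $E^{(0)}$ (truncated Maxwellians near the minima) and a single WKB state $e_0$ spanning $E^{(1)}$ (localised near the saddle), with exponential decay coming from weighted/Agmon-type estimates adapted to the non-elliptic, non-self-adjoint setting; and computation of the matrices of $d_\phi:E^{(0)}\to E^{(1)}$ and $d_\phi^{A,*}:E^{(1)}\to E^{(0)}$ via Laplace-type tunneling integrals near the saddle, giving $\mu=\lambda_{-1}^*\lambda_{-1}+\lambda_1^*\lambda_1$ and hence the form (\ref{1.5}). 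The paper phrases the last step as a direct matrix computation on the actual spectral subspaces rather than a Grushin reduction, but this is cosmetic.

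One point to correct: your reality argument ``$\widetilde{P}=A^\sharp A$ hence $\mu\ge 0$'' does not work as stated, because the adjoint $d_\phi^{A,*}$ is taken with respect to the \emph{non-Hermitian} bilinear form $(\cdot|\cdot)_A$ (the matrix $A$ is not symmetric), so $d_\phi^{A,*}d_\phi$ is not of the form $B^*B$ in $L^2$ and no a priori positivity follows. In the paper reality of $\mu(h)$ comes out of the explicit tunneling computation: the matrix entries have the form $\lambda_j=h^{1/2}\ell_j(h)e^{-S_j/h}$ and $\lambda_j^*=h^{1/2}\ell_j^*(h)e^{-S_j/h}$ with $\ell_j,\ell_j^*$ \emph{real} elliptic symbols satisfying $\ell_j\ell_j^*>0$, so that each $\lambda_j^*\lambda_j$ is real and positive. (There is a later observation, mentioned after Theorem~\ref{kfp2.5}, that on the finite-dimensional low-lying space one can manufacture a genuine scalar product for which the operator becomes self-adjoint; that is closer to the spirit of your argument, but it is not how the two-well case is handled in the proof proper.)
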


\par As for the problem of return to equilibrium, we obtained the
following result with F.~H\'erau and M.~Hitrik in \cite{HeHiSj08b}:
\begin{theo}\label{kfp2.5} We make the same assumptions as in Theorem
\ref{kfp2} and let $\Pi_j$ be the spectral projection associated with
the eigenvalue $\mu_j$, $j=0,1$, where $\mu _0=0$, $\mu _1=\mu
(h)$. Then we have
\begin{equation}
\label{eq04} \Pi_j={\cal O}(1):L^2\to L^2,\quad h\rightarrow 0.
\end{equation} We have furthermore, uniformly as $t\geq 0$ and
$h\rightarrow 0$,
\begin{equation}
\label{eq05} e^{-tP/h}=\Pi_0+e^{-t\mu_1/h}\Pi_1+{\cal O}(1) e^{-t/C},
\mbox{ in }{\cal L}(L^2,L^2),
\end{equation} where $C>0$ is a constant.
\end{theo}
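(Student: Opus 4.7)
The natural strategy is to combine Theorem \ref{kfp2} with a Dunford--Riesz decomposition
$$
e^{-tP/h}=\Pi_0+e^{-t\mu_1/h}\Pi_1+R(t),\qquad R(t)=e^{-tP/h}(I-\Pi_0-\Pi_1).
$$
The remainder $R(t)$ acts on the invariant complement $H_\perp={\rm Ran}(I-\Pi_0-\Pi_1)$, on which $P$ has spectrum in $\{\Re z\ge h/C_0\}$ by Theorem \ref{kfp2}. The theorem thus reduces to two uniform-in-$h$ claims: (i) $\Vert\Pi_j\Vert=O(1)$ for $j=0,1$, and (ii) $\Vert e^{-tP/h}\vert_{H_\perp}\Vert\le Ce^{-t/C}$.

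For (i), a direct estimate from $\Pi_j=\frac{1}{2\pi i}\oint(z-P)^{-1}\,dz$ around a small circle between $\mu_0=0$ and $\mu_1=O(e^{-2S/h})$ is hopeless, since such circles have exponentially small radius. Instead I would use the rank-one representation
$$
\Pi_j u=\frac{\langle u,\widetilde{\phi}_j\rangle}{\langle\phi_j,\widetilde{\phi}_j\rangle}\phi_j,
$$
where $\phi_j,\widetilde{\phi}_j$ are eigenfunctions of $P$ and $P^*$ at $\mu_j$. The crucial input is the WKB and tunnelling analysis of \cite{HeHiSj08a}, built on the supersymmetric Witten-type complex of Bismut and Tailleur--Tanase-Nicola--Kurchan: it provides exponentially accurate quasimodes for $\phi_j$ and $\widetilde{\phi}_j$, concentrated at the local minima $x_{\pm 1}$, together with the crucial quantitative non-degeneracy $|\langle\phi_j,\widetilde{\phi}_j\rangle|\gtrsim\Vert\phi_j\Vert\,\Vert\widetilde{\phi}_j\Vert$. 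This yields $\Vert\Pi_j\Vert=O(1)$ at once.

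For (ii), I would invoke a Hilbert-space Gearhart--Pr\"uss-type argument applied to the rescaled generator $-P/h\vert_{H_\perp}$: it suffices to establish that $\Vert(z-P)^{-1}\vert_{H_\perp}\Vert$ is uniformly $O(h^{-1})$ on a vertical line $\Re z=h/(2C_0)$. Accretivity $\Re P\ge 0$ gives the bound $\Vert(z-P)^{-1}\Vert\le 1/|\Re z|$ for $\Re z<0$, while Theorem \ref{inbdy2}(B) with $k=2$ delivers the stronger $\Vert(z-P)^{-1}\vert_{H_\perp}\Vert=O(h^{-2/3})$ in the critical regime $|z|\lesssim h^{2/3}$. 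Control at large $|\Im z|$ comes from the hypoelliptic estimates of \cite{HeNi04}. The resulting uniform resolvent bound translates, via the standard Laplace-inversion contour integral with an integration by parts to absorb the polynomial prefactor, into the required $Ce^{-t/C}$ decay of the semigroup on $H_\perp$.

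The main obstacle is (i). The supersymmetric factorization of the KFP operator used in \cite{HeHiSj08a} -- in the spirit of the Helffer--Sj\"ostrand analysis of the Schr\"odinger tunnel effect \cite{HeSj84,HelSj2} -- is what rescues non-normality: it forces the biorthogonal pair $(\phi_j,\widetilde{\phi}_j)$ to pair non-degenerately despite the exponential closeness $\mu_1-\mu_0=O(e^{-2S/h})$. Without this algebraic input, the strong non-normality would a priori allow $\Vert\Pi_j\Vert$ to blow up exponentially, destroying the uniform-in-$h$ and uniform-in-$t$ statement (\ref{eq05}). A secondary but routine technical point is the large-$|\Im z|$ resolvent control in (ii), which is handled by the hypoelliptic machinery already developed for the Kramers--Fokker--Planck operator.
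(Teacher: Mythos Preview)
Your overall architecture---spectral decomposition plus separate treatment of $\Pi_j$ and of the semigroup on the complement---matches the paper exactly, and your argument for (i) is essentially the paper's: the paper makes it concrete by writing the restriction of $P$ to ${\cal R}(\Pi_0+\Pi_1)$ as the explicit $2\times 2$ matrix $\left(\begin{smallmatrix}\lambda_{-1}^*\lambda_{-1}&\lambda_{-1}^*\lambda_1\\ \lambda_1^*\lambda_{-1}&\lambda_1^*\lambda_1\end{smallmatrix}\right)$ in the basis $e_{\pm 1}$ of Proposition~\ref{kfp5}, then reads off eigenvectors $v_j$ and dual vectors $v_j^*$ with coefficients $\lambda_{\pm 1}/\sqrt{\mu}$, $\lambda_{\pm 1}^*/\sqrt{\mu}$, which are $O(1)$ precisely because of the supersymmetric identity $\mu=\lambda_{-1}^*\lambda_{-1}+\lambda_1^*\lambda_1$ with $\lambda_j\lambda_j^*>0$ (Proposition~\ref{kfp7}). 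That is exactly your non-degeneracy of $\langle\phi_j,\widetilde\phi_j\rangle$.

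For (ii) the paper takes a shorter route than Gearhart--Pr\"uss. The key fact (equation~(\ref{4.4})) is that after passing to a uniformly equivalent norm---built from the averaging weight $\psi_\epsilon$ of Subsubsection~\ref{kfpav}---one has $\Re(Pu\,|\,u)\ge (h/C)\Vert u\Vert^2$ on ${\cal R}(1-\widetilde\Pi)$, where $\widetilde\Pi$ projects onto the eigenspace for $\sigma(P)\cap D(0,Bh)$. This dissipativity estimate gives $\Vert e^{-tP/h}u\Vert\le Ce^{-t/C}\Vert u\Vert$ directly by differentiating $\Vert e^{-tP/h}u\Vert^2$, with no contour integral needed; the finite-dimensional piece ${\cal R}(\widetilde\Pi-\Pi)$ is handled trivially since its eigenvalues have real part $\ge h/C$. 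Your resolvent-based route would also work in principle, but note a technical slip: Theorem~\ref{inbdy2}(B) does not apply to the Kramers--Fokker--Planck operator at $z_0=0$, because $0\in\Sigma_\infty(p)$ (the characteristic set $\{y=\eta=0\}$ is non-compact)---the paper flags this explicitly in Subsection~\ref{kfp}. The resolvent bound you want is available, but it comes from the same weighted-norm machinery that yields~(\ref{4.4}), so once that is in hand it is more economical to use the quadratic-form inequality directly.
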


\par Actually, as we shall see in the outline of the proofs, these
results (as well as (\ref{1.4})) hold for more general classes of
supersymmetric operators.

\par Very recently we observed with F.~H\'erau and M.~Hitrik that we can actually treat the case of any (finite) number of local minima. The basic observation here is that there is a Hermitian product which becomes a scalar product on the space spanned by the $N_0$ lowest eigenvalues, where $N_0$ denotes the number of local minima, and for which our operator is formally self-adjoint. This makes it possible to apply very much the same methods as for the standard Witten complex.

\subsubsection{A partial generalization of \cite{HeSjSt05}}\label{kfpgen}

Consider on ${\bf R}^n$ ($2n$ is now replaced by $n$):
\begin{eqnarray*} P&=&\sum_{j,k}hD_{x_j}b_{j,k}(x)hD_{x_k}+\\
&&{1\over 2}\sum_j(c_j(x)h\partial _{x_j} +h\partial _{x_j}\circ
c_j(x))+p_0(x)\\ &=&P_2+iP_1+P_0,
\end{eqnarray*} where $b_{j,k}, c_j, p_0$ are real and smooth.  The
associated symbols are:
\begin{eqnarray*} p(x,\xi )&=&p_2(x,\xi )+ip_1(x,\xi )+p_0(x),\\
p_2&=&\sum b_{j,k}\xi _j\xi _k,\ p_1=\sum c_j\xi _j.
\end{eqnarray*} Assume,
$$
p_2\ge 0,\ p_0\ge 0,
$$
\begin{eqnarray*}
\partial _x^\alpha b_{j,k}&=&{\cal O}(1),\ \vert \alpha \vert\ge 0,\\
\partial _x^\alpha c_j&=&{\cal O}(1),\ \vert \alpha \vert\ge 1,\\
\partial _x^\alpha p_0&=&{\cal O}(1),\ \vert \alpha \vert\ge 2.
\end{eqnarray*} {Assume that} $$\{x;\,p_0(x)=c_1(x)=..=c_n(x)=0\}$$ is
finite $=\{ x_1,...,x_N\}$ and put ${\cal C}=\{ \rho _1,...,\rho
_n\}$, $\rho _j=(x_j,0)$.  {Put}
$$\widetilde{p}(x,\xi )=\langle \xi  \rangle^{-2}
p_2(x,\xi )+p_0(x),\ \langle \xi \rangle =\sqrt{1+\vert \xi \vert
^2}$$
$$
\langle \widetilde{p} \rangle_{T_0}={1\over T_0}\int_{-T_0/2}^{T_0/2}
\widetilde{p} \circ \exp (tH_{p_1})dt,\ T_0>0 \hbox{ fixed.}
$$
Here in general we let $H_a=a'_\xi \cdot \frac{\partial }{\partial x}
-a'_x \cdot \frac{\partial }{\partial \xi } $ denote the Hamilton
field of the $C^1$-function $a=a(x,\xi )$.

\par Dynamical assumptions: Near each $\rho _j$ we have $\langle
\widetilde{p} \rangle_{T_0}\sim \vert \rho -\rho _j \vert^2$ and in
any compact set disjoint from ${\cal C}$ we have $\langle
\widetilde{p} \rangle_{T_0}\ge 1/C$. (Near infinity this last
assumption has to be modified slightly and we refer to \cite{HeHiSj08a}
for the details.) The following result from \cite{HeHiSj08a} is very
close to the main result of \cite{HeSjSt05} and generalizes Theorem
\ref{kfp1}:

\begin{theo} \label{kfp3} Under the above assumptions, the spectrum
of $P$ is discrete in any band $0\le \Re z\le Ch$ and the eigenvalues
have asymptotic expansions as in (\ref{1.4}).\end{theo}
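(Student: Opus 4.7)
The plan is to adapt the microlocal strategy of \cite{HeSjSt05} (used to prove Theorem \ref{kfp1}) to the more abstract dynamical framework assumed here. The essential observation is that, although $\Re p = p_2+p_0$ vanishes on a set strictly larger than $\mathcal{C}$, the averaging hypothesis $\langle\widetilde{p}\rangle_{T_0}\ge 1/C$ away from $\mathcal{C}$ allows one, via a moderate bounded conjugation of $P$, to recover an effective coercivity outside any prescribed neighborhood of $\mathcal{C}$. Once this is achieved, the spectral analysis in the band $0\le\Re z\le Ch$ reduces to perturbative analysis of quadratic models at each $\rho_j$.

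First I would treat the exterior of a small neighborhood $U$ of $\mathcal{C}$. Construct a bounded real weight $G\in C_b^{\infty}(T^*\R^n)$ by a standard averaging/Duhamel formula along the $H_{p_1}$-flow so that
\[
H_{p_1}G = \langle\widetilde{p}\rangle_{T_0}-\widetilde{p}
\]
modulo lower order terms that can be handled separately. Conjugating by $e^{tG^w/h}$ for a fixed sufficiently small $t>0$, which is a bounded operator with bounded inverse on $L^2$, one finds that the real part of the leading symbol of $\widetilde{P}=e^{tG^w/h}Pe^{-tG^w/h}$ is
\[
p_2+p_0 + t(\langle\widetilde{p}\rangle_{T_0}-\widetilde{p}) + O(t^2).
\]
Since $\widetilde{p}=\langle\xi\rangle^{-2}p_2+p_0$, for $t$ small the unfavourable contribution $-t\widetilde{p}$ is absorbed by $p_2+p_0$, leaving a lower bound proportional to $t\langle\widetilde{p}\rangle_{T_0}$. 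Combined with the Weyl--H\"ormander calculus adapted to the metric $dx^2+\langle\xi\rangle^{-2}d\xi^2$, in the spirit of the globally subelliptic framework of \cite{HeNi04}, this yields invertibility of $\widetilde{P}-z$ microlocally outside $U$, with norm $O(1/h)$, for $0\le\Re z\le h/C_1$.

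Next, near each $\rho_j\in\mathcal{C}$ the problem is local and perturbative. I would apply an FBI/Bargmann transform in the spirit of \cite{Sj82} and then a linear symplectic normalization that brings $p$ into its Taylor quadratic form $q_j$ at $\rho_j$, after which $P$ becomes a controlled perturbation of the quadratic model $Q_j=q_j^w$. The spectrum of such a non-self-adjoint quadratic operator with non-negative real part is classical and consists exactly of the combinations $\mu_{j,k}$ of Theorem \ref{kfp1}; a standard perturbation argument then produces an asymptotic expansion in $h$. When $\mu_{j,k}$ is a simple eigenvalue of $Q_j$ the expansion runs in integer powers of $h$, so $N_{j,k}=1$; in the presence of Jordan blocks or resonances, a finite-dimensional reduction yields an algebraic equation whose solutions exhibit fractional powers $h^{1/N_{j,k}}$.

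Finally, the exterior and local analyses are assembled via a Grushin reduction. For each eigenvalue $\mu_{j,k}$ in the admissible window, I would introduce microlocalized approximate eigenfunctions of $Q_j$ near $\rho_j$ and use them to build finite rank operators $R_\pm$. The resulting Grushin matrix is invertible for $0\le\Re z\le Ch$ outside an $o(h)$-neighborhood of $\{h\mu_{j,k}\}$, reducing the spectral equation to $\det E_{-+}(z,h)=0$, whose roots reproduce the expansion \nr{1.4}. The main technical obstacle is the subelliptic, rather than elliptic, behaviour of $\widetilde{P}$ in $U$: one must carefully balance the size of $t$ in the conjugation, the choice of microlocal cut-offs and the perturbative remainders so that the errors introduced at each stage remain well below the spectral gap of order $h/C$ that one needs to open between the quadratic eigenvalues and the rest of the spectrum.
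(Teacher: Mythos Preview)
Your overall architecture---averaging along the $H_{p_1}$--flow to trade $p_1$ for $\langle\widetilde p\rangle_{T_0}$, then quadratic analysis near each $\rho_j$, assembled by a Grushin problem---is exactly the strategy the paper follows (Subsubsection \ref{kfpav}). The substantive difference, and the place where your argument has a gap, is the \emph{size} of the weight.

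You build a bounded $G$ with $H_{p_1}G=\langle\widetilde p\rangle_{T_0}-\widetilde p$ and conjugate by $e^{tG^w/h}$ with $t>0$ \emph{fixed}. For each $h$ this is indeed bounded with bounded inverse, so isospectrality is not the issue. The problem is the symbol identity you write next: for $tG=\mathcal O(1)$ the weight $tG/h$ is of size $1/h$, so $e^{tG^w/h}$ is not an $h$--pseudodifferential or standard $h$--FIO, and the Egorov/Taylor expansion giving $\Re\,\mathrm{symb}(\widetilde P)=p_2+p_0+t(\langle\widetilde p\rangle_{T_0}-\widetilde p)+\mathcal O(t^2)$ is only formal in the $C^\infty$ category. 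The paper itself flags this restriction elsewhere (Section \ref{bdy}): on the FBI side one may change the weight $\Phi$ only within $\Phi'-\Phi_0'=\mathcal O(h^\delta)$ unless one has analyticity. With $t$ fixed you are outside that regime, and no amount of smallness of $t$ (independent of $h$) repairs it.

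The paper's remedy is precisely this rescaling: one replaces $\widetilde p$ by $\widetilde p_\epsilon$, equal to $\widetilde p$ near $\mathcal C$ but flattened to $\epsilon\widetilde p$ away from $\mathcal C$, and takes $\epsilon=Ah$ with $A\gg1$ fixed. The resulting weight $\psi_\epsilon$ (your $G$, built from $\widetilde p_\epsilon$) is then $\mathcal O(h)$, so $\exp(\psi_\epsilon/h)$ is uniformly bounded and the new norm is uniformly equivalent to $L^2$. In that space the leading symbol is $p(\exp(iH_{\psi_\epsilon})(\rho))$, whose real part is $p_2+p_0+\langle\widetilde p_\epsilon\rangle_{T_0}-\widetilde p_\epsilon$: this is $\ge\epsilon/C$ away from $\mathcal C$ and $\asymp\mathrm{dist}(\rho,\mathcal C)^2$ in a $\sqrt\epsilon$--neighbourhood of $\mathcal C$. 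The gain is only $\mathcal O(h)$, not $\mathcal O(t)$, but that is exactly what is needed to confine the spectrum in $0\le\Re z\le Ch$ to a $\sqrt h$--neighbourhood of $\mathcal C$, after which your quadratic--model/Grushin step goes through as you describe. So the fix is small but essential: take the weight of size $h$, not of size $1$, and include the $\widetilde p_\epsilon$ cutoff.
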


\par Put $$q(x,\xi )=-p(x,i\xi )=p_2(x,\xi )+p_1(x,\xi )-p_0(x).$$

\par The linearization of the Hamilton field $H_q$ at $\rho _j$ (for
any fixed $j$) has eigenvalues $\pm \alpha _k$, $k=1,..,n$ with real
part $\ne 0$.  Let $\Lambda _+=\Lambda _{+,j}$ be the unstable
manifold through $\rho _j$ for the $H_q$-flow. Then $\Lambda _+$ is
Lagrangian and of the form $\xi =\phi '_+(x)$ near $x_j$ ($\phi_+
=\phi _{+,j}$), where
$$\phi_+ (x_j)=0,\ \phi_+' (x_j)=0,\ 
\phi_+'' (x_j)>0.$$ The next result is from \cite{HeHiSj08a}:
\begin{theo}\label{kfp4} Let $\lambda _{j,k}(h)$ be a simple
eigenvalue as in (\ref{1.4}) and assume there is no other eigenvalue
in a disc $D(\lambda _{j,k},h/C)$ for some $C>0$.  Then, in the $L^2$ sense, the
corresponding eigenfunction is of the form $e^{-\phi
_+(x)/h}(a(x;h)+{\cal O}(h^\infty ))$ near $x_j$, where $a(x;h)$ is
smooth in $x$ with an asymptotic expansion in powers of $h$. Away from
a small neighborhood of $x_j$ it is exponentially decreasing.
\end{theo}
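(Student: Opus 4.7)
The plan is to construct a local WKB quasimode at $x_j$, promote it to the true eigenfunction by a Riesz projection argument using the spectral gap, and then obtain exponential decay away from $x_j$ by an Agmon type weighted estimate.

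For the phase, note that $q=-p(x,i\xi)=p_2+p_1-p_0$ is conserved along its own Hamilton flow, so the unstable Lagrangian $\Lambda_+=\{(x,\phi_+'(x))\}$ is contained in $q^{-1}(0)$; this reads $p(x,i\phi_+'(x))=0$ near $x_j$, which is precisely the eikonal equation associated with the conjugation $e^{\phi_+/h}Pe^{-\phi_+/h}$. I would then seek a formal quasimode $\tilde u=\chi(x)e^{-\phi_+(x)/h}a(x;h)$ with $a\sim a_0+h^{1/N_{j,k}}a_1+\cdots$ and $\chi$ a cutoff equal to $1$ near $x_j$. Taylor expansion of the conjugated full symbol in $\xi$ around $\xi=0$ yields a tower of transport equations; the leading one is $L_0 a_0=\mu_{j,k}a_0$, with $L_0=v(x)\cdot\partial_x+w_0(x)$ and $v(x)=(\partial_\xi q)(x,\phi_+'(x))$ the projection to the base of $H_q|_{\Lambda_+}$. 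The point $x_j$ is a hyperbolic source of $v$, whose linearization has eigenvalues equal to the ``unstable'' $\alpha_k$. By construction (Theorem \ref{kfp1}), $\mu_{j,k}$ is an eigenvalue of the quadratic model at $x_j$, which is exactly the indicial condition needed to solve $L_0a_0=\mu_{j,k}a_0$ smoothly near $x_j$ by the classical theory of first-order linear equations at a hyperbolic source; the subsequent $a_\ell$ are fixed inductively by analogous non-homogeneous equations, and Borel summation produces $a(x;h)$ smooth near $x_j$. The resulting quasimode satisfies $(P-\lambda_{j,k})\tilde u=r$ with $r$ supported in $\{\phi_+\ge \delta>0\}$, so $r=O(e^{-\delta/h})$ in $L^2$.

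To pass from $\tilde u$ to the true eigenfunction, I would invoke the gap hypothesis. The resolvent bound $\|(z-P)^{-1}\|=O(1/h)$ on the circle $|z-\lambda_{j,k}|=h/(2C)$ is already built into the proof of Theorem \ref{kfp3}, so the Riesz projector $\Pi_{j,k}=(2\pi i)^{-1}\oint(z-P)^{-1}\,dz$ is uniformly bounded and $(I-\Pi_{j,k})\tilde u=O(h^\infty)$ in $L^2$ by a standard Cauchy computation. Simplicity of $\lambda_{j,k}$ forces $\Pi_{j,k}$ to have rank one, so after normalization $\Pi_{j,k}\tilde u$ is the sought eigenfunction and agrees with $\tilde u$ modulo $O(h^\infty)$; this gives the WKB form on the neighborhood where $\chi\equiv 1$.

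The step I expect to be the main obstacle is the global exponential decay of the true eigenfunction away from $x_j$. I would address it by an Agmon type weighted estimate tailored to this non-elliptic, non-self-adjoint setting: choose a Lipschitz weight $\psi$ that coincides with $\phi_+$ near $x_j$, stays uniformly bounded, and has $|\psi'|$ small enough that the dynamical coercivity of $\langle\tilde p\rangle_{T_0}$ driving Theorem \ref{kfp3} survives the deformation $p\rightsquigarrow p(\,\cdot\,,\,\cdot\,+i\psi'(\,\cdot\,))$ outside a neighborhood of $\rho_j$. The subelliptic positive-commutator estimates of \cite{HeNi04, HeSjSt05, HeHiSj08a}, applied to $P_\psi=e^{\psi/h}Pe^{-\psi/h}$, then yield $h\|w\|\le C(\|P_\psi w\|+\|\tau w\|)$ with $\tau$ a microlocal cutoff to a neighborhood of $\rho_j$; substituting $w=e^{\psi/h}u$ for the true eigenfunction $u$ and using the local WKB identification to control the right-hand side gives $\|e^{\psi/h}u\|_{L^2}=O(1)$, which is the claimed exponential decay.
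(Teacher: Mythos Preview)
Your overall architecture (WKB quasimode at $x_j$ together with exponentially weighted a priori estimates) is exactly what the paper uses, and your quasimode construction is correct. But there is a genuine gap in how you pass from the quasimode to the eigenfunction.

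The Riesz projection step only yields $u=\tilde u+{\cal O}(h^\infty)$ in \emph{unweighted} $L^2$. The theorem, however, places the ${\cal O}(h^\infty)$ \emph{inside} the parentheses: the claim is that $e^{\phi_+/h}u-a={\cal O}(h^\infty)$ in $L^2$ on a fixed neighborhood of $x_j$. Since $e^{\phi_+/h}$ is of size $e^{c/h}$ on any such fixed neighborhood, an unweighted ${\cal O}(h^\infty)$ bound on $u-\tilde u$ is destroyed upon multiplication by $e^{\phi_+/h}$. So your sentence ``this gives the WKB form on the neighborhood where $\chi\equiv1$'' is not justified. (This strong form is what is actually needed later for the tunneling computations in Propositions~\ref{kfp5}--\ref{kfp7}.) Your Step~3, as written, does not repair this: you apply the weighted estimate to $w=e^{\psi/h}u$, which gives exponential decay of $u$, but to obtain the WKB form you must apply it to $e^{\psi/h}(u-\tilde u)$, whose source term $e^{\psi/h}r$ is ${\cal O}(h^\infty)$ precisely because $\psi\le\phi_+$. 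This is exactly what the paper does: first the weighted estimates show $u={\cal O}(e^{-\phi_+/h})$, then they are applied again to the \emph{difference} of the eigenfunction and the quasimode.

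There is also a structural point you miss about the weight. The correct criterion is not ``$|\psi'|$ small enough'' (indeed $\psi=\phi_+$ near $x_j$ has $|\psi'|\asymp|x-x_j|$, not small), but rather $q(x,\psi'(x))\le0$, which via \no{pR.5} gives $\Re p_\psi\ge0$ and hence the desired a priori estimate for $P_\psi$. The reason one can build such a $\psi$ equal to $\phi_+$ near $x_j$ and constant far away is the \emph{convexity} of $q(x,\cdot)$ in $\xi$ (since $p_2\ge0$): for $0\le t\le1$ one has $q(x,t\phi_+'(x))\le t\,q(x,\phi_+'(x))+(1-t)\,q(x,0)=-(1-t)p_0(x)\le0$, so any convex interpolation between $\phi_+$ and a constant does the job. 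This replaces your appeal to ``$|\psi'|$ small enough that the dynamical coercivity survives'', which is the mechanism of the \emph{microlocal} $O(h)$ weights in Theorem~\ref{kfp3}, a different animal from the $O(1)$ exponential weights needed here.
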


The proof of the first theorem uses microlocal weak exponential
estimates, while the one of the last theorem also uses local
exponential estimates.

\subsubsection{Averaging and exponential weights.}\label{kfpav}

The basic idea of the proof of Theorem \ref{kfp3} is taken from
\cite{HeSjSt05}, but we reworked it in order to allow for
non-hypoelliptic operators. We will introduce a weight on $T^*{\bf
R}^n$ of the form \ekv{pR.1} { \psi _\epsilon =-\int
J(\frac{t}{T_0})\widetilde{p}_\epsilon \circ \exp (tH_{p_1}) dt, } for
$0<\epsilon \ll 1$. Here $J(t)$ is the {odd} function given by
\ekv{pR.2} { J(t)= \left\{ \begin{array}{ll} & 0,\ \vert t\vert \ge
\frac{1}{2}, \\ & \frac{1}{2}-t,\ 0<t\le \frac{1}{2},
\end{array} \right.  } and we choose $\widetilde{p}_\epsilon (\rho )$
to be equal to $\widetilde{p}(\rho )$ when ${\rm dist\,}(\rho ,{\cal
C})\le \epsilon $, and flatten out to $\epsilon \widetilde{p}$ away
from a fixed neighborhood of ${\cal C}$ in such a way that
$\widetilde{p}_\epsilon ={\cal O}(\epsilon )$.  Then \ekv{pR.3} {
H_{p_1}\psi _\epsilon =\langle \widetilde{p}_\epsilon \rangle _{T_0}
-\widetilde{p}_\epsilon .  }

We let $\epsilon =Ah$ where $A\gg 1$ is independent of $h$. {Then the
weight $\exp (\psi _\epsilon /h)$ is uniformly bounded when $h\to 0$.}
Indeed, $\psi _\epsilon ={\cal O}(h)$.

\par Using Fourier integral operators with complex phase, we can
define a Hilbert space of functions that are ``microlocally ${\cal
O}(\exp (\psi _\epsilon /h))$ in the $L^2$ sense''. The norm is
uniformly equivalent to the one of $L^2$, but the natural leading
symbol of $P$, acting in the new space, becomes { \ekv{pR.4}{p(\exp
(iH_{\psi_\epsilon })(\rho )),\ \rho \in T^*{\bf R}^n} }which by
Taylor expansion has {real part $\approx p_2(\rho )+p_0(\rho )+\langle
\widetilde{p}_\epsilon \rangle-\widetilde{p}_\epsilon $.}
\par Very roughly, {the real part of the new symbol is $\ge \epsilon $
away from ${\cal C}$ and behaves like ${\rm dist\,}(\rho ,{\cal C})^2$
in a $\sqrt{\epsilon }$-neighborhood of ${\cal C}$.} This can be used
to show that the spectrum of $P$ (viewed as an operator on the
weighted space) in a band $0\le \Re z <\epsilon /C $ comes from an
$\sqrt{\epsilon }$-neighborhood of ${\cal C}$. In such a neighborhood,
we can treat $P$ as an elliptic operator and the spectrum is to
leading order determined by the quadratic approximation of the dilated
symbol (\ref{pR.4}). This gives Theorem \ref{kfp3}.

We next turn to the proof of Theorem \ref{kfp4}, and we work near a
point $\rho _j=(x_j,\xi _j)\in {\cal C}$. Recall that $\Lambda _+:\xi
=\phi _+'(x)$ is the unstable manifold for the $H_q$-flow, where
$q(x,\xi )=-p(x,i\xi )$. We have {$q(x,\phi _+'(x))=0$}.

\par In general, if $\psi \in C^\infty $ is real, then $P_\psi
:=e^{\psi /h}\circ P\circ e^{-\psi /h}$ has the symbol \ekv{pR.5} {
p_\psi (x,\xi )=p_2(x,\xi ){-q(x,\psi '(x))}+i(q'_\xi (x,\psi
'(x))\cdot \xi }

\begin{itemize}
\item As long as $q(x,\psi '(x))\le 0$, we have $\Re p_\psi \ge 0$ and
we may hope to establish good apriori estimates for $P_\psi $.
\item This is the case for $\psi =0$ and for $\psi =\phi _+$. Using
the convexity of $q(x,\cdot )$, we get suitable weights $\psi $ with
$q(x,\psi '(x))\le 0$, equal to $\phi _+(x)$ near $x_j$, strictly
positive away from $x_j$ and constant outside a neighborhood of that
point.
\item It follows that the eigenfunction in Theorem \ref{kfp4} is
(roughly) ${\cal O}(e^{-\phi _+(x)/h})$ near $x_j$ in the $L^2$ sense.
\item On the other hand, we have quasi-modes of the form
$a(x;h)e^{-\phi _+(x)/h}$ as in \cite{HeSj84}.
\item Applying the exponentially weighted estimates, indicated above,
to the difference of the eigenfunction and the quasi-mode, we then get
Theorem \ref{kfp4}.
\end{itemize}

\subsubsection{Supersymmetry and the proof of Theorem
\ref{kfp2}}\label{kfpss}

\par We review the supersymmetry from \cite{Bi05}, \cite{TaTaKu06}, see
also G.~Lebeau \cite{Le04}.  Let $A(x):T_x^*{\bf R}^n\to T_x{\bf R}^n$
be linear, invertible and smooth in $x$. Then we have the
nondegenerate bilinear form
$$
\langle u\vert v \rangle_{A(x)}=\langle \wedge^kA(x)u|v \rangle,\
u,v\in\wedge^kT_x^*{\bf R}^n,
$$
and we also write $(u|v)_{A(x)}=\langle u|\overline{v}
\rangle_{A(x)}$.

\par If $u,v$ are smooth $k$-forms with compact support, put
$$
(u|v)_A=\int (u(x)|v(x))_{A(x)}dx.
$$
The formal ``adjoint'' $Q^{A,*}$ of an operator $Q$ is then given by
$$
(Qu|v)_A=(u|Q^{A,*}v)_A.
$$

 Let $\phi :{\bf R}^n\to {\bf R}$ be a smooth Morse function with
$\partial ^\alpha \phi $ bounded for $\vert \alpha \vert\ge 2$ and
with $|\nabla \phi |\ge 1/C$ for $|x|\ge C$. Introduce the
Witten-De Rham complex:
$$
d_\phi =e^{-{\phi \over h}}\circ hd\circ e^{{\phi \over h}}= \sum_j
(h\partial _{x_j}+\partial _{x_j}\phi )\circ dx_j^\wedge ,
$$
where $d$ denotes exterior differentiation and $dx_j^\wedge$ left
exterior multiplication with $dx_j$.  The corresponding Laplacian is
then: $-\Delta_A=d_\phi ^{A,*}d_\phi +d_\phi d_\phi^{A,*}$.  Its
restriction to $q$-forms will be denoted by $-\Delta _A^{(q)}$. Notice
that:
$$-\Delta ^{(0)}_A(e^{-\phi /h})=0.$$

Write $A=B+C$ with $B^t=B$, $C^t=-C$. $-\Delta_A$ is a second order
differential operator with scalar principal symbol in the
semi-classical sense (${h\over i}{\partial \over \partial x_j}\mapsto
\xi _j$) of the form:
$$p(x,\xi )=\sum_{j,k}b_{j,k}(\xi _j\xi _k+\partial _{x_j}\phi
\partial _{x_k}\phi )+2i\sum_{j,k}c_{j,k}\partial _{x_k}\phi \,\xi _j.
$$
\par\noindent {\bf Example.} Replace $n$ by $2n$, $x$ by $(x,y)$, let
$$
A={1\over 2}\left(\begin{array}{ccc}0 &I\\ -I
&\gamma \end{array}\right).
$$
Then
\begin{eqnarray*} -\Delta _A^{(0)}&=&h(\phi '_y\cdot \partial _x-\phi
'_x\cdot \partial _y) \\ &&+ {\gamma \over 2}\sum_{j}(-h\partial
_{y_j}+\partial _{y_j}\phi ) (h\partial _{y_j}+\partial _{y_j}\phi ).
\end{eqnarray*} When $\phi ={y^2/2}+V(x)$ we recover the KFP operator
(\ref{0.1kfp})

\par The results of Subsubsection \ref{kfpgen} apply, if we make the
additional dynamical assumptions there; $-\Delta ^{(q)}_A$ has an
asymptotic eigenvalue $=o(h)$ associated to the critical point $x_j$
precisely when the index of $x_j$ is equal to $q$ (as for the Witten
complex and analogous complexes in several complex variables). In
order to cover the cases $q>0$ we also assume that \ekv{3.1}{A={\rm
Const.}}  {\it The Double well case.}  Keep the assumption
(\ref{3.1}).  Assume that $\phi $ is a Morse function with $|\nabla
\phi |\ge 1/C$ for $|x|\ge C$ such that $-\Delta _A$ satisfies the
extra dynamical conditions of Subsubsection \ref{kfpgen} and having
precisely three critical points, two local minima $U_{\pm 1}$ and a
saddle point $U_0$ of index 1.

\par Then $-\Delta _A^{(0)}$ has precisely 2 eigenvalues: $0,\, \mu $
that are $o(h)$ while $-\Delta _A^{(1)}$ has precisely one such
eigenvalue: $\mu $. (Here we use as in the study of the Witten
complex, that $d_\phi $ and $d_{\phi }^{A,*}$ intertwine our
Laplacians in degeree 0 and 1. The detailed justification is more
complicated however.)  $e^{-\phi /h}$ is the eigenfunction of $\Delta
_A^{(0)}$ corresponding to the eigenvalue 0. Let $S_j=\phi (U_0)-\phi
(U_j)$, $j=\pm 1$, and let $D_j$ be the connected component of $\{
x\in {\bf R}^n;\, \phi (x) < \phi (U_0)\}$ containing $U_j$ in its
interior.

\par

\par Let $E^{(q)}$ be the corresponding spectral subspaces so that
${\rm dim\,}E^{(0)}=2$, ${\rm dim\,}E^{(1)}=1$. Truncated versions of
the function $e^{-\phi (x)/h}$ can be used as approximate
eigenfunctions, and we can show:
\begin{prop}\label{kfp5} $E^{(0)}$ has a basis $e_1,e_{-1}$, where
$$e_j=\chi_j(x)e^{-{1\over h}(\phi (x)-\phi (U_j))}+{\cal
O}( e^{-{1\over h}(S_j-\epsilon )}),\mbox{ in the $L^2$-sense.} $$ Here,
we let $\chi _j\in C_0^\infty (D_j)$ be equal to 1 on $\{ x\in D_j;\,
\phi (x)\le \phi (U_0)-\epsilon \}$.\end{prop}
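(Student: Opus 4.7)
The plan is a standard quasi-mode/spectral-projection argument, made possible by the fact that $e^{-\phi/h}$ is an \emph{exact} kernel of $-\Delta_A^{(0)}$ (by construction of the Witten-type complex) and by the spectral gap of order $h$ separating $\{0,\mu\}$ from the remaining spectrum of $-\Delta_A^{(0)}$.

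First I would build two quasi-modes $\widetilde{e}_j:=\chi_j(x)\,e^{-(\phi(x)-\phi(U_j))/h}$, $j=\pm 1$. Since $-\Delta_A^{(0)}(e^{-\phi/h})=0$, we have $-\Delta_A^{(0)}\widetilde{e}_j=[-\Delta_A^{(0)},\chi_j]\,e^{-(\phi-\phi(U_j))/h}$, which is a first-order differential operator (with $h$-small coefficients) applied to $e^{-(\phi-\phi(U_j))/h}$ and supported in $\{\phi(U_0)-\epsilon\le \phi\le \phi(U_0)\}\cap D_j$. On that set $\phi-\phi(U_j)\ge S_j-\epsilon$, so a straightforward $L^2$ estimate yields $\|(-\Delta_A^{(0)})\widetilde{e}_j\|_{L^2}=\mathcal{O}(e^{-(S_j-\epsilon)/h})$, after a harmless adjustment of $\epsilon$. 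Note also that $\|\widetilde{e}_j\|_{L^2}\asymp h^{n/4}$ by Laplace's method, and that $\widetilde{e}_1$, $\widetilde{e}_{-1}$ have disjoint supports since $D_1\cap D_{-1}=\emptyset$.

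Next I would introduce the spectral projection. By the hypothesis (and by Theorem \ref{kfp3} applied to $-\Delta_A^{(0)}$) the spectrum in the disc $D(0,h/C_0)$ consists exactly of $\{0,\mu\}$, both $o(h)$, while the remaining spectrum lies in $\{\Re z\ge h/C_0\}$. Choose a circle $\gamma\subset \mathrm{neigh}(0,\mathbf{C})$ of radius $\asymp h$ enclosing $\{0,\mu\}$ and no other eigenvalue, and set $\Pi_0=\frac{1}{2\pi i}\oint_\gamma (z+\Delta_A^{(0)})^{-1}\,dz$, the spectral projector onto $E^{(0)}$. Define $e_j:=\Pi_0 \widetilde{e}_j\in E^{(0)}$. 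Writing $\widetilde{e}_j-e_j=(I-\Pi_0)\widetilde{e}_j$ and observing that on $\mathrm{Range}(I-\Pi_0)$ the operator $-\Delta_A^{(0)}$ has bounded inverse with norm controlled by a negative power of $h$ (this is the key a priori estimate, obtained as in Subsubsection \ref{kfpav}; even the loss $\mathcal{O}(h^{-2/3})$ from the Theorem \ref{inbdy2} type bound is amply sufficient), one gets
$$
\|\widetilde{e}_j-e_j\|_{L^2}\le \mathcal{O}(h^{-\alpha})\,\|(-\Delta_A^{(0)})\widetilde{e}_j\|_{L^2}=\mathcal{O}(e^{-(S_j-\epsilon)/h}),
$$
again after absorbing the polynomial factor into $\epsilon$. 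This gives the desired form $e_j=\chi_j(x)e^{-(\phi-\phi(U_j))/h}+\mathcal{O}(e^{-(S_j-\epsilon)/h})$ in $L^2$.

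Finally, linear independence of $\{e_1,e_{-1}\}$ follows because the leading terms $\widetilde{e}_{\pm 1}$ are supported in the disjoint sets $D_{\pm 1}$ and each has $L^2$-norm $\asymp h^{n/4}$, which dwarfs the exponentially small errors; since $\dim E^{(0)}=2$, the pair is a basis.

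The main obstacle is the resolvent bound on $\gamma$, i.e.\ showing $\|(z+\Delta_A^{(0)})^{-1}\|\le\mathcal{O}(h^{-\alpha})$ uniformly for $z\in \gamma$. For the non-selfadjoint, non-elliptic operator $-\Delta_A^{(0)}$ this is not immediate from the location of the spectrum; it requires the exponential weight/averaging construction of Subsubsection \ref{kfpav}, where one conjugates by $e^{\psi_\epsilon/h}$ with $\psi_\epsilon$ given by \eqref{pR.1} to force $\Re$ of the new symbol to be positive outside a $\sqrt{\epsilon}$-neighborhood of the critical set $\mathcal{C}$, thereby reducing the resolvent estimate to an essentially elliptic question near the wells. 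Once this is in place, the remainder of the argument is routine.
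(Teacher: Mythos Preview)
Your proposal is correct and follows precisely the approach the paper indicates: the text preceding the proposition says only that ``truncated versions of the function $e^{-\phi(x)/h}$ can be used as approximate eigenfunctions, and we can show'' the result, deferring the details to \cite{HeHiSj08a}. You have filled in exactly these details---quasi-modes $\chi_j e^{-(\phi-\phi(U_j))/h}$, commutator estimate on the support of $\nabla\chi_j$, spectral projection via a contour of radius $\asymp h$, and the resolvent bound coming from the averaging construction of Subsubsection~\ref{kfpav}---and correctly identified the non-selfadjoint resolvent estimate as the only nontrivial step.
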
 The theorems
\ref{kfp3}, \ref{kfp4} can be adapted to $-\Delta _A^{(1)}$ and lead
to:
\begin{prop}\label{kfp6} $E^{(1)}={\bf C}e_0$, where
$$e_0(x)=\chi _0(x)a_0(x;h)e^{-{1\over h}\phi _+(x)}+{\cal O}
(e^{-\epsilon _0/h}),$$ $\phi _+(x)\sim (x-U_0)^2,$ $\epsilon _0>0$ is
small enough, $a_0$ is an elliptic symbol, $\chi_0\in C_0^\infty ({\bf
R}^n)$, $\chi_0=1$ near $U_0$.
\end{prop}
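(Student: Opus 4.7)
The plan is to obtain Proposition \ref{kfp6} by adapting the strategy used for Theorems \ref{kfp3} and \ref{kfp4} to the $1$-form case, exploiting the supersymmetric structure to reduce the spectral description of $-\Delta^{(1)}_A$ to a local analysis near the unique index-$1$ critical point $U_0$.

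First, I would extend the analysis of Subsubsection \ref{kfpgen} and Theorem \ref{kfp3} to the operator $-\Delta^{(1)}_A$ acting on $1$-forms: its principal symbol is still scalar, equal to the $p(x,\xi)$ appearing in Subsubsection \ref{kfpgen}, so the averaging/weight construction of Subsubsection \ref{kfpav} applies mutatis mutandis, and yields that in the band $0\le \Re z \le h/C$ the spectrum is discrete and concentrated near the set $\mathcal C$, with asymptotic eigenvalues of the form $(\ref{1.4})$ associated to index-$1$ critical points. The supersymmetric intertwining of $d_\phi$ and $d_\phi^{A,*}$ with $-\Delta_A$ forces the nonzero small eigenvalues in degree $0$ to coincide with those in degree $1$, so $\dim E^{(1)}=1$. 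Thus I just need to produce a nonzero eigenform of the stated asymptotic shape; its uniqueness up to a scalar is automatic.

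Next, I would perform a WKB construction near $\rho_0=(U_0,0)$. As in Theorem \ref{kfp4}, the unstable manifold $\Lambda_+=\Lambda_{+,0}$ of the $H_q$-flow through $\rho_0$ is Lagrangian and of the form $\xi=\phi_+'(x)$ with $\phi_+(U_0)=0$, $\phi_+'(U_0)=0$, $\phi_+''(U_0)>0$, so that $\phi_+(x)\asymp (x-U_0)^2$ locally. Then I would seek a local approximate eigenform $a_0(x;h)e^{-\phi_+(x)/h}$, where $a_0\sim a_{0,0}+h a_{0,1}+\dots$ is a classical $1$-form valued symbol, by imposing, order by order in $h$, the transport equations for $(-\Delta^{(1)}_A-\mu)(a_0 e^{-\phi_+/h})=O(h^\infty)e^{-\phi_+/h}$; on $\Lambda_+$ the leading equation is a first order linear system for $a_{0,0}$ along the $H_q$-flow, with a nontrivial fibered solution since the linearization of $H_q$ at $\rho_0$ has precisely one positive eigenvalue (reflecting the index $1$) and the subprincipal contribution coming from the matrix structure of $-\Delta^{(1)}_A$ is smooth. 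Multiplying by a cutoff $\chi_0\in C_0^\infty$ equal to $1$ near $U_0$, I obtain a global quasi-mode $v_0=\chi_0 a_0 e^{-\phi_+/h}$ satisfying $(-\Delta^{(1)}_A-\mu)v_0=O(e^{-S/h})$ in $L^2$ for some $S>0$.

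Finally, I would use the exponentially weighted estimates of Subsubsection \ref{kfpav} applied to the difference $e_0-v_0$, where $e_0$ is any normalized eigenform spanning $E^{(1)}$. Choosing a weight $\psi$ that equals $\phi_+$ near $U_0$, is strictly positive away from $U_0$, and is constant outside a neighborhood of that point (exactly as in the discussion following $(\ref{pR.5})$ for scalar operators), the sign condition $q(x,\psi'(x))\le 0$ yields apriori estimates for the conjugated operator $e^{\psi/h}\circ(-\Delta^{(1)}_A-\mu)\circ e^{-\psi/h}$, so $e_0=O(e^{-\phi_+/h})$ locally in $L^2$ and is exponentially decreasing away from $U_0$. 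Comparing $e_0$ and $v_0$ in this weighted norm then gives the stated $O(e^{-\epsilon_0/h})$ remainder. The main obstacle I expect is verifying that the scalar weighted-estimate machinery of Subsubsection \ref{kfpav} transfers to the matrix setting of $-\Delta^{(1)}_A$: the zero-order potential-like term on $1$-forms (the Witten Hessian contribution) must be controlled so that the favorable sign of $\Re p_\psi$ is preserved, and the WKB transport equations must remain compatible at every order; once these are checked, the rest of the argument is a routine adaptation of the $0$-form case treated in Theorem \ref{kfp4}.
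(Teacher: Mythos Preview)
Your proposal is correct and follows essentially the same approach as the paper: the paper simply states that ``The theorems \ref{kfp3}, \ref{kfp4} can be adapted to $-\Delta_A^{(1)}$ and lead to'' Proposition \ref{kfp6}, and your outline is precisely a fleshed-out version of that adaptation (scalar principal symbol permitting the averaging/weight machinery, supersymmetric intertwining for the dimension count, WKB quasi-mode on the unstable Lagrangian, then weighted estimates on the difference). You have also correctly flagged the one genuine point of work, namely checking that the zero-order matrix term on $1$-forms does not spoil the sign in the conjugated operator; this is exactly the content of the word ``adapted'' in the paper, and under the assumption $A=\mathrm{Const.}$ it goes through.
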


\par Let the matrices of $d_\phi :E^{(0)}\to E^{(1)}$ and
$d_\phi^{A,*} :E^{(1)}\to E^{(0)}$ with respect to the bases $\{
e_{-1}, e_1\}$ and $\{ e_0\}$ be
$$
\left(\begin{array}{ccc} \lambda_{-1} & \lambda_{1}\end{array}\right)
\hbox{ and } \left (\begin{array}{ccc}\lambda _{-1}^* \\ \lambda
_1^*\end{array}\right)\hbox{ respectively}.
$$
Using the preceding two results in the spirit of tunneling estimates
and computations of Helffer--Sj\"ostrand (\cite{HeSj84, HelSj2}) we
can show:

\begin{prop}\label{kfp7} Put $S_j=\phi (U_0)-\phi (U_j)$, $j=\pm
1$. Then we have
$$
\left(\begin{array}{ccc} \lambda _{-1}\\ \lambda _1\end{array}\right)
=h^{1\over 2}(I+{\cal O}(e^{-{1\over Ch}})) \left(\begin{array}{ccc}
\ell_{-1}(h)e^{-S_{-1}/h}\\ \ell_1(h)e^{-S_1/h}
\end{array}\right),
$$

$$
\left(\begin{array}{ccc} \lambda ^*_{-1}\\ \lambda
_1^*\end{array}\right)=h^{1\over 2}(I+{\cal O}(e^{-{1\over Ch}}))
\left(\begin{array}{ccc} \ell_{-1}^*(h)e^{-S_{-1}/h}\\
\ell_1^*(h)e^{-S_1/h}
\end{array}\right),
$$
where $\ell_{\pm 1}$, $\ell_{\pm 1}^*$ are real elliptic symbols of
order $0$ such that $\ell_{j}\ell_j^*>0$, $j=\pm 1$.
\end{prop}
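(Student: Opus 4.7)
My plan is to adapt the Helffer--Sj\"ostrand tunneling analysis of the Witten complex \cite{HeSj84,HelSj2} to the present supersymmetric framework, using Propositions~\ref{kfp5} and \ref{kfp6} as the starting ingredients. A key structural observation is that, since $\phi$ has no critical point of index~$2$, the Morse--Witten analysis forces the small-eigenvalue part of $-\Delta^{(2)}_A$ to be trivial; together with $d_\phi^2=0$ this gives $d_\phi e_0\equiv 0$ (any small-eigenvalue eigenform of $-\Delta_A^{(1)}$ must be $d_\phi$-closed). Combined with the $A$-adjointness $(d_\phi u|v)_A=(u|d_\phi^{A,*}v)_A$ and the one-dimensionality of $E^{(1)}$, this yields
\[
\lambda_j\,(e_0|e_0)_A \;=\; (d_\phi e_j \,|\, e_0)_A + \mathcal{O}(e^{-(S_j+\delta)/h}),\qquad j=\pm 1,
\]
together with the analogous identity for $\lambda_j^*$ using $d_\phi^{A,*}$.

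Substituting the explicit form $e_j=\chi_j e^{-(\phi-\phi(U_j))/h}+\mathcal{O}(e^{-(S_j-\epsilon)/h})$ of Proposition~\ref{kfp5} and using $d_\phi=hd+d\phi\wedge$, the $d\phi\wedge$-terms cancel and $d_\phi e_j=h\,(d\chi_j)\,e^{-(\phi-\phi(U_j))/h}$. I would then perform a Stokes-type deformation: since $d_\phi e_0\equiv 0$ to high accuracy, the surface of integration can be freely moved, and I choose a hypersurface $\Sigma_j$ transverse to the integral curve of $H_q$ joining $U_j$ to $U_0$ and passing microlocally close to $U_0$. Inserting the WKB form $e_0=a_0(x;h)e^{-\phi_+(x)/h}$ of Proposition~\ref{kfp6}, the matrix element reduces to a Laplace integral
\[
(d_\phi e_j\,|\,e_0)_A \;=\; h\int_{\Sigma_j} c_j(x;h)\, e^{-F_j(x)/h}\,dS(x) + \mathcal{O}(e^{-(S_j+\delta)/h}),
\]
with $c_j$ a real classical symbol satisfying $c_j(U_0;0)>0$ and $F_j(x)=\phi(x)-\phi(U_j)+\phi_+(x)$.

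On $\Sigma_j$ the phase $F_j$ attains a unique non-degenerate minimum at $U_0$, with $F_j(U_0)=\phi(U_0)-\phi(U_j)=S_j$ and positive definite transverse Hessian---this is precisely the geometric content of the eikonal equation $q(x,\phi_+'(x))=0$ on the unstable manifold: $\phi_+$ is the Agmon potential that turns the negative direction of $\phi''(U_0)$ into a positive one of $(\phi+\phi_+)''(U_0)$ transverse to $\Lambda_+$. Laplace's method on the resulting $(n-1)$-dimensional integral contributes $h^{(n-1)/2}$ times a positive constant; a parallel Gaussian evaluation of the normalization $(e_0|e_0)_A$ produces an $h^{n/2}$-factor, and together with the explicit $h$ prefactor from $d_\phi e_j$ one obtains $\lambda_j=h^{1/2}\ell_j(h)e^{-S_j/h}(1+\mathcal{O}(e^{-1/Ch}))$ with $\ell_j$ a real classical elliptic symbol and $\ell_j(0)>0$. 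The argument for $\lambda_j^*$ is identical with $d_\phi^{A,*}$ replacing $d_\phi$, yielding $\ell_j^*(0)>0$ and hence $\ell_j\ell_j^*>0$ as claimed.

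The main obstacle is the Stokes-deformation step: it requires the WKB approximation of $e_0$ to be accurate throughout the region bridging the saddle~$U_0$ and the transition zone $\mathrm{supp}\,d\chi_j$. Proposition~\ref{kfp6} only provides this approximation with error $\mathcal{O}(e^{-\epsilon_0/h})$, which is not a priori small enough compared to $e^{-S_j/h}$ if $\epsilon_0\le S_j$. Closing this gap requires iterating the WKB construction along the integral curves of $H_q$ to produce an approximate $d_\phi$-closed form globally in a neighborhood of the reaction path, and then invoking the exponential-weight microlocal estimates of Subsubsection~\ref{kfpav} on the difference between $e_0$ and its WKB approximation in order to bound the remainder by an Agmon weight strictly smaller than $e^{-S_j/h}$.
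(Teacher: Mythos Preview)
Your proposal is correct and follows precisely the route the paper indicates: the text merely says the result is obtained ``in the spirit of tunneling estimates and computations of Helffer--Sj\"ostrand \cite{HeSj84, HelSj2}'' from Propositions~\ref{kfp5} and~\ref{kfp6}, giving no further details. Your sketch is considerably more explicit than what the survey itself provides, and the Stokes-deformation/Laplace-method computation you outline is exactly the Helffer--Sj\"ostrand mechanism adapted to the $A$-supersymmetric setting of \cite{HeHiSj08a}.

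On the obstacle you raise at the end: the paper addresses it in the paragraph immediately following the proposition, noting that ``thanks to the fact that we have only two local minima, certain simplifications were possible in the proof. In particular it was sufficient to control the exponential decay of general eigenfunctions in some small neighborhood of the critical points.'' Concretely, on $\mathrm{supp}\,d\chi_j$ one already has $\phi(x)-\phi(U_j)\ge S_j-\epsilon$, so $e^{-(\phi-\phi(U_j))/h}$ is itself of size at most $e^{-(S_j-\epsilon)/h}$; away from the small neighborhood of $U_0$ where Proposition~\ref{kfp6} furnishes the WKB form, the additional exponential smallness of $e_0$ (of order $e^{-\epsilon_0/h}$) makes that portion of the matrix element $\mathcal O(e^{-(S_j-\epsilon+\epsilon_0)/h})$, which is negligible once $\epsilon<\epsilon_0$. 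Hence the integral localizes automatically to a neighborhood of $U_0$ and no global propagation of the WKB approximation along the reaction path is required in this double-well case. Your identification of the issue is accurate; the geometry simply resolves it without the extra iteration you anticipated.
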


\par From this we get Theorem \ref{1.2}, since $\mu =\lambda
^*_{-1}\lambda _{-1}+ \lambda _1^*\lambda
_1$.\hfill{$\square$}\medskip

Thanks to the fact that we have only two local minima, certain
simplifications were possible in the proof. In particular it was
sufficent to control the exponential decay of general eigenfunctions
in some small neighborhood of the critical points.

\subsubsection{Return to equilibrium, ideas of the proof of Theorem
\ref{kfp2.5}}
\label{kfpre}

 Keeping the same assumptions, let $\Pi_0,\,\Pi _1 $ be the rank 1
spectral projections corresponding to the eigenvalues $\mu _0:=0,\,
\mu _1:=\mu $ of $-\Delta _A^{(0)}$ and put $\Pi =\Pi _0+\Pi _1$. Then
$e_{-1,},e_1$ is a basis for ${\cal R}(\Pi )$ and the restriction of
$P$ to this range, has the matrix \ekv{4.1} {
\left(\begin{array}{ccc}\lambda _{-1}^*\\ \lambda _1^*
\end{array}\right) \left(\begin{array}{ccc} \lambda _{-1} &\lambda
_1\end{array}\right) = \left(\begin{array}{ccc} \lambda _{-1}^*\lambda
_{-1} & \lambda _{-1}^*\lambda _{1}\\ \lambda _{1}^*\lambda _{-1}&
\lambda _{1}^*\lambda _{1}
\end{array}\right) } with the eigenvalues $0$ and $\mu =\lambda
_{-1}^*\lambda _{-1}+ \lambda _{1}^*\lambda _{1}$. A corresponding
basis of eigenvectors is given by
\begin{eqnarray}\label{4.2} v_0&=&\frac{1}{\sqrt{\mu _1}}(\lambda
_1e_{-1}-\lambda _{-1}e_{-1}) \\ v_1&=&\frac{1}{\sqrt{\mu _1}}(\lambda
_{-1}^*e_{-1}+\lambda _{1}^* e_{-1}).\nonumber
\end{eqnarray} The corresponding dual basis of eigenfunctions of $P^*$
is given by \eekv{4.3} { v_0^*&=&\frac{1}{\sqrt{\mu }}(\lambda
_1^*e_{-1}^*- \lambda _{-1}^*e_{-1}^*) }
{v_1^*&=&\frac{1}{\sqrt{\mu}}(\lambda _{-1}e_{-1}^*+\lambda _{1}
e_{1}^*),} where $e_{-1}^*,e_1^* \in {\cal R}(\Pi ^*)$ is the basis
that is dual to $e_{-1},e_1$. It follows that $v_j,v_j^*={\cal O}(1)$
in $L^2$, when $h\to 0$.
\par From this discussion we conclude that $\Pi _j=(\cdot |v_j^*)v_j$,
\emph{are uniformly bounded when $h\to 0$}.  A non-trivial fact, based
on the analysis described in Subsubsections \ref{kfpgen}, \ref{kfpav}, is
that after replacing the standard norm and scalar product on $L^2$ by
certain uniformly equivalent ones, we have \ekv{4.4} {\Re (Pu|u)\ge
\frac{h}{C}\Vert u\Vert^2,\quad \forall u\in {\cal
R}(1-\widetilde{\Pi} ),} where $\widetilde{\Pi }$ is the spectral
projection corresponding to the spectrum of $P$ in $D(0,Bh)$ for some
$B\gg 1$.

This can be applied to the {study of $u(t):=e^{-tP/h}u(0)$,} where the
initial state $u(0)\in L^2$ is arbitrary: {Write \ekv{4.5} { u(0)=\Pi
_0u(0)+\Pi _1u(0)+(1-\Pi )u(0)=:u^0+u^1+u^\perp .  }Then
\begin{eqnarray}\label{11} \Vert u^0 \Vert, \Vert u^1 \Vert, \Vert
u^\perp\Vert&\le& {\cal O}(1) \Vert u(0)\Vert \\ \Vert
e^{-tP/h}u^\perp\Vert&\le& Ce^{-t/C}\Vert u(0)\Vert \label{12}\\
e^{-tP/h}u_j&=&e^{-t\mu _j/h}u_j,\ j=0,1. \label{13}
\end{eqnarray}} Here (\ref{12}) follows if we write $u^\perp=
(1-\widetilde{\Pi })u+(\Pi -\widetilde{\Pi })u$, apply (\ref{4.4}) to
the evolution of the first term, and use that the last term is the
(bounded) spectral projection of $u$ to a finite dimensional spectral
subspace of $P$, for which the corresponding eigenvalues all have real
part $\ge h/C$.\hfill{$\Box$}

\subsection{Spectral asymptotics in 2 dimensions}\label{2d}
\setcounter{equation}{0}

\subsubsection{Introduction}\label{2dint}

This subsection is mainly based on recent joint works with S.~V{\~ u} Ng{\d o}c and M.~Hitrik \cite{HiSj08} \cite{HiSjVu07}, but we shall start by recalling some earlier results that we obtained with A.~Melin \cite{MeSj03} where we discovered that in the two dimensional case one often can have Bohr-Sommerfeld conditions to determine all the individual eigenvalues in some region of the spectral plane, provided that we have analyticity. In the self-adjoint case such results are known (to the author) only in 1 dimension and in very special cases for higher dimensions.

Subsequently, with M. Hitrik we have studied small perturbations of
self-adjoint operators. First we studied the case when the classical
flow of the unperturbed operator is periodic, then also with S.~V{\~
u} Ng{\d o}c we looked at the more general case when it is completely
integrable, or just when the energy surface contains some invariant
Diophantine Lagrangian tori.

\subsubsection{Bohr-Sommerfeld rules in two dimensions}\label{2dbs}

For (pseudo-)differential operators in dimension 1, we often have a
Bohr-Sommerfeld rule to determine the asymptotic behaviour of the
eigenvalues. Consider for instance the semi-classical Schr\"odinger
operator
$$
P=-h^2\frac{d^2}{dx^2}+V(x),\mbox{ with symbol } p(x,\xi )=\xi
^2+V(x),
$$
where we assume that $V\in C^\infty ({\bf R};{\bf R})$ and $V(x)\to
+\infty $, $|x|\to \infty $. Let $E_0\in {\bf R}$ be a non-critical
value of $V$ such that (for simplicity) $\{ x\in {\bf R}; V(x)\le
E_0\}$ is an interval. Then in some small fixed neighborhood of $E_0$
and for $h>0$ small enough, the eigenvalues of $P$ are of the form
$E=E_k$, $k\in {\bf Z}$, where
$$
\frac{I(E)}{2\pi h} = k-\theta (E; h), \quad I(E)=\int_{p^{-1}(E)}\xi
\cdot dx, \ \theta (E;h)\sim \theta _0(E)+\theta _1(E)h+...$$ In the
non-self-adjoint case we get the same results, provided that $\Im V$
is small and $V$ {\it is analytic}. The eigenvalues will then be on a
curve close to the real axis.

\par
For self-adjoint operators in
dimension $\ge 2$ it is generally admitted that Bohr-Sommerfeld rules
do not give all eigenvalues in any fixed domain except in certain
(completely integrable) cases. Using the KAM theorem one can sometimes
describe some fraction of the eigenvalues.

\par With A.~Melin \cite{MeSj03}: we considered an
$h$-pseudodifferential operator with leading symbol $p(x, \xi )$ that
is bounded and holomorphic in a tubular neighborhood of ${\bf R}^4$ in
${\bf C}^4 = {\bf C}^2_x\times{\bf C}^2_\xi $.  Assume that
\ekv{bs.1}{{\bf R}^4 \cap p^{-1}(0)\ne \emptyset \mbox{ is
connected.}}  \ekv{bs.2}{\mbox{On } {\bf R}^4\mbox{ we have }|p(x, \xi
)| \ge 1/C,\mbox{ for }|(x, \xi )| \ge C,} for some $C > 0$,
\ekv{bs.3}{d\Re p(x, \xi ), d \Im p(x, \xi ) \mbox{ are linearly
independent for all } (x, \xi ) \in p^{-1}(0) \cap {\bf R}^4.}  (Here
the boundedness assumption near $\infty $ and (\ref{bs.2}) can be
replaced by a suitable ellipticity assumption.)  It follows that
$p^{-1}(0)\cap {\bf R}^4$ is a compact (2-dimensional) surface.

\par Also assume that \ekv{bs.4}{|\{ \Re p, \Im p\} |\mbox{ is
sufficiently small on } p^{-1}(0) \cap {\bf R}^4.}  
``Sufficiently small'' here refers to some positive bound that can be
defined whenever the the other conditions are satisfied uniformly.

When the Poisson bracket vanishes on $p^{-1}(0)$, this set becomes a
Lagrangian torus, and more generally it is a torus. The following is a
complex version of the KAM theorem without small divisors (cf
T.W.~Cherry \cite{Ch}, J.~Moser \cite{Mo}),
\begin{theo}\label{bs1} (\cite{MeSj03}) There exists a smooth
2-dimensional torus $\Gamma \subset p^{-1}(0)\cap {\bf C}^4$, close to
$p^{-1}(0)\cap {\bf R}^4$ such that ${{\sigma }_\vert}_{\Gamma } = 0$
and $I_j(\Gamma )\in {\bf R},$ $j = 1, 2$. Here $ I_j(\Gamma ):=
\int_{\gamma _j} \xi \cdot dx$ are the actions along the two
fundamental cycles $\gamma _1,\gamma _2\subset \Gamma $, and $\sigma =
\sum_1^2 d\xi _j \wedge dx_j$ is the complex symplectic (2,0)-form.
\end{theo}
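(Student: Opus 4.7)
The plan is to construct $\Gamma $ as a small complex deformation of $T_0:=p^{-1}(0)\cap {\bf R}^4$ by a KAM-type iteration in which the usual diophantine small divisors are absent because the frequency vector for the relevant transport operator on the torus is genuinely complex. This is the ``complex KAM without small divisors'' idea going back to Cherry and Moser.

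First I would describe $T_0$ concretely. Conditions \nr{bs.1}--\nr{bs.3} together with the smallness of $\{\Re p,\Im p\}$ guarantee that $T_0$ is a smooth compact $2$-surface carrying two pointwise linearly independent, almost commuting tangent vector fields $H_{\Re p},H_{\Im p}$; hence $T_0$ is diffeomorphic to ${\bf T}^2$ and admits angle coordinates $\theta =(\theta _1,\theta _2)$ in which $H_{\Re p},H_{\Im p}$ are constant vector fields up to an error controlled by \nr{bs.4}. Near $T_0$ the complex level set $p^{-1}(0)\cap {\bf C}^4$ is a smooth holomorphic $3$-fold, so I would seek $\Gamma $ as the image of a map $\iota :{\bf T}^2\to p^{-1}(0)\cap {\bf C}^4$ of the form $\iota (\theta )=\iota _0(\theta )+v(\theta )$, where $\iota _0$ parametrizes $T_0$ and $v$ is a small complex correction transverse to $T_0$ inside the level set. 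The remaining condition $\iota ^*\sigma =0$ (together with the requirement that $\iota $ factor through the level set) translates into a nonlinear first-order PDE system for $v$ on ${\bf T}^2$.

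The core of the argument is the linearization. The model operator at $v=0$ takes the form $Lu=\omega \cdot \partial _\theta u+Mu$ on ${\bf T}^2$, with $\omega \in {\bf C}^2$ a complex frequency vector encoding the direction of the holomorphic Hamilton field $H_p$ along $T_0$ and $M$ a bounded multiplication operator. The decisive point is that $\omega $ has nontrivial imaginary part -- equivalently $H_p$ is a genuinely complex linear combination of $H_{\Re p},H_{\Im p}$, not a real vector field on ${\bf R}^4$ -- so the Fourier symbol $i(k\cdot \omega )$ satisfies $|k\cdot \omega |\ge c>0$ uniformly for $k\in {\bf Z}^2\setminus \{0\}$. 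This replaces the usual diophantine estimate: $L$ is boundedly invertible on, say, spaces of holomorphic functions on a tubular neighborhood of ${\bf T}^2$, with no loss of derivatives. A standard contraction argument (or Newton scheme) then produces $v$ and hence the torus $\Gamma $.

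Finally, the construction leaves a two-real-parameter freedom in the output (coming from the flows of $H_p$ and $H_{\overline p}$, which preserve $p^{-1}(0)$ and displace $\Gamma $ transversely to itself within the complex $3$-fold). The actions $I_j(\Gamma )$ depend smoothly on these parameters, and at the real configuration their imaginary parts vanish to leading order; by an implicit function theorem in the two parameters one can arrange $I_1(\Gamma ),I_2(\Gamma )\in {\bf R}$ simultaneously. I expect the main obstacle to lie in the third step: identifying the correct angle coordinates on $T_0$ and the correct framing of its normal bundle inside $p^{-1}(0)\cap {\bf C}^4$ so that the linearized equation truly reduces to the clean complex transport form above with a genuinely non-real frequency, and controlling the non-integrable remainders using the smallness assumption \nr{bs.4}.
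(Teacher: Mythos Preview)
The paper does not contain a proof of this theorem; it is quoted from \cite{MeSj03} and described only as ``a complex version of the KAM theorem without small divisors (cf.\ T.W.~Cherry, J.~Moser)''. So there is nothing in the present text to compare your outline against beyond that one-line description, with which your first three steps are entirely consistent: the mechanism you isolate --- a linearized transport operator on ${\bf T}^2$ with genuinely complex frequency $\omega =a+ib$, $a,b\in{\bf R}^2$ linearly independent, hence $|k\cdot\omega|\ge c|k|$ for $k\ne 0$ and no small divisors --- is precisely the Cherry--Moser phenomenon the paper alludes to. Your self-diagnosis that the real work lies in arranging coordinates so the linearization is truly constant-coefficient (using only the smallness \nr{bs.4}) is also accurate.

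There is, however, a concrete gap in your final step. The real-time flow of $H_p$ does move $\Gamma$ inside $p^{-1}(0)$ and preserves $\sigma|_\Gamma=0$, but it also preserves the actions: since $\iota_{H_p}\sigma=-dp$ and $d(\xi\cdot dx)=\sigma$, one has ${\cal L}_{H_p}(\xi\cdot dx)=d(\iota_{H_p}(\xi\cdot dx))+\iota_{H_p}\sigma=d(\iota_{H_p}(\xi\cdot dx)-p)$, which is exact, so $\int_{\gamma_j}\xi\cdot dx$ is unchanged along the flow. As for $H_{\overline p}$ --- presumably you mean the Hamilton field of the holomorphic function $p^*(\rho):=\overline{p(\overline\rho)}$ --- its flow does \emph{not} preserve $p^{-1}(0)$: $H_{p^*}(p)=\{p,p^*\}$ restricts on ${\bf R}^4$ to $\{p,\overline p\}=-2i\{\Re p,\Im p\}$, which is nonzero in general. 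So neither flow supplies the two real parameters you need to force $\Im I_1=\Im I_2=0$, and the implicit-function argument as written does not go through. The reality of the actions has to come from a different source (a reality/involution argument built into the construction, or a genuinely different moduli parameter); for this you will have to consult \cite{MeSj03}.
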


Replacing $p$ by $p-z$ for $z$ in a neighborhood of $0\in {\bf C}$, we
get tori $\Gamma (z)$ depending smoothly on $z$ and a corresponding
smooth action function $I(z)=(I_1(\Gamma (z)),I_2(\Gamma (z)))$, which
are important in the Bohr-Sommerfeld rule for the eigen-values near
$0$ in the semi-classical limit $h\to 0$:
\begin{theo}\label{bs2} (\cite{MeSj03}) Under the above assumptions,
there exists $\theta_0 \in ( \frac{1}{2} {\bf Z})^2$ and $\theta (z;
h) \sim \theta_ 0 + \theta_ 1(z)h + \theta_ 2(z)h^2 + ..$ in $C^\infty
( \mathrm{neigh\,} (0,{\bf C}))$, such that for $z$ in an
$h$-independent neighborhood of 0 and for $h > 0$ sufficiently small,
we have that $z$ is an eigenvalue of $P=p(x,hD_x)$ iff
$$\frac{I(z)}{2\pi h}
= k-\theta (z; h),\mbox{ for some }k ∈\in {\bf Z}^2.\quad (BS)$$
\end{theo}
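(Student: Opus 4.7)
The plan is to reduce the eigenvalue problem near $z=0$ to a model problem on a real Lagrangian torus obtained by an IR-deformation of phase space, and then apply a Bohr--Sommerfeld / WKB construction together with a Grushin problem to get necessity and sufficiency of the quantization condition.

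First, I would use Theorem \ref{bs1} in a parametric form: for $z$ in a neighborhood of $0$ in $\mathbb{C}$, construct a smooth family of totally real Lagrangian tori $\Gamma(z)\subset p^{-1}(z)\subset \mathbb{C}^4$ varying smoothly in $z$, each having real action integrals $I(z)=(I_1(z),I_2(z))$. Because $\Gamma(z)$ is totally real and Lagrangian with respect to the complex symplectic form $\sigma$, there is a globally defined germ of an I-Lagrangian, R-symplectic submanifold $\Lambda(z)\subset\mathbb{C}^4$ containing $\Gamma(z)$ with $\Lambda(z)\supset\Gamma(z)$, obtained by flowing $\Gamma(z)$ by $\Re H_p$ and $\Im H_p$; on $\Lambda(z)$, the symbol $p-z$ is real-valued and vanishes exactly on $\Gamma(z)$ with $d(p-z)|_{\Lambda(z)}$ nonzero there.

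Next, following the weighted-space/Bargmann transform philosophy used in Section \ref{bdy} and in \cite{Sj82}, realize $\Lambda(z)$ as the natural $\Lambda_{\Phi_z}=\{(x,\tfrac{2}{i}\partial_x\Phi_z)\}$ for a smooth strictly plurisubharmonic weight $\Phi_z$ on a neighborhood of the base projection of $\Gamma(z)$. Conjugating $P$ by a suitable global FBI transform $T$ and working in $H_{\Phi_z}$, one obtains $\widetilde P_z=TPT^{-1}$ with leading symbol $\widetilde p\circ\kappa_T=p$, and on $\Lambda_{\Phi_z}$ the image $\widetilde\Gamma(z)$ of $\Gamma(z)$ is now a \emph{real} Lagrangian torus on which $\widetilde p-z$ vanishes to first order. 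By varying $\Phi_z$ smoothly in $z$ (which is allowed because the perturbation of the weight is of class $\mathcal{O}(h^\delta)$-small on the symbolic side), the resulting family of model operators behaves like a self-adjoint elliptic operator with an invariant Lagrangian torus in the energy surface.

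Now I would do WKB on the torus. Near $\widetilde\Gamma(z)$ introduce symplectic coordinates $(x,\xi)$ with $\widetilde\Gamma(z)=\{\xi=0\}$; after a further canonical transformation and Fourier integral conjugation one can reduce $\widetilde p - z$ to its normal form modulo $\mathcal{O}(\xi^2)$ and iteratively improve. Look for quasi-modes of the form $u_k(x;h)=a(x;h)e^{i\psi(x)/h}$ with $\psi$ generating $\widetilde\Gamma(z)$. Single-valuedness of $e^{i\psi/h}$ around each of the two fundamental cycles $\gamma_j$ gives the leading Bohr--Sommerfeld constraint $I_j(z)/(2\pi h)\in \mathbb{Z}-\theta_{0,j}$, where $\theta_0\in(\tfrac12\mathbb{Z})^2$ is the Maslov contribution. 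The transport equations for $a\sim a_0+ha_1+\cdots$ are solvable iff the integrals of the subprincipal symbol (and of inductively determined correction terms) around $\gamma_1,\gamma_2$ match integer multiples of $2\pi$; collecting these obstructions yields a full symbol $\theta(z;h)\sim \theta_0+h\theta_1(z)+\cdots$ and the quantization condition $I(z)/(2\pi h)=k-\theta(z;h)$.

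Finally, to prove that these quasi-modes exhaust the spectrum near $0$, set up a Grushin problem
\[
\mathcal P(z)=\begin{pmatrix}\widetilde P_z-z & R_-(z) \\ R_+(z) & 0\end{pmatrix}:H_{\Phi_z}\oplus\mathbb{C}^{N(h)}\to H_{\Phi_z}\oplus\mathbb{C}^{N(h)},
\]
where $R_\pm(z)$ encode projection onto/insertion of the microlocal WKB quasi-modes associated to the lattice points $k\in\mathbb{Z}^2$ with $I(z)/(2\pi h)+\theta(z;h)\approx k$. Away from the torus $\widetilde\Gamma(z)$ the symbol $\widetilde p -z$ is elliptic on $\Lambda_{\Phi_z}$ (here we use $(\ref{bs.2})$ and $(\ref{bs.3})$), so a standard parametrix makes $\mathcal P(z)$ invertible with a well-defined effective Hamiltonian $E_{-+}(z;h)$. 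Then $z$ is an eigenvalue of $P$ iff $\det E_{-+}(z;h)=0$, and the asymptotic analysis of $E_{-+}(z;h)$ (a diagonal-to-leading-order matrix whose diagonal entries are $I_j(z)/(2\pi h)-k_j+\theta_j(z;h)$) gives exactly (BS).

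The main obstacle is the construction and justification of the global IR-deformation carrying $\Gamma(z)$ to a real Lagrangian torus simultaneously for all $z$ in a fixed neighborhood of $0$, together with the proof that the associated weighted operator $\widetilde P_z$ really is well-defined as a bounded operator on $H_{\Phi_z}$ modulo $\mathcal O(h^\infty)$ and that $E_{-+}(z;h)$ has a complete asymptotic expansion. This is where the \emph{analyticity} of $p$ and the smallness of the Poisson bracket $\{\Re p,\Im p\}$ are crucial: the first allows exponential weights without loss of regularity, while the second ensures that there are no small-divisor obstructions when aligning $\Gamma(z)$ to the deformed real phase space, which is precisely why the 2D non-self-adjoint setting admits a complete Bohr--Sommerfeld description that is unavailable in the self-adjoint case.
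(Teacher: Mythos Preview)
The paper does not prove Theorem \ref{bs2}; it is stated as a result from \cite{MeSj03} in a survey subsection, with no proof or proof sketch given here. There is therefore nothing in the present paper to compare your proposal against directly.

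That said, your outline is broadly consistent with the methodology of \cite{MeSj03} and with the related constructions the paper does sketch in Subsections \ref{2ddi}--\ref{2dpr}: complex Lagrangian tori with real actions (Theorem \ref{bs1}), IR-deformations of phase space realized via FBI/Bargmann transforms and exponentially weighted $H_\Phi$-spaces, a normal-form/WKB construction on the torus producing the Maslov contribution $\theta_0$ and the subprincipal corrections $\theta_j(z)$, and a Grushin reduction to an effective Hamiltonian $E_{-+}$ whose zeros give the spectrum. Your identification of analyticity (allowing large weights) and smallness of $\{\Re p,\Im p\}$ (avoiding small divisors in the torus construction) as the crucial inputs is correct. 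One point worth sharpening: in your sketch you build a $z$-dependent IR-manifold $\Lambda(z)$ and weight $\Phi_z$; in practice one rather fixes a single IR-deformation adapted to $\Gamma(0)$ and carries out the analysis for all nearby $z$ on that fixed deformed phase space, which avoids the technical issue of a $z$-dependent Hilbert space. But as a high-level plan your proposal is on the right track; for the actual execution you would need to consult \cite{MeSj03}.
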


Recently, a similar result was obtained by S.~Graffi, C.~Villegas Bas
\cite{GrVi}.

\par An application of this result is that we get all resonances
(scattering poles) in a {fixed} neighborhood of $0\in {\bf C}$ for
$-h^2\Delta +V(x)$ if V is an analytic real potential on ${\bf R}^2$
with a nondegenerate saddle point at $x=0$, satisfying $V(0)=0$ and
having $\{ (x,\xi )=(0,0)\}$ as its classically trapped set in the
energy surface $\{ p(x,\xi )=0\}$.

\medskip

\subsubsection{Diophantine case}\label{2ddi}

In this and the next subsubsection we describe a result from
\cite{HiSjVu07} and the main result of \cite{HiSj08} about individual
eigenvalues for small perturbations of a self-adjoint operator with a
completely integrable leading symbol. We start with the case when only
Diophantine tori play a role.

Let $P_\epsilon (x,hD;h)$ on ${\bf R}^2$ have the leading symbol
$p_\epsilon (x,\xi )=p(x,\xi )+i\epsilon q(x,\xi )$ where $p$, $q$ are
real and extend to bounded holomorphic functions on a tubular
neighborhood of ${\bf R}^4$. 
Assume that $p$ fulfills the ellipticity condition (\ref{bs.2})
near infinity and that \ekv{bs.5}{P_{\epsilon =0}=P(x,hD)} is
self-adjoint. (The conditions near infinity can be
modified and we can also replace ${\bf R}^2_x$ by a compact
2-dimensional analytic manifold.)

\par Also, assume that $P_\epsilon (x,\xi ;h)$ depends smoothly on
$0\le \epsilon \le \epsilon _0$ with values in the space of bounded
holomorphic functions in a tubular neighborhood of ${\bf R}^4$, and
$P_\epsilon \sim p_\epsilon +hp_{1,\epsilon} +h^2p_{2,\epsilon }+...$,
when $h\to 0$.

Assume \ekv{bs.6}{p^{-1}(0) \mbox{ is connected and }dp\ne 0 \mbox{ on
that set.}}

Assume complete integrability for $p$: There exists an analytic real
valued function $f$ on $T^*{\bf R}^2$ such that $H_pf = 0$, with the
differentials $df$ and $dp$ being linearly independent almost
everywhere on $p^{-1}(0)$. ($H_p=p'_\xi \cdot \frac{\partial }{\partial x}-p'_x \cdot
\frac{\partial }{\partial \xi }$ is the Hamilton field.)

\par Then we have a disjoint union decomposition
\ekv{bs.7}{p^{-1}(0)\cap T^*{\bf R}^2 = \bigcup_{\Lambda \in J}\Lambda
,} where $\Lambda $ are compact connected sets, invariant under the
$H_p$ flow.  We assume (for simplicity) that $J$ has a natural
structure of a graph whose edges correspond to families of regular
leaves; Lagrangian tori (by the Arnold-Mineur-Liouville theorem \cite{Vu06}).  The
union of edges $J\setminus S$ possesses a natural real analytic
structure.

Each torus $\Lambda \in J\setminus S$ carries real analytic
coordinates $x_1, x_2$ identifying $\Lambda $ with ${\bf T}^2 = {\bf
R}^2/2\pi {\bf Z}^2$, so that along $\Lambda $, we have \ekv{bs.8}{H_p
= a_1\frac{\partial }{\partial x_1} + a_2\frac{\partial }{\partial
x_2},} where $a_1, a_2 \in {\bf R}$. The rotation number is defined as
the ratio $\omega (\Lambda ) = [a_1 : a_2] \in {\bf R}{\bf P}^1$, and
it depends analytically on $\Lambda \in J\setminus S$. We assume that
$\omega (\Lambda )$ is not identically constant on any open edge.

\par We say that $\Lambda \in J\setminus S$ is respectively rational,
irrational, diophantine if $a_1/a_2$ has the corresponding
property. Diophantine means that there exist $\alpha >0$, $d>0$ such
that \ekv{bs.15} { |(a_1,a_2)\cdot k|\ge \frac{\alpha }{|k|^{2+d}},\
0\ne k\in {\bf Z}^2, }

\par We introduce \ekv{bs.9}{\langle q\rangle_T =
\frac{1}{T}\int_{-T/2}^{T/2} q \circ \exp (tH_p) dt,\ T > 0, } and
consider the compact intervals $Q_\infty (\Lambda ) \subset {\bf R}$,
$\Lambda \in J$, defined by, \ekv{bs.10}{Q_\infty (\Lambda ) = [\lim_
{T\to →\infty} \inf_ {\Lambda } \langle q\rangle _T , \lim_ {T\to
\infty} \sup_ {\Lambda} \langle q\rangle _T ] .}

\par { A first localization of the spectrum $\sigma (P_\epsilon
(x,hD_x;h))$ (\cite{HiSjVu07}) is given by \ekv{bs.14} { \Im (\sigma
(P_\epsilon ) \cap \{z ; |\Re z|\le \delta \} ) \subset \epsilon [\inf
\bigcup_ {\Lambda \in J} Q_\infty (\Lambda )-o(1), \sup \bigcup_
{\Lambda \in J} Q_\infty (\Lambda ) + o(1) ] , } when $\delta
,\epsilon, h \to 0$.}

\par For each torus $\Lambda \in J\setminus S$, we let {$\langle
q\rangle (\Lambda )$} be the average of ${{q}_\vert}_{\Lambda }$ with
respect to the natural smooth measure on $\Lambda $, and assume that
the analytic function $J\setminus S ∋\ni \Lambda \mapsto \langle
q\rangle (\Lambda )$ is not identically constant on any open edge.

By
combining (\ref{bs.8}) with the Fourier series representation of $q$,
we see that
when $\Lambda$ is irrational then $Q_\infty (\Lambda ) = \{\langle
q\rangle (\Lambda )\}$, while 
in the rational case,    
\ekv{bs.11}{Q_\infty (\Lambda ) \subset \langle q\rangle
(\Lambda ) + {\cal O}( \frac{1} {(|n|+|m|)^\infty}) [ -1, 1 ],}
when $\omega (\Lambda ) = \frac{m}{n}$ and $m \in {\bf Z}$, $n \in {\bf N}$ are relatively prime.

\par{Let $F_0\in \cup_{\Lambda \in J}Q_\infty (\Lambda )$ and assume
that there exists a Diophantine torus $\Lambda _d$ (or finitely many),
such that \ekv{bs.15.5}{ \langle q\rangle (\Lambda _d)=F_0, \quad
d_\Lambda \langle q\rangle (\Lambda _d)\ne 0.}} With M.~Hitrik and
S.~V{\~u} Ng{\d o}c we obtained:

\begin{theo}\label{bs3}(\cite{HiSjVu07}) Assume also that $F_0$ does not
belong to $Q_\infty (\Lambda )$ for any other $\Lambda \in J$. Let
$0<\delta <K<\infty $. Then $\exists C>0$ such that for $h>0$ small
enough, and $k^K\le \epsilon \le h^\delta $, the eigenvalues of
$P_\epsilon $ in the rectangle $ |\Re z|<h^\delta /C,\ |\Im z-\epsilon
\Re F_0|<\epsilon h^\delta /C $ are given by
$$P^{(\infty )}(h(k-\frac{k_0}{4})-\frac{S}{2\pi },\epsilon ;h)+{\cal
  O}(h^\infty ),\ k\in {\bf Z}^2,$$ Here $P^{(\infty )}(\xi ,\epsilon
;h)$ is smooth, real-valued for $\epsilon =0$ and when $h\to 0$ we
have \ekv{bs.16}{P^{(\infty )}(\xi ,\epsilon ;h)\sim \sum_{\ell
=0}^\infty h^\ell p_\ell^{(\infty )}(\xi ,\epsilon ),\ p_0^{(\infty
)}=p_\infty (\xi )+i\epsilon \langle q\rangle (\xi )+{\cal O}(\epsilon
^2),} corresponding to action angle coordinates.
\end{theo}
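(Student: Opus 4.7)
The plan is to localize the spectral problem microlocally to a neighborhood of the distinguished Diophantine torus $\Lambda_d$, and then to put $P_\epsilon$ in a quantum Birkhoff normal form there, reducing it (modulo $\mathcal{O}(h^\infty)$) to an $h$-pseudodifferential operator on $T^*\mathbb{T}^2$ whose full symbol depends only on the action variables $\xi$. The asserted Bohr-Sommerfeld formula is then the standard spectrum of such an operator on the torus, with $k_0/4$ coming from the Maslov index of the generators of $H_1(\Lambda_d)$ and $S/(2\pi)$ from the action integrals along those cycles.

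First I would carry out the global microlocal reduction. The a priori localization \eqref{bs.14} already confines $\sigma(P_\epsilon)$ in the rectangle of interest to a small strip around $i\epsilon F_0$. Using the hypothesis that $F_0 \notin Q_\infty(\Lambda)$ for any $\Lambda \ne \Lambda_d$, I would pass to an FBI/Bargmann transform side (in the spirit of Section~\ref{bdy} and of \cite{HiSj08}) and construct a bounded analytic weight $G$ on $T^*\mathbb{R}^2$, supported away from $\Lambda_d$, such that the symbol of $e^{\epsilon G/h} P_\epsilon e^{-\epsilon G/h}$ on the deformed IR-manifold has imaginary part bounded away from $\epsilon F_0$ by $\epsilon/C$ outside any given neighborhood of $\Lambda_d$. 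The construction of $G$ uses the averaging trick \eqref{bs.9}: on irrational tori, $\langle q\rangle_T \to \langle q\rangle(\Lambda) \ne F_0$, and on rational tori one exploits \eqref{bs.11} and \eqref{bs.15.5} to pick up a definite deviation. On the deformed side, $P_\epsilon - z$ is then elliptic away from $\Lambda_d$ for $z$ in the given rectangle, so the eigenvalue problem is reduced to a microlocal problem in a neighborhood of $\Lambda_d$.

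Next I would construct a quantum Birkhoff normal form near $\Lambda_d$. Because $\Lambda_d$ is analytic and Diophantine \eqref{bs.15}, a complex KAM / analytic Birkhoff procedure (a two-variable analogue of the one-torus construction in \cite{HiSj08}, using Theorem~\ref{bs1} only as a first step) provides a holomorphic canonical transformation $\kappa$ from a neighborhood of the zero section of $T^*\mathbb{T}^2$ to a neighborhood of $\Lambda_d$ such that $\kappa^* p$ depends only on $\xi$ up to $\mathcal{O}(\xi^\infty)$. Quantizing $\kappa$ as an analytic FIO and conjugating, one obtains an operator whose principal symbol is $p_\infty(\xi) + i\epsilon\langle q\rangle(\xi) + \mathcal{O}(\epsilon^2)$. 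I would then iterate, solving at each step a cohomological equation of the form $\{p_\infty(\xi),a\} = r(x,\xi) - \langle r\rangle(\xi)$ on $\mathbb{T}^2$, which is solvable with analytic estimates thanks to \eqref{bs.15}; at the quantum level the corresponding conjugation by $e^{i b^w/h}$ (or the multiplicative weight side variant) eliminates all $x$-dependence modulo $\mathcal{O}(h^N)$ for arbitrary $N$, and then a standard Borel summation yields a full symbol $P^{(\infty)}(\xi,\epsilon;h)$ of the form \eqref{bs.16}.

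Finally, from the normal form one reads off the spectrum: an $h$-pseudodifferential operator on $T^*\mathbb{T}^2$ whose total symbol depends only on $\xi$ has eigenfunctions $e^{i k\cdot x}$ (with appropriate half-density / Maslov corrections), giving eigenvalues $P^{(\infty)}\bigl(h(k-k_0/4) - S/(2\pi),\epsilon;h\bigr)+\mathcal{O}(h^\infty)$, where $k_0\in(\tfrac{1}{2}\mathbb{Z})^2$ is the Maslov index of the fundamental cycles of $\Lambda_d$ and $S=(S_1,S_2)$ collects the classical actions of $\gamma_1,\gamma_2$. The passage back through the global reduction is harmless because outside a small neighborhood of $\Lambda_d$ the conjugated operator is elliptic in the appropriate pseudospectral sense, so no additional eigenvalues are created. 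The main obstacle, I expect, is the Birkhoff normal form step: keeping the full analyticity and uniform estimates while solving the infinite sequence of cohomological equations in the non-self-adjoint regime, and ensuring that the error terms remain controllable in the complex deformation used for the global localization. The restriction $\epsilon \ge h^K$ enters precisely here, to absorb the successive $\mathcal{O}(\epsilon^{-N}h^N)$ terms produced by the averaging procedure.
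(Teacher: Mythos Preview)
Your proposal is essentially correct and follows the same route as the paper's outline in Subsubsection~\ref{2dpr}: a global averaging weight to localize the problem to a neighborhood of $\Lambda_d$, then a quantized Birkhoff normal form in action-angle coordinates to reduce to a function of $hD_x$ alone, from which the quantization condition follows by Fourier series on $\mathbb{T}^2$.

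One small clarification: the paper does not invoke Theorem~\ref{bs1} (the complex KAM theorem without small divisors) at this stage. Since $p$ is real and completely integrable, ordinary real action-angle coordinates from the Arnold--Mineur--Liouville theorem already put $p$ in the form $p(\xi)$ exactly (not just modulo $\mathcal{O}(\xi^\infty)$) near $\Lambda_d$; the Diophantine condition \eqref{bs.15} and the Birkhoff iteration are only needed to eliminate the $x$-dependence of the $\epsilon$-perturbation $i\epsilon q + \mathcal{O}(\epsilon^2)$, via the cohomological equation \eqref{2dpr.3}. Your invocation of a complex KAM step is therefore unnecessary here, though harmless. Otherwise your identification of the main technical point (controlling the complex canonical transformations and the associated exponentially weighted spaces, together with the role of the constraint $h^K\le\epsilon\le h^\delta$) matches the paper's emphasis.
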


In \cite{HiSjVu07} we also considered applications to small
non-self-adjoint perturbations of the Laplacian on a surface of
revolution. Thanks to (\ref{bs.11}) the total measure of the union of
all $Q_\infty (\Lambda )$ over the rational tori is finite and
sometimes small, and we could then show that there are plenty of
values $F_0$, fulfilling the assumptions in the theorem.

With M.~Hitrik we are currently studying the distribution of eigenvalues in sub-bands that are delimited by two different values ``$F_0$"
as in the theorem.

\subsubsection{The case with rational tori}\label{2drt}

{ Let $F_0$ be as in (\ref{bs.15.5}) but now also allow for the
possibility that there is a rational torus (or finitely many) $\Lambda
_r$, such that \ekv{bs.17} { F_0\in Q_\infty (\Lambda _r),\quad
F_0\ne\langle q\rangle (\Lambda _r), } \ekv{bs.17.5} {d_\Lambda
(\langle q\rangle)(\Lambda _r)\ne 0,\ d_\Lambda (\omega )(\Lambda
_r)\ne 0.  } { Assume also that \ekv{bs.18} { F_0\not\in Q_\infty
(\Lambda ),\hbox{ for all }\Lambda \in J\setminus \{ \Lambda _d,
\Lambda _r \} .  }}}
With M.~Hitrik we showed the
following result:

\begin{theo}\label{bs4}(\cite{HiSj08}) Let $\delta >0$ be small and
assume that $h\ll \epsilon \le h^{\frac{2}{3}+\delta }$, or that the
subprincipal symbol of $P$ vanishes and that $h^2 \ll \epsilon \le
h^{\frac{2}{3}+\delta }$ . Then the spectrum of $P_\epsilon $ in the
rectangle
$$
[-\frac{\epsilon }{C},\frac{\epsilon }{C}]+i\epsilon
[F_0-\frac{\epsilon ^\delta }{C},F_0+\frac{\epsilon ^\delta }{C}]
$$
is the union of two sets: $E_d \cup E_r$, where the elements of $E_d$
form a distorted lattice, given by the Bohr-Sommerfeld rule
(\ref{bs.16}), with horizontal spacing $\asymp h$ and vertical spacing
$\asymp \epsilon h$. The number of elements $\# (E_r)$ of $E_r$ is
${\cal O}({\epsilon ^{3/2}}/{h^2})$.
\end{theo}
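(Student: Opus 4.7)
The plan is to combine the complex microlocal approach used in the proof of Theorem~\ref{bs3} with a secular Birkhoff reduction adapted to the rational torus $\Lambda_r$. Work on the FBI--Bargmann side $H_{\Phi_0}$ and deform the weight to $\Phi = \Phi_0 + \epsilon G$, where $G$ is an averaging function of $q$ along the $H_p$-flow as in (\ref{pR.1}) and (\ref{bs.9}); the conjugated leading symbol restricted to $\Lambda_\Phi$ then becomes $p + i\epsilon\langle q\rangle_T + \mathcal{O}(\epsilon^2)$. A microlocal partition of unity over $p^{-1}(0)$ splits the analysis into three pieces: the complement of $\Lambda_d \cup \Lambda_r$, a neighborhood of $\Lambda_d$, and a neighborhood of $\Lambda_r$.

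On the complement of $\Lambda_d \cup \Lambda_r$, assumption (\ref{bs.18}) together with (\ref{bs.14}) gives $|\langle q\rangle_T - F_0| \geq \epsilon^\delta/C$ for a sufficiently large but fixed $T$, so $P_\epsilon - z$ is elliptic on $\Lambda_\Phi$ throughout the spectral rectangle, generating no eigenvalues there and yielding a parametrix with negligible errors. Near $\Lambda_d$, the Diophantine condition (\ref{bs.15}) combined with $d_\Lambda \langle q\rangle(\Lambda_d) \neq 0$ lets one carry out the complex KAM procedure and Birkhoff normal form exactly as in the proof of Theorem~\ref{bs3}, reducing $P_\epsilon$ microlocally to an integrable model $P^{(\infty)}(hD_x, \epsilon; h)$ whose eigenvalues are given by the Bohr--Sommerfeld rule (\ref{bs.16}); in the rectangle these form the distorted lattice $E_d$ with horizontal spacing $\asymp h$ (from $\Re P^{(\infty)}$) and vertical spacing $\asymp \epsilon h$ (from $\Im P^{(\infty)} = \epsilon \langle q\rangle + \mathcal{O}(\epsilon^2)$).

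Near $\Lambda_r$, introduce action--angle coordinates $(x, \xi) \in T^*{\bf T}^2$ for $p$ with $\Lambda_r = \{\xi = \xi_r\}$ and $\partial p/\partial \xi(\xi_r)$ parallel to $(m,n)$, then change to a basis with fast angle $y_2 = mx_1 + nx_2$ and resonant slow angle $y_1 = nx_1 - mx_2$. A finite-order Birkhoff normal form (whose homological equations involve only non-resonant divisors $k \cdot \partial p/\partial\xi(\xi_r)$ with $k \notin {\bf Z}(n,-m)$, which are bounded away from $0$ in a neighborhood of $\Lambda_r$) removes all non-resonant Fourier modes of $q$ and brings the effective leading symbol to
\[
p(\xi) + i\epsilon\bigl(\langle q\rangle(\Lambda_r) + q^{\mathrm{res}}(y_1,\xi)\bigr) + \mathcal{O}(\epsilon^2),
\]
which depends on only the slow angle $y_1$ in its angular part. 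Since $\partial p/\partial\xi \neq 0$ at $\xi_r$, the constraint $|\Re z| \leq \epsilon/C$ confines the transverse action to a band of width $\mathcal{O}(\sqrt\epsilon)$ around $\xi_r$. Viewing the reduced operator as a direct integral of one-dimensional $h$-pseudodifferential operators on $S^1_{y_1}$ parametrized by the transverse quantized action, a Jensen formula count for a relative determinant in the spirit of Section~\ref{ze} yields $\#E_r = \mathcal{O}(\epsilon^{3/2}/h^2)$: one factor $\sqrt\epsilon$ comes from the number of active transverse quantized action levels intersecting the $\sqrt\epsilon$-band, and the remaining $\epsilon/h^2$ is the one-dimensional count per level.

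The main technical obstacle is the rigorous construction and matching of the secular Birkhoff normal form near $\Lambda_r$: non-resonant modes with rotation vectors close to $(m,n)$ produce small (though not vanishing) divisors that limit how far the normal form can be pushed, and the reduced parametrix must glue coherently with the KAM parametrix around $\Lambda_d$ and the elliptic parametrix in the overlap zones. The lower bound $h \ll \epsilon$ (weakened to $h^2 \ll \epsilon$ when the subprincipal symbol vanishes) ensures that the $\epsilon q$-perturbation dominates the semiclassical correction $h p_{1,\epsilon}$ in the normal form, while the upper bound $\epsilon \leq h^{2/3 + \delta}$ keeps the accumulated $\mathcal{O}(\epsilon^2)$-errors of the Birkhoff reduction below the spectral scales relevant to the reduced problem near $\Lambda_r$.
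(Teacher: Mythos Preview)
Your overall architecture matches the paper: averaging to localize away from $\Lambda_d\cup\Lambda_r$, complex Birkhoff normal form near $\Lambda_d$ giving the lattice $E_d$, and a secular (partial) Birkhoff reduction near $\Lambda_r$ producing a family of one-dimensional operators after Fourier decomposition in the fast angle. The heuristic that the contributions from $\Lambda_r$ come from a phase-space region of volume $\mathcal{O}(\epsilon^{3/2})$ is also the paper's heuristic.

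There is, however, a genuine gap in your treatment of the rational torus. First, the $\sqrt{\epsilon}$ confinement does \emph{not} come from the spectral constraint $|\Re z|\le \epsilon/C$ as you claim; with $p(\xi)=\xi_2+\xi_1^2$ that constraint restricts $\xi_2+\xi_1^2$, not $\xi_1$ alone. The point is rather that for $|\xi_1|\gg\sqrt{\epsilon}$ one can push the Birkhoff reduction further and eliminate the remaining angular dependence, so only the zone $|\xi_1|\lesssim\sqrt{\epsilon}$ resists complete integrability. Second, and more seriously, you have misidentified the origin of the constraint $\epsilon\le h^{2/3+\delta}$. It is not about keeping $\mathcal{O}(\epsilon^2)$ normal-form errors small. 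The paper reduces the problem near $\Lambda_r$ via a Grushin problem to a matrix of size $N=\mathcal{O}(\epsilon^{3/2}/h^2)$, which already gives $\#E_r\le N$. The real difficulty is non-self-adjointness: even off the spectrum of this matrix, its inverse can only be bounded by $\exp\mathcal{O}(\epsilon^{3/2}/h^2)$, and this growth must be \emph{confined} to a small neighborhood of $\Lambda_r$ so that the global Grushin problem (gluing with the Diophantine and elliptic regions) is well posed. What saves the argument is that, thanks to analyticity, one can conjugate with exponential weights away from $\Lambda_r\cup\Lambda_d$ and obtain off-diagonal resolvent decay like $\exp(-1/(Ch))$. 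The gluing then works provided
\[
\frac{1}{Ch}\gg\frac{\epsilon^{3/2}}{h^2},
\]
which is exactly $\epsilon\ll h^{2/3}$. Your Jensen-formula sketch does not address this confinement issue at all; without it, the local analysis near $\Lambda_r$ cannot be turned into a global statement about $\sigma(P_\epsilon)$.
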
 NB that $\# (E_d) \asymp \epsilon ^{1+\delta }/h^2$.

This result can be applied to the damped wave equation on surfaces of
revolution.

\subsubsection{Outline of the proofs of Theorem \ref{bs3} and \ref{bs4}}
\label{2dpr}

The principal symbol of $P_\epsilon $ is $p_\epsilon =p+i\epsilon
q+{\cal O}(\epsilon ^2)$. Put
$$
\langle q\rangle_T=\frac{1}{T}\int_{-T/2}^{T/2}q\circ \exp (tH_p)dt.
$$
As in Subsection \ref{kfp} we will use an averaging of the imaginary part
of the symbol.  Let $J(t)$ be the piecewise affine function with
support in $[-\frac{1}{2},\frac{1}{2}]$, solving
$$
J'(t)=\delta (t)-1_{[-\frac{1}{2},\frac{1}{2}]}(t),
$$
and introduce the weight
$$G_T(t)=\int J(-\frac{t}{T})q\circ \exp (tH_p)dt.$$
Then $H_pG_T=q-\langle q\rangle_T$, implying \ekv{2dpr.1} {p_\epsilon
\circ \exp (i\epsilon H_{G_T})=p+i\epsilon \langle q\rangle_T+{\cal
O}_T(\epsilon ^2).}

The left hand side of (\ref{2dpr.1}) is the principal symbol of the
isospectral operator $e^{-\frac{\epsilon }{h}G_T(x,hD_x)}\circ
P_\epsilon \circ e^{\frac{\epsilon }{h}G_T(x,hD_x)}$ and under the
assumptions of Theorem \ref{bs3} resp. \ref{bs4} its imaginary part
will not take the value $i\epsilon F_0$ on $p^{-1}(0)$ away from
$\Lambda _d$ resp.~$\Lambda _d\cup \Lambda _r$. This means that we
have localized the spectral problem to a neighborhood of $\Lambda _d$
resp.~$\Lambda _d\cup\Lambda _r$.

Near $\Lambda _d$ we choose action-angle coordinates so that $\Lambda
_d$ becomes the zero section in the cotangent space of the 2-torus,
and \ekv{2dpr.2} {p_\epsilon (x,\xi )=p(\xi )+i\epsilon q(x,\xi
)+{\cal O}(\epsilon ^2).}  We follow the quantized Birkhoff normal
form procedure in the spirit of V.F.~Lazutkin and Y.~Colin de
Verdi\`ere \cite{La, Co}: solve first \ekv{2dpr.3}{H_pG=q(x,\xi
)-\langle q(\cdot ,\xi )\rangle ,} where the bracket indicates that we
take the average over the torus with respect to $x$. Composing with
the corresponding complex canonical transformation, we get the new
conjugated symbol
$$p(\xi )+i\epsilon \langle q(\cdot ,\xi )\rangle+{\cal O}(\epsilon
^2+\xi ^\infty ).$$ Here the Diophantine condition is of course
important.

Iterating the procedure we get for every $N$,
$$
p_\epsilon \circ \exp (H_{G^{(N)}})=\underbrace{p(\xi )+i\epsilon
(\langle q\rangle (\xi )+{\cal O}(\epsilon ,\xi
))}_{\mathrm{independent\ of\ }x}+{\cal O}((\xi ,\epsilon )^{N+1})
$$
This procedure can be continued on the operator level, and up to a
small error we see that $P_\epsilon $ is microlocally equivalent to an
operator $P_\epsilon (hD_\xi ,\epsilon ;h)$. At least formally,
Theorem \ref{bs3} then follows by considering Fourier series
expansions, but in order to get a full proof we also have take into
account that we have constructed complex canonical transformations
that are quantized by Fourier integral operators with complex phase
and study the action of these operators on suitable exponentially
weighted spaces.

Near $\Lambda _r$ we can still use action-angle coordinates as in
(\ref{2dpr.2}) but the homological equation (\ref{2dpr.3}) is no
longer solvable. Instead, we use {secular perturbation theory} (cf the
book \cite{LiLi}), which amounts to making a {partial Birkhoff
reduction.}

\par After a linear change of $x$-variables, we may assume that $p(\xi
)=\xi _2+{\cal O}(\xi ^2)$ and in order to fix the ideas $=\xi _2+\xi
_1^2$. Then we can make the averaging procedure only in the
$x_2$-direction and reduce $p_\epsilon $ in (\ref{2dpr.2}) to
$$
\widetilde{p}_\epsilon (x,\xi )=\underbrace{\xi _2+\xi _1^2+{\cal
O}(\epsilon )}_{\mathrm{independent\ of\ }x_2,\atop \approx \xi _2+\xi
_1^2+i\epsilon \langle q\rangle_2(x_1,\xi )}+{\cal O} ((\epsilon ,\xi
)^\infty ),
$$
where $\langle q\rangle_2(x_1,\xi )$ denotes the average with respect
to $x_2$.

\par
Carrying out the reduction on the operator level, we obtain up to
small errors an operator $\widetilde{P}_\epsilon
(x_1,hD_{x_1},hD_{x_2};h)$ and after passing to Fourier series in
$x_2$, a family of non-self-adjoint operators on $S_{x_1}^1$:
$\widetilde{P}_\epsilon (x_1,hD_{x_1},hk;h)$, $k\in {\bf Z}$.

The non-self-adjointness and the corresponding possible wild growth of
the resolvent makes it hard to go all the way to study individual
eigenvalues. However, it can be shown that in the region $|\xi _1|\gg
\epsilon ^{1/2}$ (inside the energy surface $p=0$) we can go further
and (as near $\Lambda _d$) get a sufficiently good elimination of the
$x$-dependence. This leads to the conclusion that the contributions
from a vicinity of $\Lambda _r$ to the spectrum of $P_\epsilon $ in
the rectangle
$$
|\Re z|\le \frac{\epsilon }{C},\ |\Im z-\epsilon F_0|\le
\frac{\epsilon ^{1+\delta }}{C},
$$ 
come from a neighborhood of $\Lambda _r$ of phase space volume ${\cal
O}(\epsilon ^{3/2})$.

This explains heuristically why the rational torus will contribute
with ${\cal O} (\epsilon ^{3/2}/h^2)$ eigenvalues in the rectangle.

The actual proof is more complicated. We use a Grushin problem
reduction in order to reduce the study near $\Lambda _r$ to that of a
square matrix of size ${\cal O}(\epsilon ^{3/2}/h^2)$. However, even
if we avoid the eigenvalues of such a matrix, the inverse can only be
bounded by \ekv{2dpr.4}{ \exp {\cal O}(\epsilon ^{3/2}/h^2).} What
saves us is that away from $\Lambda _r\cup \Lambda _d$, we can
conjugate the operator with exponential weights and show that the
resolvent has an ``off-diagonal decay'' like $\exp (-1/(Ch))$. This
implies that we can confine the growth in (\ref{2dpr.4}) to a small
neighborhood of $\Lambda _r$, if
$$
\frac{1}{Ch}\gg \frac{\epsilon ^{\frac{3}{2}}}{h^2},
$$ 
leading to the assumption $\epsilon \ll h^{2/3}$ in Theorem \ref{bs4}.

\part{Non-self-adjoint operators with random perturbations.}

\section{Zeros of holomorphic functions of exponential growth}\label{ze}
\setcounter{equation}{0}

We will need a result on the number of zeros in a domain of holomorphic functions $u(z)=u_h(z)$ that satisfy an exponential upper bound near the boundary of the domain as well as corresponding lower bounds at finitely many points, distributed along the boundary. Such a result (related to classical results for the zeros of entire functions, cf \cite{Lev80}, Chapter III, Section 3, Theorem 3) was obtained by Hager \cite{Ha06a, Ha06b} under a rather strong regularity assumption on the exponent. With Hager \cite{HaSj08} we obtained a more general result with a logarithmic loss  however.  Recently I revisited the proofs and was able to get a result that includes the earlier ones and allows to eliminate such losses, see \cite{Sj09b}.

Let $\Gamma \Subset {\bf C}$ be an open set and let $\gamma =\partial
 \Gamma $ be the boundary of $\Gamma $. Let $r:\gamma \to ]0,\infty [$
 be a Lipschitz function of Lipschitz modulus $\le 1/2$:
\ekv{ze.1}
{
\vert r(x)-r(y)\vert \le \frac{1}{2}\vert x-y\vert,\ x,y\in \gamma .
}
We further assume that $\gamma $ is Lipschitz in the following precise
sense, where $r$ enters:

For every $x\in \gamma $ there exist new affine coordinates $\widetilde{y}=(\widetilde{y}_1,\widetilde{y}_2)$ of the form $\widetilde{y}=U(y-x)$, $y\in {\bf C}\simeq {\bf R}^2$ being the old coordinates, where $U=U_x$ is orthogonal, such that the intersection of $\Gamma $ and the rectangle $R_x:= \{ y\in {\bf C} ;\, |\widetilde{y}_1|<r(x),\, |\widetilde{y}_2|<C_0r(x)\} $ takes the form \ekv{ze.2} {\{ y\in R_x;\, \widetilde{y}_2>f_x(\widetilde{y}_1),\ |\widetilde{y}_1|<r(x),\} } where $f_x(\widetilde{y}_1)$ is uniformly Lipschitz on $[-r(x),r(x)]$, and $C_0$ is a fixed constant, which is larger than the Lipschitz moduli of the functions $f_x$.

\par Notice that our assumption (\ref{ze.2}) remains valid if we
decrease $r$. It will be convenient to extend the function to all of
${\bf C}$, by putting 
\ekv{ze.3}
{
r(x)=\inf_{y\in \gamma }(r(y)+\frac{1}{2}|x-y|).
}
The extended function is also Lipschitz with modulus $\le
\frac{1}{2}$: 
$$
|r(x)-r(y)|\le \frac{1}{2}|x-y|,\ x,y\in {\bf C}.
$$
Notice that 
\ekv{ze.4}
{
r(x)\ge \frac{1}{2}\mathrm{dist\,}(x,\gamma ),
}
and that 
\ekv{ze.5}
{
|y-x|\le r(x)\Rightarrow \frac{r(x)}{2}\le r(y)\le \frac{3r(x)}{2}.
}

\begin{theo}\label{ze1}
Let $\Gamma \Subset {\bf C}$ be simply connnected, and have Lipschitz boundary $\gamma $ with an
associated Lipschitz weight $r$ as in (\ref{ze.1}), (\ref{ze.2}), 
(\ref{ze.3}).
 Put $\widetilde{\gamma }_{\alpha r}=\cup_{x\in \gamma
}D(x,\alpha r(x))$ for any constant $\alpha >0$. Let $z_j^0\in \gamma $, $j\in {\bf Z}/N{\bf Z} $ be distributed along 
the  boundary in the positively oriented sense such that 
$$r(z_j^0)/4\le |z_{j+1}^0-z_j^0|\le r(z_j^0)/2 .$$ (Here ``4'' can be
replaced by any fixed constant $>2$.) 
Then if
$C_1>0$ is large enough, depending only on the constant $C_0$ in (\ref{ze.2}) and if
$C_1\ge C_0$, $z_j\in D(z_j^0,r(z^0_j)/(2C_1))$, we have the following:

\par Let $\phi $ be a continuous subharmonic function on
$\widetilde{\gamma }_{r/C_1}$ with a distribution extension to
$\Gamma \cup \widetilde{\gamma }_{r/C_1}$ that will be denoted by the
same symbol. Then there exists a constant $C_2>0$ such that
if $u=u_h(z)$, $0<h\le 1$, is a holomorphic function on $\Gamma \cup \widetilde{\gamma
}_{r/{C_1}}$ satisfying
\ekv{ze.6}
{
h\ln |u|\le \phi (z)\hbox{ on }\widetilde{\gamma }_{r/{C_1}},
}
\ekv{ze.7}
{
h\ln |u(z_j)|\ge \phi (z_j)-\epsilon _j,\hbox{ for }j=1,2,...,N,
}
where $\epsilon _j\ge 0$, then the number of zeros of $u$ in $\Gamma $
satisfies
\eekv{ze.8}
{
&&|\# (u^{-1}(0)\cap \Gamma )-\frac{1}{2\pi h}\mu (\Gamma )|\le
}
{&&
\frac{C_2}{h}\left( 
\mu (\widetilde{\gamma }_{r/{C_1}})+\sum_1^N \left(\epsilon _j+\int
_{D(z_j,\frac{r(z_j)}{4C_1})}|\ln \frac{|w-z_j|}{r(z_j)}|\mu (dw)\right)
\right).
}
Here $\mu :=\Delta \phi \in {\cal D}'(\Gamma \cup \widetilde{\gamma }_{r/C_1})$ is a positive measure on $\widetilde{\gamma }_{r/C_1}$ so that $\mu (\Gamma )$ and $\mu (\widetilde{\gamma }_{r/C_1})$ are well-defined. Moreover,
the constant $C_2$ only depends on $C_0$ in (\ref{ze.2}) and on $C_1$.
\end{theo}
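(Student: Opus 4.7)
The plan is to express the signed difference $2\pi h\,\#(u^{-1}(0)\cap\Gamma)-\mu(\Gamma)$ as a distributional pairing, and then estimate it by combining a global integration by parts against a cut-off with local Poisson--Jensen inequalities at the test points~$z_j$. Set $\psi:=h\log|u|$, a subharmonic function on $\Gamma\cup\widetilde\gamma_{r/C_1}$ with distributional Laplacian $\Delta\psi=2\pi h\sum_k m_k\delta_{z_k}$, where the $z_k$ are the zeros of $u$ counted with multiplicities $m_k$. Using a Vitali-type covering of $\gamma$ by discs of radius comparable to $r$ and the Lipschitz regularity of $r$, I would build $\chi\in C_0^\infty({\bf C})$ with $0\le \chi\le 1$, $\chi=1$ on $\Gamma\setminus\widetilde\gamma_{r/(2C_1)}$, $\mathrm{supp}\,\chi\subset \Gamma\cup\widetilde\gamma_{r/C_1}$, and
\[
|\nabla\chi(z)|\le C/r(z),\qquad |\Delta\chi(z)|\le C/r(z)^2,
\]
with $\mathrm{supp}\,\Delta\chi\subset\widetilde\gamma_{r/C_1}$. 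Pairing $\chi$ against $\Delta\psi-\Delta\phi$ yields the identity
\[
2\pi h\sum_k m_k\chi(z_k)-\int\chi\,d\mu=\int(\Delta\chi)(\psi-\phi)\,dA,
\]
whose left-hand side differs from $2\pi hN-\mu(\Gamma)$ by at most $\mu(\widetilde\gamma_{r/C_1})+2\pi h\cdot\#\{z_k\in\widetilde\gamma_{r/C_1}\}$.

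Choose $\rho_j:=r(z_j^0)/(2C_1)$; by (\ref{ze.4})--(\ref{ze.5}) the closed disc $\overline{D}(z_j,\rho_j)$ lies in $\widetilde\gamma_{r/C_1}$, and the lower bound in (\ref{ze.7}) forces $u(z_j)\ne 0$ (unless the right-hand side is $-\infty$, in which case the $z_j$ contribution is vacuous). Applying Poisson--Jensen to $\psi$ and the Riesz mean-value identity to $\phi$ on $D(z_j,\rho)$ for any $\rho\le\rho_j$, then subtracting and inserting $\psi\le\phi$ together with $\psi(z_j)\ge\phi(z_j)-\epsilon_j$, produces the twin inequalities
\[
\frac{1}{2\pi}\int_0^{2\pi}(\phi-\psi)(z_j+\rho e^{i\theta})\,d\theta\le\epsilon_j+\frac{1}{2\pi}\int_{D(z_j,\rho)}\log\frac{\rho}{|w-z_j|}\,\mu(dw),
\]
\[
h\sum_{z_k\in D(z_j,\rho)}m_k\log\frac{\rho}{|z_k-z_j|}\le\epsilon_j+\frac{1}{2\pi}\int_{D(z_j,\rho)}\log\frac{\rho}{|w-z_j|}\,\mu(dw).
\]
Specialising the second at $\rho=\rho_j$ and restricting to $D(z_j,\rho_j/2)$ bounds $\#\{z_k\in D(z_j,\rho_j/2)\}$ pointwise. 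Integrating the first against $\rho\,d\rho$ from $0$ to $\rho_j$ and swapping the order of integration on the right-hand side yields the $L^1$ version
\[
\int_{D(z_j,\rho_j)}(\phi-\psi)\,dA\le C\,r(z_j^0)^2\Bigl(\epsilon_j+\int_{D(z_j,\rho_j)}\bigl|\log\frac{|w-z_j|}{r(z_j)}\bigr|\,\mu(dw)\Bigr).
\]

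The discs $\{D(z_j,\rho_j/2)\}$ cover $\widetilde\gamma_{r/C_1}$ with uniformly bounded multiplicity, thanks to the spacing prescription on $\{z_j^0\}$ and the Lipschitz graph description (\ref{ze.2}). Since $|\Delta\chi|\le C/r^2$, $\mathrm{supp}\,\Delta\chi\subset\widetilde\gamma_{r/C_1}$, and $\phi-\psi\ge 0$ there, summing the $L^1$ bound over $j$ gives
\[
\Bigl|\int(\Delta\chi)(\psi-\phi)\,dA\Bigr|\le C\sum_j\Bigl(\epsilon_j+\int_{D(z_j,\rho_j)}\bigl|\log\frac{|w-z_j|}{r(z_j)}\bigr|\,\mu(dw)\Bigr);
\]
summing the pointwise zero bound dominates $2\pi h\cdot\#\{z_k\in\widetilde\gamma_{r/C_1}\}$ by the same expression. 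Feeding both estimates into the identity of the first step and dividing by $2\pi h$ produces (\ref{ze.8}), the disc radius $\rho_j/2=r(z_j^0)/(4C_1)$ matching the statement up to the equivalence $r(z_j)\asymp r(z_j^0)$ furnished by (\ref{ze.5}).

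The main obstacle is the geometric bookkeeping: one has to take $C_1\ge C_0$ large enough that on the scale $r/C_1$ the boundary~$\gamma$ is well approximated by hyperplanes, so that the cut-off $\chi$ admits the required pointwise estimate on $\Delta\chi$ and so that the discs $\{D(z_j,\rho_j/2)\}$ really cover $\widetilde\gamma_{r/C_1}$ with uniform multiplicity; this is precisely where the Lipschitz graph assumption (\ref{ze.2}) and the spacing condition on the $z_j^0$ enter. The sharpness of the estimate, i.e.\ the absence of the logarithmic loss that appears in \cite{HaSj08}, rests on controlling the boundary-layer zero count directly through the Jensen-type bound of the second step, rather than through a cruder trace-type inequality.
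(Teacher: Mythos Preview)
Your covering claim in Step~3 is false, and this is a genuine gap. You take $\rho_j=r(z_j^0)/(2C_1)$ so that $\overline{D}(z_j,\rho_j)\subset\widetilde\gamma_{r/C_1}$ (this constraint is unavoidable: outside the tube you do not know $\psi\le\phi$, and inside $\Gamma$ the function $\phi$ is only a distribution, so your circle averages would not make sense). But the theorem prescribes the spacing $|z_{j+1}^0-z_j^0|\in[r(z_j^0)/4,\,r(z_j^0)/2]$, which is of order $r(z_j^0)$, while your disc radii $\rho_j/2=r(z_j^0)/(4C_1)$ are of order $r(z_j^0)/C_1$ with $C_1$ large. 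The discs $D(z_j,\rho_j/2)$ therefore do not even cover $\gamma$, let alone the tube $\widetilde\gamma_{r/C_1}\supset\mathrm{supp}\,\Delta\chi$. Consequently your $L^1$ bound on $\phi-\psi$ controls only a fraction $\sim 1/C_1$ of the area of $\mathrm{supp}\,\Delta\chi$, and you cannot bound $\int|\Delta\chi|(\phi-\psi)\,dA$.

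The missing idea is precisely what the paper supplies: one must propagate the lower bound at the isolated point $z_j$ across a piece of the tube of diameter $\sim r(z_j)$ (not $r(z_j)/C_1$). The paper does this by working with the Green's function $G_{\gamma_r}$ of the thin tube itself, whose exponential decay along the tube (Proposition~\ref{ze5}) it establishes via weighted Dirichlet estimates (Proposition~\ref{ze4}). Using $G_{\gamma_r}$, the paper forms harmonic functions $\Phi,\Psi$ on the tube and sets $H=\Phi-\Psi\ge0$; then $H(z_j)\le\epsilon_j+\int(-G_{\gamma_{\widetilde r}}(z_j,w))\,\mu(dw)$, and Harnack's inequality (applicable because $H$ is harmonic on the whole tube, not just on small discs) spreads this bound to an elementary piece $M_j$ of diameter $\sim r(z_j)$. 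These pieces \emph{do} cover the tube, and the integration against $\Delta\chi$ goes through. Your Poisson--Jensen argument on discs cannot substitute for this, because the discs available to you are an order of magnitude smaller than the gaps between test points.
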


We next discuss the elimination of the logarithmic integrals in (\ref{ze.8}).

\smallskip\par
Using (\ref{ze.5}), we get 
\begin{eqnarray*}
&&\int_{D(z_j^0,\frac{r(z_j^0)}{2C_1})}\int
_{D(z,\frac{r(z)}{4C_1})}|\ln \frac{|w-z|}{r(z)}|\mu (dw)\frac{L(dz)}{L(D(z_j^0,\frac{r(z_j^0)}{2C_1}))}\le\\
&&\int_{D(z_j^0,\frac{r(z_j^0)}{2C_1})}\int
_{D(z^0_j,\frac{r(z^0_j)}{C_1})}|\ln \frac{|w-z|}{r(z)}|\mu
(dw)\frac{L(dz)}{L(D(z_j^0,\frac{r(z_j^0)}{2C_1}))},
\end{eqnarray*}
where $L$ denotes the Lebesgue measure.
Here we use Fubini's theorem and the fact that 
$$
\int_{D(z_j^0,\frac{r(z_j^0)}{2C_1})} |\ln \frac{|z-w|}{r(z)}|L(dz)\le {\cal O}(1)L(D(z_j^0,\frac{r(z_j^0)}{2C_1}))
$$
to conclude that the mean-value of
$$
D(z_j^0,\frac{r(z_j^0)}{2C_1})\ni z\mapsto \int_{D(z,\frac{r(z)}{4C_1})}
|\ln \frac{|w-z|}{r(z)}|\mu (dw)
$$
is ${\cal O}(1)\mu (D(z_j^0,\frac{r(z_j^0)}{C_1}))$. Thus we can find
$\widetilde{z}_j\in D(z_j^0,\frac{r(z_j^0)}{2C_1})$ such that 
$$
\sum_{j=1}^N \int_{D(\widetilde{z}_j,\frac{r(\widetilde{z}_j)}{4C})}
|\ln \frac{|w-\widetilde{z}_j|}{r(\widetilde{z}_j)}|\mu (dw)={\cal
  O}(1)\mu (\widetilde{\gamma }_{r/C}).
$$
This leads to the following variant of Theorem \ref{ze1}, where
(\ref{ze.8}) is simplified but where the choice of $z_j$ is no more arbitrary.
\begin{theo}\label{ze2}
Let $\Gamma $, $\gamma =\partial \Gamma $, $r$, $z_j^0$, $C_0$, $C_1$ be as in
Theorem \ref{ze1}. Then $\exists\,\widetilde{z}_j\in
D(z_j^0,\frac{r(z_j^0)}{2C_1})$ such that if $\phi $, $u$ are as in Theorem
\ref{ze1}, satisfying (\ref{ze.6}), and
\ekv{ze.9}
{
h\ln |u(\widetilde{z}_j)|\ge \phi (\widetilde{z}_j)-\epsilon _j,\ j=1,2,...,N,
} 
instead of (\ref{ze.7}), then
\ekv{ze.10}
{
|\# (u^{-1}(0)\cap \Gamma )-\frac{1}{2\pi h}\mu (\Gamma )|
\le \frac{C_2}{h}(\mu (\widetilde{\gamma }_{r/C_1})+\sum
\epsilon _j ).
}
\end{theo}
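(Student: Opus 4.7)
The plan is to derive Theorem \ref{ze2} as an essentially immediate corollary of Theorem \ref{ze1} by a pigeonhole (mean value) argument. Since Theorem \ref{ze1} allows us to take $z_j$ anywhere in $D(z_j^0, r(z_j^0)/(2C_1))$, the idea is to select $\widetilde{z}_j$ in that disc so that the logarithmic integrals appearing in (\ref{ze.8}) are no larger than their Lebesgue average over the disc, and to show that the latter averages sum to at most a constant times $\mu(\widetilde{\gamma}_{r/C_1})$.

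First I would fix $j$ and consider the function
$$
F_j(z) := \int_{D(z,\, r(z)/(4C_1))} \Bigl|\ln \frac{|w-z|}{r(z)}\Bigr|\, \mu(dw),\qquad z\in D(z_j^0,\, r(z_j^0)/(2C_1)).
$$
Computing the Lebesgue average of $F_j$ over this disc and swapping integrals by Fubini, I would use (\ref{ze.5}) to see that $r(z)\asymp r(z_j^0)$ and that the inner discs $D(z, r(z)/(4C_1))$ are all contained in $D(z_j^0, r(z_j^0)/C_1)$. The problem reduces to the pointwise estimate
$$
\int_{D(z_j^0,\, r(z_j^0)/(2C_1))} \Bigl|\ln \frac{|w-z|}{r(z_j^0)}\Bigr|\, L(dz) \le C\, L\bigl(D(z_j^0, r(z_j^0)/(2C_1))\bigr),
$$
uniform in $w$, which is standard since $\ln$ has an integrable singularity in two dimensions. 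Hence the average of $F_j$ is bounded by $C\mu\bigl(D(z_j^0, r(z_j^0)/C_1)\bigr)$, and the mean value principle produces the desired $\widetilde{z}_j$.

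Second I would sum over $j$. The boundary spacing $r(z_j^0)/4\le |z_{j+1}^0-z_j^0|\le r(z_j^0)/2$, combined with (\ref{ze.5}), ensures that the enlarged discs $D(z_j^0, r(z_j^0)/C_1)$ have uniformly bounded multiplicity of overlap (depending only on $C_0$ and $C_1$) and are all contained in $\widetilde{\gamma}_{r/C_1}$. Therefore
$$
\sum_{j=1}^N F_j(\widetilde{z}_j) \le C\, \mu(\widetilde{\gamma}_{r/C_1}).
$$
Finally, applying Theorem \ref{ze1} with $z_j := \widetilde{z}_j$ and the hypothesis (\ref{ze.9}), the sum of logarithmic integrals in (\ref{ze.8}) is absorbed into the $\mu(\widetilde{\gamma}_{r/C_1})$ term already present on the right-hand side, yielding (\ref{ze.10}).

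The only delicate point is the bounded-multiplicity property of the covering by the discs $D(z_j^0, r(z_j^0)/C_1)$ and the verification that all implicit constants ultimately depend only on $C_0$ and $C_1$; this is a geometric fact about Lipschitz weights of modulus $\le 1/2$ and the chosen boundary spacing, and I would not expect it to be a serious obstacle. Everything else is routine Fubini and pigeonhole on top of Theorem \ref{ze1}.
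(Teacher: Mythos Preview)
Your proposal is correct and follows essentially the same route as the paper: average the logarithmic integral over $D(z_j^0,r(z_j^0)/(2C_1))$, use Fubini together with (\ref{ze.5}) and the local integrability of the logarithm to bound this average by $C\mu(D(z_j^0,r(z_j^0)/C_1))$, pick $\widetilde z_j$ below the average, sum over $j$ using bounded overlap of these discs inside $\widetilde\gamma_{r/C_1}$, and feed the result into Theorem~\ref{ze1}. You are in fact slightly more explicit than the paper about the bounded-overlap step, which the paper leaves implicit.
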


\smallskip
\par Of course, if we already know that 
\ekv{ze.11}
{
\int_{D(z_j,\frac{r(z_j)}{4C})}|\ln \frac{|w-z_j|}{r(z_j)}| \mu (dw)=
{\cal O}(1)\mu (D(z_j,\frac{r(z_j)}{4C}),
}
then we can keep $\widetilde{z}_j=z_j$ in (\ref{ze.8}) and get
(\ref{ze.10}). This is the case, if we assume that $\mu $ is
equivalent to the Lebesgue measure in the following sense:
\ekv{ze.12}
{
\frac{\mu (dw)}{\mu (D(z_j,\frac{r(z_j)}{2C_1}))} \asymp 
\frac{L (dw)}{L (D(z_j,\frac{r(z_j)}{2C_1}))}
\hbox{ on }D(z_j,\frac{r(z_j)}{2C_1}),\hbox{ uniformly for }j=1,2,...,N.
}
Then we get,
\begin{theo}\label{d3}
Make the assumptions of Theorem \ref{ze1} as well as (\ref{ze.11}) or the
stronger assumption (\ref{ze.12}). Then from (\ref{ze.6}), (\ref{ze.7}),
we conclude (\ref{ze.10}).
\end{theo}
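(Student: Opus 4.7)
The plan is to derive Theorem \ref{d3} as a direct corollary of Theorem \ref{ze1}. Starting from the bound (\ref{ze.8}), the only task is to absorb the sum of logarithmic integrals $\sum_j \int_{D(z_j,r(z_j)/(4C_1))} |\ln(|w-z_j|/r(z_j))|\mu(dw)$ into an expression of the form $O(\mu(\widetilde{\gamma}_{r/C_1}))$, after which (\ref{ze.10}) follows by relabeling constants.

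Assuming (\ref{ze.11}) first, I would bound each term of the sum by $O(1)\mu(D(z_j,r(z_j)/(4C_1)))$ directly, and then add up the contributions. The key geometric fact to verify is that the discs $D(z_j,r(z_j)/(4C_1))$, $j=1,\dots,N$, have bounded overlap and are contained in $\widetilde{\gamma}_{r/C_1}$. Containment is immediate since each $z_j\in D(z_j^0,r(z_j^0)/(2C_1))$ with $z_j^0\in\gamma$, so $D(z_j,r(z_j)/(4C_1))\subset D(z_j^0,O(r(z_j^0)/C_1))$; after possibly enlarging $C_1$, this sits inside $\widetilde{\gamma}_{r/C_1}$. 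For bounded overlap, use the spacing $|z_{j+1}^0-z_j^0|\ge r(z_j^0)/4$ along $\gamma$ together with the Lipschitz property (\ref{ze.5}), which keeps $r$ comparable on neighbouring discs, to see that any given point lies in at most a dimension-dependent number of the discs. Summing (\ref{ze.11}) over $j$ then gives
\[
\sum_{j=1}^{N}\int_{D(z_j,r(z_j)/(4C_1))}\Bigl|\ln\frac{|w-z_j|}{r(z_j)}\Bigr|\,\mu(dw)\le O(1)\,\mu(\widetilde{\gamma}_{r/C_1}),
\]
which, combined with (\ref{ze.8}), yields (\ref{ze.10}).

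To see that (\ref{ze.12}) implies (\ref{ze.11}), I would simply compute: on $D(z_j,r(z_j)/(2C_1))$ the measure $\mu$ is comparable to a constant multiple of Lebesgue measure, with the constant being $\mu(D(z_j,r(z_j)/(2C_1)))/L(D(z_j,r(z_j)/(2C_1)))$. Hence
\[
\int_{D(z_j,r(z_j)/(4C_1))}\Bigl|\ln\frac{|w-z_j|}{r(z_j)}\Bigr|\,\mu(dw)\asymp \frac{\mu(D(z_j,r(z_j)/(2C_1)))}{L(D(z_j,r(z_j)/(2C_1)))}\int_{D(z_j,r(z_j)/(4C_1))}\Bigl|\ln\frac{|w-z_j|}{r(z_j)}\Bigr|\,L(dw),
\]
and after the change of variable $w=z_j+r(z_j)\zeta$ the Lebesgue integral equals $r(z_j)^2$ times a finite absolute constant (the integral of $|\ln|\zeta||$ over $|\zeta|\le 1/(4C_1)$). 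Since $L(D(z_j,r(z_j)/(2C_1)))\asymp r(z_j)^2$, this ratio is $O(\mu(D(z_j,r(z_j)/(2C_1))))$, and a second application of (\ref{ze.12}) lets us replace the larger disc by $D(z_j,r(z_j)/(4C_1))$, giving (\ref{ze.11}).

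I do not expect any serious obstacle here, since the mechanism was already identified in the discussion preceding Theorem \ref{ze2}; the only care needed is bookkeeping with the Lipschitz constants $C_0,C_1$ to ensure that the enlarged discs used in the overlap argument remain inside $\widetilde{\gamma}_{r/C_1}$, and that the comparability constants in (\ref{ze.5}) are used consistently when passing between $r(z_j^0)$ and $r(z_j)$.
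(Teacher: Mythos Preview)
Your proposal is correct and follows the same route as the paper: the paper's argument for Theorem \ref{d3} is just the brief remark preceding it, namely that (\ref{ze.11}) lets you keep $\widetilde z_j=z_j$ and still control the sum of logarithmic integrals by $\mu(\widetilde\gamma_{r/C_1})$, while (\ref{ze.12}) implies (\ref{ze.11}) by the obvious Lebesgue computation. You have simply spelled out the bounded-overlap and containment bookkeeping that the paper leaves implicit; one small remark is that no enlargement of $C_1$ is actually needed, since $z_j\in D(z_j^0,r(z_j^0)/(2C_1))$ together with (\ref{ze.5}) already gives $D(z_j,r(z_j)/(4C_1))\subset D(z_j^0,r(z_j^0)/C_1)\subset\widetilde\gamma_{r/C_1}$.
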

In particular, we recover the counting proposition of M.~Hager
\cite{Ha06a, Ha06b}, where $\phi $ was independent of $h$ and of class $C^2$ 
in a fixed neighborhood of $\gamma $. Then $\mu
\asymp L$ and if we choose $r\ll 1$ constant and assume (\ref{ze.6}),
(\ref{ze.7}), we get from (\ref{ze.9}):
\ekv{ze.13}
{
|\# (u ^{-1}(0)\cap \Gamma )-\frac{1}{2\pi h}\mu (\Gamma )|
\le
\frac{\widetilde{C}}{h}(r+\sum_1^N \epsilon _j ).
}
Hager had $\epsilon _j=\epsilon $ independent of $j$,
$r=\sqrt{\epsilon }$, $N\asymp \epsilon ^{-1/2}$, so the remainder in
(\ref{ze.13}) is ${\cal O}(\frac{\sqrt{\epsilon }}{h})$.

We next outline the proof of Theorem \ref{ze1}.
Using a locally finite covering with discs $D(x,r(x))$ and a
subordinated partition of unity, it is standard to find a smooth function
$\widetilde{r}(x)$ satisfying 
\ekv{ze.14}
{
\frac{1}{C}r(x)\le \widetilde{r}(x)\le r(x),\ |\nabla
\widetilde{r}(x)|\le \frac{1}{2},\ \partial ^\alpha
\widetilde{r}(x)={\cal O}(\widetilde{r}^{1-|\alpha |}).
}

\par From now on, we replace $r(x)$ by $\widetilde{r}(x)$ 
and the drop the
tilde. The general estimates on $r$ remain valid and we have
$$
r(x)\ge \frac{1}{C}\mathrm{dist\,}(x,\gamma ).
$$.

\par Consider the signed distance to $\gamma $:
\ekv{ze.15}
{
g(x)=\cases{\mathrm{dist\,}(x,\gamma ),\ x\in \Gamma \cr
  -\mathrm{dist\,}(x,\gamma ),\ x\in {\bf C}\setminus \Gamma }
}

\par In the set $\cup_{x\in \gamma }R_x$, we consider the
regularized function 
\ekv{ze.16}
{
g_{\epsilon }(x)=\int \frac{1}{(\epsilon r(x))^2}\chi
(\frac{x-y}{\epsilon r(x)})g(y) L(dy),
}
where $0\le \chi \in C_0^\infty (D(0,1))$, $\int \chi
(x)L(dx)=1$. Here $\epsilon >0$ is small and we notice that
$r(x)\asymp r(y)$, $g(y)={\cal O}(r(y))$, when $\chi ((x-y)/\epsilon
r(x))\ne 0$. It follows that $g_\epsilon (x)={\cal O}(r(x))$ and more
precisely, since $g$ is Lipschitz, that 
\ekv{ze.17}
{
g_\epsilon (x)-g(x)={\cal O}(\epsilon r(x)).
}
Moreover one can show that if $(\nabla g)_\epsilon $ denotes the regularization of $\nabla g$, obtained as in (\ref{ze.16}), then 
\ekv{ze.18}{\nabla _xg_\epsilon (x)-(\nabla g)_\epsilon (x)={\cal O}(1)
\sup_{y\in D(x,\epsilon r(x))}\frac{|g(y)|}{r(x)},}
\ekv{ze.19}
{
\partial ^\alpha g_\epsilon (x)={\cal O}_{\alpha }((\epsilon
r(x))^{1-|\alpha |}),\ |\alpha |\ge 1.
}

\par Let $C>0$ be large enough but independent of $\epsilon $. Put 
\ekv{ze.20}
{
\widetilde{\gamma }_{C\epsilon r}=\{ x\in \cup_{y\in \gamma
}R_y;\, |g_\epsilon (x)|<C\epsilon r(x)\}.
}

\par If $C>0$ is sufficiently large, then in the coordinates
associated to (\ref{ze.2}), $\widetilde{\gamma }_{C\epsilon r}$ takes
the form
\ekv{ze.21}
{
f_x^-(\widetilde{y}_1)<\widetilde{y}_2<f_x^+(\widetilde{y}_1),\ |\widetilde{y}_1|<r(x),
}
where $f^\pm _x$ are smooth on $[-r(x),r(x)]$ and satisfy 
\ekv{ze.22}
{
\partial _{\widetilde{y}_1}^kf_x^\pm ={\cal O}_k((\epsilon
r(x))^{1-k}),\ k\ge 1,
}
\ekv{ze.23}
{
0< f_x^+-f_x,\, f_x-f_x^-\asymp \epsilon r(x).
}
Later, we will fix $\epsilon >0$ small enough and write $\gamma
_r=\widetilde{\gamma }_{C\epsilon r}$ and more generally, $\gamma
_{\alpha r}=\widetilde{\gamma }_{C\alpha \epsilon r}$.

\par We shall next establish an exponentially weighted estimate for
the Dirichlet Laplacian in $\gamma _r$ by adating the general approach of Agmon estimates to thin tubes (cf \cite{HeSj84}, \cite{Je00}):
\begin{prop}\label{ze4}
Let $C>0$ be sufficiently large and $\epsilon >0$ sufficiently small. Then if $\phi \in
C^2(\overline{\gamma }_r)$ and 
\ekv{ze.24}
{
|\phi '_x|\le \frac{1}{Cr},
}
we have 
\ekv{ze.25}
{
\Vert e^{\phi }Du\Vert+\frac{1}{C}\Vert \frac{1}{r}e^{\phi }u\Vert
\le C\Vert re^\phi \Delta u\Vert,\ u\in (H_0^1\cap H^2)(\gamma _r),
}
where we use the natural $L^2$ norms.
\end{prop}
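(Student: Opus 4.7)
The plan is a standard Agmon-type weighted energy estimate adapted to the thin tube $\gamma_r$, combined with a transverse Poincaré inequality that exploits the small width $\sim \epsilon r(x)$ of the tube.

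The first key step is the \emph{weighted Poincaré inequality}
$$
\Vert \tfrac{1}{r}e^{\phi }u\Vert \le C'\epsilon \,\Vert e^{\phi }\nabla u\Vert, \qquad u\in H_0^1(\gamma _r).
$$
To prove this, I would work in the local straightening coordinates of (\ref{ze.2})--(\ref{ze.23}): in a rectangle $R_x$ the slice $\widetilde{y}_1=\mathrm{const}$ is a segment $f_x^-(\widetilde{y}_1)<\widetilde{y}_2<f_x^+(\widetilde{y}_1)$ of length $\asymp \epsilon r(x)$ on which $u$ vanishes at both endpoints, so a one-dimensional Poincaré inequality gives $\int |u|^2\,d\widetilde y_2 \le C(\epsilon r)^2\int |\partial _{\widetilde{y}_2}u|^2\,d\widetilde y_2$. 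The hypothesis $|\phi '|\le 1/(Cr)$ combined with the transverse width $\sim\epsilon r$ implies that $e^{2\phi }$ varies across each slice by at most a factor $e^{2\epsilon /C}={\cal O}(1)$, so the weight can be slipped inside the slice inequality. A partition of unity subordinated to the covering $\{R_x\}$ (using (\ref{ze.5})) patches the slice estimates into the global inequality above.

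The second step is the energy identity. For $u\in (H_0^1\cap H^2)(\gamma _r)$, integrating by parts (the boundary terms vanish by Dirichlet) gives
$$
\int_{\gamma _r} e^{2\phi }|\nabla u|^2 \,L(dx) = -\Re \int_{\gamma _r}e^{2\phi }(\Delta u)\bar u \,L(dx) - 2\Re \int_{\gamma _r}e^{2\phi }(\nabla \phi \cdot \nabla u)\bar u\,L(dx).
$$
Now I apply Cauchy--Schwarz to each term, writing the first as $(re^{\phi }\Delta u)\cdot (r^{-1}e^{\phi }\bar u)$, and using $|\nabla \phi |\le 1/(Cr)$ in the second to bound it by $\tfrac{2}{C}\Vert e^{\phi }\nabla u\Vert\,\Vert r^{-1}e^{\phi }u\Vert$. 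Substituting the Poincaré inequality of the first step into the right-hand side yields
$$
\Vert e^{\phi }\nabla u\Vert ^2 \le C'\epsilon\, \Vert re^{\phi }\Delta u\Vert\,\Vert e^{\phi }\nabla u\Vert + \tfrac{2C'\epsilon }{C}\Vert e^{\phi }\nabla u\Vert ^2 .
$$
For $\epsilon $ small and $C$ large the second term is absorbed, giving $\Vert e^{\phi }\nabla u\Vert \le C''\Vert re^{\phi }\Delta u\Vert$. Combining this with the Poincaré step yields (\ref{ze.25}) since $|Du|=|\nabla u|$.

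The main obstacle is the Poincaré step: one must verify that the slicewise 1D inequality can be globalized uniformly on $\gamma _r$ despite the variability of $r(x)$ and the fact that the coordinates $R_x$ only cover a neighborhood of each boundary point. The bounds (\ref{ze.14}), the Lipschitz geometry encoded in (\ref{ze.22})--(\ref{ze.23}), and in particular the condition $|\nabla \phi |\le 1/(Cr)$ (which controls the transverse oscillation of $e^{\phi }$) are exactly what make this globalization quantitative and uniform in the weight $\phi $. Once this is in place, the rest is bookkeeping with Cauchy--Schwarz and an absorption argument, with no difficulty beyond choosing $\epsilon $ small and $C$ large enough.
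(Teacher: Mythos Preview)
Your proof is correct and follows a genuinely different route from the paper. You go via a transverse Poincar\'e inequality, using directly that each cross-section of $\gamma_r$ has length $\asymp \epsilon r$ and applying the 1D Poincar\'e inequality slice by slice; this yields $\Vert r^{-1}e^{\phi}u\Vert \le C'\epsilon\,\Vert e^{\phi}\nabla u\Vert$, after which an elementary integration-by-parts identity and absorption finish the job. The paper instead conjugates to $-\Delta_\phi = e^{\phi}(-\Delta)e^{-\phi}$, uses the commutator identity $(Du|vu)-(vu|Du)=i(\mathrm{div}\,v\, u|u)$ for a well-chosen vector field, and then constructs $v=\nabla(e^{\lambda g_\epsilon/r})$ with $\lambda\gg 1$ so that $\mathrm{div}\,v\ge r^{-2}$ and $|v|={\cal O}(r^{-1})$; this encodes the same Hardy/Poincar\'e information without opening up local charts. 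Your argument is more geometric and arguably more transparent, but the globalization of the slicewise estimate (via a covering by the rectangles $R_x$ with bounded overlap) does require a small amount of care; the paper's vector-field construction bypasses that step at the cost of having to verify the differential inequalities for $v$, which is where the regularized distance $g_\epsilon$ and the bounds (\ref{ze.17})--(\ref{ze.19}) are used.
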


\begin{outline} Let $\phi \in C^2(\overline{\gamma
  }_r;{\bf R})$ and put 
$$
-\Delta _\phi = e^{\phi }\circ (-\Delta )\circ e^{-\phi }=D_x^2-(\phi '_x)^2+i(\phi '_x\circ D_x+D_x\circ \phi '_x),
$$
where we make the usual observation that the last term is formally
anti-self-adjoint. Then for every $u\in (H^2\cap H_0^1)(\gamma _r)$:
\ekv{ze.26}
{
(-\Delta _\phi u|u)=\Vert D_xu\Vert^2-((\phi '_x)^2u|u).
}

\par We need an apriori estimate for $D_x$. Let $v:\overline{\gamma
}_r\to {\bf R}^n$ be sufficiently smooth. We sometimes consider $v$ as
a vector field. Then for $u\in (H^2\cap H_0^1)(\gamma _r)$:
$$
(Du|vu)-(vu|Du)=i(\mathrm{div\,}(v)u|u).
$$

Assume $\mathrm{div\,}(v)>0$. Recall that if $v=\nabla w$, then
$\mathrm{div\,}(v)=\Delta w$, so it suffices to take $w$ strictly
subharmonic. Then 
$$
\int \mathrm{div\,}(v)|u|^2dx\le 2\Vert vu\Vert\Vert Du\Vert\le \Vert
Du\Vert^2+\Vert vu\Vert^2,
$$
which we write
$$
\int (\mathrm{div\,}(v)-|v|^2)|u|^2 dx\le \Vert Du\Vert^2.
$$
Using this in (\ref{ze.26}), we get
\begin{eqnarray*}
&&\frac{1}{2}\Vert Du\Vert^2+\int
(\frac{1}{2}(\mathrm{div\,}(v)-|v|^2)-(\phi '_x)^2)|u|^2 dx\le\\
&& \Vert \frac{1}{k}(-\Delta _\phi )u\Vert\Vert ku\Vert\le 
\frac{1}{2}\Vert \frac{1}{k}(-\Delta _\phi )u\Vert^2+\frac{1}{2}
\Vert ku\Vert^2,
\end{eqnarray*}
where $k$ is any positive continuous function on $\overline{\gamma
}_r$. We write this as
\ekv{ze.26.5}
{
\frac{1}{2}\Vert Du\Vert^2+\int
(\frac{1}{2}(\mathrm{div\,}(v)-|v|^2-k^2)-(\phi _x')^2)dx \le
\frac{1}{2}\Vert \frac{1}{k}(-\Delta _\phi )u\Vert^2.
}

\par The remaining work is then to see that we can choose $v$ so that 
\ekv{ze.27}{
\mathrm{div\,}(v)\ge r^{-2},\ |v|\le {\cal O}(r^{-1}).
}
and it turns out that this is possible with
\ekv{ze.28}
{
v=\nabla (e^{\lambda g/r}),
}
where $\lambda >0$ is sufficiently large and $g=g_\epsilon $. Then replace $v$ by a small multiple and finally choose $k$ to be a small multiple of $1/r$.\end{outline}
 
\par If $\Omega \Subset {\bf C}$ has smooth boundary, let $G_\Omega $,
$P_\Omega $ denote the Green and the Poisson kernels of $\Omega $, so
that the Dirichlet problem,
$$
\Delta u=v,\ {{u}_\vert}_{\partial \Omega }=f,\quad u,v\in C^\infty
(\overline{\Omega }),\ f\in C^{\infty }(\partial \Omega ),
$$
has the unique solution
$$
u(x)=\int_\Omega G_\Omega (x,y)v(y)L(dy)+\int_{\partial \Omega
}P_\Omega (x,y)f(y)|dy|.
$$
Recall that $-G_\Omega \ge 0$, $P_\Omega \ge 0$. We have 
\ekv{ze.29}
{
-G_{\Omega }(x,y)\le C-\frac{1}{2\pi }\ln |x-y|,
}
where $C>0$ only depends on the diameter of $\Omega $. 

\par We also have the scaling property:
\ekv{ze.30}
{
G_{\Omega }(\frac{x}{t},\frac{y}{t})=G_{t\Omega }(x,y),\ x,y\in
t\Omega , t>0.
}
Moreover, $-G_{\Omega }$ is an increasing function of $\Omega $
in the natural sense. Using these facts with Proposition \ref{ze4} one can show the following result:
\begin{prop}\label{ze5}
For all $x,y\in \gamma _r$ (and $\epsilon >0$ small enough), we have 
\ekv{ze.31}
{
-G_{\gamma _r}(x,y)\le C-\frac{1}{2\pi }\ln \frac{|x-y|}{r(y)},\hbox{
  when }|x-y|\le \frac{r(y)}{C},
}
\ekv{ze.32}
{
-G_{\gamma _r}(x,y)\le C \exp (-\frac{1}{C}\int_{\pi _\gamma (y)}^{\pi _\gamma (x)}\frac{1}{r(t)}|dt|),\hbox{
  when }|x-y|\ge \frac{r(y)}{C},
}
where it is understood that the integral is evaluated along $\gamma $
from $\pi _\gamma (y)\in \gamma $ to $\pi _\gamma (x)\in \gamma $,
where $\pi _\gamma (y)$, $\pi _\gamma (x)$ denote points in $\gamma $
with $|x-\pi _\gamma (x)|=\mathrm{dist\,}(x,\gamma )$, 
$|y-\pi _\gamma (y)|=\mathrm{dist\,}(y,\gamma )$, and we choose these
two points (when they are not uniquely defined) and the intermediate
segment in such a way that the integral is as small as possible.
\end{prop}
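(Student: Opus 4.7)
Proof plan. The two estimates are proved independently: (\ref{ze.31}) by scaling and monotonicity, using explicit Green's-function bounds for a strip, and (\ref{ze.32}) as the main application of Proposition \ref{ze4}.

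For (\ref{ze.31}), I would rescale by $1/r(y)$ via the identity (\ref{ze.30}) so as to reduce to $r(y)\asymp 1$ and a tube of transverse width $\asymp \epsilon $ near $y$. The crude bound (\ref{ze.29}) in the scaled picture produces only a constant depending on $\mathrm{diam\,}(\gamma _r/r(y))\sim 1/r(y)$, which fails to yield the $\ln r(y)$ improvement. The improvement comes from the transverse thinness: by the local coordinate form (\ref{ze.21})--(\ref{ze.23}), near $y$ the tube $\gamma _r$ lies in an infinite strip $S_y$ of width $\asymp \epsilon r(y)$ tangent to $\partial \gamma _r$ at $\pi _\gamma (y)$, and the Dirichlet Green's function of such a strip with a source at distance $\asymp \epsilon r(y)$ from the boundary satisfies $-G_{S_y}(x,y)\le C-\frac{1}{2\pi }\ln (|x-y|/(\epsilon r(y)))$ by the method of images. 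Since $\gamma _r$ is not globally contained in $S_y$, one needs a patching argument: choose an auxiliary domain $\Omega \supset \gamma _r$ coinciding with $S_y$ on $D(y,r(y))$ and extending so as to contain the rest of the tube, then use monotonicity $-G_{\gamma _r}\le -G_\Omega $ together with a decomposition of $G_\Omega $ into a local strip piece and a smooth harmonic correction.

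For (\ref{ze.32}), I would apply Proposition \ref{ze4} to a cutoff of $-G_{\gamma _r}(\cdot ,y)$ with an Agmon weight. Using the mollified $r$ of (\ref{ze.14}), construct $\phi \in C^\infty (\gamma _r;{\bf R})$ with $\phi (\pi _\gamma (y))=0$, $|\phi '_x|\le 1/(Cr)$ (so (\ref{ze.24}) holds), and
$$
\phi (x)\ge \frac{1}{C_1}\int _{\pi _\gamma (y)}^{\pi _\gamma (x)}\frac{|dt|}{r(t)}-C_1',
$$
obtained by integrating $1/r$ along $\gamma $ from $\pi _\gamma (y)$ and extending transversally. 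Let $\chi \in C^\infty (\gamma _r)$ be a cutoff with $\chi =0$ on $\{|x-y|\le r(y)/(2C)\}$, $\chi =1$ on $\{|x-y|\ge r(y)/C\}$, and $|\partial ^\alpha \chi |={\cal O}(r(y)^{-|\alpha |})$, and set $u=\chi \cdot (-G_{\gamma _r}(\cdot ,y))\in (H_0^1\cap H^2)(\gamma _r)$. Since $\chi $ annihilates the $\delta _y$ singularity,
$$
\Delta u=2\nabla \chi \cdot \nabla (-G_{\gamma _r})+(-G_{\gamma _r})\Delta \chi
$$
is supported in the annular region $\{r(y)/(2C)\le |x-y|\le r(y)/C\}\cap \gamma _r$, whose area is $O(\epsilon r(y)^2)$. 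On this region the crude bound (\ref{ze.29}) and interior elliptic estimates yield $|{-G_{\gamma _r}}|={\cal O}(1+|\ln r(y)|)$ and $|\nabla (-G_{\gamma _r})|={\cal O}(1/r(y))$, while $e^\phi ={\cal O}(1)$. A direct computation gives $\Vert re^\phi \Delta u\Vert _{L^2}={\cal O}(1)$, so Proposition \ref{ze4} implies
$$
\Vert r^{-1}e^\phi u\Vert _{L^2(\gamma _r)}+\Vert e^\phi \nabla u\Vert _{L^2(\gamma _r)}={\cal O}(1).
$$

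Finally, for $x_0\in \gamma _r$ with $|x_0-y|\ge r(y)/C$, the function $-G_{\gamma _r}(\cdot ,y)$ is harmonic on a disc $B\subset \gamma _r$ of radius $\asymp \epsilon r(x_0)$ around $x_0$, so the sub-mean-value inequality for $|{-G_{\gamma _r}}|^2$, together with the near-constancy $e^\phi \asymp e^{\phi (x_0)}$ and $r\asymp r(x_0)$ on $B$, yields
$$
|{-G_{\gamma _r}}(x_0,y)|\le \frac{C}{\epsilon r(x_0)}\Vert {-G_{\gamma _r}}\Vert _{L^2(B)}\le C'e^{-\phi (x_0)}\le C''\exp \left(-\frac{1}{C'''}\int _{\pi _\gamma (y)}^{\pi _\gamma (x_0)}\frac{|dt|}{r(t)}\right),
$$
which is (\ref{ze.32}). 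The main obstacle is (\ref{ze.31}): implementing the strip comparison in a curved, only-Lipschitz tube requires careful control of the curvature error and of the low boundary regularity, and both the quantitative flatness (\ref{ze.22}) and the mollified $r$ of (\ref{ze.14}) are essential to make this comparison rigorous.
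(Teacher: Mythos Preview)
Your Agmon-estimate route to (\ref{ze.32}) via Proposition \ref{ze4} is the intended one, but the computation has a gap. With only the crude bound (\ref{ze.29}) on the annulus you get $|{-G_{\gamma _r}}|={\cal O}(1+|\ln r(y)|)$, and then the term $(-G_{\gamma _r})\Delta \chi $ contributes ${\cal O}((1+|\ln r(y)|)/r(y)^2)$ pointwise on a set of area $\asymp \epsilon r(y)^2$; the ``direct computation'' therefore gives
$$
\Vert re^\phi \Delta u\Vert _{L^2}={\cal O}\big(\sqrt{\epsilon }\,(1+|\ln r(y)|)\big),
$$
not ${\cal O}(1)$. This logarithm survives the weighted estimate and the sub-mean-value step, so your argument yields (\ref{ze.32}) only with an extra prefactor $1+|\ln r(y)|$, which cannot be absorbed into the exponential when $|x_0-y|\asymp r(y)$ (the integral $\int r^{-1}|dt|$ is then only $\asymp 1$). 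The same logarithm would contaminate the maximum-principle step you would need for (\ref{ze.31}).

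The fix is to reverse the order: prove (\ref{ze.31}) first by a direct comparison, and then feed its consequence $-G_{\gamma _r}(x,y)={\cal O}(1)$ for $|x-y|\asymp r(y)$ into the Agmon argument, after which (\ref{ze.32}) follows cleanly. For (\ref{ze.31}) your strip comparison runs into exactly the global containment obstacle you describe, and the patching is delicate. A simpler route is pure monotonicity with a simply connected $\Omega \supset \gamma _r$ having $d(y,\partial \Omega )\asymp \epsilon r(y)$: take for instance $\Omega $ to be the bounded region enclosed by the outer boundary curve of $\gamma _r$. Then the Koebe distortion theorem (applied to the Riemann map $\Omega \to D(0,1)$ sending $y\mapsto 0$) gives $-G_\Omega (x,y)\le C-\frac{1}{2\pi }\ln \frac{|x-y|}{\epsilon r(y)}$ when the image point has modulus $\le 1/2$, and $-G_\Omega (x,y)\le \frac{\ln 2}{2\pi }$ otherwise; both cases yield (\ref{ze.31}). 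Equivalently, the curvature bound (\ref{ze.22}) gives an exterior tangent disc $D(z^*,\rho )$ with $\rho \asymp \epsilon r(y)$, $|z^*-y|\asymp \epsilon r(y)$, and the explicit Green's function of ${\bf C}\setminus \overline{D(z^*,\rho )}$ (via inversion) does the same job.
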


\par We will also need a lower bound on $G_{\gamma _r}$ on suitable
subsets of $\gamma _r$. For $\epsilon >0$ fixed and sufficiently small,
we say that $M\Subset \gamma _r$ is an
elementary piece of $\gamma _r$ if 
\begin{itemize}
\item $M\subset \gamma _{(1-\frac{1}{Cr})}$,
\item $\frac{1}{C}\le \frac{r(x)}{r(y)}\le C$, $x,y\in M$,
\item $\exists y\in M$ such that $M=y+r(y)\widetilde{M}$, where
  $\widetilde{M}$ belongs to a bounded set of relatively compact
  subsets of ${\bf C}$ with smooth boundary.
\end{itemize}
In the following, it will be tacitly understood that we choose our
elementary pieces with some uniform control ($C$ fixed and uniform
control on the $\widetilde{M}$). Using Harnack's inequality one can show:
\begin{prop}\label{a3}
If $M$ is an elementary piece in $\gamma _r$, then 
\ekv{ze.33}
{
-G_{\gamma _r}(x,y)\asymp 1+|\ln \frac{|x-y|}{r(y)}|,\ x,y\in M.
}
\end{prop}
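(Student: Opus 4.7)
My plan is to prove the upper bound directly from Proposition \ref{ze5} and then prove the lower bound in two pieces separated by the scale $r(y)$: a local piece obtained by monotonicity with an interior disc, and a global piece obtained by Harnack chaining. The scaling identity (\ref{ze.30}) and the uniform control of $\widetilde{M}$ will make all constants uniform.

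For the upper bound, fix $x,y\in M$. Since $M$ is an elementary piece, $|x-y|\le\mathrm{diam}(M)\le Cr(y)$ and $r(x)\asymp r(y)$. If $|x-y|\le r(y)/C$, then (\ref{ze.31}) immediately yields
\[
-G_{\gamma_r}(x,y)\le C-\tfrac{1}{2\pi}\ln\tfrac{|x-y|}{r(y)}\le C'\bigl(1+\bigl|\ln\tfrac{|x-y|}{r(y)}\bigr|\bigr),
\]
while if $|x-y|\ge r(y)/C$, (\ref{ze.32}) gives $-G_{\gamma_r}(x,y)\le C$ and $1+|\ln(|x-y|/r(y))|$ is bounded from below by a positive constant, so the same bound holds.

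For the lower bound, pick a constant $C'$ so large that $D:=D(y,r(y)/C')\subset\gamma_r$; this is possible because $M\subset\gamma_{(1-1/C)r}$ ensures $y$ is at distance $\gtrsim r(y)/C$ from $\partial\gamma_r$. By the monotonicity of Green's functions for inclusion of domains,
\[
-G_{\gamma_r}(x,y)\ge -G_D(x,y)=\tfrac{1}{2\pi}\ln\tfrac{r(y)/C'}{|x-y|},\qquad x\in D.
\]
For $|x-y|\le r(y)/(2C')$ the right-hand side is $\ge c(1+|\ln(|x-y|/r(y))|)$, yielding the bound in this regime. In particular on the circle $|x-y|=r(y)/(2C')$ (which lies in $\gamma_r$) we have $-G_{\gamma_r}(\cdot,y)\ge c_0>0$.

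To extend to $x\in M$ with $|x-y|\ge r(y)/(2C')$, where the target bound is just $-G_{\gamma_r}(x,y)\gtrsim 1$, I would apply Harnack's inequality to the positive harmonic function $u=-G_{\gamma_r}(\cdot,y)$ on $\gamma_r\setminus\{y\}$. Using the rescaling (\ref{ze.30}) with $\tilde x=(x-y)/r(y)$, the set $\widetilde M$ belongs to a bounded family of compacta with smooth boundary, all contained in a fixed compact portion of the rescaled domain $\widetilde\Omega=(\gamma_r-y)/r(y)$ at distance $\gtrsim 1/C$ from $\partial\widetilde\Omega$. Thus one can cover $\widetilde M\setminus D(0,1/(2C'))$ by a uniformly bounded number of discs of fixed radius contained in $\widetilde\Omega$, and a standard Harnack chain starting from the circle $|\tilde x|=1/(2C')$ propagates the lower bound $u\ge c_0$ with a uniform multiplicative constant. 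The main obstacle is exactly this uniformity: one must be sure that the Harnack constant does not depend on $M$, which is precisely guaranteed by the two structural features in the definition of an elementary piece (comparability of $r$ on $M$, and $\widetilde M$ belonging to a bounded family), together with the separation $M\subset\gamma_{(1-1/C)r}$ from $\partial\gamma_r$. Combining the two regimes yields (\ref{ze.33}).
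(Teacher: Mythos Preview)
Your proof is correct and follows essentially the same approach as the paper, which only says ``Using Harnack's inequality one can show'' before stating the result. Your argument is in fact more detailed: you obtain the upper bound from Proposition \ref{ze5}, and for the lower bound you combine monotonicity with an interior disc (to capture the logarithmic singularity) with a Harnack chain (to propagate a uniform positive lower bound across $M$), correctly identifying that the uniformity of the Harnack constant is guaranteed by the three defining properties of an elementary piece.
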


\par Let $\phi $ be a continuous subharmonic function defined in some neighborhood of 
$\overline{\gamma _r}$. Let 
\ekv{cz.9}
{
\mu =\mu _\phi =\Delta \phi 
}
be the corresponding locally finite  positive measure.

Let $u$ be a holomorphic function defined in a neighborhood of 
$\Gamma \cup\overline{\gamma _r}$. 
We assume that 
\ekv{cz.10}
{
h\ln \vert u(z)\vert \le \phi (z),\ z\in\overline{\gamma _r}. 
}

\begin{lemma}\label{cz1}
Let $z_0\in M$, where $M$ is an elementary piece, such that
\ekv{cz.11}
{h\ln \vert u(z_0)\vert \ge \phi (z_0)-\epsilon ,\ 0<\epsilon \ll 1.}
Then the number of zeros of $u$ in $M$ is 
\ekv{cz.12}{\le {C\over h}(\epsilon +\int_{\gamma _r}-G_{\gamma_r}(z_0,w)\mu (dw)).}
\end{lemma}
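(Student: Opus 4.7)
The plan is to apply the Riesz representation on $\gamma_r$ to the function $v(z) := \phi(z) - h\ln |u(z)|$. By hypothesis (\ref{cz.10}) we have $v \ge 0$ on $\overline{\gamma_r}$, and (\ref{cz.11}) gives $v(z_0) \le \epsilon$. Its distributional Laplacian is
\[
\Delta v = \mu - 2\pi h\, \nu_u,\qquad \nu_u = \sum_j \delta_{a_j},
\]
where $a_j$ are the zeros of $u$ in $\gamma_r$, counted with multiplicity. The Poisson--Green representation for the Dirichlet problem on $\gamma_r$ then reads
\[
v(z_0) = \int_{\gamma_r} G_{\gamma_r}(z_0, w)\, \Delta v(w) + \int_{\partial \gamma_r} P_{\gamma_r}(z_0, w)\, v(w)\,|dw|,
\]
which, using $G_{\gamma_r} \le 0$, I would rearrange as
\[
2\pi h\sum_j (-G_{\gamma_r}(z_0, a_j)) + \int_{\partial\gamma_r} P_{\gamma_r}(z_0,w)\, v(w)\,|dw| = v(z_0) + \int_{\gamma_r} (-G_{\gamma_r}(z_0,w))\,\mu(dw).
\]

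Since $P_{\gamma_r} \ge 0$ and $v|_{\partial\gamma_r} \ge 0$, the boundary integral on the left is nonnegative, and using $v(z_0) \le \epsilon$ I get
\[
2\pi h \sum_{j:\, a_j \in M} (-G_{\gamma_r}(z_0, a_j)) \le 2\pi h \sum_{j} (-G_{\gamma_r}(z_0, a_j)) \le \epsilon + \int_{\gamma_r} (-G_{\gamma_r}(z_0, w))\,\mu(dw).
\]
For each $a_j \in M$, Proposition \ref{a3} gives $-G_{\gamma_r}(z_0, a_j) \asymp 1 + |\ln(|z_0 - a_j|/r(a_j))|$, and since $z_0, a_j$ both lie in the elementary piece $M$ the ratio $|z_0 - a_j|/r(a_j)$ is bounded above by an absolute constant, so $-G_{\gamma_r}(z_0, a_j) \ge 1/C$ uniformly in $j$. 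Dividing through yields exactly the bound (\ref{cz.12}) on $\#(u^{-1}(0) \cap M)$.

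The step I expect to require the most care is justifying the Poisson--Green representation formula in this setting: $\phi$ is only assumed continuous subharmonic, the domain $\gamma_r$ has merely Lipschitz boundary, and $u$ may a priori vanish on $\partial \gamma_r$. I would handle this by mollifying $\phi$ to $\phi_\delta \in C^\infty$ with $\Delta \phi_\delta = \mu_\delta \to \mu$ weakly, and by shrinking $\gamma_r$ very slightly to a smooth domain containing no zeros of $u$ on the new boundary; the resulting representation is standard, all the sign constraints ($-G_{\gamma_r} \ge 0$, $P_{\gamma_r} \ge 0$, $v \ge 0$ on the boundary, $\mu \ge 0$) survive passage to the limit, and the final inequality is preserved. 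No finer properties of $\gamma_r$ beyond Proposition \ref{a3} enter the argument.
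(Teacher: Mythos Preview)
Your proof is correct and is essentially the same as the paper's: both use the Poisson--Green representation on $\gamma_r$, the nonnegativity of $v=\phi-h\ln|u|$ on the boundary (from (\ref{cz.10})), and the uniform lower bound $-G_{\gamma_r}(z_0,w)\ge 1/C$ on the elementary piece $M$. The only cosmetic difference is that the paper applies the representation separately to $h\ln|u|$ and to $\phi$ and then subtracts, whereas you apply it directly to $v$; the paper also justifies the regularity issues more tersely (smoothing $\phi$ and assuming no zeros on $\partial\gamma_r$), exactly along the lines you sketch.
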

\begin{proof}
Writing $\phi $ as a uniform limit of an increasing sequence of smooth 
functions, we may assume that $\phi \in C^\infty $.
Let 
$$
n_u(dz)=\sum 2\pi \delta (z-z_j),
$$
where $z_j$ are the zeros of $u$ counted with their multiplicity. We may 
assume that no $z_j$ are situated on $\partial \gamma _r$. Then, since 
$\Delta \ln \vert u\vert =n_u$,
\eeekv{cz.13}
{
&&\hskip -2cm h\ln \vert u(z)\vert = \int_{\gamma _r} G_{\gamma_r}(z,w)h n_u (dw)+\int_{\partial 
\gamma _r}P_{\gamma_r}(z,w)h\ln \vert u(w)\vert \vert dw\vert}   
{&&\le \int_{\gamma _r}G_{\gamma_r}(z,w)hn_u(dw)+\int_{\partial \gamma 
_r}P_{\gamma_r}(z,w)\phi (w)\vert dw\vert} 
{&&= \int_{\gamma _r}G_{\gamma_r}(z,w)hn_u(dw)+\phi (z)-\int_{\gamma 
_r}G_{\gamma_r}(z,w)\mu (dw).}
Putting $z=z_0$ in \no{cz.13} and using \no{cz.11}, we get
$$
\int_{\gamma _r}-G_{\gamma_r}(z_0,w)hn_u(dw)\le \epsilon +\int_{\gamma 
_r}-G_{\gamma_r}(z_0,w)\mu (dw).
$$
Now $$
-G_{\gamma_r}(z_0,w)\ge {1 \over C},\ w\in M,
$$
and we get \no{cz.12}.
\end{proof}

\par Notice that this argument is basically the same as when using 
Jensen's 
formula to estimate the number of zeros of a holomorphic function in a disc. 

\par Now we sharpen the assumption \no{cz.11} and assume 
\ekv{cz.14}
{
h\ln \vert u(z_j)\vert \ge \phi (z_j)-\epsilon_j ,
}
where $z_1,...,z_N\in \gamma _{(1-{1\over C_1})r}$
are points such that with the cyclic convention $N+1=1$:
 
\ekv{cz.15}
{
|z_{j+1}-z_j|\asymp r(z_j),\ \frac{r(z_{j+1})}{r(z_j)}\asymp 1.
}
We also assume that $z_1,z_2,...,z_N$ are arranged in such a way that 
\ekv{cz.16}
{
{\bf Z}/N{\bf Z}\mapsto \pi_\gamma  (z_j) \hbox{ runs through the oriented boundary in the positive sense.}
}
Let $M_j\subset \gamma _r$ be elementary pieces such that 
\ekv{cz.16.5}
{z_j\in M_j,\ \mathrm{dist\,}(z_j,M_k)\ge \frac{r(z_j)}{C}\hbox{ when
  }
k\ne j,\
  \gamma _{\widetilde{r}}\subset \cup_j M_j,\ \widetilde{r}=(1-\frac{1}{C_1})r.}
We will also assume for a while that $\phi $ 
is smooth. 

\par According to Lemma \ref{cz1}, we have 
\ekv{cz.17}
{
\# (u^{-1} (0)\cap M_j)\le 
{C_3\over h}(\epsilon_j +\int_{\gamma _r}-G_{\gamma_r}(z_j,w)\mu (dw)).
}

\par Consider the harmonic functions on $\gamma _{\widetilde{r}}$,
\ekv{cz.19}
{
\Psi (z)=h(\ln \vert u(z)\vert +\int_{\gamma 
_{\widetilde{r}}}-G_{\gamma _{\widetilde{r}}}(z,w)n_u(dw)),
}
\ekv{cz.20}
{
\Phi (z)=\phi (z)+\int_{\gamma _{\widetilde{r}}}-
G_{\gamma _{\widetilde{r}}}(z,w)\mu (dw).
}
Then $\Phi (z)\ge \phi (z)$ with equality on $\partial \gamma 
_{\widetilde{r}}$. Similarly, $\Psi (z)\ge h\ln \vert u(z)\vert $ with 
equality on $\partial \gamma _{\widetilde{r}}$.

\par Consider the harmonic function
\ekv{cz.21}
{
H(z)=\Phi (z)-\Psi (z),\ z\in \gamma _{\widetilde{r}}.
}
Then on $\partial \gamma _{\widetilde{r}}$, we have by \no{cz.10} that
$$
H(z)=\phi (z)-h\ln \vert u(z)\vert \ge 0,
$$
so by the maximum principle,
\ekv{cz.22}
{
H(z)\ge 0,\hbox{ on }\gamma _{\widetilde{r}}.
}
By \no{cz.14}, we have 
\eeeekv{cz.23}
{
H(z_j)&=& \Phi (z_j)-\Psi (z_j)
}
{
&=&\phi (z_j)-h\ln \vert u(z_j)\vert }{&&+
\int_{\gamma _{\widetilde{r}}}-G_{\gamma _{\widetilde{r}}}(z_j,w)\mu 
(dw)-\int_{\gamma _{\widetilde{r}}} -G_{\gamma _{\widetilde{r}}}(z_j,w)hn_u(dw)
}
{
&\le & \epsilon_j +\int_{\gamma _{\widetilde{r}}} -
G_{\gamma _{\widetilde{r}}}(z_j,w)\mu (dw).
}

\par Harnack's inequality implies that 
\ekv{cz.24}
{
H(z)\le {\cal O}(1)(\epsilon_j +\int -G_{\gamma _{\widetilde{r}}}(z_j,w)\mu (dw))\hbox{ 
on }M_j\cap \gamma _{\widehat{r}},\ \widehat{r}=(1-\frac{1}{C_1})\widetilde{r}.}

\par Now assume that $u$ extends to a holomorphic function in a neighborhood of 
$\Gamma \cup \overline{\gamma _r}$. We then would like to evaluate the 
number of zeros of $u$ in $\Gamma $. Using \no{cz.17}, we first have 
\ekv{cz.25}
{
\# (u^{-1}(0)\cap \gamma _{\widetilde{r}})\le {C\over h}\sum_{j=1}^N
\left(\epsilon _j
+ \int_{\gamma _r}-G_{\gamma_r}(z_j,w)\mu (dw)\right).
}
\par Let $\chi \in C_0^\infty (\Gamma \cup \gamma _{\widehat{r}};[0,1])$ be equal to 1 on $\Gamma $. Of course $\chi $ 
will have to depend on $r$ but we may assume that for all $k \in{\bf N}$,
\ekv{cz.26}
{
\nabla ^k\chi ={\cal O}(r^{-k}).
}
We are interested in 
\ekv{cz.27}
{
\int \chi (z)hn_u(dz)=\int_{\gamma _{\widehat{r}}}h\ln \vert u(z)\vert 
\Delta \chi (z)L(dz).
}
Here we have on $\gamma _{\widetilde{r}}$
\eeeekv{cz.28}
{h\ln \vert u(z)\vert &=&\Psi (z)-\int_{\gamma 
_{\widetilde{r}}}-G_{\gamma _{\widetilde{r}}}(z,w)hn_u(dw)}
{&=&\Phi (z)-H(z)-\int_{\gamma 
_{\widetilde{r}}}-G_{\gamma _{\widetilde{r}}}(z,w)hn_u(dw)}
{
&=&\phi (z)+\int_{\gamma _{\widetilde{r}}}-G_{\gamma _{\widetilde{r}}}
(z,w)\mu 
(dw)-H(z)-\int_{\gamma _{\widetilde{r}}}-G_{\gamma _{\widetilde{r}}}(z,w)hn_u(dw)
}
{&=& \phi (z)+R(z),}
where the last equality defines $R(z)$.

\par Inserting this in \no{cz.27}, we get 
\ekv{cz.29}
{
\int\chi (z)hn_u(dz)=\int \chi (z)\mu (dz)+\int R(z)\Delta \chi (z)L(dz).
}
(Here we also used some extension of $\phi $ to $\Gamma $ 
with $\mu =\Delta \phi $.) The task is now to estimate $R(z)$ and the 
corresponding integral in \no{cz.29}. Put 
\ekv{cz.30}
{
\mu _j=\mu (M_j\cap \gamma _{\widetilde{r}}).
}
Using the exponential decay property (\ref{ze.32}) (equally valid for 
$G_{\gamma _{\widetilde{r}}}$) we get for $z\in M_j\cap \gamma 
_{\widetilde{r}}$, ${\rm dist\,}(z,\partial M_k)\ge r(z_j)/{\cal
  O}(1)$, $k\ne j$:
\ekv{cz.31}
{
\int_{\gamma _{\widetilde{r}}}-G_{\gamma _{\widetilde{r}}}
(z,w)\mu (dw)\le 
\int_{M_j\cap \gamma _{\widetilde{r}}}
-G_{\gamma _{\widetilde{r}}}(z,w)\mu 
(dw)+{\cal O}(1)\sum_{k\ne j}\mu _ke^{-{1\over C_0}\vert j-k\vert },
}
where $|j-k|$ denotes the natural distance from $j$ to $k$ in ${\bf
  Z}/N{\bf Z}$.
Similarly from \no{cz.24}, we get 
\ekv{cz.32}
{
H(z)\le {\cal O}(1)(\epsilon_j +
\int_{M_j\cap\gamma_{\widetilde{r}}}-G_{\gamma _{\widetilde{r}}}
(z_j,w)\mu (dw)+\sum_{k\ne j}e^{-{1\over 
C_0}\vert j-k\vert }\mu _k),
}
for $z\in M_j\cap \gamma _{\widetilde{r}}$. 

\par This gives the following estimate on the contribution from the first 
two terms in $R(z)$ to the last integral in \no{cz.29}:
\eeekv{cz.32.5}
{
&&\int_{\gamma _{\widetilde{r}}}\left(\int_{\gamma _{\widetilde{r}}}
-G_{\gamma _{\widetilde{r}}}(z,w)\mu (dw)-H(z)\right)\Delta \chi (z)L(dz)
}{&&={\cal O}(1)\sum_j(\epsilon _j+\int_{M_j\cap\gamma_{\widetilde{r}}}-G_{\gamma
    _{\widetilde{r}}}(z_j,w)\mu (dw))+\sum_{k\ne
    j}e^{-\frac{1}{C_0}|j-k|}\mu _k)
}{&&+{\cal O}(1)\sum_j\int_{M_j\cap\gamma_{\widetilde{r}}}\int_{M_j\cap\gamma_{\widetilde{r}}}-G_{\gamma _{\widetilde{r}}}
(z,w)\mu (dw)|\Delta \chi (z)|L(dz).
}
Here,
\ekv{cz.32.7}{
\int_{M_j\cap\gamma_{\widetilde{r}}}-G_{\gamma _{\widetilde{r}}} |\Delta \chi (z)|L(dz)={\cal O}(1), 
}
so (\ref{cz.32.5}) leads to 
\eekv{cz.33}
{
&&\int_{\gamma _{\widetilde{r}}}\left(\int_{\gamma _{\widetilde{r}}}
-G_{\gamma _{\widetilde{r}}}(z,w)\mu (dw)-H(z)\right)
\Delta \chi (z)L(dz)}
{&&={\cal O}(1)\left(\mu (\gamma _{\widetilde{r}})+\sum_j \epsilon _j
+\sum_j \int_{M_j\cap\gamma_{\widetilde{r}}} -G_{\gamma _{\widetilde{r}}}(z_j,w)\mu (dw)\right).
}

\par The contribution from the last term in $R(z)$ (in \no{cz.28}) to the 
last integral in \no{cz.29} is 
\ekv{cz.34}
{
\int_{z\in \gamma _{\widehat{r}}}\int_{w\in \gamma 
_{\widetilde{r}}}G_{\gamma _{\widetilde{r}}}(z,w)hn_u(dw)\Delta \chi (z)L(dz).
}
Here, by using an estimate similar to (\ref{cz.31}) with $\mu (dw)$
replaced by $L(dz)$ together with (\ref{cz.32.7}), we get 
$$\int_{z\in \gamma _{\widehat{r}}}G_{\gamma _{\widetilde{r}}}(z,w)(\Delta \chi )
(z)L(dz)={\cal O}(1),
$$
so the expression \no{cz.34} is by (\ref{cz.25})
\eeekv{cz.35}
{
&&{\cal O}(h)\# (u^{-1} (0)\cap \gamma _{\widetilde{r}})
}
{
&=&{\cal O}(1)\sum_{j=1}^N (\epsilon _j+ \int_{\gamma 
_r}(-G_{\gamma_r}(z_j,w))\mu (dw))
}
{&=&
{\cal O}(1)(\mu (\gamma _r)+\sum_{j=1}^N(\epsilon _j+ \int_{M 
_j}-G_{\gamma_r}(z_j,w)\mu (dw))).
}
This is quite similar to (\ref{cz.33}). Using Proposition \ref{ze5}, we have 
$$
\int_{M_j\cap\gamma_{\widetilde{r}}}-G_{\gamma _{\widetilde{r}}}(z_j,w)\mu (dw)\le
{\cal O}(1)(\int_{|w-z_j|\le \frac{r(z_j)}{C}}|\ln
\frac{|z_j-w|}{r(z_j)}|\mu (dw)+\mu (M_j\cap\gamma_{\widetilde{r}})
$$
and similarly for the last integral in (\ref{cz.35})
Using all this in \no{cz.29}, we get
\eekv{cz.36}
{&&\hskip -5truemm
\int\chi (z)hn_u(dz)=\int \chi (z)\mu (dz)
}
{
&&+{\cal O}(1)(\mu (\gamma _r)+\sum_j(\epsilon _j+\int _{|w-z_j|\le
  r(z_j)/C}|\ln (\frac{|z_j-w|}{r(z_j)})|\mu (dw)
).
}
We replace the smoothness assumption on $\phi $ by the assumption that 
$\phi $ is continuous near $\Gamma $ and keep \no{cz.14}. Then by 
regularization, we still get \no{cz.36}.

Here, we observe that 
$$
|\# (u^{-1}(0)\cap \Gamma ) -\frac{1}{2\pi h}\int \chi (z) hn_u (dz)|
\le
\# (u^{-1}(0)\cap \gamma _{\widetilde{r}}),
$$
which can be estimated by means of (\ref{cz.35}),
and combining this with (\ref{cz.36}), we get 
\eekv{d.2}
{
&&|\# (u^{-1}(0)\cap \Gamma )-\frac{1}{2\pi h}\mu (\Gamma )|\le
}
{&& \frac{{\cal O}(1)}{h}
\left( \mu (\gamma _r)+\sum_j (\epsilon _j +\int_{|w-z_j|\le
    \frac{r(z_j)}{C}} |\ln \frac{|z_j-w|}{r(z_j)}| \mu (dw))
\right) . }

\section{The one-dimensional semi-classical case}\label{one}
\setcounter{equation}{0}

In this section we consider a simple model operator in dimension 1 and show how random perturbations give rise to Weyl asymptotics in the interior of the range of $p$. We follow rather closely the work of Hager \cite{Ha06b} with some inputs also from Bordeaux Montrieux \cite{Bo} and Hager--Sj \cite{HaSj08}. Some of the general ideas appear perhaps more clearly in this special situation.

\par Let $P=hD_x+g(x)$, $g\in C^\infty (S^1)$ with symbol $p(x,\xi )=\xi +g(x)$, and assume that $\Im g$
has precisely two critical points; a unique maximum and a unique
minimum. 

\par Let $\Omega \Subset \{z\in {\bf C};\, \min \Im g<\Im z<\max \Im
g\}$
 be open. Put 
\eekv{3.1ny}{P_\delta =P_{\delta ,\omega }=hD_x+g(x)+\delta Q_\omega ,}
{Q_\omega u(x)=\sum_{|k|,|\ell|\le \frac{C_1}{h}}\alpha _{j,k}(\omega
  )(u|e^k)e^{\ell}(x),}
where $C_1>0$ is sufficiently large, $e^k(x)=(2\pi )^{-1/2}e^{ikx}$,
$k\in {\bf Z}$, and $\alpha _{j,k}\sim {\cal N}(0,1)$ are independent
complex Gaussian random variables, centered with variance 1. $Q_\omega $ is compact, so $P_\delta
$ has discrete spectrum.
Let $\Gamma \Subset \Omega $ have smooth boundary. 
\begin{theo}\label{31}
Let $\kappa >5/2$ and let $\epsilon _0>0$ be sufficiently small. Let
$\delta =\delta (h)$ satisfy $e^{-\epsilon _0/h}\ll \delta \ll
h^\kappa $ and put $\epsilon =\epsilon (h)=h\ln (1/\delta )$. Then for
$h>0$ small enough, we have with probability $\ge 1-{\cal
  O}(\frac{\delta ^2}{\sqrt{\epsilon }h^5})$ that the number of
eigenvalues of $P_\delta $ in $\Gamma $ satisfies
\ekv{3.2}
{
|\#(\sigma (P_\epsilon )\cap\Gamma )-\frac{1}{2\pi
  h}\mathrm{vol}(p^{-1}(\Gamma ))|\le
\mathrm{Const.\,}\frac{\sqrt{\epsilon }}{h}.
}
\end{theo}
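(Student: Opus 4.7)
The strategy is to represent the eigenvalues of $P_\delta$ as the zeros of a single holomorphic function on $\Omega$, then invoke Theorem \ref{ze2}. For $z\in \Omega$, the eikonal equation $\phi'(x,z)=z-g(x)$ integrates to furnish a normalized WKB quasi-mode $e_+(\cdot,z;h)$ for $P-z$ and $e_-(\cdot,z;h)$ for $(P-z)^*$, both holomorphic in $z$. With $R_+u=(u|e_+)$ and $R_-v_-=v_-e_-$, the Grushin problem
\begin{equation*}
\mathcal{P}^\delta(z)=\begin{pmatrix}P_\delta-z & R_-\\ R_+ & 0\end{pmatrix}:H^1(S^1)\times\mathbb{C}\to L^2(S^1)\times\mathbb{C}
\end{equation*}
is bijective with bounded inverse $\mathcal{E}^\delta(z)$ having lower-right entry a scalar holomorphic function $E_{-+}^\delta(z)$ whose zero set (with multiplicity) equals $\sigma(P_\delta)$.

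\textbf{Upper bound.} A direct WKB computation gives $h\log|E_{-+}^0(z)|=\phi(z)+\mathcal{O}(h)$, where $\phi(z)=-S(z)$ is smooth and subharmonic with $\mu:=\Delta\phi$ equal to $(2\pi)^{-1}$ times the density whose integral over $\Gamma$ produces the Weyl term $\mathrm{vol}(p^{-1}(\Gamma))/(2\pi h)$. Since the truncated perturbation satisfies $\|\delta Q_\omega\|\le \delta\|\alpha\|_{\ell^2}$, Gaussian tail bounds give $\|\alpha\|\le\mathcal{O}(h^{-1})$ outside an event of exponentially small probability. Expanding in a Neumann series,
\begin{equation*}
E_{-+}^\delta(z)=E_{-+}^0(z)-\delta\bigl(Q_\omega e_+(\cdot,z;h)\,\big|\,e_-(\cdot,z;h)\bigr)+\mathcal{O}(\delta^2 h^{-2}),
\end{equation*}
and the combined estimate $h\log|E_{-+}^\delta(z)|\le \phi(z)+\mathcal{O}(h)$ holds uniformly in a neighborhood $\widetilde{\gamma}_{r/C_1}$ of $\partial\Gamma$ with $r\asymp\sqrt{\epsilon}$.

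\textbf{Lower bound at sample points.} Choose $z_j^0\in\partial\Gamma$, $j=1,\dots,N$, with $N\asymp 1/\sqrt{\epsilon}$ and spacing $\asymp\sqrt\epsilon$, and let $\widetilde{z}_j$ be as furnished by Theorem \ref{ze2}. The linear correction in $\delta$ evaluated at $\widetilde{z}_j$ is a centered complex Gaussian $Z_j(\omega)$ whose variance is $\sigma_j^2\asymp \delta^2$; the lower bound on $\sigma_j$ uses that $C_1$ is large enough so that the frequency truncation in $Q_\omega$ captures $e_\pm(\cdot,\widetilde{z}_j;h)$ modulo $\mathcal{O}(h^\infty)$. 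Anti-concentration for complex Gaussians gives $\mathbb{P}(|Z_j|<\eta\sigma_j)\le C\eta^2$. Choosing $\eta$ of order $\delta/(\sqrt{\epsilon}\,h^{5/2})$, union-bounding over the $N$ points, and absorbing the $\mathcal{O}(\delta^2 h^{-2})$ Neumann remainder (which is where $\delta\ll h^\kappa$, $\kappa>5/2$, is used), we obtain, with probability at least $1-\mathcal{O}(\delta^2/(\sqrt\epsilon\, h^5))$,
\begin{equation*}
h\log|E_{-+}^\delta(\widetilde{z}_j)|\ge \phi(\widetilde{z}_j)-\epsilon_j,\qquad \epsilon_j=\mathcal{O}(\epsilon),\ j=1,\dots,N.
\end{equation*}

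\textbf{Counting.} Apply Theorem \ref{ze2} with $u=E_{-+}^\delta$, constant weight $r\asymp\sqrt{\epsilon}$, and the function $\phi$ above. Since $\mu$ is comparable to Lebesgue measure, $\mu(\widetilde{\gamma}_{r/C_1})=\mathcal{O}(\sqrt{\epsilon})$ and $\sum_j\epsilon_j=\mathcal{O}(\sqrt{\epsilon})$, so the right-hand side of \eqref{ze.10} is $\mathcal{O}(\sqrt\epsilon/h)$, yielding \eqref{3.2}. The main difficulty is the simultaneous lower bound: one must verify non-degeneracy of the Gaussian along $e_\pm$ uniformly in the sample point and calibrate the anti-concentration exponent so that the union bound over $N\asymp 1/\sqrt{\epsilon}$ points still produces a probability tending to $1$, while the $\mathcal{O}(\delta^2)$ correction stays dominated by $\eta\sigma_j\sim e^{(\phi-\epsilon_j)/h}$. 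This calibration is exactly what forces the lower bound $\delta\gg e^{-\epsilon_0/h}$ and the power $\kappa>5/2$ in the upper bound on $\delta$.
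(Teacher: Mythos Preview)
Your overall architecture --- Grushin reduction to a scalar function, upper bound everywhere, Gaussian anti-concentration for the lower bound at $\asymp 1/\sqrt{\epsilon}$ sample points, then a zero-counting theorem with $r\asymp\sqrt{\epsilon}$ --- matches the paper. But you have skipped the one step that carries most of the analytic weight in the paper's proof, and your substitute for it is not justified.

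You assert that the WKB quasimodes $e_\pm(\cdot,z;h)$, and hence $R_\pm$ and $E_{-+}^\delta(z)$, can be taken \emph{holomorphic in $z$}. In the paper's construction they are not. The quasimode is built from $\phi_+(x)=\int_{x_+(z)}^x(z-g(y))\,dy$ with a cutoff $\chi(x-x_+(z))$ and a normalization, all of which depend on the real-valued turning point $x_+(z)$ solving $\Im g(x_+)=\Im z$; this is smooth in $(\Re z,\Im z)$ but certainly not holomorphic. The paper therefore uses the \emph{eigenfunctions} $e_0,f_0$ of $Q=(P-z)^*(P-z)$ and $\widetilde Q=(P-z)(P-z)^*$ to set up $R_\pm$, and the resulting $E_{-+}(z)$ is again not holomorphic. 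The remedy (Proposition~\ref{42}) is a $\overline\partial$-equation
\[
\partial_{\overline z}E_{-+}(z)+f(z)E_{-+}(z)=0,\qquad f=(\partial_{\overline z}R_+)E_++E_-\partial_{\overline z}R_-,
\]
so that $u(z)=e^{F(z)}E_{-+}(z)$ is holomorphic once $\partial_{\overline z}F=f$. The subharmonic weight is then $\phi=h\Re F$, and the crucial identity $\Re\Delta F=\frac{2}{h}\bigl(\{p,\overline p\}(\rho_+)^{-1}/i-\{p,\overline p\}(\rho_-)^{-1}/i\bigr)+\mathcal O(1)$ is what connects $\mu=\Delta\phi$ to the symplectic volume (Proposition~\ref{43}). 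Your formula ``$\phi(z)=-S(z)$'' and the claim that $\Delta\phi$ reproduces the Weyl density are stand-ins for this computation; without the $\overline\partial$-correction there is no holomorphic $u$ to feed into a zero-counting theorem, and without Proposition~\ref{43} there is no link between $\mu(\Gamma)$ and $\mathrm{vol}(p^{-1}(\Gamma))$.

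Two smaller points. First, the paper applies Proposition~\ref{56} (Hager's original counting lemma with $C^2$ weight) rather than Theorem~\ref{ze2}; either works here since $\phi=h\Re F$ is smooth, but you should say which hypotheses you are checking. Second, your calibration of $\eta$ is off by a factor: the per-point failure probability should be $\mathcal O(\delta^2/h^5)$ (this is Proposition~\ref{54}, obtained from Proposition~\ref{53} with $t\asymp\delta/h^{5/2}$), and it is the union bound over $N\asymp1/\sqrt\epsilon$ points that produces the stated $\mathcal O(\delta^2/(\sqrt\epsilon\,h^5))$. With your choice $\eta\sim\delta/(\sqrt\epsilon\,h^{5/2})$ the union bound would give $\delta^2/(\epsilon^{3/2}h^5)$ instead.
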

If instead, we let $\Gamma $ vary in a set of subsets that satisfy the
assumptions uniformly, then with probability $\ge 1-{\cal
  O}(\frac{\delta ^2}{\epsilon h^5})$  we have (\ref{3.2}) uniformly
for all $\Gamma $ in that subset. The remainder of the section is devoted
to the (outline of) the proof of this result.
\subsection{Preparations for the unperturbed operator}
For $z\in \Omega $, let $x_+(z), x_-(z)\in S^1$ be the solutions of the 
equation $\Im g(x)=\Im z$, with $\pm \Im g'(x_\pm)<0$, define $\xi
_{\pm}(z)$ by $\xi _\pm +\Re g(x_{\pm})=\Re z$. Then, with $\rho
_{\pm}=(x_\pm, \xi _\pm )$, we have
$$
p(\rho _{\pm})=z,\quad \pm \frac{1}{i}\{ p,\overline{p}\} (\rho _\pm)>0.
$$
We introduce quasimodes of the form 
$$e_{\mathrm{wkb}}(x)=h^{-1/4}a(h)\chi
 (x-x_+(z))e^{\frac{i}{h}\phi _+(x)},$$ where $a(h)\sim a_0+ha_1+..,\ a_0\ne 0$, $\phi _+(x)=\int_{x_+(z)}^x (z-g(y))dy$, $\chi \in C_0^\infty
 (\mathrm{neigh\,}(0,{\bf R}))$ and $\chi (x)=1$ in a neighborhood of $0$. We can choose $a$ depending smoothly on $z$ such that all
 derivatives with respect to $z,\overline{z}$ are bounded when $h\to 0$ and $\Vert e_{\mathrm{wkb}}\Vert =1$ where we take the $L^2$ norm over $]x_-(z),x_+(z)+2\pi [$. We can
 assume that $e_{\mathrm{wkb}}$ is normalized in $L^2$ and 
$$
(P-z)e_{\mathrm{wkb}}={\cal O}(e^{-\frac{1}{Ch}}). 
$$

\par
Define $z$-dependent elliptic self-adjoint operators
$$
Q=(P-z)^*(P-z),\, \widetilde{Q}=(P-z)(P-z)^*:\, L^2(S^1)\to L^2(S^1), 
$$
with domain ${\cal D}(Q),\, {\cal D}(\widetilde{Q})=H^2(S^1)$.
They have discrete spectrum $\subset [0,+\infty [$. Using that $P-z$
is Fredholm of index zero, we see that $\mathrm{dim\,}{\cal N}(Q)=
\mathrm{dim\,}{\cal N}(\widetilde{Q})$. If $\mu \ne 0$ is an
eigenvalue of $Q$, with the corresponding eigenfunction $e\in C^\infty
$, then $f:=(P-z)e $ is an eigenfunction for $\widetilde{Q}$ with the
same eigenvalue $\mu $. Pursuing this observation, we see that 
$$
\sigma (Q)=\sigma(\widetilde{Q})=\{ t_0^2,t_1^2,...\},\ 0\le t_j\nearrow
+\infty . 
$$ 
\begin{prop}\label{41}
There exists a constant $C>0$ such that $t_0^2={\cal O}(e^{-1/(Ch)})$,
$t_1^2-t_0^2\ge h/C$ for $h>0$ small enough.
\end{prop}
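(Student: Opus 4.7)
My plan is to handle the two claims separately, with the lower bound on $t_1^2$ being the main technical step.

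For the upper bound $t_0^2 = \mathcal{O}(e^{-1/(Ch)})$, I would use the normalized WKB quasimode $e_{\mathrm{wkb}}$ recalled just before the proposition. By the min-max characterization of the bottom of the spectrum of the self-adjoint operator $Q$, together with $\|e_{\mathrm{wkb}}\|=1$ and $(P-z)e_{\mathrm{wkb}} = \mathcal{O}(e^{-1/(Ch)})$ in $L^2$,
$$t_0^2 \leq \langle Q e_{\mathrm{wkb}} \mid e_{\mathrm{wkb}}\rangle = \|(P-z)e_{\mathrm{wkb}}\|^2 = \mathcal{O}(e^{-2/(Ch)}),$$
and enlarging $C$ gives the claim.

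For the lower bound $t_1^2 \geq h/C$, I would set up a Grushin problem. First I construct a dual quasimode for the formal adjoint $P^* - \bar z = hD_x + \bar g(x) - \bar z$, whose symbol is $\bar p - \bar z$ with Poisson bracket $(1/i)\{\bar p, p\} = -(1/i)\{p,\bar p\}$; the favorable sign now occurs at $\rho_-$, so the same WKB procedure produces a normalized function $f_{\mathrm{wkb}}$ concentrated at $x_-(z)$ satisfying $(P-z)^* f_{\mathrm{wkb}} = \mathcal{O}(e^{-1/(Ch)})$. Since $x_+ \neq x_-$, the overlap $(e_{\mathrm{wkb}} \mid f_{\mathrm{wkb}})$ is also exponentially small. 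With these building blocks I introduce
$$\mathcal{P}(z) = \begin{pmatrix} P-z & R_- \\ R_+ & 0 \end{pmatrix}\colon H^1(S^1) \oplus \mathbb{C} \longrightarrow L^2(S^1) \oplus \mathbb{C}, \quad R_- u_- = u_- f_{\mathrm{wkb}}, \ R_+ u = (u\mid e_{\mathrm{wkb}}),$$
and the key claim is that $\mathcal{P}(z)$ is bijective with inverse $\mathcal{E}(z) = \begin{pmatrix} E & E_+ \\ E_- & E_{-+} \end{pmatrix}$ satisfying $\|E\| = \mathcal{O}(h^{-1/2})$, $\|E_\pm\| = \mathcal{O}(1)$, and $|E_{-+}(z)| = \mathcal{O}(e^{-1/(Ch)})$.

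This Grushin estimate is the hard part. I would prove it via a microlocal partition of unity on $T^*S^1$ adapted to the characteristic set $p^{-1}(z) = \{\rho_+, \rho_-\}$. Away from $p^{-1}(z)$, ellipticity of $P-z$ gives direct control of order $1$. Near $\rho_-$, the condition $(1/i)\{p,\bar p\}(\rho_-) < 0$ is a subellipticity condition that yields the Hörmander-type estimate $\|u\| \leq C h^{-1/2} \|(P-z)u\|$ for $u$ microlocalized there (essentially a one-dimensional instance of the results in Section \ref{bdy}). Near $\rho_+$, the opposite sign precludes such a direct estimate, but the auxiliary channel $R_+$ absorbs precisely the one-dimensional obstruction spanned by $e_{\mathrm{wkb}}$, and the residual part satisfies the same $h^{-1/2}$ bound. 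Pasting these three regimes together produces the required control on $\mathcal{E}$, while $E_{-+}$ inherits the exponentially small mismatch between the quasimode equations and the ideal case.

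Once the Grushin problem is under control, the lower bound on $t_1$ is immediate: for any $u \in H^1$ with $(u \mid e_{\mathrm{wkb}}) = 0$, the identity $\mathcal{E} \mathcal{P}(u,0) = (u,0)$ reduces to $u = E(P-z)u$, so $\|(P-z)u\| \geq \|u\|/\|E\| \geq \sqrt{h/C'}\,\|u\|$. The max-min principle for $Q$ then yields
$$t_1^2 \geq \min_{\|u\|=1,\, u \perp e_{\mathrm{wkb}}} \|(P-z)u\|^2 \geq h/C',$$
which combined with the upper bound on $t_0^2$ gives $t_1^2 - t_0^2 \geq h/C$ for $h$ sufficiently small. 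I expect the main obstacle to be the microlocal analysis of $\mathcal{P}(z)$, especially combining the elliptic, subelliptic, and quasimode-corrected estimates uniformly in $z \in \Omega$ and ensuring that the exponentially small remainders feed only into $E_{-+}$ rather than degrading the polynomial estimate on $E$.
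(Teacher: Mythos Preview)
Your approach is correct in outline, but it takes a different and heavier route than the paper. The paper exploits directly that $P-z=hD_x+g(x)-z$ is a first-order ODE on $S^1$: cutting the circle at $x_-(z)$, every solution of $(P-z)u=v$ has the explicit form
\[
u(x)=Ch^{-1/4}a(h)e^{\frac{i}{h}\phi_+(x)}+Fv(x),\qquad Fv(x)=\frac{i}{h}\int_{x_+}^{x}e^{\frac{i}{h}(\phi_+(x)-\phi_+(y))}v(y)\,dy,
\]
and the single analytic input is Hager's bound $\|F\|_{L^2\to L^2}=\mathcal{O}(h^{-1/2})$, which follows from $\Im(\phi_+(x)-\phi_+(y))\ge 0$ on the domain of integration. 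This immediately shows $\|e_0-e_{\mathrm{wkb}}\|$ is exponentially small, and for $u\perp e_0$ the orthogonality pins down the free constant as $C=\mathcal{O}(h^{-1/2})\|v\|$, yielding $\|u\|\le \sqrt{C/h}\,\|(P-z)u\|$ with no Grushin problem, no $f_{\mathrm{wkb}}$, and no microlocal partition.

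Your Grushin/microlocal strategy --- ellipticity away from $p^{-1}(z)$, the favorable-sign estimate at $\rho_-$, and the $R_+$ channel absorbing the quasimode at $\rho_+$ --- is the right architecture in higher dimensions (and is indeed what later sections of the paper do), and the min-max conclusion from $\|E\|=\mathcal{O}(h^{-1/2})$ is clean. But in this one-dimensional model the explicit variation-of-constants formula replaces all of that with a single integral-operator bound; what your approach buys is generality, what the paper's buys is brevity and avoidance of the pasting issues you flagged as the main obstacle.
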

\begin{proof} We have $Qe_{\mathrm{wkb}}=r,$ $\Vert r\Vert={\cal
  O}(e^{-1/Ch})$ and since $Q$ is self adjoint we deduce that
$t_0^2$ is exponentially small. If $e_0$ denotes the corresponding
normalized eigenfunction, we see that $(P-z)e_0=:v$ with $\Vert v\Vert$
exponentially small. Considering this ODE on $]x_-(z)-2\pi ,x_-(z)[$,
we get 
$$e_0(x)=Ch^{-\frac{1}{4}}a(h)e^{\frac{i}{h}\phi _+(x)}+Fv(x),$$
$$
Fv(x)=\frac{i}{h}\int_{x_+}^xe^{\frac{i}{h}(\phi _+(x)-\phi _+(y))}v(y)dy,
$$
where $\phi _+(x)=\int_{x_+}^x(z-g(y))dy$. We observe that $\Im (\phi
_+(x)-\phi _+(y))\ge 0$ on the domain of integration. With some
more work Hager showed that $\Vert F\Vert_{{\cal L}(L^2,L^2)} ={\cal O}(h^{-1/2})$. Hence
for our particular $v$, we see that $Fv$ is exponentially decaying in
$L^2$. Recalling the form of $e_{\mathrm{wkb}}(x)$ we conclude that
$\Vert e_0-e_{\mathrm{wkb}}\Vert$ is exponentially small.

To show that $t_1^2-t_0^2\ge h/C$, it suffices to show that $(Qu|u)\ge
\frac{h}{C}\Vert u\Vert ^2$ when $u\perp e_0$ or in other words, that 
\ekv{4.1ny}
{
\Vert u\Vert \le \sqrt{\frac{C}{h}}\Vert (P-z)u\Vert .
} 
If $v:= (P-z)u$, we again have 
$$u=Ch^{-\frac{1}{4}}a(h)e^{\frac{i}{h}\phi _+(x)}+Fv$$
for some constant $C$
and the orthogonality requirement on $u$ implies that 
$$
0=(1+{\cal O}(h^\infty ))C+(Fv|e_0),
$$
where $(Fv|e_0)={\cal O}(h^{-\frac{1}{2}})\Vert v\Vert$, so $C={\cal
  O}(h^{-1/2})\Vert v\Vert$ and we get the desired estimate on 
$\Vert u\Vert$.\end{proof}
\subsection{Grushin (Shur, Feschbach, bifurcation)
    approach}
Let $f_0$ be the normalized eigenfunction such that
$\widetilde{Q}f_0=t_0^2f_0$. As observed prior to Proposition \ref{41}, we get 
$$
(P-z)e_0=\alpha _0f_0,\ (P-z)^*f_0=\beta _0e_0,\quad \alpha _0\beta
_0=t_0^2,
$$
and combining this with $((P-z)e_0|f_0)=(e_0|(P-z)^*f_0)$, we see that $\alpha _0=\overline{\beta }_0$.
Define $R_+:L^2(S^1)\to {\bf C}$, $R_-={\bf C}\to L^2(S^1)$ by 
$$R_+u=(u|e_0),\quad R_-u_-=u_-f_0.$$
Then 
$$
{\cal P}(z):=\left(\begin{array}{ccc}P-z &R_-\\R_+
    &0 \end{array}\right):
H^1\times {\bf C}\to L^2\times {\bf C}
$$
is bijective with the bounded inverse
$${\cal E}(z)=\left(\begin{array}{ccc}E &E_+\\E_-
    &E_{-+} \end{array}\right).$$

Here $E={\cal O}(h^{-1/2})$ in $L^2\to L^2$ is basically the inverse of $P-z$ from $(f_0)^{\perp}$ to $(e_0)^\perp$,
$E_-v=(v|f_0)$, $E_+v_+=v_+e_0$,
$E_{-+}={\cal O}(e^{-1/(Ch)})$.
It is a general feature of such auxiliary (Grushin) operators that 
$$
z\in \sigma (P) \Leftrightarrow E_{-+}(z)=0.
$$ 
\subsection{d-bar equation for $E_{-+}$}
\begin{prop}\label{42}
We have 
\ekv{4.2ny}
{
\partial _{\overline{z}}E_{-+}(z)+f(z)E_{-+}(z)=0,
}
where 
\ekv{4.3ny}{f(z)=f_+(z)+f_-(z),\quad f_+(z)=(\partial
  _{\overline{z}}R_+)E_+,\ f_-(z)=E_-\partial _{\overline{z}}R_-.}
Thus,
\ekv{4.4ny}
{
\partial _{\overline{z}}(e^{F(z)}E_{-+}(z))=0\hbox{ if }\partial _{\overline{z}}F(z)=f(z).
}
Moreover,
\ekv{4.5ny}{
\Re \Delta F(z)=\Re 4\partial
_zf=\frac{2}{h}(\frac{1}{\frac{1}{i}\{p,\overline{p}\}(\rho _+)}-
\frac{1}{\frac{1}{i}\{p,\overline{p}\}(\rho _-)})+{\cal O}(1).
}
\end{prop}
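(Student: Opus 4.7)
For \nr{4.2ny}: I differentiate ${\cal P}(z){\cal E}(z)=I$ in $\overline{z}$. Since $P-z$ is holomorphic in $z$, only the off-diagonal entries $\partial_{\overline{z}}R_\pm$ survive in $\partial_{\overline{z}}{\cal P}$, and $\partial_{\overline{z}}{\cal E}=-{\cal E}(\partial_{\overline{z}}{\cal P}){\cal E}$ reads at the $(2,2)$-entry
$$\partial_{\overline{z}}E_{-+}=-E_-(\partial_{\overline{z}}R_-)E_{-+}-E_{-+}(\partial_{\overline{z}}R_+)E_+=-(f_-+f_+)E_{-+},$$
which is \nr{4.2ny}. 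The local existence of an antiderivative $F$ to $f$ on the simply connected set $\Omega$ then gives \nr{4.4ny}.

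For \nr{4.5ny}: write $\Delta F=4\partial_zf$ and compute $f_\pm$ via WKB. The Grushin relations together with $E_{-+}={\cal O}(e^{-1/Ch})$ and the spectral gap from Proposition \ref{41} give $E_+1=e_0+{\cal O}(e^{-1/Ch})$ and $E_-u=(u|f_0)+{\cal O}(e^{-1/Ch})\Vert u\Vert$, so modulo exponentially small errors
$$f_+=(e_0|\partial_ze_0),\qquad f_-=(\partial_{\overline{z}}f_0|f_0).$$
Substitute the WKB form $e_0\approx h^{-1/4}a(x,z;h)\chi(x-x_+)e^{i\phi_+/h}$ with $\phi_+(x,z)=\int_{x_+(z)}^x(z-g(y))\,dy$, and an analogous expression for $f_0$ near $x_-$ based on the dual phase $\psi_-(x,z)=\int_{x_-(z)}^x(\overline{z}-\overline{g(y)})\,dy$. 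The inner product $(e_0|\partial_ze_0)=\int|e_0|^2\partial_{\overline{z}}\log\overline{e_0}\,dx$ is dominated at order $1/h$ by $-(i/h)\int|e_0|^2\partial_{\overline{z}}\overline{\phi_+}\,dx$. Using $\partial_{\overline{z}}\overline{\phi_+}=(x-x_+)-(\partial_{\overline{z}}x_+)\overline{\xi_+}$, the Gaussian concentration of $|e_0|^2$ at the real point $x_+(z)$ with width $\sqrt{h}$, and the implicit differentiation identity $\partial_{\overline{z}}x_+=i/(2\Im g'(x_+))$ coming from $\Im g(x_+)=\Im z$, one obtains
$$f_+=\frac{-\overline{\xi_+}(z)}{2h\,\Im g'(x_+(z))}+{\cal O}(1),\qquad f_-=\frac{\overline{\xi_-}(z)}{2h\,\Im g'(x_-(z))}+{\cal O}(1),$$
where the opposite sign in $f_-$ comes from the conjugate structure of $(\partial_{\overline{z}}f_0|f_0)$.

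Applying $\partial_z$ once more and taking real parts, the $1/h$-contribution from $\partial_z\overline{\xi_+}=-\overline{g'(x_+)}\partial_zx_+$ gives $\Re\partial_zf_+=-1/(4h\,\Im g'(x_+))+{\cal O}(1)$. The other potential $1/h$-contribution, from $\partial_z\Im g'(x_+)=\Im g''(x_+)\partial_zx_+$, enters multiplied by $\overline{\xi_+}$ and vanishes in the real part, since $\Im\xi_+=\Im z-\Im g(x_+)=0$ by the definition of $x_+$. Recalling $\frac{1}{i}\{p,\overline{p}\}(\rho_\pm)=-2\Im g'(x_\pm)$, I rewrite
$$\Re\partial_zf_+=\frac{1}{2h}\cdot\frac{1}{\frac{1}{i}\{p,\overline{p}\}(\rho_+)}+{\cal O}(1),\qquad \Re\partial_zf_-=-\frac{1}{2h}\cdot\frac{1}{\frac{1}{i}\{p,\overline{p}\}(\rho_-)}+{\cal O}(1),$$
and summing and multiplying by $4$ yields \nr{4.5ny}.

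The main obstacle is the careful justification of the WKB substitution: one has to verify that the amplitude corrections $ha_1,h^2a_2,\ldots$, the cutoff $\chi$, the exponentially small residue $e_0-e_{\mathrm{wkb}}$, and higher-order stationary phase corrections to the Gaussian moments of $|e_0|^2$ all contribute only at order ${\cal O}(1)$, so that the computed $1/h$-coefficient is unambiguous. The crucial cancellation that prevents a spurious $1/h$-term in $\Re\partial_zf_\pm$ is the real-characteristic identity $\Im\xi_\pm=0$; any computation missing this identity would produce an incorrect formula.
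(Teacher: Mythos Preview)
Your argument is correct and follows essentially the same route as the paper: derive \nr{4.2ny} from $\partial_{\overline{z}}{\cal E}=-{\cal E}(\partial_{\overline{z}}{\cal P}){\cal E}$, identify $f_+=(e_0|\partial_ze_0)$, replace $e_0$ by $e_{\mathrm{wkb}}$, and extract the leading $1/h$ term by stationary phase/Gaussian concentration before differentiating in $z$ and taking real parts. One small overcautiousness: by the explicit construction of the Grushin inverse you have $E_+1=e_0$ and $E_-v=(v|f_0)$ \emph{exactly}, so $f_+=(e_0|\partial_ze_0)$ and $f_-=(\partial_{\overline{z}}f_0|f_0)$ hold without any exponentially small error; the approximation step enters only when you pass from $e_0$ to $e_{\mathrm{wkb}}$, which is exactly the interpolation point the paper also isolates.
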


\begin{proof}
(\ref{4.2ny}), (\ref{4.3ny}) follow from the general formula for the
differentiation of the inverse of an operator, here:
$$
\partial _{\overline{z}}{\cal E}+{\cal E}(\partial _{\overline{z}}{\cal
  P}){\cal E}=0.
$$
Let $\Pi (z)$ be the spectral projection of $Q$: $L^2\to {\bf
  C}e_0$. It is easy to see that the various $z$ and $\overline{z}$
derivatives of $e_{\mathrm{wkb}}$ and $\Pi (z)$ have at most temperate
growth in $1/h$ and since $e_0$ is the normalization of $\Pi
(z)e_{\mathrm{wkb}}$ we get the same fact for $e_0$ and hence for
$e_0-e_{\mathrm{wkb}}$. This quantity is also exponentially small in
$L^2$ and by elementary interpolation estimates for the successive
derivatives in $z,\overline{z}$ we get the same conclusion for the
higher $z$-gradients of $e_0-e_{\mathrm{wkb}}$.\\
It follows that 
$$
f_+(z)=(e_0(z)|\partial
_ze_0(z))=(e_{\mathrm{wkb}}(z)|\partial _ze_{\mathrm{wkb}}(z))+{\cal O}(e^{-\frac{1}{Ch}}),
$$
and the various $z,\overline{z}$-derivatives of the remainder are 
also exponentially decaying.

Using that $e_{\mathrm{wkb}}$ behaves like a Gaussian, peaked at the
point $x_+(\zeta )$, we can apply a variant of the method of
stationary phase to get 
\ekv{4.7}
{(e_{\mathrm{wkb}}| \partial
_ze_{\mathrm{wkb}})=-\frac{i}{h}\overline{(\partial _z\phi
  _+)(x_+(z),z)}+{\cal O}(1),}
where the remainder remains bounded after taking $z,\overline{z}$
derivatives.\\
Using that $\phi _+(x_+(z),z)=0$, $(\phi_+)'_x(x_+(z),z)=\xi _+(z)$, we
get after applying $\partial _z$ to the first of these relations, that 
$$
(\partial _z\phi _+)(x_+(z),z)=-\xi _+(z)\partial _zx_+(z).
$$
On the other hand, if we apply $\partial _z$ and $\partial
_{\overline{z}}$ to the equation, $p(x_+(z),\xi _+(z))=z$ and use that
$x_+$ and $\xi _+(z)$ are real valued we can show that 
$$
\partial _{\overline{z}}x_+=\frac{p_\xi '}{\{ p,\overline{p}\}}(\rho
_+), \quad \partial _{\overline{z}}\xi _+=\frac{-p_x '}{\{ p,\overline{p}\}}(\rho _+) .
$$
Plugging this into (\ref{4.7}), applying $\partial _z$ and taking real parts, we get the
second (non-trivial) identity in (\ref{4.5ny}) for the contribution from
$f_+$. The one from $f_-$ can be treated similarly.\end{proof}

Using the expressions for the $z$-derivatives of $x_+, \xi _+$ and the
analogous ones for $x_-,\xi _-$, we have the following easy result
relating (\ref{4.5ny}) to the symplectic volume:
\begin{prop}\label{43}
Writing $z=x+iy$, we have:
$$
d\xi _+(z)\wedge dx_+(z)=\frac{2}{\frac{1}{i}\{p,\overline{p}\}(\rho
  _+)}dy\wedge dx,
$$
$$
-d\xi _-(z)\wedge dx_-(z)=-\frac{2}{\frac{1}{i}\{p,\overline{p}\}(\rho
  _-)}dy\wedge dx,
$$
so by (\ref{4.5ny}),
\ekv{4.11}{
\Re \Delta F(z)dy\wedge dx=\frac{1}{h}(d\xi _+\wedge dx_+-
d\xi _-\wedge dx_-)+{\cal O}(1).
}
\end{prop}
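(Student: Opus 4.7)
The plan is to read off the two identities from a direct Cramer-rule computation of the $z$- and $\bar z$-derivatives of $(x_+(z),\xi_+(z))$ and $(x_-(z),\xi_-(z))$. The proof of Proposition \ref{42} has already produced
$$
\partial_{\bar z}x_+=\frac{p'_\xi}{\{p,\overline{p}\}}(\rho_+),\qquad \partial_{\bar z}\xi_+=-\frac{p'_x}{\{p,\overline{p}\}}(\rho_+),
$$
by differentiating $p(x_+(z),\xi_+(z))=z$ and its complex conjugate $\overline{p}(x_+(z),\xi_+(z))=\bar z$ (valid because $x_+,\xi_+$ are real-valued). First I would apply $\partial_z$ to the same pair of identities; this gives a $2\times 2$ linear system with the same matrix (determinant $-\{p,\overline{p}\}$) but with right-hand side $(1,0)^t$, yielding
$$
\partial_z x_+=-\frac{\overline{p}'_\xi}{\{p,\overline{p}\}}(\rho_+),\qquad \partial_z\xi_+=\frac{\overline{p}'_x}{\{p,\overline{p}\}}(\rho_+).
$$

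With these four expressions in hand, the first identity is a short calculation. Writing $dx_+=\partial_z x_+\,dz+\partial_{\bar z}x_+\,d\bar z$ and similarly for $d\xi_+$,
$$
d\xi_+\wedge dx_+=\bigl(\partial_z\xi_+\,\partial_{\bar z}x_+-\partial_{\bar z}\xi_+\,\partial_z x_+\bigr)\,dz\wedge d\bar z,
$$
and substitution collapses the coefficient to $(p'_\xi\overline{p}'_x-p'_x\overline{p}'_\xi)/\{p,\overline{p}\}^2=1/\{p,\overline{p}\}(\rho_+)$ by the very definition of the Poisson bracket. Using $dz\wedge d\bar z=2i\,dy\wedge dx$ converts this into $\frac{2}{\frac{1}{i}\{p,\overline{p}\}(\rho_+)}\,dy\wedge dx$, which is the first claim. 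The same computation at $\rho_-$ gives $d\xi_-\wedge dx_-=\frac{2}{\frac{1}{i}\{p,\overline{p}\}(\rho_-)}\,dy\wedge dx$, and multiplying both sides by $-1$ yields the second identity as stated.

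Finally, (\ref{4.11}) follows immediately by multiplying (\ref{4.5ny}) through by $dy\wedge dx$ and using the two identities to substitute $\frac{2}{\frac{1}{i}\{p,\overline{p}\}(\rho_\pm)}\,dy\wedge dx$ by $\pm\, d\xi_\pm\wedge dx_\pm$; the $\mathcal{O}(1)$ remainder of (\ref{4.5ny}) is preserved as an $\mathcal{O}(1)$ coefficient of $dy\wedge dx$. The only point that requires care is sign and orientation bookkeeping, which is straightforward once one notes that $\{p,\overline{p}\}$ is purely imaginary, with $\tfrac{1}{i}\{p,\overline{p}\}$ positive at $\rho_+$ and negative at $\rho_-$ by the standing hypothesis on $\Im g'$. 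I do not anticipate any serious obstacle beyond this; the proposition is essentially just a symplectic-volume reformulation of Proposition \ref{42}.
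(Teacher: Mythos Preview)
Your proposal is correct and follows exactly the approach the paper indicates: the paper simply says the proposition is an ``easy result'' obtained by ``using the expressions for the $z$-derivatives of $x_+$, $\xi_+$ and the analogous ones for $x_-$, $\xi_-$,'' and you have carried out precisely that computation. Your Cramer-rule derivation of $\partial_z x_\pm$, $\partial_z\xi_\pm$, the evaluation of the wedge product, and the identification $dz\wedge d\bar z=2i\,dy\wedge dx$ are all accurate, and (\ref{4.11}) is then immediate from (\ref{4.5ny}) as you say.
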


\subsection{Adding the random perturbation}

Let $X\sim {\cal N}_{{\bf C}}(0,\sigma ^2)$ be a complex Gaussian random
variable, meaning that $X$ has the probability distribution
\ekv{5.1}
{
X_*(P(d\omega))=\frac{1}{\pi \sigma ^2}e^{-\frac{|X|^2}{\sigma
    ^2}}d(\Re X)d(\Im X). 
}
Here $\sigma >0$. For $t<1/\sigma ^2$, we have the expectation value
\ekv{5.2}
{
E(e^{t|X|^2})=\frac{1}{1-\sigma ^2t}.
}
Bordeaux Montrieux \cite{Bo} observed that we have the following possibly classical result (improving a similar statement in \cite{HaSj08}).
\begin{prop}\label{51}
There exists $C_0>0$ such that the following holds: Let $X_j\sim {\cal
  N}_{\bf C}(0,\sigma _j^2)$, $1\le j\le N<\infty $ be independent complex Gaussian
random variables. Put $s_1=\max \sigma _j^2$. Then for every $x>0$, we
have 
$$
P(\sum_1^N|X_j|^2\ge x)\le \exp(\frac{C_0}{2s_1}\sum_1^N \sigma _j^2-\frac{x}{2s_1}).
$$
\end{prop}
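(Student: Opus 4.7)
The plan is to use the standard exponential Markov (Chernoff) bound, exploiting the moment-generating function formula (5.2) and independence. First I would fix a parameter $t$ with $0<t<1/s_1$ so that $E[e^{t|X_j|^2}]$ is finite for every $j$. By Markov's inequality and independence,
\[
P\Bigl(\sum_{j=1}^N |X_j|^2\ge x\Bigr)\le e^{-tx}\prod_{j=1}^N E[e^{t|X_j|^2}]=e^{-tx}\prod_{j=1}^N\frac{1}{1-\sigma_j^2 t},
\]
where the last equality uses (5.2).

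Next I would optimize (really, just choose conveniently) the parameter $t$. Taking $t=1/(2s_1)$, each factor $1-\sigma_j^2 t$ lies in $[1/2,1]$ since $\sigma_j^2\le s_1$, so in particular the product is finite. Then I would use the elementary inequality $-\log(1-u)\le 2u$ valid for $u\in[0,1/2]$ (which holds because $-\log(1/2)=\log 2<1$ and the function $-\log(1-u)/u$ is increasing). Applied with $u=\sigma_j^2/(2s_1)$, this gives
\[
\sum_{j=1}^N -\log\bigl(1-\sigma_j^2/(2s_1)\bigr)\le 2\sum_{j=1}^N\frac{\sigma_j^2}{2s_1}=\frac{1}{s_1}\sum_{j=1}^N\sigma_j^2.
\]
Exponentiating and combining with the Markov bound yields
\[
P\Bigl(\sum_{j=1}^N|X_j|^2\ge x\Bigr)\le\exp\Bigl(\frac{1}{s_1}\sum_{j=1}^N\sigma_j^2-\frac{x}{2s_1}\Bigr),
\]
which is the claimed inequality with $C_0=2$.

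There is no real obstacle: the only subtle point is verifying that the elementary inequality $-\log(1-u)\le 2u$ indeed holds on $[0,1/2]$, for which I would just note that $f(u):=2u+\log(1-u)$ satisfies $f(0)=0$ and $f'(u)=2-1/(1-u)\ge 0$ for $u\le 1/2$, so $f\ge 0$ on that interval. The choice $t=1/(2s_1)$ is made so as to stay uniformly away from the singularity $t=1/\sigma_j^2$ in (5.2) regardless of the spread of the $\sigma_j$; any fixed fraction of $1/s_1$ would work and only affect the constant $C_0$.
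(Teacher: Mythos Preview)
Your proof is correct and follows essentially the same approach as the paper: apply the exponential Markov bound with the moment-generating function (5.2), choose $t=1/(2s_1)$, and use the elementary inequality $-\log(1-u)\le C_0 u$ on $[0,1/2]$ to control the product. The only difference is that you make the constant explicit ($C_0=2$) and fill in the verification of the elementary inequality, which the paper leaves implicit.
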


\begin{proof}
For $t\le 1/(2s_1)$, we have 
\begin{eqnarray*}
P(\sum |X_j|^2\ge x)\le E(e^{t(\sum |X_j|^2-x)})=e^{-tx}\prod_1^NE(e^{t
|X_j|^2})\\
=\exp (\sum_1^N \ln \frac{1}{1-\sigma _j^2t}-tx)\le \exp t(C_0\sum
\sigma _j^2t -x).
\end{eqnarray*}
It then suffices to take $t=(2s_1)^{-1}$.\end{proof}

\par Recall that 
\ekv{5.3}
{
Q_\omega u(x)=\sum_{|k|,|j|\le C_1/h}\alpha _{j,k}(\omega
)(u|e^k)e^j(x),\ e^k(x)=\frac{1}{\sqrt{2\pi }}e^{ikx}.
}
Since the Hilbert-Schmidt norm of $Q_\omega $ is given by 
$\Vert Q_\omega \Vert_{\mathrm{HS}}^2=\sum |\alpha _{j,k}(\omega )|^2$,
we get from the preceding proposition:

\begin{prop}\label{52}
If $C>0$ is large enough, then 
\ekv{5.4}
{
\Vert Q_\omega \Vert_{\mathrm{HS}}\le \frac{C}{h}\hbox{ with
  probability }\ge 1-e^{-\frac{1}{Ch^2}}.
}
\end{prop}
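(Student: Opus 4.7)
The plan is to simply apply Proposition~\ref{51} to the squared Hilbert--Schmidt norm
$$
\Vert Q_\omega \Vert_{\mathrm{HS}}^2 \;=\; \sum_{|j|,|k|\le C_1/h} |\alpha_{j,k}(\omega)|^2,
$$
which is a sum of $N$ independent $|X_j|^2$ with $X_j\sim\mathcal{N}_{\mathbb{C}}(0,1)$, where $N = (2\lfloor C_1/h\rfloor+1)^2$ satisfies $N \le C_2/h^2$ for some fixed $C_2$ depending only on $C_1$. Here all variances equal $1$, so $s_1=1$ and $\sum_{j}\sigma_j^2 = N \le C_2/h^2$.

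First I would invoke Proposition~\ref{51} with $x = (C/h)^2$, obtaining
$$
P\!\left(\Vert Q_\omega \Vert_{\mathrm{HS}} \ge \tfrac{C}{h}\right)
= P\!\left(\sum_j |X_j|^2 \ge \tfrac{C^2}{h^2}\right)
\le \exp\!\left(\tfrac{C_0 C_2}{2h^2} - \tfrac{C^2}{2h^2}\right).
$$
Then I would choose $C$ large enough (depending only on $C_0$ and $C_1$) so that $\tfrac{C^2}{2} - \tfrac{C_0 C_2}{2} \ge \tfrac{1}{C}$, yielding the stated tail bound $\le \exp(-1/(Ch^2))$ after (harmlessly) enlarging $C$.

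There is essentially no obstacle here; the only point to be slightly careful about is that the ``$C$'' in the conclusion of Proposition~\ref{52} and the ``$C$'' used to choose the tail threshold may need to be taken as the maximum of a couple of constants, but this is purely bookkeeping. The statement follows directly from the sub-exponential concentration of sums of squared complex Gaussians established in the preceding proposition.
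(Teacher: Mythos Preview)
Your proof is correct and is exactly the approach taken in the paper: the paper simply notes that $\Vert Q_\omega\Vert_{\mathrm{HS}}^2=\sum|\alpha_{j,k}(\omega)|^2$ and then invokes Proposition~\ref{51}, without even spelling out the arithmetic you have written. Your argument fills in precisely those details.
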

Now, we work under the assumption that $\Vert Q_\omega
\Vert_{\mathrm{HS}}\le C/h$ and recall that $\Vert Q_\omega \Vert \le
\Vert Q_\omega \Vert_{\mathrm{HS}}$. Assume that 
\ekv{5.5}
{
\delta \ll h^{3/2},
}
so that $\Vert \delta Q_\omega \Vert \ll h^{1/2}$. 
Then, by simple
perturbation theory we see that 
$$
{\cal P}_\delta (z)=\left(\begin{array}{ccc}P_\delta -z &R_-\\ R_+
    &0 \end{array}\right): H^1\times {\bf C}\to L^2\times {\bf C}
$$
is bijective with the bounded inverse 
$$
{\cal E}_\delta =\left(\begin{array}{ccc}E^\delta &E_+^\delta \\
E_-^{\delta } &E_{-+}^\delta  \end{array}\right)
$$

\begin{eqnarray}\label{5.6}
&&E^\delta =E+{\cal O}(\frac{\delta }{h})={\cal O}(h^{-1/2}) \hbox{ in
} {\cal L}(L^2,L^2)\\
&&E_+^\delta =E_++{\cal O}(\frac{\delta }{h^{3/2}})={\cal O}(1) \hbox{ in
} {\cal L}({\bf C},L^2)\nonumber\\
&&E_-^\delta =E_-+{\cal O}(\frac{\delta }{h^{3/2}})={\cal O}(1) \hbox{ in
} {\cal L}(L^2,{\bf C})\nonumber\\
&&E_{-+}^\delta =E_{-+}-\delta E_-QE_++{\cal O}(\frac{\delta ^2}{h^{5/2}}).\nonumber
\end{eqnarray}
As before the eigenvalues of $P_\delta $ are the zeros of
$E_{-+}^\delta $ and we have the d-bar equation
$$
\partial _{\overline{z}}E_{-+}^\delta +f^\delta (z)E_{-+}^\delta =0,$$
$$
f^\delta (z)=\partial _{\overline{z}}R_+ E_+^\delta
+E_-^\delta \partial _{\overline{z}}R_-=f(z)+{\cal
  O}(\frac{1}{h}\frac{\delta }{h^{3/2}}). 
$$

We can solve $\partial
_{\overline{z}}F^\delta =f^\delta $ (making $e^{F^\delta
}E_{-+}^\delta $ holomorphic) with
\ekv{5.7}
{
F^\delta =F+{\cal O}(\frac{\delta }{h^{5/2}})=
F+{\cal O}(\frac{\delta }{h^{3/2}})\frac{1}{h}.
}
\begin{prop}\label{53}
Assume that $0<t\ll 1$, $\delta \ll h^{3/2}$,
\ekv{5.8}
{
\delta t\gg e^{-\frac{1}{C_0h}},\quad t\gg \frac{\delta }{h^{5/2}},
}
where $C_0\gg 1$ is fixed. Then with probability $\ge
1-e^{-\frac{1}{Ch^2}}$, we have
\ekv{5.9}
{
|E_{-+}^\delta (z)|\le e^{-\frac{1}{Ch}}+\frac{C\delta }{h},\ \forall
z\in \Omega .
}
For every $z\in \Omega $, we have with probability $\ge 1-{\cal
  O}(t^2)-e^{-\frac{1}{Ch^2}}$, that
\ekv{5.10}
{
|E_{-+}^\delta (z)|\ge \frac{t\delta }{C},
}
\end{prop}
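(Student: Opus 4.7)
The plan is to start from the expansion (\ref{5.6}),
\[
E_{-+}^\delta(z)=E_{-+}(z)-\delta\, E_-(z) Q_\omega E_+(z)+O(\delta^2/h^{5/2}),
\]
in which $|E_{-+}(z)|=O(e^{-1/(Ch)})$ (by Proposition~\ref{41} applied to $\widetilde Q$, since $|E_{-+}(z)|^2$ is essentially the smallest singular value squared) and the $O(\delta^2/h^{5/2})$ term is deterministic once we bound $\|Q_\omega\|$. The only genuinely stochastic quantity is the linear term $E_-(z) Q_\omega E_+(z)$, and the whole proof reduces to understanding that this is a scalar complex Gaussian with variance essentially $1$.

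For the upper bound (\ref{5.9}), I would work on the event $\{\|Q_\omega\|_{\mathrm{HS}}\le C/h\}$, which has probability $\ge 1-e^{-1/(Ch^2)}$ by Proposition~\ref{52}. Since $\|Q_\omega\|\le \|Q_\omega\|_{\mathrm{HS}}$ and $\|E_\pm\|=O(1)$ by (\ref{5.6}),
\[
|E_-(z) Q_\omega E_+(z)|\le \|E_-\|\,\|Q_\omega\|\,\|E_+\|=O(1/h),
\]
so the linear term contributes at most $C\delta/h$. The quadratic error $O(\delta^2/h^{5/2})$ is dominated by $\delta/h$ under $\delta\ll h^{3/2}$, and combining with the exponential bound on $E_{-+}$ yields (\ref{5.9}) uniformly in $z$.

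For the lower bound (\ref{5.10}) at fixed $z\in\Omega$, I would expand the linear term explicitly using $R_+u=(u|e_0)$ and $R_-u_-=u_-f_0$:
\[
E_-(z) Q_\omega E_+(z)=(Q_\omega e_0|f_0)=\sum_{|j|,|k|\le C_1/h}\alpha_{j,k}\,a_k\,\overline{b_j},\qquad a_k=(e_0|e^k),\ b_j=(f_0|e^j).
\]
As the $\alpha_{j,k}$ are independent standard complex Gaussians, this is a complex Gaussian of variance $\sigma^2=\bigl(\sum_{|k|\le C_1/h}|a_k|^2\bigr)\bigl(\sum_{|j|\le C_1/h}|b_j|^2\bigr)$. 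The key microlocal step is to show $\sigma^2=1+O(h^\infty)$: since $e_0$ is exponentially close in $L^2$ to the WKB quasimode $e_{\mathrm{wkb}}$ (proof of Proposition~\ref{41}), whose Fourier coefficients are concentrated near the frequency $\xi_+(z)/h$ with non-stationary-phase tails, the choice of $C_1$ large enough (uniformly for $z\in\Omega$, since $\xi_\pm(z)$ is bounded there) yields $\sum_{|k|\le C_1/h}|a_k|^2=\|e_0\|^2+O(h^\infty)=1+O(h^\infty)$; the analogous argument works for $f_0$, which is the bottom eigenfunction of $\widetilde Q$ and admits its own WKB description. The standard anti-concentration $P(|X|\le t)=1-e^{-t^2/\sigma^2}\le Ct^2$ for such a Gaussian then gives $|E_-Q_\omega E_+|\ge t$ with probability $\ge 1-O(t^2)$. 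On the intersection with the event of Proposition~\ref{52}, the first condition in (\ref{5.8}) ensures $|E_{-+}|\le e^{-1/(Ch)}\ll \delta t$, and the second ensures $|O(\delta^2/h^{5/2})|\le \delta t/C$, producing $|E_{-+}^\delta(z)|\ge \delta t/C$ with total probability $\ge 1-O(t^2)-e^{-1/(Ch^2)}$.

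The main obstacle will be the microlocal verification that $\sigma^2=1+O(h^\infty)$ holds for the exact eigenfunctions $e_0$, $f_0$, not merely for their WKB approximants. The $L^2$ bound on $e_0-e_{\mathrm{wkb}}$ handles the low-frequency part, and non-stationary phase on $e_{\mathrm{wkb}}$ handles the high-frequency tail, but one must check that the two estimates match on the overlapping annulus $|k|\asymp C_1/h$ and that $C_1$ can be chosen independently of $z\in\Omega$. Once this is in place, the rest of the proof is bookkeeping on the two assumptions of (\ref{5.8}) and the Gaussian anti-concentration estimate.
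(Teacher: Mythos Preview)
Your proposal is correct and follows essentially the same route as the paper's own proof sketch: bound $\|Q_\omega\|$ via Proposition~\ref{52} for the upper estimate, recognize $E_-Q_\omega E_+=(Q_\omega e_0\,|\,f_0)$ as a complex Gaussian with variance $\sigma^2=\bigl(\sum|\widehat{e}_0(k)|^2\bigr)\bigl(\sum|\widehat{f}_0(j)|^2\bigr)=1+{\cal O}(h^\infty)$, apply the anti-concentration bound $P(|X|\le t)\le Ct^2$, and use the two hypotheses in (\ref{5.8}) to ensure the linear term dominates both $E_{-+}$ and the quadratic remainder. Your identification of the only delicate point---controlling the high-frequency Fourier tail of the exact eigenfunctions rather than just the WKB approximants---is exactly what the paper singles out as well, and your proposed handling (exponential $L^2$-closeness to $e_{\mathrm{wkb}}$ plus non-stationary phase on the latter) is adequate.
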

 
We only give the main idea of the proof which is to notice that
$E_-Q_\omega E_+$ can be written as a sum of independent Gaussian
random variables and is therefore itself a Gaussian random
variable. Applying the standard formula for the variance of such a sum
we get for the variance:
\ekv{5.12}
{\sigma ^2=\sum_{|k|,|j|\le \frac{C_1}{h}}|\widehat{e}_0(j)|^2|\widehat{f}_0(k)|^2,}
where $\widehat{e}_0(j)$, $\widehat{f}_0(j)$ are the Fourier
coefficients of $e_0$, $f_0$. Now we can show that the Fourier
coefficients are ${\cal O}( (h/|j|)^N)$ for every $N\ge 0$, when
$h|j|$ is sufficiently large, so if we take $C_1$ (in the definition
of $Q_\omega $) large enough, we conclude that $\sigma ^2=1+{\cal
  O}(h^\infty )$.\\
The remainder of the proof then consists in showing that $|E_-Q_\omega
E_+|$
is $\ge t$ with probability $\ge 1-{\cal O}(t^2)$ and observing that
when this happens, then the second term in the expression for
$E_{-+}^\delta $ in (\ref{5.6}) is dominant.\hfill{$\Box$}

\begin{prop}\label{54}
Let $\kappa >5/2$ and fix $\epsilon _0\in ]0,1[$ sufficiently
small. Let $\delta =\delta (h)$ satisfy $e^{-\epsilon _0/h}\ll \delta
\ll h^\kappa $, and put $\epsilon =\epsilon (h)=h\ln \frac{1}{\delta
}$. Then with probability $\ge 1-e^{-1/(Ch^2)}$ we have
$|E_{-+}^\delta |\le 1$ for all $z\in \Omega $. \\
For any $z\in \Omega $, we have $|E_{-+}^\delta |\ge e^{-C\epsilon
  /h}$ with probability $\ge 1-{\cal O}(\delta ^2/h^5)$. 
\end{prop}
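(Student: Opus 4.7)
The plan is to apply Proposition~\ref{53} twice, once for each assertion, with suitably chosen parameters.

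For the uniform upper bound, the first part of Proposition~\ref{53} already yields $|E_{-+}^\delta(z)|\le e^{-1/(Ch)}+C\delta/h$ on the high-probability event $\Vert Q_\omega\Vert_{\mathrm{HS}}\le C/h$ from Proposition~\ref{52}. Since $\delta\ll h^\kappa$ with $\kappa>5/2>1$, the dominant term $C\delta/h$ is $\ll h^{\kappa-1}\to 0$, so both terms are much smaller than $1$ for $h$ small enough, giving the first claim with probability $\ge 1-e^{-1/(Ch^2)}$.

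For the pointwise lower bound at a fixed $z\in\Omega$, I would apply the second part of Proposition~\ref{53} with the choice $t=A\delta/h^{5/2}$, where $A\gg 1$ is a fixed large constant, so that $t^2=A^2\delta^2/h^5$ matches the target failure probability $O(\delta^2/h^5)$. The hypotheses of Proposition~\ref{53} must be verified: $t\ll 1$ follows from $\delta\ll h^\kappa$ with $\kappa>5/2$; $t\gg \delta/h^{5/2}$ is automatic from $A\gg 1$; and $\delta t\gg e^{-1/(C_0 h)}$ becomes $\delta^2/h^{5/2}\gg e^{-1/(C_0 h)}$, which, using $\delta\gg e^{-\epsilon_0/h}$, holds as soon as $2\epsilon_0<1/C_0$. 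This is precisely the smallness requirement on $\epsilon_0$. With these parameters, Proposition~\ref{53} yields $|E_{-+}^\delta(z)|\ge t\delta/C_1=(A/C_1)\,\delta^2/h^{5/2}$, with probability $\ge 1-O(\delta^2/h^5)-e^{-1/(Ch^2)}$; the exponentially small term is absorbed into the $O$.

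It remains to rewrite the lower bound in the stated form. Since $\epsilon/h=\ln(1/\delta)$, we have $e^{-C\epsilon/h}=\delta^C$, so the inequality $(A/C_1)\delta^2/h^{5/2}\ge \delta^C$ holds for any fixed $C\ge 2$ and all $h$ sufficiently small. The substantial analytic content is already contained in Proposition~\ref{53} (itself relying on the Grushin construction and on the Gaussian structure of $E_-Q_\omega E_+$); the present proposition is a convenient repackaging obtained by the specific choice $t\asymp \delta/h^{5/2}$, and the only mild obstacle is tracking the compatibility of the three smallness/largeness conditions imposed on $\delta$ and $\epsilon_0$.
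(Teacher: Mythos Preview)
Your proof is correct and follows essentially the same route as the paper: both derive the proposition from Proposition~\ref{53} by taking $t$ of the order $\delta/h^{5/2}$, verifying the compatibility conditions $t\gg \delta/h^{5/2}$, $\delta t\gg e^{-1/(C_0h)}$ (which forces $\epsilon_0$ small), and $t\ll 1$ (which uses $\kappa>5/2$), and then rewriting the resulting bound $|E_{-+}^\delta|\ge t\delta/C\asymp \delta^2/h^{5/2}$ as $\ge \delta^C=e^{-C\epsilon/h}$. The paper's proof is terser but records the same constraints on $t$; your version is a faithful and slightly more explicit rendering.
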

This follows from Proposition \ref{53} by choosing $t$ such that
$$
\max (\frac{1}{\delta }e^{-\frac{1}{C_0h}},\frac{\delta }{h^{5/2}}, C\delta ^{C-1})\ll t \le {\cal O}(\frac{\delta }{h^{5/2}}), 
$$
which is possible to do since 
$$\frac{1}{\delta }e^{-\frac{1}{C_0h}},\ C\delta ^{C-1}\ll \frac{\delta }{h^{5/2}}.$$

Under the same assumptions, we also have 
$$
|F_\delta -F|\le{\cal O}(\frac{\delta 
}{h^{3/2}})\frac{1}{h}\le {\cal O}(\epsilon )\frac{1}{h}.
$$
Thus for the holomorphic function $u(z)=e^{F_\delta (z)}E_{-+}^\delta
(z)$ we have \begin{itemize}
\item With probability $\ge 1-e^{-1/(Ch^2)}$ we have $|u(z)|\le \exp
  (\Re F(z)+C\epsilon /h)$ for all $z\in \Omega $.
\item For every $z\in \Omega $, we have $|u(z)|\ge \exp (\Re
  F(z)-C\epsilon /h)$ with probability $\ge 1-{\cal O}(\delta
  ^2/h^5)$. 
\end{itemize}

Theorem \ref{31} on the Weyl asymptotics of small random perturbations of
the operator $P=hD+g(x)$ is now a consequence of the following result of 
M.~Hager, that we apply with $\phi =h\Re F$
\begin{prop}\label{56}
Let $\Gamma \Subset {\bf C}$ have smooth boundary and let $\phi $ be a
real valued $C^2$-function defined in a fixed neighborhood of
$\overline{\Gamma }$. Let $z\mapsto u(z;h)$ be a family of holomorphic
functions defined in a fixed neighborhood of $\overline{\Gamma }$, and
let $0<\epsilon =\epsilon (h)\ll 1$. Assume
\begin{itemize}
\item $|u(z;h)|\le \exp (\frac{1}{h}(\phi (z)+\epsilon ))$ for all $z$
  in a fixed neighborhood of $\partial \Gamma $.
\item There exist $z_1,...,z_N$ depending on $h$, with $N=N(h)\asymp
  \epsilon ^{-1/2}$ such that $\partial \Gamma \subset
  \cup_1^ND(z_k,\sqrt{\epsilon })$ such that $|u(z_k;h)|\ge \exp
  (\frac{1}{h}(\phi (z_k)-\epsilon ))$, $1\le k\le N(h)$.
\end{itemize}
Then, the number of zeros of $u$ in $\Gamma $ satisfies
$$
|\#(u^{-1}(0)\cap \Gamma )-\frac{1}{2\pi h}\int _\Gamma \Delta \phi
(z)dxdy| \le C\frac{\sqrt{\epsilon }}{h}.
$$ 
\end{prop}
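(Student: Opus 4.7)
The plan is to deduce Proposition \ref{56} directly from Theorem \ref{ze1} by choosing the Lipschitz weight $r$ on $\partial\Gamma$ to be the constant $\sqrt{\epsilon}$ and the subharmonic function in the hypothesis to be $\tilde\phi := \phi + \epsilon$. Since $\Gamma$ has smooth boundary and $\phi\in C^2$, the Lipschitz conditions (\ref{ze.1})--(\ref{ze.2}) are trivially met, and for $h$ (and hence $\epsilon$) small enough the thin tube $\widetilde{\gamma}_{r/C_1}$ sits inside the fixed neighborhood of $\partial\Gamma$ where both hypotheses of the proposition hold. The upper bound $|u|\le \exp((\phi+\epsilon)/h)$ then reads $h\ln|u|\le\tilde\phi$ on $\widetilde{\gamma}_{r/C_1}$, which is exactly (\ref{ze.6}).

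For the lower bound, I extract from the given $z_1,\dots,z_N$ a reference sequence $z_j^0\in\partial\Gamma$ satisfying the spacing $r/4\le |z_{j+1}^0 - z_j^0|\le r/2$, each of the given $z_k$ lying inside some $D(z_j^0, r/(2C_1))$ (possible because the $z_k$ form an $\sqrt\epsilon$-dense family along the smooth curve $\partial\Gamma$; alternatively one invokes the freedom of Theorem \ref{ze2} to shift the probe points). Then $h\ln|u(z_k)|\ge \phi(z_k) - \epsilon = \tilde\phi(z_k) - 2\epsilon$ supplies (\ref{ze.7}) with $\epsilon_j = 2\epsilon$. The associated positive measure is $\mu = \Delta\tilde\phi = \Delta\phi\cdot L(dz)$, with bounded continuous density, so $\mu(\Gamma) = \int_\Gamma \Delta\phi\,dxdy$ and $\mu(\widetilde{\gamma}_{r/C_1}) = O(\sqrt\epsilon)$ since the tube has area $O(\sqrt\epsilon)$.

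It remains to estimate the logarithmic integrals appearing in (\ref{ze.8}). Since $\mu$ has bounded density,
$$\int_{D(z_k,\,r/(4C_1))}\Bigl|\ln\tfrac{|w-z_k|}{r}\Bigr|\,\mu(dw)\;\le\;\|\Delta\phi\|_\infty\!\!\int_{D(0,r/(4C_1))}\!\!\bigl|\ln\tfrac{|w|}{r}\bigr|\,L(dw)\;=\;O(r^2)\;=\;O(\epsilon).$$
Summing over the $N\asymp\epsilon^{-1/2}$ probe points gives $O(\sqrt\epsilon)$ for this total, while $\sum_j\epsilon_j = 2N\epsilon = O(\sqrt\epsilon)$ as well. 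Substituting into (\ref{ze.8}) then yields
$$\Bigl|\#(u^{-1}(0)\cap\Gamma) - \tfrac{1}{2\pi h}\int_\Gamma\Delta\phi\,dxdy\Bigr|\;\le\;\frac{C}{h}\Bigl(\mu(\widetilde{\gamma}_{r/C_1}) + \sum_j\epsilon_j + \sum_j O(\epsilon)\Bigr)\;=\;O\!\left(\tfrac{\sqrt\epsilon}{h}\right),$$
which is the claimed bound. The only mildly delicate point is the geometric matching of the given $z_k$ to a Theorem \ref{ze1}-compatible reference sequence; this is a purely covering-theoretic step for a smooth curve that goes through automatically for $\epsilon$ small, and everything else is a direct substitution into the counting machinery of Section~\ref{ze}.
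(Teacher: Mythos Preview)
Your deduction from Theorem~\ref{ze1} is correct, and the paper explicitly acknowledges this route: right after Theorem~\ref{d3} it remarks that Hager's counting proposition is recovered by taking $r=\sqrt{\epsilon}$ constant, $\epsilon_j=\epsilon$, $N\asymp\epsilon^{-1/2}$, giving the remainder ${\cal O}(\sqrt{\epsilon}/h)$. Your handling of the logarithmic integrals via the bounded density of $\mu$ is fine (and is essentially the content of condition~(\ref{ze.12})); the geometric matching of the given $z_k$ to a reference sequence works once you inflate $r$ by a fixed constant factor so that each $z_j^0\in\partial\Gamma$, which lies in some $D(z_k,\sqrt{\epsilon})$, admits that $z_k$ as its probe point.

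The paper's own proof in the subsection following the proposition is genuinely different and more elementary: rather than invoking the Green-function machinery of Section~\ref{ze}, it linearizes $\phi$ at each $z_j$ via the holomorphic $\phi_j$ with $\Re(i\phi_j)=\phi+{\cal O}((z-z_j)^2)$, rescales to a unit disc, applies Jensen's formula to bound the local zero count by ${\cal O}(\epsilon/h)$, factorizes $u=e^g\prod(w-w_k)$ with $\Re g,\,g'={\cal O}(\epsilon/h)$ via Harnack, and then evaluates $\frac{1}{2\pi i}\int_{\partial\Gamma}u'/u\,dz$ directly using these local representations. This is self-contained and avoids the thin-tube Dirichlet estimates of Propositions~\ref{ze4}--\ref{a3}; your approach trades that elementary argument for a one-line appeal to the general theorem, which is equally valid but logically heavier.
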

This is essentially a special case of Theorem \ref{d3}, but we outline the simple and direct proof of Hager in the next subsection.
\subsection{Proof of Proposition \ref{56}, an outline}

Define $\phi _j(z)$ by $i\phi _j(z)=\phi (z_j)+2\partial _z\phi
(z_j)(z-z_j)$. Then
\begin{eqnarray*}
\phi (z)&=&\Re (i\phi _j(z))+R_j(z),\ R_j(z)={\cal O}((z-z_j)^2)\\
\phi '_j(z)&=& \frac{2}{i}\partial _z\phi (z)+{\cal O}((z-z_j)).
\end{eqnarray*}
Consider the holomorphic function 
$$
v_j(z;h)=u(z;h)e^{-\frac{i}{h}\phi _j(z)}.
$$
Then $|v_j(z;h)|\le e^{\frac{1}{h}(\phi (z)-\Re i\phi
  _j(z))}=e^{\frac{1}{h}R_j}\le e^{\frac{C\epsilon }{h}}$, when
$z-z_j={\cal O}(\sqrt{\epsilon })$, while 
$$
|v_j(z_j;h)|\ge e^{-\frac{C\epsilon }{h}}.
$$
In a $\sqrt{\epsilon }$-neighborhood of $z_j$ we put $v=v_j$ and make
the change of variables $w=(z-z_j)/\sqrt{\epsilon }$,
$\widetilde{v}(w)=v(z)$, so that {
$$
|\widetilde{v}(w)|\le e^{C\epsilon /h} \hbox{ on }D(0,2),\
|\widetilde{v}(0)|\ge e^{-C\epsilon /h}.
$$}

Using Jensen's formula we see that the number of zeros $w_1,...,w_N$  of
$\widetilde{v}$ in $D(0,3/2)$ (repeated with their multiplicity) is ${\cal O}(\epsilon /h)$. Factorize:
$$
\widetilde{v}(w)=e^{g(w)}\prod _1^N(w-w_k).
$$
Using the maximum principle and a suitably chosen disc of radius
between 4/3 and 3/2, and then also Harnack's inequality we can follow
a standard procedure to show that 
$$
\Re g(w),\, g'(w)\, = {\cal O}(\epsilon /h) \hbox{ in }D(0,6/5).
$$

\par Using finally that $\partial \Gamma $ is covered by the discs
$D(z_j,\sqrt{\epsilon })$ and using the above representation of $u$ in
each disc, we can show that the number of zeros of $u(\cdot ;h)$ in
$\Gamma $ is equal to 
\begin{eqnarray*}
\Re \frac{1}{2\pi i}\int _{\partial \Gamma }\frac{u'(z)}{u(z)}dz&=&
\Re \frac{1}{2\pi h}\int_{\partial \Gamma }\frac{2}{i}\partial _z\phi
(z)dz+{\cal O}(\frac{\sqrt{\epsilon }}{h})\\
&=& \frac{1}{2\pi h}\int \Delta \phi (x)dxdy+{\cal
  O}(\frac{\sqrt{\epsilon }}{h}).\hskip 2cm \Box
\end{eqnarray*}

\section{The multi-dimensional semi-classical case}\label{mult}
\setcounter{equation}{0}
\subsection{Introduction}\label{int}

In this section we consider general semi-classical operators with multiplicative random perturbations. We follow \cite{Sj08a, Sj08b} which make use of the work \cite{HaSj08}. The use of Theorem \ref{ze2} rather than the corresponding weaker result in \cite{HaSj08} led us to improved remainder estimates in comparison with \cite{Sj08b}. 

Let $X$ be a compact smooth manifold on which we choose a positive density of integration so that the scalar product on $L^2(X)$ is well-defined.
On $X$ we consider an $h$-differential operator $P$ which in local
coordinates takes the form,
\ekv{int.1}
{
P=\sum_{|\alpha |\le m}a_\alpha (x;h)(hD)^\alpha ,
} 
where we use standard multiindex notation and let
$D=D_x=\frac{1}{i}\frac{\partial }{\partial x}$. We assume that the 
coefficients $a_\alpha  $ are uniformly bounded in $C^\infty $ for
$h\in ]0,h_0]$, $0<h_0\ll 1$. (We will also discuss the case when we
only have some Sobolev space control of $a_0(x)$.) Assume
\eekv{int.2}
{
&&a_\alpha (x;h)=a_\alpha ^0(x)+{\cal O}(h) \mbox{ in }C^\infty ,}
{&&a_\alpha (x;h)=a_\alpha (x)\hbox{ is independent of }h\hbox{ for
  }|\alpha |=m.
}
Notice that this assumption is invariant under changes of local
coordinates. 

Also assume that $P$ is elliptic in the classical sense, uniformly
with respect to $h$:
\ekv{int.3}
{
|p_m(x,\xi )|\ge \frac{1}{C}|\xi |^m,
}
for some positive constant $C$, where
\ekv{int.4}
{
p_m(x,\xi )=\sum_{|\alpha |=m}a_\alpha (x)\xi ^\alpha 
}
is invariantly defined as a function on $T^*X$.
It follows that $p_m(T^*X)$ is a closed cone in ${\bf C}$ and we
assume that 
\ekv{int.5}
{
p_m(T^*X)\ne {\bf C}.
}
If $z_0\in {\bf C}\setminus p_m(T^*X)$, we see that $\lambda
z_0\not\in \Sigma (p)$ if $\lambda \ge 1$ is sufficiently large and
fixed, where $\Sigma (p):=p(T^*X)$ and $p$ is the semiclassical
principal symbol
\ekv{int.6}
{
p(x,\xi )=\sum_{|\alpha |\le m}a_\alpha ^0(x)\xi ^\alpha .
}
Actually, (\ref{int.5}) can be replaced by the weaker condition that
$\Sigma (p)\ne {\bf C}$.

\par Standard elliptic theory and analytic Fredholm theory now show
that if we consider $P$ as an unbounded operator: $L^2(X)\to L^2(X)$
with domain ${\cal D}(P)=H^m(X)$ (the Sobolev space of order $m$),
then $P$ has purely discrete spectrum and each eigenvalue has finite algebraic multiplicity.

\par We will need the symmetry assumption
\ekv{int'.7}{P^*=\Gamma P\Gamma ,}
where $P^*$ denotes the formal complex adjoint of $P$ in $L^2(X,dx)$,
and $dx$ is the fixed smooth positive density of integration
and $\Gamma $ is the antilinear operator of complex conjugation;
$\Gamma u=\overline{u}$. Notice that this assumption implies that 
\ekv{int'.8}
{
p(x,-\xi )=p(x,\xi ),
}
and conversely, if $p$ fulfills (\ref{int'.8}), then we get
(\ref{int'.7}) if we replace $P$ by $\frac{1}{2}(P+\Gamma P^*\Gamma $),
which has the same semi-classical principal symbol $p$. Actually, (\ref{int'.7}) can be formulated more simply by saying that $P$ is symmetric for the bilinear form $\int_Xu(x)v(x)dx$.

\par Let $V_z(t):=\mathrm{vol\,}(\{ \rho \in T^*X;\,|p(\rho
)-z|^2 \le t\} )$. For $\kappa \in ]0,1]$, $z\in {\bf C}$, we consider
the property that
\ekv{int.6.2}{V_z(t)={\cal O}(t^ \kappa ),\ 0\le t \ll 1.} 
Since $r\mapsto p(x,r\xi )$ is a polynomial of degree $m$ in $r$ with
non-vanishing leading coefficient, we see that (\ref{int.6.2}) holds
with $\kappa =1/(2m)$.

The random potential will be of the form 
\ekv{int.6.3}
{q_\omega (x)=\sum_{0<\mu _k\le L}\alpha _k(\omega )\epsilon
_k(x),\ |\alpha |_{{\bf C}^D}\le R,}
where $\epsilon _k$ is the orthonormal basis of eigenfunctions of
$h^2\widetilde{R}$, where $\widetilde{R}$ is an $h$-independent
positive elliptic 2nd order operator on $X$ with smooth 
coefficients. Moreover, $h^2\widetilde{R}\epsilon _k=\mu
_k^2 \epsilon _k$, $\mu _k>0$ and we may assume for simplicity that the $\mu _k$ form a (non-strictly) increasing sequence. 
We choose $L=L(h)$, $R=R(h)$ in the interval
\eekv{int.6.4}
{h^{\frac{\kappa -3n}{s-\frac{n}{2}-\epsilon }}\ll L\le Ch^{-M},&&
M\ge \frac{3n-\kappa }{s-\frac{n}{2}-\epsilon },}
{\frac{1}{C}h^{-(\frac{n}{2}+\epsilon )M+\kappa -\frac{3n}{2}}\le R\le
 C h^{-\widetilde{M}},&& \widetilde{M}\ge \frac{3n}{2}-\kappa
  +(\frac{n}{2}+\epsilon )M,}
for some $\epsilon \in ]0,s-\frac{n}{2}[$, $s>\frac{n}{2}$,
so by Weyl's law for the large eigenvalues of elliptic
self-adjoint operators, the dimension $D$ is of the order of magnitude
$(L/h)^n$. We introduce the  small parameter 
$\delta =\tau _0 h^{N_1+n}$, $0<\tau _0\le \sqrt{h}$, where 
\ekv{int.6.4.3}
{
N_1:=\widetilde{M}+sM+\frac{n}{2}.
} 
The randomly perturbed operator is
\ekv{int.6.4.5}
{
P_\delta =P+\delta h^{N_1}q_\omega =:P+\delta Q_\omega .
}

\par The random variables $\alpha _j(\omega )$ will have a joint probability distribution \ekv{int.6.5}{P(d\alpha )=C(h)e^{\Phi (\alpha ;h)}L(d\alpha ),} where for some $N_4>0$, \ekv{int.6.6}{ |\nabla _\alpha \Phi |={\cal O}(h^{-N_4}),} $L(d\alpha )$ is the Lebesgue measure and we use the standard $\ell^2$ norm on ${\bf C}^D$. ($C(h)$ is the normalizing constant, assuring that the probability of $B_{{\bf C}^D}(0,R)$ is equal to 1.)

\par We also need the parameter 
\ekv{int.6.7.5}{\epsilon _0(h)=(h^{\kappa }+h^n\ln 
\frac{1}{h})(\ln \frac{1}{\tau _0}+(\ln \frac{1}{h})^2)} and assume
that $\tau _0=\tau _0(h)$ is not too small, so that $\epsilon _0(h)$ is
small. Let $\Omega \Subset {\bf C}$ be open, simply connected, not
entirely contained in $\Sigma (p)$. The main result of this section is:
\begin{theo}\label{int1} Under the assumptions above, let 
$\Gamma \Subset \Omega $ have smooth boundary, let $\kappa \in
]0,1]$ be the parameter in \no{int.6.3}, \no{int.6.4}, \no{int.6.7.5} and assume that 
\no{int.6.2} holds uniformly for $z$ in a
neighborhood of $\partial \Gamma $.  Then there
exists a constant $C>0$ such that for
$C^{-1}\ge r>0$,
$\widetilde{\epsilon }\ge C \epsilon _0(h)$ 
we have with probability 
\ekv{int.6.8}{
\ge 1-\frac{C\epsilon _0(h)}
{rh^{n+\max (n(M+1), N_4+\widetilde{M})}}
e^{-\frac{\widetilde{\epsilon }}{C\epsilon _0(h)}} }
that:
\eekv{int.7}
{
&&|
\#(\sigma (P_\delta )\cap \Gamma )-\frac{1}{(2\pi h)^n
}\mathrm{vol\,}(p^{-1}(\Gamma ))
|\le
}
{&&
\frac{C}{h^n}\left( \frac{\widetilde{\epsilon }}{r}
+\mathrm{vol\,}(p^{-1}(\partial
\Gamma +D(0,r)))
 \right).}
Here $\#(\sigma (P_\delta )\cap \Gamma )$ denotes the number of
eigenvalues of $P_\delta $ in $\Gamma $, counted with their algebraic multiplicity.
\end{theo}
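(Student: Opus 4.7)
The plan is to follow the strategy of Section \ref{one}, reducing the counting of eigenvalues of $P_\delta$ in $\Gamma$ to counting zeros of a holomorphic function via a Grushin problem and then invoking Theorem \ref{ze2}. For $z$ in a neighborhood of $\overline{\Omega}$, let $t_1(z)\le t_2(z)\le\ldots$ be the singular values of $P-z$ with associated right/left singular vectors $(e_j(z))$, $(f_j(z))$. Fix the threshold $\tau_0$ from (\ref{int.6.4.5}) and let $N=N(z,h)=\#\{j:t_j(z)\le\tau_0\}$; a standard semiclassical Weyl argument for $(P-z)^*(P-z)$ combined with (\ref{int.6.2}) yields $N={\cal O}(V_z(\tau_0^2)h^{-n})\ll h^{\kappa-n}$. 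Setting $R_+u=((u|e_j))_{j=1}^N$, $R_-u_-=\sum_j u_-^j f_j$, the system $\mathcal{P}_\delta(z)=\left(\!\begin{array}{cc}P_\delta-z & R_-\\R_+ & 0\end{array}\!\right):H^m\times{\bf C}^N\to L^2\times{\bf C}^N$ is well-posed with inverse $\mathcal{E}^\delta(z)$, and $z\in\sigma(P_\delta)$ iff $u(z):=\det E_{-+}^\delta(z)=0$. A perturbation expansion as in (\ref{5.6}) yields
$$
E_{-+}^\delta(z)=E_{-+}^0(z)-\delta M(z;\omega)+{\cal O}(\delta^2/\tau_0^2),\quad M_{jk}(z;\omega)=(Q_\omega e_k|f_j),
$$
with $E_{-+}^0(z)$ the diagonal matrix $\mathrm{diag}(t_j(z))$ up to unitary factors.

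\textbf{Step 2: weight and its Laplacian.} Introduce the natural weight
$$
\phi(z):=h\sum_{t_j(z)\le\tau_0}\log t_j(z),
$$
so that $h\log|\det E_{-+}^0(z)|=\phi(z)+{\cal O}(\epsilon_0(h))$. By a semiclassical trace argument applied to $\log_+(\tau_0^2/((P-z)^*(P-z)))$, one derives, uniformly in $z\in\mathrm{neigh\,}(\partial\Gamma)$,
$$
\phi(z)=-\frac{1}{(2\pi)^n}\iint_{T^*X}\log_+\frac{\tau_0}{|p(\rho)-z|}\,d\rho+{\cal O}(h^\kappa),
$$
so the positive measure $\mu:=\Delta\phi$ is, modulo $h^\kappa$, the push-forward by $p$ of $(2\pi)^{-n}$-Liouville volume on $T^*X$. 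In particular $\mu(\Gamma)=(2\pi h)^{-n}\mathrm{vol\,}(p^{-1}(\Gamma))+{\cal O}(h^{\kappa-n})$ and similarly $\mu(\widetilde\gamma_{r/C_1})={\cal O}(h^{-n}\mathrm{vol\,}(p^{-1}(\partial\Gamma+D(0,r))))$. The \emph{deterministic} upper bound $h\log|u(z)|\le\phi(z)+C\epsilon_0(h)$ on $\overline{\Omega}$ follows because $\|\delta Q_\omega\|\le\delta Rh^{-\widetilde M}\ll\tau_0$ on the support ball $|\alpha|_{{\bf C}^D}\le R$, together with the expansion in Step 1.

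\textbf{Step 3: probabilistic lower bound and union bound.} At each fixed $z$ near $\partial\Gamma$, $\det E_{-+}^\delta(z)$ is a polynomial of degree $N$ in the coordinates $(\alpha_\ell)$, whose leading structure is controlled by the pairing matrix $((\epsilon_\ell e_k|f_j))_{\ell,(j,k)}$. The lower bound on $L$ in (\ref{int.6.4}) ensures that $(\epsilon_\ell)_{\mu_\ell\le L}$ is quantitatively frame-like in the Sobolev scale $H^s$, which makes this pairing non-degenerate. Combining with the anti-concentration inequality implied by (\ref{int.6.5})--(\ref{int.6.6}) yields
$$
P\!\left(|u(z)|<e^{(\phi(z)-\widetilde\epsilon)/h}\right)\le\frac{C\epsilon_0(h)}{h^{n+\max(n(M+1),N_4+\widetilde M)}}\exp\!\left(-\frac{\widetilde\epsilon}{C\epsilon_0(h)}\right).
$$
Select $z_j^0\in\partial\Gamma$ with $|z_{j+1}^0-z_j^0|\asymp r$, so that $N_0\asymp r^{-1}$. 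By the union bound, the lower bounds $h\log|u(\widetilde z_j)|\ge\phi(\widetilde z_j)-\widetilde\epsilon$ hold simultaneously at the points $\widetilde z_j$ produced by Theorem \ref{ze2}, with probability at least that in (\ref{int.6.8}).

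\textbf{Step 4: conclusion and main obstacle.} Theorem \ref{ze2} applied to $u,\phi,(\widetilde z_j)$ then gives (\ref{ze.10}), and identifying $\mu(\Gamma)$ and $\mu(\widetilde\gamma_{r/C_1})$ via Step 2 produces exactly (\ref{int.7}). The principal technical obstacle is Step 2: establishing uniformly in $z$ the Poincar\'e--Lelong-type identity $\Delta\phi=(2\pi h)^{-n}p_*(\mathrm{Liouville})+{\cal O}(h^{\kappa-n})$, which is what converts the abstract singular-value content of $\phi$ into the Weyl density on the right-hand side of (\ref{int.7}). A secondary non-trivial point is making the anti-concentration argument of Step 3 uniform in a determinantal polynomial whose degree $N$ itself tends to infinity with $1/h$.
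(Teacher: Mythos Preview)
Your overall architecture is right --- reduce to a holomorphic function, get an upper bound everywhere and a lower bound at $\asymp r^{-1}$ boundary points with high probability, then invoke Theorem \ref{ze2} --- but two of your steps have genuine gaps, and the paper handles both quite differently.

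\textbf{Holomorphy of $u(z)$.} Your $u(z)=\det E_{-+}^\delta(z)$ is \emph{not} holomorphic: $R_\pm$ are built from singular vectors of $P-z$, which vary non-holomorphically in $z$, and even $N=N(z)$ jumps. In one dimension this was repaired by a $\bar\partial$ equation (Proposition \ref{42}), but in higher dimension with matrix-valued $E_{-+}$ that route is not carried out. The paper instead takes $u(z)=\det P_{\delta,z}$ with $P_{\delta,z}=(\widetilde P_\delta-z)^{-1}(P_\delta-z)$, which \emph{is} holomorphic in $z$ on $\Omega$. The Grushin problem then appears only as a tool to estimate $|\det P_{\delta,z}|$, via the identity $\det P_{\delta,z}=\det\mathcal P_\delta\cdot\det E_{-+}$ and $|\det\mathcal P_\delta|^2=\alpha^{-N}\det 1_\alpha(S_{\delta,z})$ (Proposition \ref{gr1}, (\ref{grny.6})). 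Correspondingly the weight is not your $\phi(z)=h\sum_{t_j\le\tau_0}\log t_j$ but rather $\phi(z)=(2\pi)^{-n}\iint\ln|p_z|\,dxd\xi$, which is manifestly continuous and subharmonic with $\frac{1}{2\pi}\Delta\phi$ equal to the push-forward of Liouville measure; the upper and lower bounds on $\ln|\det P_{\delta,z}|$ relative to this $\phi$ come from Proposition \ref{fu8} and Proposition \ref{sp4}.

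\textbf{The lower bound / anti-concentration.} Your Step 3 is the crucial missing piece. The hypotheses (\ref{int.6.5})--(\ref{int.6.6}) impose \emph{only} $|\nabla_\alpha\Phi|={\cal O}(h^{-N_4})$; there is no Gaussian or log-concave structure, so a direct anti-concentration inequality for a degree-$N$ polynomial with $N\asymp h^{\kappa-n}$ is not available. The paper's mechanism is completely different: for each fixed $z$ one \emph{constructs} a special admissible potential $\alpha^0$ at which the lower bound holds deterministically. This is the content of Subsections \ref{inv}--\ref{sv}: one first chooses $N$ points $a_1,\dots,a_N\in X$ so that the matrix $M_{q_a}$ associated to $q_a=\sum\delta_{a_j}$ has good singular values (Lemma \ref{inv4}, Proposition \ref{spe1}), then approximates $q_a$ by an admissible potential (Proposition \ref{spe2}), and iterates (Proposition \ref{sp2}) to force all small singular values of $P_\delta-z$ above explicit thresholds, yielding Proposition \ref{sp4}. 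Only \emph{then} does one exploit that $F(\alpha)=\det P_{\delta,z}\,e^{-h^{-n}\phi(z)}$ is holomorphic in $\alpha$: restricting to complex lines through $\alpha^0$, Jensen's formula bounds the number of zeros and hence the measure of the set where $|F|$ is small (Proposition \ref{pr1}); the gradient bound (\ref{int.6.6}) enters only to compare the given probability to Lebesgue on intervals of length $\asymp h^{N_6}$ (Proposition \ref{pr2}). Your ``frame-like'' remark about $(\epsilon_\ell)$ is a small ingredient of Proposition \ref{spe02}, but by itself does not give the required lower bound.
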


Actually, the theorem holds for the slightly more general
operators, obtained by replacing $P$ by $P_0=P+\delta
_0(h^{\frac{n}{2}}q_1^0+q_2^0)$, where $\Vert q_1^0\Vert_{H^s_h}\le
1$, $\Vert q_2\Vert_{H^s}\le 1$, $0\le \delta _0\le h$. Here, $H^s$ is
the standard Sobolev space and $H_h^s$ is the same space with the
natural semiclassical $h$-dependent norm. See Subsection \ref{hs}. This allows us in principle to consider more general random perturbations and will be used in Section \ref{alm}.

We also have a result valid simultaneously for a
family ${\cal C}$ of domains $\Gamma \subset \Omega $ satisfying the
assumptions of Theorem \ref{int1} uniformly in the natural sense:
With a probability 
\ekv{int.8}{
\ge 1-\frac{{\cal O}(1)\epsilon _0(h)}{r^2h^{n+\max (n(M+1), N_4+\widetilde{M})}}e^{-\frac{\widetilde{\epsilon }}{C\epsilon _0(h)}}, } the
estimate \no{int.7} holds simultaneously for all $\Gamma \in {\cal C}$.
\begin{remark}\label{int1.5}
{\rm If $\kappa >1/2$, then $\mathrm{vol\,}p^{-1}(\partial \Gamma +D(0,r))={\cal O}(r^{2\kappa -1})$, where the exponent $2\kappa -1$ is $>0$. More generally, if 
$$
\mathrm{vol\,}(p^{-1}(\partial \Gamma +D(0,r)))={\cal O}(r^{\alpha }),
$$
for some $\alpha \in ]0,1]$, then we can choose $r=\widetilde{\epsilon }^{\frac{1}{\alpha +1}}$ and obtain that that the right hand side in (\ref{int.7}) is ${\cal O}(1)h^{-n}\widetilde{\epsilon }^{\frac{\alpha }{\alpha +1}}$ showing that we have Weyl asymptotics. Notice here that if $z$ is not a critical value of $p$, in the sense that $d\Re p(\rho )$ and $\Im p(\rho )$ are independent whenever $p(\rho )=z$, then (\ref{int.6.2}) holds with $\kappa =1$.

\par In the proof we replace the zero counting proposition from \cite{HaSj08} by the stronger Theorem \ref{ze2} leading to an improved remainder estimate. It may be possible (though we have not yet checked the details) to replace the right hand side in (\ref{int.7}) by 
$$
\frac{C}{h^n}\mathrm{vol\,}(p^{-1}(\partial \Gamma +D(0,h^{\frac{1}{2}-\epsilon }))),
$$
for any fixed $\epsilon >0$, and also to let $\Gamma $ be $h$-dependent of a suitable Lipschitz class as in section \ref{ze}.}

\end{remark}
\begin{remark}\label{int2}
{\rm When $\widetilde{R}$ has real coefficients, we may assume that the
eigenfunctions $\epsilon _j$ are real. Then (cf Remark 8.3 in \cite{Sj08a}) we may
restrict $\alpha $ in (\ref{int.6.3}) to be in ${\bf R}^D$ so that
$q_\omega $ is real, still with $|\alpha |\le R$, and change
$C(h)$ in (\ref{int.6.5}) so that $P$ becomes a probability measure on 
$B_{{\bf R}^D}(0,R)$. Then Theorem \ref{int1} remains valid.}
\end{remark}
\begin{remark}\label{int3}
{\rm The assumption (\ref{int'.7}) cannot be
completely eliminated. Indeed, let $P=hD_x+g(x)$ on ${\bf T}={\bf
  R}/(2\pi {\bf Z})$ where $g$ is smooth and complex valued. Then (cf
Hager \cite{Ha06a}) the spectrum of $P$ is contained in the line 
$\Im z = \int_0^{2\pi }\Im g(x)dx/(2\pi )$. This line will vary only very
little under small multiplicative perturbations of $P$ so 
Theorem \ref{int1} cannot hold in this case. On the other hand, for other classes of perturbations, like the ones in Section \ref{one} or in \cite{HaSj08}, the symmetry assumption can be dropped.}
\end{remark}
In the remainder of this section, we shall outline the proof of Theorem \ref{int1} following \cite{Sj08b, Sj08a}.

\subsection{Semiclassical Sobolev spaces and multiplication}
\label{al}
We let $H_h^s({\bf R}^n)\subset {\cal S}'({\bf R}^n)$, $s\in {\bf R}$, 
denote the semiclassical Sobolev space of order
$s$ equipped with the norm $\Vert \langle hD\rangle^s u\Vert$ where
the norms are the ones in $L^2$, $\ell^2$ or the corresponding
operator norms if nothing else
is indicated. Here $\langle hD\rangle= (1+(hD)^2)^{1/2}$. In
\cite{Sj08a} we recalled the following result:
\begin{prop}\label{al1}
Let $s>n/2$. Then there exists a constant $C=C(s)$ such that for all
$u,v\in H_h^s({\bf R}^n)$, we have $u\in L^\infty ({\bf R}^n) $, 
$uv\in H_h^s({\bf R}^n)$ and 
\ekv{al.1}
{
\Vert u\Vert_{L^\infty }\le Ch^{-n/2}\Vert u\Vert_{H_h^s},
}
\ekv{al.2}
{
\Vert uv\Vert_{H_h^s} \le Ch^{-n/2} \Vert u\Vert_{H_h^s} \Vert v\Vert_{H_h^s}.
}
\end{prop}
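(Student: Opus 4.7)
The plan is to prove both bounds by reducing to the standard ($h=1$) Fourier-analytic facts, with the factor $h^{-n/2}$ arising from a single application of Plancherel/scaling in each case. Denote by $\mathcal{F}_h u(\xi)=\int e^{-ix\xi/h}u(x)\,dx$ the semiclassical Fourier transform, so that Plancherel reads $\|\mathcal{F}_h u\|_{L^2}=(2\pi h)^{n/2}\|u\|_{L^2}$, and hence $\|\langle\xi\rangle^s\mathcal{F}_h u\|_{L^2}=(2\pi h)^{n/2}\|u\|_{H_h^s}$.

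For (\ref{al.1}), I would apply semiclassical Fourier inversion
$$ u(x)=(2\pi h)^{-n}\int e^{ix\xi/h}\,\mathcal{F}_h u(\xi)\,d\xi, $$
insert the trivial factor $\langle\xi\rangle^{-s}\langle\xi\rangle^s$ under the integral, and apply Cauchy--Schwarz. Since $s>n/2$ ensures $\langle\xi\rangle^{-s}\in L^2(\R^n)$, this yields
$$ \|u\|_{L^\infty}\le (2\pi h)^{-n}\,\|\langle\xi\rangle^{-s}\|_{L^2}\,\|\langle\xi\rangle^s\mathcal{F}_h u\|_{L^2}= C(s)\,h^{-n/2}\|u\|_{H_h^s}, $$
which is (\ref{al.1}).

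For (\ref{al.2}), I would exploit the dilation $\tilde u(y):=u(hy)$. A change of variable in the Fourier integral gives $\widehat{\tilde u}(\xi)=h^{-n}\hat u(\xi/h)$, and then the substitution $\eta=\xi/h$ in the $H^s$ norm of $\tilde u$ yields the identity $\|\tilde u\|_{H^s}^2=h^{-n}\|u\|_{H_h^s}^2$, i.e.\ $\|\tilde u\|_{H^s}=h^{-n/2}\|u\|_{H_h^s}$. Since $\widetilde{uv}=\tilde u\,\tilde v$, the same identity relates $\|uv\|_{H_h^s}$ to $\|\tilde u\,\tilde v\|_{H^s}$, so (\ref{al.2}) follows at once from the classical algebra property $\|fg\|_{H^s(\R^n)}\le C\|f\|_{H^s}\|g\|_{H^s}$, valid for $s>n/2$.

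The only non-trivial ingredient is that classical algebra property, but it is standard textbook material (e.g.\ from Littlewood--Paley theory, or directly from $\widehat{fg}=(2\pi)^{-n}\hat f\ast\hat g$ combined with Peetre's inequality $\langle\xi\rangle^s\le 2^{s/2}(\langle\xi-\eta\rangle^s+\langle\eta\rangle^s)$ and Cauchy--Schwarz, where $s>n/2$ is used precisely so that $\langle\cdot\rangle^{-s}\in L^2$). Thus there is no real obstacle: the substance of the proposition is merely the bookkeeping that exchanging $L^2$-information for either $L^\infty$-information or a product on the spatial side costs one factor of $h^{-n/2}$, and both costs are recovered transparently from the single $(2\pi h)^{\pm n/2}$ in the semiclassical Plancherel identity.
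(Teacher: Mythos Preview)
Your argument is correct. The paper does not actually prove this proposition here; it merely recalls it from \cite{Sj08a}. Your approach---semiclassical Fourier inversion plus Cauchy--Schwarz for (\ref{al.1}), and the dilation $\tilde u(y)=u(hy)$ converting $H_h^s$ into the standard $H^s$ with a factor $h^{-n/2}$ for (\ref{al.2})---is the natural and standard one, and presumably matches what is done in the cited reference.
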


We cover $X$ by
finitely many coordinate neighborhoods $X_1,...,X_p$ and for
each $X_j$, we let $x_1,...,x_n$ denote the corresponding local 
coordinates on $X_j$. Let $0\le \chi _j\in C_0^\infty (X_j)$ have the
property that $\sum_1^p\chi _j >0$ on $X$. Define $H_h^s(X)$ to be the
space of all $u\in {\cal D}'(X)$ such that 
\ekv{al.4}
{
\Vert u\Vert_{H_h^s}^2:=\sum_1^p \Vert \chi _j\langle hD\rangle^s \chi
_j u\Vert ^2 <\infty .
}
It is standard to show that this definition does not depend on the
choice of the coordinate neighborhoods or on $\chi _j$. With different
choices of these quantities we get norms in \no{al.4} which are
uniformly equivalent when $h\to 0$. In fact, this follows from the
$h$-pseudodifferential calculus on manifolds with symbols in the
H\"ormander space $S^m_{1,0}$, that we quickly reviewed in the
appendix in \cite{Sj08a}. See also \cite{Sj08b}, Section 4.
An equivalent definition of $H_h^s(X)$ is the following: Let 
\ekv{al.5}
{
h^2\widetilde{R}=\sum (hD_{x_j})^*r_{j,k}(x)hD_{x_k}
}
be a non-negative elliptic operator with smooth coefficients on $X$,
where the star indicates that we take the adjoint with respect to the
fixed positive smooth density on $X$. Then $h^2\widetilde{R}$ is
essentially self-adjoint with domain $H^2(X)$, so
$(1+h^2\widetilde{R})^{s/2}:L^2\to L^2$ is a closed densely defined
operator for $s\in {\bf R}$, which is bounded precisely when $s\le
0$. Standard methods allow to show that $(1+h^2\widetilde{R})^{s/2}$
is an $h$-pseudodifferential operator with symbol in $S^s_{1,0}$ and
semiclassical principal symbol given by $(1+r(x,\xi ))^{s/2}$, where
$r(x,\xi )=\sum_{j,k}r_{j,k}(x)\xi _j\xi _k$ is the semiclassical
principal symbol of $h^2\widetilde{R}$.  See the appendix in
\cite{Sj08a}.
The
$h$-pseudodifferential calculus gives for every $s\in {\bf R}$:
\begin{prop}\label{al2}
  $H_h^s(X)$ is the space of all $u\in {\cal D}'(X)$ such that 
$(1+h^2\widetilde{R})^{s/2}u\in L^2$ and the norm $\Vert u\Vert_{H_h^s}$ is
equivalent to $\Vert (1+h^2\widetilde{R})^{s/2}u\Vert$, uniformly when $h\to 0$.
\end{prop}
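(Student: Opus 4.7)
The plan is to compare both quantities with one another by recognizing that each is the $L^2$-norm of $u$ after applying an elliptic $h$-pseudodifferential operator of order $s$, and then invoking the semiclassical calculus (composition, parametrix construction, Calderón–Vaillancourt $L^2$-boundedness of order-$0$ pseudos) as reviewed in the appendix of \cite{Sj08a}. Write $B := (1+h^2\widetilde{R})^{s/2}$; by the text just preceding the proposition, $B$ belongs to the calculus with symbol in $S^s_{1,0}$ and principal symbol $(1+r(x,\xi))^{s/2}$, which is elliptic since $r(x,\xi)\asymp |\xi|^2$ for large $|\xi|$. In particular $B$ is invertible (as a function of the positive self-adjoint operator $1+h^2\widetilde{R}$), and $B^{-1}=(1+h^2\widetilde{R})^{-s/2}$ is a genuine parametrix of order $-s$. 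Similarly each $A_j := \chi_j\langle hD\rangle^s\chi_j$ is (in local coordinates on $X_j$) an $h$-pseudo of order $s$ with leading symbol $\chi_j(x)^2\langle\xi\rangle^s$, and is defined globally on $X$ by extension by zero.

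For the upper bound $\Vert u\Vert_{H^s_h}\le C\Vert Bu\Vert$, form the composition $T_j := A_j\,B^{-1}$. By the calculus, $T_j$ has symbol in $S^0_{1,0}$, uniformly in $h$, hence is uniformly bounded on $L^2$. Writing $A_j u = T_j(Bu)$ gives $\Vert A_j u\Vert\le C\Vert Bu\Vert$, and summing over $j$ yields the desired inequality.

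For the lower bound $\Vert Bu\Vert\le C\Vert u\Vert_{H^s_h}$, introduce the self-adjoint operator
\[
Q := \sum_{j=1}^{p} A_j^{\,*}A_j,
\]
whose leading symbol equals $\bigl(\sum_j \chi_j(x)^4\bigr)\langle\xi\rangle^{2s}$. Since $\sum_j\chi_j>0$ on the compact manifold $X$, the same holds for $\sum_j\chi_j^4$, so $Q$ is elliptic of order $2s$ with positive principal symbol. Moreover $Q$ is non-negative self-adjoint, and by the Seeley-type functional calculus for elliptic positive pseudos (applied uniformly in $h$ to $Q$, after a harmless shift if $s<0$), its square root $Q^{1/2}$ lies in the calculus with symbol in $S^s_{1,0}$ and elliptic principal symbol $\bigl(\sum_j\chi_j^4\bigr)^{1/2}\langle\xi\rangle^s$; its parametrix is of order $-s$. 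Then $B\cdot Q^{-1/2}$ is a pseudo of order $0$, uniformly $L^2$-bounded, giving
\[
\Vert Bu\Vert = \Vert (BQ^{-1/2})Q^{1/2}u\Vert \le C\Vert Q^{1/2}u\Vert = C\sqrt{(Qu|u)} = C\Vert u\Vert_{H^s_h},
\]
up to a smoothing error absorbed as in Step 3. This completes the equivalence of the two norms.

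The main obstacle is the lower bound step, specifically the passage to $Q^{1/2}$ in the calculus. One must ensure that the Seeley–type construction of fractional powers of elliptic self-adjoint semiclassical pseudos goes through with uniform estimates in $h$, which requires verifying that the resolvent of $Q$ lies in the calculus with the expected symbol bounds in $h$. An alternative that circumvents this is a sharp-Gårding/Melin-type argument showing directly that $c_1 Q \le B^*B\le c_2 Q$ modulo operators of order $2s-1$, combined with an iteration to absorb the lower-order remainder into the leading inequality; both routes are standard and the exposition follows whichever is most convenient.
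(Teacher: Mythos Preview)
Your proposal is correct, and in fact the paper gives no proof beyond the single sentence ``The $h$-pseudodifferential calculus gives for every $s\in{\bf R}$:\,\ldots'' preceding the statement; so you are providing a legitimate fleshing-out of what the paper leaves implicit. The upper bound via $A_jB^{-1}\in\mathrm{Op}_h(S^0_{1,0})$ is exactly the intended mechanism.

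One remark on the lower bound: your route through $Q^{1/2}$ works, but it is heavier than needed and, as you note yourself, forces you to check that Seeley-type fractional powers stay in the calculus uniformly in $h$ (plus the small issue of possible zero eigenvalues of $Q$, which you handle by a shift). A lighter alternative, still purely within the calculus and avoiding any functional calculus for $Q$, is this: since the paper already records that the definition \no{al.4} is independent of the choice of cutoffs up to uniformly equivalent norms, one may assume $\sum_j\chi_j^2\equiv 1$. Then $B^*B=\sum_j\chi_j B^*B\chi_j$ up to a commutator term of order $2s-1$, and each $\chi_jB^*B\chi_j$ is, modulo lower order, $C_jA_j^*A_j$ with $C_j$ a bounded order-$0$ operator (compose with $\langle hD\rangle^{\pm s}$ in the chart). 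This yields $\|Bu\|^2\le C\sum_j\|A_ju\|^2+C\|u\|_{H_h^{s-1/2}}^2$, and the last term is absorbed by interpolation between the already-proved upper bound and the trivial $H_h^{s-1}$ estimate. This is the kind of ``standard'' manipulation the paper is gesturing at, and it sidesteps the square-root construction entirely. Either way, your argument is sound.
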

\begin{remark}\label{al3}
\rm From the first definition we see that Proposition \ref{al1} remains
valid if we replace ${\bf R}^n$ by a compact $n$-dimensional 
manifold $X$.
\end{remark}

\par Of course, $H_h^s(X)$ coincides with the standard Sobolev space
$H^s(X)$ and the norms are equivalent for each fixed value of $h$, but
not uniformly with respect to $h$. The following variant of
Proposition \ref{al1} will be useful when studying the high
energy limit in Section \ref{alm}.
\begin{prop}\label{al4}
Let $s>n/2$. Then there exists a constant $C=C_s>0$ such that 
\ekv{al.6}
{
\Vert uv\Vert_{H_h^s}\le C\Vert u\Vert_{H^s}\Vert v\Vert_{H_h^s},\
\forall u\in H^s({\bf R}^n),\, v\in H_h^s({\bf R}^n).
}
The result remains valid if we replace ${\bf R}^n$ by $X$.
\end{prop}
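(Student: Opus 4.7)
The plan is to prove the estimate on ${\bf R}^n$ first and then transfer to $X$ via the local definition \no{al.4}. On ${\bf R}^n$ I will use Bony's paraproduct decomposition with respect to the standard (non-semi-classical) dyadic Littlewood--Paley projectors $P_k$, $k\ge 0$, exploiting the characterization
$$
\Vert f\Vert _{H_h^s}^2\asymp \sum _k\langle h2^k\rangle ^{2s}\Vert P_kf\Vert _{L^2}^2.
$$
Writing $uv=T_uv+T_vu+R(u,v)$ in the usual way, with $T_uv$ the ``$u$ low $\cdot $ $v$ high'' paraproduct, $T_vu$ its symmetric counterpart, and $R(u,v)$ the residual doubly-high term, I treat each piece separately.

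The term $T_uv=\sum _jS_{j-5}u\cdot P_jv$ is the easy one: each summand is frequency-localized in a dyadic shell around $2^j$, and the classical Sobolev embedding $H^s\hookrightarrow L^\infty $ (valid since $s>n/2$) gives $\Vert S_{j-5}u\Vert _{L^\infty }\le \Vert u\Vert _{L^\infty }\le C\Vert u\Vert _{H^s}$; almost-orthogonality then immediately yields $\Vert T_uv\Vert _{H_h^s}\le C\Vert u\Vert _{H^s}\Vert v\Vert _{H_h^s}$. For $T_vu=\sum _jS_{j-5}v\cdot P_ju$, the essential new input is the dyadically localized bound
$$
\Vert S_{j-5}v\Vert _{L^\infty }\le C\min (2^{jn/2},h^{-n/2})\Vert v\Vert _{H_h^s},
$$
obtained by Bernstein's inequality applied to $P_kv$, $k<j-4$, inserting $\Vert P_kv\Vert _{L^2}\le \langle h2^k\rangle ^{-s}c_k\Vert v\Vert _{H_h^s}$ with an $\ell ^2$-sequence $(c_k)$, and splitting the sum over $k$ at the threshold $2^k\sim 1/h$ where the semi-classical weight transitions from $\sim 1$ to $\sim h2^k$. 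Combined with $\Vert P_ju\Vert _{L^2}\le 2^{-js}\tilde c_j\Vert u\Vert _{H^s}$ and a corresponding split of the outer sum over $j$ at the same threshold, the key series
$$
\sum _j\langle h2^j\rangle ^{2s}\min (2^{2j(n/2-s)},h^{-n}2^{-2js})\tilde c_j^2
$$
reduces to a constant times $1+h^{2s-n}$, which is bounded since $s>n/2$ and $h\le 1$. This yields $\Vert T_vu\Vert _{H_h^s}\le C\Vert u\Vert _{H^s}\Vert v\Vert _{H_h^s}$. For the residual $R(u,v)=\sum _jP_ju\cdot \tilde P_jv$ with $\tilde P_j=P_{j-1}+P_j+P_{j+1}$, the product has frequency support in $|\xi |\le C2^j$, Bernstein gives $\Vert P_ju\cdot \tilde P_jv\Vert _{L^2}\le C2^{jn/2}\Vert P_ju\Vert _{L^2}\Vert \tilde P_jv\Vert _{L^2}$, and Cauchy--Schwarz on the factor $2^{j(n/2-s)}$ (summable precisely because $s>n/2$) absorbs both $\ell ^2$-sequences.

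The main obstacle is the $T_vu$ piece, which corresponds to the regime where $u$ carries the high frequencies and $v$ the low ones: crudely applying the semi-classical Sobolev embedding $\Vert v\Vert _{L^\infty }\le Ch^{-n/2}\Vert v\Vert _{H_h^s}$ of Proposition \ref{al1} directly to $S_{j-5}v$ would inject a spurious $h^{-n/2}$; the dyadic refinement above is what saves the day, and the matching of the two threshold sums at $2^j\sim 1/h$ is exactly what forces the constant to be $h$-independent under the strict inequality $s>n/2$. Once the ${\bf R}^n$ estimate is in hand, the manifold case follows from the local definition \no{al.4}: multiplication by the cutoffs $\chi _j\in C_0^\infty (X_j)$ commutes with $\langle hD\rangle ^s$ modulo ${\cal O}(h^\infty )$-smoothing errors controlled by the $h$-pseudodifferential calculus recalled in the appendix of \cite{Sj08a}, and Bernstein and Sobolev embeddings transfer chart by chart.
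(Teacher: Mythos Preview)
Your paraproduct argument on ${\bf R}^n$ is correct and handles the three pieces $T_uv$, $T_vu$, $R(u,v)$ cleanly; the key observation --- that the dyadically localized bound $\|S_{j-5}v\|_{L^\infty}\le C\min(2^{jn/2},h^{-n/2})\|v\|_{H_h^s}$, obtained by splitting the $k$-sum at the threshold $2^k\sim 1/h$, exactly compensates the mismatch between the classical weight on $u$ and the semiclassical weight on the product --- is the heart of the matter, and your treatment of it is right. The residual term also goes through once one uses $\langle h2^k\rangle\le C\langle h2^j\rangle$ for $k\le j+{\cal O}(1)$ before invoking Cauchy--Schwarz on the summable factor $2^{j(n/2-s)}$.

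This is a genuinely different route from the paper's. The paper works directly on the Fourier side: writing $\widehat{uv}=\hat u*\hat v$ and estimating $\|\langle h\xi\rangle^s\widehat{uv}\|_{L^2}$ as a convolution in weighted $L^2$, the basic inputs being a Peetre-type inequality (combined with $h\le 1$) and a splitting of the $\eta$-integral. Your Littlewood--Paley approach is longer but more modular and makes transparent where the hypothesis $s>n/2$ enters in each regime; the direct convolution estimate is shorter and more elementary once the right pointwise inequality on the weights is in hand. Both reductions to the compact manifold $X$ proceed by the same standard localization via the cutoffs in \no{al.4}.

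One small inaccuracy in your last paragraph: the commutator $[\chi_j,\langle hD\rangle^s]$ is ${\cal O}(h)$ in the semiclassical calculus (one order lower, one power of $h$ gained), not ${\cal O}(h^\infty)$-smoothing. This does not affect your argument, since all you actually need for the transfer to $X$ is that multiplication by a fixed $C_0^\infty$ cutoff is uniformly bounded on $H_h^s$ --- and that follows immediately from the ${\bf R}^n$ estimate you have just proved, applied with $u=\chi_j\in H^s$.
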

The proof is straight forward. We work in local coordinates and make a Fourier transform. Then we have to estimate convolutions in certain weighted $L^2$ spaces. See \cite{Sj08b} for the details.

\subsection{$H^s$-perturbations and eigenfunctions}\label{hs}

\par Let $S^m(T^*X)=S^m_{1,0}(T^*X)$,
$S^m(U\times {\bf R}^n)=S^m_{1,0}(U\times {\bf R}^n)$ denote the
classical H\"ormander symbol spaces, where $U\subset {\bf R}^n$ is
open. The condition (\ref{int.5}) implies that the closure of the image of $p$ is not equal to the whole complex plane and (as in
\cite{Ha06b, HaSj08} we can find $\widetilde{p}\in S^m(T^*X)$
which is equal to $p$ outside any given fixed neighborhood of
$p^{-1}(\overline{\Omega })$ such that $\widetilde{p}-z$ is
non-vanishing, for any $z\in \overline{\Omega }$. Let
$\widetilde{P}=P+\mathrm{Op}_h(\widetilde{p}-p)$, where
$\mathrm{Op}_h(\widetilde{p}-p)$ denotes any reasonable quantization
of $(\widetilde{p}-p)(x,h\xi )$. (See for instance the appendix in 
\cite{Sj08a}.) Then $\widetilde{P}-z:H^m_h(X)\to H^0_h(X)$ has a
uniformly bounded inverse for $z\in \overline{\Omega }$ and $h>0$
small enough. Now (see for instance \cite{HaSj08, Sj08a}) the eigenvalues of
$P$ in $\Omega $, counted with their algebraic multiplicity, coincide
with the zeros of the function $z\mapsto \det
((\widetilde{P}-z)^{-1}(P-z))=\det
(1-(\widetilde{P}-z)^{-1}(\widetilde{P}-P))$. Notice here that $(\widetilde{P}-z)^{-1}(\widetilde{P}-P)$ is of trace class so the determiant is well-defined (\cite{GoKr}). 

\par Fix $s>n/2$ and consider the perturbed operator
\ekv{hs.1}{
P_\delta =P+\delta (h^{\frac{n}{2}}q_1+q_2)=P+\delta
(Q_1+Q_2)=P+\delta Q,
}
where $q_j\in H^s(X)$, 
\ekv{hs.2}{
\Vert q_1\Vert_{H^s_h}\le 1,\ \Vert q_2\Vert_{H^s}\le 1,\ 0\le \delta
\ll 1.
}
According to Propositions \ref{al1}, \ref{al4}, $Q={\cal
  O}(1):H_h^s \to H_h^s$ and hence by duality and interpolation,
\ekv{hs.3}
{
Q={\cal O}(1):H_h^\sigma \to H_h^\sigma ,\ -s\le \sigma \le s.
} 

\par Again, the spectrum of $P_\delta $ in $\Omega $
 is discrete and coincides with the set of zeros of 
\ekv{hs.4}
{
\det ((\widetilde{P}_\delta -z)^{-1}(P_\delta -z))=\det (1-
(\widetilde{P}_\delta -z)^{-1}(\widetilde{P}-P)),}
where $\widetilde{P}_\delta :=\widetilde{P}+\delta Q$.
Here $(\widetilde{P}-z)^{-1}={\cal O}(1):H_h^\sigma \to H_h^\sigma $
for $\sigma $ in the same range and by an easy perturbation argument, we get the same
conclusion for $(\widetilde{P}_\delta -z)^{-1}$.  

\par
Put 
\ekv{hs.5}
{
P_{\delta ,z}:=(\widetilde{P}_\delta -z)^{-1}(P_\delta -z)=
1-(\widetilde{P}_\delta -z)^{-1}(\widetilde{P}-P)=:1-K_{\delta ,z},
}
\ekv{hs.6}
{
S_{\delta ,z}:=P_{\delta ,z}^*P_{\delta ,z}=1-(K_{\delta ,z}+K_{\delta ,z}^*-K_{\delta ,z}^*K_{\delta ,z})=:1-L_{\delta ,z}.
}
Clearly, 
\ekv{hs.7}
{K_{\delta ,z},
L_{\delta ,z}={\cal O}(1):H_h^{-s}\to H_h^s.
}
For $0\le \alpha \le 1/2$, let $\pi _\alpha =1_{[0,\alpha ]}(S_{\delta
,z})$. Then using some simple functional calculus we showed in \cite{Sj08a}, that
\ekv{hs.8}
{
\pi _\alpha ={\cal O}(1): H_h^{-s}\to H_h^s.
}

\par We have the corresponding result for $P_\delta -z$. Let 
\ekv{hs.9}
{
S_\delta =(P_\delta -z)^*(P_\delta -z)
}
be defined as the Friedrichs extension from $C^\infty (X)$ with
quadratic form domain $H_h^m(X)$. For $0\le \alpha \le {\cal O}(1)$,
we now put $\pi _\alpha =1_{[0,\alpha ]}(S_\delta )$. Then as in
\cite{Sj08a}, we see that this new spectral projection also fulfils 
(\ref{hs.8}), for $0\le \alpha \ll 1$.

\subsection{Some functional and pseudodifferential cal\-culus}\label{fu}

\par Let $P$ be of the form (\ref{int.1}) and let $p$ in (\ref{int.6})
be the corresponding semi-classical principal symbol. Assume classical
ellipticity as in (\ref{int.3}) and let $z\in {\bf C}$ be fixed
throughout this subsection. 
Let 
\ekv{fu.1}
{
S=(P-z)^*(P-z),
}
viewed as the self-adjoint Friedrichs 
extension from $C^\infty $. Later on we will also consider a different choice of $S$, 
namely 
\ekv{fu.1.5}{
S=P_z^*P_z,\hbox{ where }P_z=(\widetilde{P}-z)^{-1}(P-z)} and
$\widetilde{P}$ is defined prior to (\ref{hs.1}). The main goal is to
make a trace class
study of $\chi (\frac{1}{\alpha }S)$ when $0<h\le \alpha \ll 1$, $\chi
\in C_0^\infty ({\bf R})$. With the second choice of $S$, we shall
also study $\ln \det (S+\alpha \chi (\frac{1}{\alpha }S))$, when $\chi
\ge 0$, $\chi (0)>0$. The main step will be to get enough information
about the resolvent $(w-\frac{1}{\alpha }S)^{-1}$ for $w={\cal O}(1)$,
$\Im w\ne 0$ and then apply the Cauchy-Riemann-Green-Stokes formula
\ekv{fu.2}
{
\chi (\frac{1}{\alpha }S)=-\frac{1}{\pi }\int \frac{\partial
  \widetilde{\chi }(w)}{\partial \overline{w}}(w-\frac{1}{\alpha
}S)^{-1}L(dw ),
}
where $\widetilde{\chi }\in C_0^\infty ({\bf C})$ is an almost
holomorphic extension of $\chi $, so that 
\ekv{fu.3}
{
\frac{\partial \widetilde{\chi }}{\partial \overline{w}}={\cal O}(
|\Im w|^\infty ).
}
Thanks to (\ref{fu.3}) we can work in symbol classes with some
temparate but otherwise unspecified growth in $1/|\Im w|$.

\par Let 
\ekv{fu.4}
{
s=|p-z|^2
}
be the semiclassical principal symbol of $S$ in (\ref{fu.1}). A basic
weight function in our calculus will be 
\ekv{fu.5}
{
\Lambda :=\left(\frac{\alpha +s}{1+s} \right)^{\frac{1}{2}},
}
satisfying $\sqrt{\alpha }\le \Lambda \le 1$.

\par As a preparation and motivation for the calculus, we first
consider symbol properties of $1+\frac{s}{\alpha }$ and its powers.
\begin{prop}\label{fu1}
For every choice of local coordinates $x$ on $X$, let $(x,\xi )$
denote the corresponding canonical coordinates on $T^*X$. Then for all 
$\ell\in {\bf R}$, $\widetilde{\alpha },\beta \in {\bf N}^n$, we have 
uniformly in $\xi $ and locally uniformly in $x$:
\ekv{fu.6}
{
\partial _x^{\widetilde{\alpha }}\partial _\xi^\beta
(1+\frac{s}{\alpha })^\ell ={\cal O}(1) (1+\frac{s}{\alpha })^\ell
\Lambda ^{-|\widetilde{\alpha }|-|\beta |}\langle \xi \rangle
^{-|\beta |}.
}
\end{prop}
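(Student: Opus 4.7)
The plan is to reduce the claimed bound on $F^\ell := (1+s/\alpha)^\ell$ to a sharp pointwise estimate on the derivatives of $F$ itself, and then to derive that estimate from the classical ellipticity of $p$. By the multivariate Fa\`a di Bruno formula applied to the composition $F^\ell = g \circ F$ with $g(t) = t^\ell$,
$$
\partial_x^{\widetilde{\alpha}}\partial_\xi^{\beta} F^\ell
= \sum_\pi c_{\pi,\ell}\, F^{\ell - |\pi|}\prod_{B \in \pi}\partial^{\gamma_B} F,
$$
where $\gamma = (\widetilde{\alpha}, \beta)$ and $\pi$ ranges over partitions of $\gamma$ into nonempty sub-multi-indices $\gamma_B = (\widetilde{\alpha}_B, \beta_B)$. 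Hence it suffices to establish, for all $|\gamma'| \ge 1$,
$$
|\partial^{\gamma'} F| \le C_{\gamma'}\, F\, \Lambda^{-|\gamma'|}\langle \xi \rangle^{-|\beta'|}.
$$
Indeed, substitution then yields $\prod_B F \Lambda^{-|\gamma_B|}\langle\xi\rangle^{-|\beta_B|} = F^{|\pi|}\Lambda^{-|\gamma|}\langle\xi\rangle^{-|\beta|}$, the $F^{|\pi|}$ cancels the $F^{-|\pi|}$ weight, and (\ref{fu.6}) drops out.

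Since $\alpha F = \alpha + s$, that key bound is in turn equivalent, for $|\gamma| \ge 1$, to
$$
|\partial^\gamma s| \le C_\gamma\, (\alpha + s)^{1 - |\gamma|/2}(1 + s)^{|\gamma|/2}\langle \xi \rangle^{-|\beta|}.
$$
Two ingredients feed into this. First, since $p$ is a polynomial of degree $m$ in $\xi$ with smooth $x$-dependence, Leibniz applied to $s = (p - z)\overline{(p - z)}$ gives, locally uniformly in $x$,
$$
|\partial^\gamma s| \le C_\gamma\bigl(\sqrt{s}\,\langle \xi \rangle^{m - |\beta|} + \mathbf{1}_{|\gamma| \ge 2}\langle \xi \rangle^{2m - |\beta|}\bigr),
$$
the second summand collecting those Leibniz terms in which both $(p-z)$-factors are actually differentiated (impossible when $|\gamma| = 1$). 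Second, the ellipticity assumption (\ref{int.3}), together with the trivial bound $\sqrt{1+s} \ge 1$ on any compact set in $\xi$, yields the pointwise estimate $\langle \xi \rangle^m \le C\sqrt{1 + s}$, locally uniformly in $x$; squaring, $\langle \xi \rangle^{2m} \le C(1 + s)$.

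For $|\gamma| \ge 2$ the two ingredients combine at once: $\langle \xi \rangle^{2m} \le C(1 + s) \le C(\alpha + s)^{1 - |\gamma|/2}(1 + s)^{|\gamma|/2}$, using $1 + s \ge \alpha + s$ (because $\alpha \le 1$) and $1 - |\gamma|/2 \le 0$; the $\sqrt{s}\langle\xi\rangle^m$ contribution is handled identically via $\sqrt{s} \le \sqrt{1+s}$. The delicate case is $|\gamma| = 1$, where the crude estimate $\sqrt{s} \le \sqrt{1 + s}$ would discard the factor $\sqrt{\alpha + s}$ that reconstitutes $\Lambda$. The correct move is $\sqrt{s} \le \sqrt{\alpha + s}$; together with the ellipticity bound this gives $\sqrt{s}\langle \xi \rangle^m \le \sqrt{\alpha + s}\cdot C\sqrt{1 + s}$, exactly the required right-hand side. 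Tracking this factor $\sqrt{\alpha + s}$, which appears precisely once and is attached to the lone undifferentiated copy of $(p - z)$, is the only non-routine point; everything else is propagation through Fa\`a di Bruno.
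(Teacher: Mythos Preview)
Your proof is correct. The paper itself omits the argument, citing \cite{Sj08b} and calling it ``straight forward''; your write-up via Fa\`a di Bruno, Leibniz on $s=|p-z|^2$, and the ellipticity bound $\langle\xi\rangle^m\le C\sqrt{1+s}$ is exactly the intended route, and you correctly isolate the only subtle point, namely that for $|\gamma|=1$ one must use $\sqrt{s}\le\sqrt{\alpha+s}$ (rather than $\sqrt{s}\le\sqrt{1+s}$) so as not to lose the factor $(\alpha+s)^{1/2}$ that produces $\Lambda^{-1}$.
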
 
The proof (\cite{Sj08b}) is straight forward and the same can be said about the proof of

\begin{prop}\label{fu2} (\cite{Sj08b}) Let $w$ vary in some bounded subset of ${\bf C}$.
For all $\ell\in {\bf R}$, $\widetilde{\alpha },\beta \in {\bf N}^n$,
there exists $J\in {\bf N}$, such that 
\ekv{fu.9}
{
\partial _x^{\widetilde{\alpha }}\partial _\xi ^\beta
(w-\frac{s}{\alpha })^\ell ={\cal O}(1)(1+\frac{s}{\alpha })^\ell 
\Lambda ^{-|\widetilde{\alpha }|-|\beta |}\langle \xi \rangle^{-|\beta
  |}|\Im w|^{-J},
}
uniformly in $\xi $ and locally uniformly in $x$.
\end{prop}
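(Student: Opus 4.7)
The strategy is to reduce the estimate to Proposition~\ref{fu1} by writing $(w-s/\alpha)^\ell = F(s/\alpha)$ with $F(t)=(w-t)^\ell$ and applying Fa\`a di Bruno's formula,
$$
\partial_x^{\widetilde\alpha}\partial_\xi^\beta F(s/\alpha)
= \sum_{k=1}^{|\widetilde\alpha|+|\beta|} F^{(k)}(s/\alpha)\,\sum_{\pi}\prod_{j=1}^{k}\partial_{x,\xi}^{\gamma_j}(s/\alpha),
$$
the inner sum taken over partitions $\pi$ of the total derivative into $k$ nonempty multi-indices $\gamma_j=(\gamma_j^x,\gamma_j^\xi)$ with $\sum|\gamma_j|=|\widetilde\alpha|+|\beta|$ and $\sum|\gamma_j^\xi|=|\beta|$. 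Since $s/\alpha=(1+s/\alpha)-1$, Proposition~\ref{fu1} with $\ell=1$ gives $\partial^{\gamma_j}(s/\alpha)={\cal O}((1+s/\alpha)\Lambda^{-|\gamma_j|}\langle\xi\rangle^{-|\gamma_j^\xi|})$ for each $|\gamma_j|\ge 1$, so each product term is bounded by ${\cal O}((1+s/\alpha)^k\Lambda^{-|\widetilde\alpha|-|\beta|}\langle\xi\rangle^{-|\beta|})$. Also $|F^{(k)}(s/\alpha)|\le C_{k,\ell}|w-s/\alpha|^{\ell-k}$, and the $k=0$ term $F(s/\alpha)=(w-s/\alpha)^\ell$ is estimated by the same dichotomy below.

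The remaining task is to control $|w-s/\alpha|^{\ell-k}(1+s/\alpha)^k$ by ${\cal O}((1+s/\alpha)^\ell|\Im w|^{-J})$. Since $w$ ranges over a bounded set, fix $C_1$ with $C_1\ge 2(\sup|w|+1)$ and split into two regimes. When $s/\alpha\ge C_1$, we have $|w-s/\alpha|\ge s/\alpha-|w|\ge s/(2\alpha)$, hence $|w-s/\alpha|\asymp 1+s/\alpha$ and the product equals ${\cal O}((1+s/\alpha)^\ell)$ with no $|\Im w|$ factor needed. When $s/\alpha\le C_1$, both $(1+s/\alpha)^k$ and $(1+s/\alpha)^\ell$ are of order $1$, while $|w-s/\alpha|\ge|\Im w|$ yields $|w-s/\alpha|^{\ell-k}\le C|\Im w|^{\ell-k}$ (bounded when $\ell\ge k$, and at worst $|\Im w|^{-(k-\ell)}$ otherwise). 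Taking $J=\max(0,\lceil|\widetilde\alpha|+|\beta|-\ell\rceil)$, or any integer exceeding $k-\ell$ over the finite range of relevant $k$, absorbs all negative powers of $|\Im w|$ simultaneously.

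There is no real obstacle here: the only care required is bookkeeping the homogeneity of the factors, ensuring that the gain $\Lambda^{-|\gamma_j|}$ supplied by Proposition~\ref{fu1} accumulates correctly to $\Lambda^{-|\widetilde\alpha|-|\beta|}$ after taking the product, and that the $\langle\xi\rangle^{-|\beta|}$ factor emerges from the $\xi$-derivatives only (which it does because $|\gamma_j|\ge 1$ forces the loss of at most $|\gamma_j^\xi|$ powers of $\langle\xi\rangle$, summing to $|\beta|$). The resulting exponent $J$ depends only on $\ell$ and on $|\widetilde\alpha|+|\beta|$, locally uniformly in $x$ and uniformly in $\xi$, as claimed.
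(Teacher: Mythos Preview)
Your proof is correct and is precisely the kind of argument the paper has in mind when it says the proof is ``straight forward'' (the paper gives no details beyond that remark and the citation to \cite{Sj08b}). The Fa\`a di Bruno reduction to Proposition~\ref{fu1}, followed by the dichotomy between $s/\alpha$ large (where $|w-s/\alpha|\asymp 1+s/\alpha$) and $s/\alpha$ bounded (where $|w-s/\alpha|\ge |\Im w|$ handles the negative powers), is exactly the natural route.
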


\par We now define our new symbol spaces.
\begin{dref}\label{fu3}
Let $\widetilde{m}(x,\xi )$ be a weight function of the form 
$\widetilde{m}(x,\xi )=\langle \xi \rangle ^k\Lambda^\ell $. We say
that the family $a=a_w\in C^\infty (T^*X)$, $w\in D(0,C)$, belongs to $S_\Lambda (\widetilde{m})$ if
for all $\widetilde{\alpha },\beta \in {\bf N}^n$ there exists $J\in
{\bf N}$ such that 
\ekv{fu.10}
{
\partial _x^{\widetilde{\alpha }}\partial _\xi ^\beta a=
{\cal O}(1)\widetilde{m}(x,\xi )\Lambda ^{-|\widetilde{\alpha
  }|-|\beta |}\langle \xi \rangle^{-|\beta |}|\Im w|^{-J}.
} 
\end{dref}

Here, as in Proposition \ref{fu2}, it is understood that the
estimate is expressed in canonical coordinates and is locally uniform
in $x$ and uniform in $\xi $. Notice that the set of estimates
(\ref{fu.10}) is invariant under changes of local coordinates in $X$.

\par Let $U\subset X$ be a coordinate neighborhood that we shall view
as a subset of ${\bf R}^n$ in the natural way. Let $a\in S_\Lambda
(T^*U,\widetilde{m})$ be a symbol as in Definition \ref{fu3} so that
(\ref{fu.10}) holds uniformly in $\xi $ and locally uniformly in
$x$. For fixed values of $\alpha $, $w$ the symbol $a$ belongs
to $S^k_{1,0}(T^*U)$, so the classical $h$-quantization
\ekv{fu.11}
{
Au=\mathrm{Op}_h(a)u(x)=\frac{1}{(2\pi h)^n}\iint
e^{\frac{i}{h}(x-y)\cdot \eta }a(x,\eta ;h)u(y)dyd\eta 
}
is a well-defined operator $C_0^\infty (U)\to C^\infty (U)$, ${\cal
  E}'(U)\to {\cal D}'(U)$. In order to develop our rudimentary
calculus on $X$ we need a pseudolocal property for the
distribution kernel $K_A(x,y)$, whose proof is also routine (see \cite{Sj08b}).
\begin{prop}\label{fu4}
For all $\widetilde{\alpha },\beta \in {\bf N}^n$, $N\in {\bf N}$,
there exists $M\in {\bf N}$ such that 
\ekv{fu.12}
{
\partial _x^{\widetilde{\alpha }}\partial _y^\beta K_A(x,y)={\cal
  O}(h^N|\Im w|^{-M}),
}
locally uniformly on $U\times U\setminus \mathrm{diag}(U\times U)$.
\end{prop}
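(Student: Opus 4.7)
The plan is to view $K_A(x,y)$ as an oscillatory integral in $\eta$ with non-stationary phase $(x-y)\cdot\eta/h$ off the diagonal, and to carry out the classical pseudolocal integration-by-parts argument, keeping careful track of the extra $\Lambda^{-1}$ cost that $\eta$-derivatives incur in the class $S_\Lambda(\widetilde m)$.

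First I would differentiate \no{fu.11} under the integral sign. Since $\partial_{x_j}$ and $\partial_{y_j}$ acting on $e^{i(x-y)\cdot\eta/h}$ produce factors $\pm i\eta_j/h$, Leibniz expresses $\partial_x^{\widetilde\alpha}\partial_y^\beta K_A(x,y)$ as a finite sum of oscillatory integrals of the same form as \no{fu.11}, each with amplitude of type $h^{|\gamma|-|\widetilde\alpha|-|\beta|}\eta^{\widetilde\alpha+\beta-\gamma}\partial_x^\gamma a$, $\gamma\le\widetilde\alpha$. Fixing a compact $K\Subset (U\times U)\setminus\mathrm{diag}(U\times U)$ on which $|x-y|\ge c_K>0$, I would integrate by parts $N$ times against the first-order operator
$$L=\frac{h}{i|x-y|^2}\sum_{j=1}^n(x_j-y_j)\partial_{\eta_j},$$
which reproduces the phase and whose formal $\eta$-adjoint is $-L$, the coefficients being $\eta$-independent. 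This transfers $(-L)^N$ onto the amplitude, pulling out a prefactor $(h/|x-y|)^N$ and applying $N$ further $\eta$-derivatives to $\eta^{\widetilde\alpha+\beta-\gamma}\partial_x^\gamma a$.

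Next I would invoke the symbol estimates \no{fu.10}: each $\partial_{\eta_j}$ on $a$ costs $\Lambda^{-1}\langle\eta\rangle^{-1}|\Im w|^{-J'}$, each $\partial_{x_j}$ costs $\Lambda^{-1}|\Im w|^{-J'}$, and the weight $\eta^{\widetilde\alpha+\beta-\gamma}$ contributes at most $\langle\eta\rangle^{|\widetilde\alpha|+|\beta|}$. For $N>k+|\widetilde\alpha|+|\beta|+n$ the resulting $\eta$-integral converges absolutely, and the bound $\Lambda\ge\sqrt\alpha\ge\sqrt h$ valid in the regime $0<h\le\alpha$ of this subsection yields $\Lambda^{-|\widetilde\alpha|-N}\le h^{-(|\widetilde\alpha|+N)/2}$. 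Combining the factor $(2\pi h)^{-n}$, the Leibniz factor $h^{-|\widetilde\alpha|-|\beta|}$, the integration-by-parts gain $h^N$, and this $\Lambda$-loss, the net power of $h$ in the estimate is at least $N/2-n-\frac{3}{2}|\widetilde\alpha|-|\beta|$, while $|x-y|^{-N}$ stays bounded on $K$. Choosing $N$ large enough delivers any prescribed power $h^{N_0}$ at the price of a finite negative power of $|\Im w|$, i.e.\ exactly \no{fu.12}.

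The only non-routine point, and the sole departure from the classical pseudolocal lemma for $S^m_{1,0}$, is the $\Lambda^{-1}$ loss per $\eta$-derivative; but the inequality $\Lambda\ge\sqrt h$ converts each integration by parts into a net gain of $h^{1/2}/|x-y|$, which is ample. I do not anticipate any obstacle beyond this bookkeeping.
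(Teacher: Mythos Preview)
Your argument is correct and is precisely the routine pseudolocal integration-by-parts argument the paper alludes to (the paper itself gives no proof beyond calling it routine and referring to \cite{Sj08b}); the only point requiring care beyond the classical $S^m_{1,0}$ case is indeed the $\Lambda^{-1}$ loss per $\eta$-derivative, and your use of $\Lambda\ge\sqrt\alpha\ge\sqrt h$ in the standing regime $0<h\le\alpha$ is exactly what converts each integration by parts into a net $h^{1/2}$ gain. Two cosmetic remarks: your final $h$-count is a bit wasteful because you combine the worst-case Leibniz factor ($|\gamma|=0$) with the worst-case $\Lambda$-loss ($|\gamma|=|\widetilde\alpha|$), and you omit the fixed factor $\Lambda^{\ell}$ coming from $\widetilde m=\langle\xi\rangle^k\Lambda^\ell$; but both are harmless, since $\ell$ is fixed and you only need the net power of $h$ to tend to $+\infty$ with the number of integrations by parts.
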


This means that if $\phi ,\psi \in C_0^\infty (U)$ have disjoint
supports, then for every $N\in {\bf N}$, there exists $M\in {\bf N}$
such that $\phi A\psi :H^{-N}({\bf R}^n)\to H^N({\bf R}^n)$ with norm
${\cal O}(h^N|\Im w|^{-M})$, and this leads to a simple way of
introducing pseudo\-differential operators on $X$: Let $U_1,...,U_s$ be
coordinate neighborhoods that cover $X$. Let $\chi _j\in C_0^\infty
(U_j)$ form a partition of unity and let $\widetilde{\chi }_j\in
C_0^\infty (U_j)$ satisfy $\chi _j\prec \widetilde{\chi }_j$ in the
sense that $\widetilde{\chi }_j$ is equal
to 1 near $\mathrm{supp\,}(\chi _j)$. Let $a=(a_1,...,a_s)$, where
$a_j\in S_\Lambda (\widetilde{m})$. Then we quantize $a$ by the
formula:
\ekv{fu.13}
{
A=\sum_1^s \widetilde{\chi }_j\circ \mathrm{Op}_h(a_j)\circ \chi _j.
}
This is not an invariant quantization procedure but it
will suffice for our purposes. 

Using integration by parts and stationary phase we can 
study the composition to the left with non-exotic
pseudodifferential operators and we obtain the following result for a coordinate neighborhood:
\begin{prop}\label{fu5} (\cite{Sj08b}).
Let $A=\mathrm{Op}_h(a)$, $a\in S_{1,0}(m_1)$, $B=\mathrm{Op}_h(b)$,
$b\in S_\Lambda (m_2)$ and assume that $b$ has uniformly compact
support in $x$. Then $A\circ B=\mathrm{Op}_h(c)$, where $c$ belongs to
$S_\Lambda
(m_1m_2)$ and has the asymptotic expansion
$$
c\sim \sum \frac{h^{|\beta |}}{\beta !}\partial _\xi ^\beta a(x,\xi )D_x^\beta
b(x,\xi ),
$$
in the sense that for every $N\in {\bf N}$,
$$
c= \sum_{|\beta |<N} \frac{h^{|\beta |}}{\beta !}\partial _\xi ^\beta a(x,\xi )D_x^\beta
b(x,\xi ) + r_N(x,\xi ;h),
$$
where 
$r_N\in S_\Lambda (\frac{m_1m_2}{(\Lambda \langle \xi \rangle)^N}h^N)$.
\end{prop}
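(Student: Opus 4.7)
The plan is to follow the standard oscillatory-integral derivation of the composition formula, while tracking how derivatives of $b\in S_\Lambda$ produce factors of $\Lambda^{-1}$ that must be compensated by powers of $h$. The key arithmetical observation that makes the expansion asymptotic is that $\Lambda\ge \sqrt{\alpha}\ge \sqrt{h}$, so
$$
\frac{h}{\Lambda\langle\xi\rangle}\le \frac{\sqrt{h}}{\langle\xi\rangle}\le 1,
$$
meaning that each successive term $h^{|\beta|}(\partial_\xi^\beta a)(D_x^\beta b)/\beta!$ lives in a class smaller by one factor of $h/(\Lambda\langle\xi\rangle)$ than the previous one.

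First I would reduce to a single coordinate chart. Since $b$ has uniformly compact support in $x$, the quantization in (\ref{fu.13}) can be localized, and Proposition \ref{fu4} shows that the off-diagonal contributions in $K_{A\circ B}$ are $\mathcal{O}(h^N|\Im w|^{-M})$ for any $N$, hence absorbable in any $S_\Lambda$-class with a suitable loss in $|\Im w|^{-1}$. After inserting cutoffs $\chi \prec \widetilde\chi$ and working in ${\bf R}^n$, the composition takes the standard form
$$
c(x,\xi)=\frac{1}{(2\pi h)^n}\iint e^{-iy\cdot\eta/h}\,a(x,\xi+\eta)\,b(x+y,\xi)\,dy\,d\eta.
$$
Taylor-expanding $a(x,\xi+\eta)$ in $\eta$ to order $N$ and integrating by parts in $y$ to convert each factor $\eta^\beta/h^{|\beta|}$ into $(-D_y)^\beta/i^{|\beta|}$ acting on $b(x+y,\xi)$, the $\eta$-integration produces a delta in $y$ and yields the claimed leading terms $h^{|\beta|}\partial_\xi^\beta a\cdot D_x^\beta b/\beta!$ for $|\beta|<N$.

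The hard part is estimating the remainder $r_N$ in the space $S_\Lambda\!\left(m_1m_2\,h^N(\Lambda\langle\xi\rangle)^{-N}\right)$. The remainder equals
$$
r_N(x,\xi;h)=\frac{N}{(2\pi h)^n}\sum_{|\beta|=N}\frac{1}{\beta!}\iint e^{-iy\cdot\eta/h}\,\eta^\beta\!\int_0^1(1-t)^{N-1}(\partial_\xi^\beta a)(x,\xi+t\eta)\,dt\,b(x+y,\xi)\,dy\,d\eta.
$$
Integration by parts in $y$ converts $\eta^\beta$ into $(hD_y)^\beta$ applied to $b$, and one estimates the resulting non-stationary oscillatory integral by the standard device of inserting additional integrations by parts $(1-h^2\Delta_y)^{M_1}(1-h^2\Delta_\eta)^{M_2}$ to render the $(y,\eta)$-integrals absolutely convergent. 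Since $\partial_\xi^\beta a\in S_{1,0}(m_1\langle\xi\rangle^{-N})$ (with no $\Lambda$-loss, as $a$ is non-exotic) and $D_x^\beta b\in S_\Lambda(m_2\Lambda^{-N})$, each such remainder term is pointwise $\mathcal{O}(m_1 m_2\, h^N\Lambda^{-N}\langle\xi\rangle^{-N}|\Im w|^{-J})$ for some $J$. Differentiating under the integral sign produces further $\Lambda^{-1}$ and $(\Lambda\langle\xi\rangle)^{-1}$ losses that match exactly the weight definition (\ref{fu.10}), so one reads off that $r_N\in S_\Lambda(m_1m_2 h^N(\Lambda\langle\xi\rangle)^{-N})$. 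The main obstacle is keeping these bookkeeping uniform in the parameters $\alpha,w$: one must verify that at each derivative only a fixed polynomial loss in $|\Im w|^{-1}$ is introduced, and that the $t$-parameter Taylor integration does not spoil the $S_\Lambda$-estimate (since $\xi+t\eta$ is bounded below by $\langle\xi\rangle/C$ on the region where one integrates by parts in $\eta$ rather than $y$). Once this is checked, reassembling the partition-of-unity pieces via (\ref{fu.13}) and absorbing the off-diagonal errors from Proposition \ref{fu4} completes the proof.
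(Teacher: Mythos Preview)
Your proposal is correct and follows exactly the approach the paper indicates: the text preceding the proposition says only ``Using integration by parts and stationary phase we can study the composition to the left with non-exotic pseudodifferential operators,'' and then cites \cite{Sj08b} for the details. Your oscillatory-integral computation with Taylor expansion of $a(x,\xi+\eta)$ in $\eta$, conversion of $\eta^\beta$ to $(hD_y)^\beta$ acting on $b$, and the key bookkeeping observation that $\partial_\xi^\beta a$ incurs only the non-exotic loss $\langle\xi\rangle^{-|\beta|}$ while $D_x^\beta b$ incurs $\Lambda^{-|\beta|}$ (so that together each order gains $h/(\Lambda\langle\xi\rangle)$) is precisely what is needed and matches the cited argument.
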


\par We have a parametrix construction for $w-\frac{1}{\alpha
}S$, still with $S$ as in (\ref{fu.1}). Let us first work in a coordinate neighborhood $U$, viewed as an open set in ${\bf R}^n$. Then for every $N\in {\bf N}$ we can construct a symbol
\ekv{fu.22}
{
E_N\equiv \frac{1}{w-\frac{s}{\alpha }}\ \mathrm{mod}\ S_\Lambda
(\frac{\alpha }{\Lambda ^2\langle \xi \rangle ^{2m}}\frac{h}{\Lambda
  ^2\langle \xi \rangle}),
}
such that on the symbol level
\ekv{fu.23}
{
(w-\frac{1}{\alpha }S)\# E_N=1+r_N,\ r_N\in S_\Lambda ((\frac{h}{\Lambda ^2\langle \xi 
\rangle})^{N+1}),
}
\ekv{fu.24}
{
E_N\hbox{ is a holomorphic function of }w,\hbox{ for }|\xi |\ge C,
}
where $C$ is independent of $N$.

\par Now we return to the manifold situation and denote by
$E_N^{(j)}$, $r_N^{(j)}$ the corresponding symbols on $T^*U_j$,
constructed above. Denote the operators by the same symbols, and put
on the operator level:
\ekv{fu.25}
{
E_N=\sum_{j=1}^s \widetilde{\chi }_jE_N^{(j)}\chi _j,
}
with $\chi _j$, $\widetilde{\chi _j}$ as in (\ref{fu.13}). Then
\eeekv{fu.26}
{
(w-\frac{1}{\alpha }S)E_{N-1}&=&1-\sum_{j=1}^s \frac{1}{\alpha
}[S,\widetilde{\chi }_j]E_{N-1}^{(j)}\chi _j+\sum_{j=1}^s\widetilde{\chi
}_j
r_N^{(j)}\chi _j
}{&=:&1+R_N^{(1)}+R_N^{(2)}
}
{&=:&1+R_N.}
Proposition \ref{fu4} implies that for every $\widetilde{N}$, there
exists an $\widetilde{M}$ such that the trace class norm of
$R_N^{(1)}$ satisfies
\ekv{fu.27}
{
\Vert R_N^{(1)}\Vert_{\mathrm{tr}}\le {\cal O}(h^{\widetilde{N}}|\Im
w| ^{-\widetilde{M}}).
}

\par As for the trace class norm of $R_N^{(2)}$, we can combine standard
facts about such norms for pseudodifferential operators and scaling to get
\ekv{fu.29}
{
\Vert R_N\Vert_{\mathrm{tr}}\le Ch^{-n}|\Im w|^{-M(N)}\iint
\left(\frac{h}{\Lambda ^2\langle \xi \rangle} \right)^N dxd\xi .
}
The contribution to this expression from the region where $\Lambda \ge
1/C$ is ${\cal O}(h^{N-n})|\Im w|^{-M(N)}$. 

\par The volume growth assumption (\ref{int.6.2}), that we now assume
for our fixed $z$, says that
\ekv{fu.30}
{
V(t):=\mathrm{vol\,}(\{ \rho \in T^*X;\, s\le t \})={\cal O}(t^\kappa
),\ 0\le t\ll 1,
}
for $0<\kappa \le 1$. Using this and (\ref{fu.29}) one can show that

\ekv{fu.31}
{
\Vert R_N\Vert_{\mathrm{tr}}\le {\cal O}(1) h^{-n}\alpha ^{\kappa
}\left(\frac{h}{\alpha } \right)^N|\Im w|^{-M(N)}.
}

\par From (\ref{fu.26}), we get 
$$
(w-\frac{1}{\alpha }S)^{-1}=E_{N-1}-(w-\frac{1}{\alpha }S)^{-1}R_N.
$$
Write 
$$
E_{N-1}=\frac{1}{w-\frac{s}{\alpha }}+F_{N-1},\quad F_{N-1}\in S_\Lambda
(\frac{\alpha h}{\Lambda ^4\langle \xi \rangle^{2m+1}}).
$$
More precisely we do this for each $E_{N-1}^{(j)}$ in (\ref{fu.25}). Then
quantize and plug this into (\ref{fu.2}):
\eekv{fu.32}
{
\chi (\frac{1}{\alpha }S)&=&-\frac{1}{\pi} \int \frac{\partial
    \widetilde{\chi }}{\partial
    \overline{w}}\mathrm{Op}_h(\frac{1}{w-\frac{s}{\alpha }})L(dw)
-\frac{1}{\pi }\int \frac{\partial
    \widetilde{\chi }}{\partial
    \overline{w}}F_{N-1} L(dw)
}
{&&-\frac{1}{\pi} \int \frac{\partial
    \widetilde{\chi }}{\partial
    \overline{w}}(w-\frac{1}{\alpha }S)^{-1}R_N L(dw)=:\mathrm{I}+\mathrm{II}+\mathrm{III}.
}
Here by definition,
$$ \mathrm{Op}_h\left( \frac{1}{w-\frac{s}{\alpha
  }}\right)=\sum_{j=1}^s\widetilde{\chi }_j\mathrm{Op}_h\left(\frac{1}{w-\frac{s}{\alpha
  }} \right)\chi _j
$$
with the coordinate dependent quantization appearing to the right. 

\par After some further estimates we get
\ekv{fu.33}{
\mathrm{tr\,}(\mathrm{I})=\frac{1}{(2\pi h)^n}\iint \chi (\frac{s(x,\xi
  )}{\alpha })dxd\xi .
}
As at the last estimate in the proof of Proposition 4.4 in 
\cite{HaSj08} we see that this quantity is ${\cal O}(\alpha
^\kappa h^{-n})$ and more generally, 
$$
\Vert \mathrm{I}\Vert_{\mathrm{tr}}={\cal O}(\alpha ^\kappa h^{-n}).
$$

\par For II, one can show, using the fact that the symbol is holomorphic in
$w$ for large $\xi $, that
$$ \Vert \mathrm{II}\Vert_{\mathrm{tr}}={\cal O}(1)\frac{\alpha ^\kappa }{h^n}\frac{h}{\alpha }. $$

\par It is also clear that 
$$\Vert \mathrm{III}\Vert_{tr}={\cal O}(1)\frac{\alpha ^\kappa
}{h^n}\left(\frac{h}{\alpha } \right)^N.$$

Summing up our estimates, we get the following result:
\begin{prop}\label{fu6}
Let $\chi \in C_0^\infty ({\bf R})$. For $0<h\le \alpha <1$, we have
\ekv{fu.34}
{
\Vert \chi (\frac{1}{\alpha }S)\Vert_{\mathrm{tr}}={\cal
  O}(1)\frac{\alpha ^\kappa }{h^n},
}
\ekv{fu.35}
{
\mathrm{tr\,}\chi (\frac{1}{\alpha }S)=\frac{1}{(2\pi h)^n}
\iint \chi (\frac{s(x,\xi )}{\alpha })dxd\xi +{\cal O}(\frac{\alpha
  ^\kappa }{h^n}\frac{h}{\alpha }).
}
\end{prop}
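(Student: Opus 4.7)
My plan is to implement the Helffer–Sjöstrand formula \no{fu.2} together with the parametrix \no{fu.25} already constructed in the excerpt. Since every ingredient is in place, the work is essentially to assemble and estimate. I would first pick an almost holomorphic extension $\widetilde{\chi}\in C_0^\infty ({\bf C})$ of $\chi $ satisfying \no{fu.3} to any prescribed order, and then write
$$
\chi (\tfrac{1}{\alpha }S)=-\tfrac{1}{\pi }\int \tfrac{\partial \widetilde{\chi }}{\partial \overline{w}}(w-\tfrac{1}{\alpha }S)^{-1}L(dw),
$$
using that $S$ is self-adjoint so that $\|(w-S/\alpha )^{-1}\|\le |\Im w|^{-1}$, and that the vanishing of $\partial _{\overline{w}}\widetilde{\chi }$ to infinite order on $\mathbb{R}$ will absorb any fixed negative power $|\Im w|^{-J}$.

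Next I would substitute the identity $(w-\tfrac{1}{\alpha }S)^{-1}=E_{N-1}-(w-\tfrac{1}{\alpha }S)^{-1}R_N$ coming from \no{fu.26}, and split $E_{N-1}^{(j)}=1/(w-s/\alpha )+F_{N-1}^{(j)}$ with $F_{N-1}^{(j)}\in S_\Lambda (\alpha h/(\Lambda ^4\langle \xi \rangle ^{2m+1}))$, giving the decomposition $\chi (S/\alpha )=\mathrm{I}+\mathrm{II}+\mathrm{III}$ as in \no{fu.32}. Term $\mathrm{I}$ reduces, after applying Helffer–Sjöstrand back inside the quantization, to $\mathrm{Op}_h(\chi (s/\alpha ))$ via the identity $-\pi ^{-1}\int \partial _{\overline{w}}\widetilde{\chi }\,(w-s/\alpha )^{-1}L(dw)=\chi (s/\alpha )$. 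Computing the trace of a quantized symbol with compact support in $s/\alpha \lesssim 1$ gives exactly \no{fu.33}, and the volume bound $V(t)={\cal O}(t^\kappa )$ in \no{fu.30} bounds it and its trace norm by $C\alpha ^\kappa h^{-n}$.

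For term $\mathrm{II}$, the symbol $F_{N-1}$ carries an extra factor $h/\alpha $ relative to the leading one (compare $\alpha h/\Lambda ^4$ against $1/\Lambda ^2$) and the same $|\Im w|^{-J}$ behaviour is killed by $\partial _{\overline{w}}\widetilde{\chi }$; moreover one uses the holomorphy in $w$ for $|\xi |\ge C$ (property \no{fu.24}) to push the $\overline{w}$-integration off the support of $\widetilde{\chi }$ in that region. This yields $\|\mathrm{II}\|_{\mathrm{tr}}={\cal O}(\alpha ^\kappa h^{-n}\cdot h/\alpha )$ after integrating in $(x,\xi )$ exactly as for $\mathrm{I}$. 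For term $\mathrm{III}$, combining the resolvent bound $\|(w-S/\alpha )^{-1}\|\le 1/|\Im w|$ with the trace class estimate \no{fu.31} for $R_N$ and integrating against $\partial _{\overline{w}}\widetilde{\chi }$ (which consumes any finite number of powers of $|\Im w|^{-1}$) gives $\|\mathrm{III}\|_{\mathrm{tr}}={\cal O}(\alpha ^\kappa h^{-n}(h/\alpha )^N)$, and picking any $N\ge 1$ suffices.

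The main obstacle is, I think, the routine but delicate matching of the negative powers of $|\Im w|$ appearing in the exotic symbol classes $S_\Lambda (\widetilde{m})$ against the infinite vanishing of $\partial _{\overline{w}}\widetilde{\chi }$, together with the trace norm estimate on the remainder $R_N$ across coordinate patches — the commutator term $\alpha ^{-1}[S,\widetilde{\chi }_j]E_{N-1}^{(j)}\chi _j$ in \no{fu.26} must be controlled via Proposition~\ref{fu4} giving rapid decay off the diagonal, while $\widetilde{\chi }_jr_N^{(j)}\chi _j$ is handled by the symbol calculus bound and the scaling argument leading to \no{fu.31}. Once these are in hand, summing $\mathrm{I}+\mathrm{II}+\mathrm{III}$ yields simultaneously \no{fu.34} and the asymptotic \no{fu.35}.
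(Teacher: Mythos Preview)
Your proposal is correct and follows essentially the same approach as the paper: the decomposition $\mathrm{I}+\mathrm{II}+\mathrm{III}$ via the Helffer--Sj\"ostrand formula and the parametrix \no{fu.25}, the identification of $\mathrm{I}$ with $\mathrm{Op}_h(\chi(s/\alpha))$ yielding \no{fu.33}, the use of holomorphy in $w$ for large $\xi$ to control $\mathrm{II}$, and the combination of the resolvent bound with \no{fu.31} for $\mathrm{III}$ are exactly what the paper does. You have also correctly identified the two pieces of $R_N$ and how each is handled (Proposition~\ref{fu4} for the commutator, the symbol estimate for $r_N^{(j)}$).
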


\begin{remark}\label{fu7}
Using simple $h$-pseudodifferential calculus (for instance as in
the appendix of \cite{Sj08a}, we see that if we redefine $S$ as in
(\ref{fu.1.5}), then in each local coordinate chart,
$S=\mathrm{Op}_h(S)$, where $S\equiv s\,\mathrm{mod\,}S_{1,0}(h\langle
\xi \rangle^{-1})$ and $s$ is now redefined as
\ekv{fu.36}
{
s(x,\xi )=\left(\frac{|p(x,\xi )-z|}{|\widetilde{p}(x,\xi )-z|} 
\right)^2.
} 
The discussion goes through without any changes (now with $m=0$) and
we still have Proposition \ref{fu6} with the new choice of $S$, $s$.

In both cases it follows from (\ref{fu.34}) that the number $N(\alpha )$ of eigenvalues of $S$ in the interval $[0,\alpha ]$ satisfies 
\ekv{grny.4}
{{\cal O}(\alpha ^\kappa /h^n).}
\end{remark}

In the remainder of this subsection, we choose $S$, $s$ as in
(\ref{fu.1.5}), (\ref{fu.36}). In this case we notice that $S$ is a
trace class perturbation of the identity, whose symbol is $1+{\cal
  O}(h^\infty /\langle \xi \rangle^\infty )$ and similarly for all its
derivatives, in a region $|\xi |\ge \mathrm{Const}$.

\par Let $0\le \chi \in C_0^\infty ([0,\infty [)$ with $\chi (0)>0$ and
let $\alpha _0>0$ be small and fixed. Using standard
pseudodifferential calculus in the spirit of \cite{MeSj02}, we get
\ekv{fu.37}
{
\ln\det (S+\alpha _0\chi (\frac{1}{\alpha _0}S))=
\frac{1}{(2\pi h)^n}(\iint \ln (s+\alpha _0\chi (\frac{1}{\alpha
  _0}s))dxd\xi +{\cal O}(h)).
}

\par Extend $\chi $ to be an element of $C_0^\infty ({\bf R};{\bf C})$
in such a way that $t+\chi (t)\ne 0$ for all $t\in {\bf R}$. As in
\cite{HaSj08}, we use that 
\ekv{fu.39}
{
\frac{d}{dt}\ln (E+t\chi (\frac{E}{t}))=\frac{1}{t}\psi (\frac{E}{t}),
}
where
\ekv{fu.39.5}
{
\psi (E)=\frac{\chi (E)-E\chi '(E)}{E+\chi (E)},
}
so that $\psi \in C_0^\infty ({\bf R})$. By standard functional
calculus for self-adjoint operators, we have 
\ekv{fu.40}
{
\frac{d}{dt}\ln \det (S+t\chi (\frac{S}{t}))=\mathrm{tr\,}
\frac{1}{t}\psi (\frac{S}{t}).
}
Using (\ref{fu.35}), we then get for $t\ge \alpha \ge h>0$:
$$
\frac{d}{dt}\ln\det (S+t\chi (\frac{1}{t}S))
=\frac{1}{(2\pi h)^n}(\iint \frac{1}{t}\psi (\frac{s}{t})dxd\xi +{\cal
  O}(ht^{\kappa -2})).
$$

Integrating this from $t=\alpha _0$ to $t=\alpha $ and using
(\ref{fu.37}), (\ref{fu.39}), leads to 
\begin{prop}\label{fu8}
If $0\le \chi \in C_0^\infty ([0,\infty [)$, $\chi (0)>0$, we have
uniformly for $0<h\le \alpha \ll 1$
\ekv{fu.42}
{
\ln\det (S+\alpha \chi (\frac{1}{\alpha }S))=\frac{1}{(2\pi h)^n}
(\iint \ln s(x,\xi )dxd\xi +{\cal O}(\alpha ^\kappa \ln \alpha )).
}
Here the remainder term can be replaced by ${\cal O}(\alpha ^\kappa )$
when $\kappa <1$ and by ${\cal O}(\alpha +h\ln \alpha )$ when $\kappa =1$.
\end{prop}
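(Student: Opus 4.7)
My plan is to prove this by combining the Cauchy-Riemann-Green-Stokes derivation behind (\ref{fu.40}) with the trace formula (\ref{fu.35}) of Proposition \ref{fu6}, integrate in the parameter $t$ from $\alpha$ up to a fixed $\alpha_0$, and then perform a symbol-level comparison of $\ln(s+\alpha\chi(s/\alpha))$ with $\ln s$ using the volume growth (\ref{fu.30}). Concretely, by (\ref{fu.39.5}) the auxiliary function $\psi(E)=(\chi(E)-E\chi'(E))/(E+\chi(E))$ lies in $C_0^\infty(\mathbf{R})$, so applying (\ref{fu.35}) to $t^{-1}\psi(\cdot/t)$ (which is $C_0^\infty$ with norms depending on $t$) and using (\ref{fu.40}) gives
\ekv{fu8plan.1}{\frac{d}{dt}\ln\det(S+t\chi(S/t))=\frac{1}{(2\pi h)^n}\left(\iint \frac{1}{t}\psi(s/t)\,dxd\xi+\mathcal{O}(ht^{\kappa-2})\right).}

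Next, I would integrate (\ref{fu8plan.1}) from $t=\alpha$ to $t=\alpha_0$, where $\alpha_0>0$ is the fixed small constant from (\ref{fu.37}). Using the scalar analogue of (\ref{fu.39})--(\ref{fu.40}), namely $\frac{d}{dt}\ln(s+t\chi(s/t))=t^{-1}\psi(s/t)$, the main term integrates exactly to $\iint[\ln(s+\alpha_0\chi(s/\alpha_0))-\ln(s+\alpha\chi(s/\alpha))]\,dxd\xi$. The trace-error integral gives $\int_\alpha^{\alpha_0} h t^{\kappa-2}dt$, which is $\mathcal{O}(h\alpha^{\kappa-1})=\mathcal{O}(\alpha^\kappa)$ when $\kappa<1$ (since $h\le\alpha$), and $\mathcal{O}(h\ln(1/\alpha))$ when $\kappa=1$. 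Combining this identity with (\ref{fu.37}) at $t=\alpha_0$ yields
\ekv{fu8plan.2}{\ln\det(S+\alpha\chi(S/\alpha))=\frac{1}{(2\pi h)^n}\left(\iint \ln(s+\alpha\chi(s/\alpha))\,dxd\xi+R_1(h,\alpha)\right),}
with $R_1=\mathcal{O}(h)+\mathcal{O}(h\alpha^{\kappa-1})$ or the corresponding $\kappa=1$ variant.

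The remaining step is the symbol-level comparison $D(\alpha):=\iint[\ln(s+\alpha\chi(s/\alpha))-\ln s]\,dxd\xi$. Since $\chi\ge0$ has support in some $[0,A]$, this integrand is nonnegative and supported on $\{s\le A\alpha\}$. Writing the integral as a Stieltjes integral against $dV(s)$ with $V(t)=\mathcal{O}(t^\kappa)$ by (\ref{fu.30}), I split at $s=\alpha$. On $[\alpha,A\alpha]$, the integrand is bounded by $\alpha\|\chi\|_\infty/s$, giving a contribution $\mathcal{O}(\alpha^\kappa)$ after integration by parts. On $[0,\alpha]$, since $\chi(0)>0$, one has $\ln(1+\alpha\chi(s/\alpha)/s)\le C+\ln(\alpha/s)$, and integration by parts gives
\begin{equation*}
\int_0^\alpha \ln(\alpha/s)\,dV(s)=[\ln(\alpha/s)V(s)]_0^\alpha+\int_0^\alpha \frac{V(s)}{s}\,ds=\mathcal{O}\Bigl(\int_0^\alpha s^{\kappa-1}ds\Bigr)=\mathcal{O}(\alpha^\kappa),
\end{equation*}
the boundary terms vanishing by $V(s)=\mathcal{O}(s^\kappa)$. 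Thus $D(\alpha)=\mathcal{O}(\alpha^\kappa)$ for every $\kappa\in(0,1]$; when $\kappa=1$ the inner integral is $\mathcal{O}(\alpha)$.

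Assembling these pieces in (\ref{fu8plan.2}) gives the stated estimate (\ref{fu.42}), with the sharper variants following from the case-by-case analysis above: $\mathcal{O}(\alpha^\kappa)$ when $\kappa<1$, and $\mathcal{O}(\alpha+h\ln\alpha)$ when $\kappa=1$. The main difficulty I expect to be purely technical, namely the careful matching of boundary contributions in the integration by parts that bounds $D(\alpha)$ using only the soft volume estimate (\ref{fu.30}); everything else is bookkeeping on errors produced by Proposition \ref{fu6} and (\ref{fu.37}).
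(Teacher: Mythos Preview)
Your proposal is correct and follows essentially the same approach as the paper: derive the differential identity (\ref{fu.40}) via (\ref{fu.39}), apply (\ref{fu.35}) with $\psi$ and parameter $t$ to obtain the displayed formula for $\frac{d}{dt}\ln\det(S+t\chi(S/t))$, integrate from $\alpha_0$ to $\alpha$, and invoke (\ref{fu.37}) at $\alpha_0$. The only step you make explicit that the paper leaves implicit is the symbol-level estimate $D(\alpha)=\iint[\ln(s+\alpha\chi(s/\alpha))-\ln s]\,dxd\xi={\cal O}(\alpha^\kappa)$ via integration by parts against $V$, and your treatment of it is correct.
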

Notice that (\ref{fu.42}) implies the upper bound,
\ekv{grny.5}
{
\ln \det P_z^*P_z\le \frac{1}{(2\pi h)^n}(\iint \ln (s)dxd\xi +
{\cal O}(\alpha ^\kappa \ln \frac{1}{\alpha })).
}

We next consider $P_{\delta ,z}=(\widetilde{P}_\delta
-z)^{-1}(P_\delta -z)=1-K_{\delta ,z}$ with $P_\delta =P+\delta Q $,
$\widetilde{P}_\delta =\widetilde{P}+\delta Q $ as in Subsection \ref{hs} and under the
assumptions \no{hs.4}, \no{hs.6}. Put 
$$
S_{\delta,z} =P^*_{\delta ,z}P_{\delta ,z}=1-K_{\delta ,z}-K_{\delta ,z}^*+K_{\delta ,z}^*K_{\delta ,z},
$$
where $K_{\delta ,z}$ is given by \no{hs.6}, so that
$$
\Vert K_{\delta ,z}\Vert \le {\cal O}(1),\ \Vert K_{\delta
  ,z}\Vert_{\mathrm{tr}}
\le \Vert (\widetilde{P}_\delta -z)^{-1}\Vert \Vert
\widetilde{P}-P\Vert_{\mathrm{tr}}\le {\cal O}(h^{-n}).
$$
Here $\Vert \cdot \Vert_{\mathrm{tr}}$ denotes the trace class norm,
and we refer for instance to \cite{DiSj99} for the standard estimate on
the trace class norm of an $h$-pseudodifferential operator with
compactly supported symbol, that we used for the last estimate.

\par Write $\dot{K}_{\delta ,z}=\frac{\partial }{\partial \delta }K_{\delta
,z}$. Then 
$$
\dot {K}_{\delta ,z}=-(z-\widetilde{P}_\delta
)^{-1}Q(z-\widetilde{P}_\delta )^{-1}(\widetilde{P}-P),
$$
so 
$$
\Vert \dot{K}_{\delta ,z}\Vert \le {\cal O}(\Vert Q\Vert ),\quad \Vert \dot{K}_{\delta ,z}\Vert_{\mathrm{tr}} \le {\cal O}(\Vert Q\Vert h^{-n}).
$$
It follows that 
$$
\Vert \dot{S}_{\delta, z}\Vert \le {\cal O}(\Vert Q\Vert ),\quad \Vert
\dot{S}_{\delta, z}\Vert_{\mathrm{tr}} \le {\cal O}(\Vert Q\Vert h^{-n}).
$$

\par Let $N=N(\alpha ,\delta )$ denote the number of singular values
of $P_{\delta ,z}$ in the interval $[0,\sqrt{\alpha} [$ for $h\ll \alpha \ll
1$. Assume 
\ekv{grny.5.5}
{
\delta \le {\cal O}(h).
}
Then $\Vert S_{\delta ,z}-S_{0,z} \Vert \le {\cal O}(h)$ and from
\no{grny.4} we get 
\ekv{grny.5.6}
{
N(\alpha ,\delta )={\cal O}(\alpha ^\kappa h^{-n}).
}

\par Let $1_\alpha (t)=\max (\alpha ,t)$.
For $0<\epsilon \ll 1 $, let $C^\infty (\overline{{\bf R}}_+)\ni
1_{\alpha ,\epsilon }\ge 1_\alpha $ be equal to $t$ outside a small
neighborhood of $t=0$ and converge to $1_\alpha $ uniformly when
$\epsilon \to 0$. For any fixed $\epsilon >0 $, we put $f(t)=1_{\alpha
,\epsilon }(t)$ for $t\ge 0$ and extend $f$ to ${\bf R}$ in
such a way that $f(t)=t+g(t)$, $g\in C_0^\infty ({\bf R})$.  Let
$\widetilde{f}(t)=t+\widetilde{g}(t)$ be an almost holomorphic
extension of $f$ with $\widetilde{g}\in C_0^\infty ({\bf C})$.  Then
we have:
$$
f(S_{\delta,z} )=S_\delta -\frac{1}{\pi }\int (w-S_{\delta,z}
)^{-1}\overline{\partial }\widetilde{g}(w)L(dw).
$$  
Differentiating with respect to $\delta $ one can show the
identity
$$
\frac{\partial }{\partial \delta }\ln \det f(S_{\delta,z} )=\mathrm{tr\,}(
f(S_{\delta,z} )^{-1}f'(S_{\delta,z} )\dot {S}_{\delta ,z} ).
$$
Now we can choose $f=1_{\alpha,\epsilon }$ such that $|f'(t)|\le 1$
for $t\ge 0$. Then we get the estimate
\begin{eqnarray*}
\frac{\partial }{\partial \delta }\ln \det (1_{\alpha ,\epsilon }(S_{\delta,z}
))&=&\mathrm{tr\,}(1_{\alpha ,\epsilon }(S_{\delta,z} )^{-1}1'_{\alpha
  ,\epsilon }(S_{\delta,z} )\dot {S}_{\delta ,z} )\\
&=& {\cal O}(\frac{\Vert \dot{S}_{\delta ,z} \Vert _{\mathrm{tr}}}{\alpha
})\\
&=& {\cal O}(1)\frac{\Vert Q\Vert}{\alpha h^n}.
\end{eqnarray*}

\par Since $\ln \det 1_\alpha (S_{\delta,z} )=\lim_{\epsilon \to 0}\ln\det
1_{\alpha ,\epsilon }(S_{\delta,z} )$, we can integrate the above
estimate, pass to the limit and obtain
$$
\ln \det 1_\alpha (S_{\delta ,z})=\ln\det 1_{\alpha }(S_{0,z} )+{\cal
  O}(\frac{\delta \Vert Q\Vert}{\alpha h^n}).
$$
With some more work, we also get
\ekv{fu.43}
{
\ln \det 1_\alpha (S_{\delta ,z})=\frac{1}{(2\pi h)^n}
(\iint \ln s(x,\xi ) dxd\xi +{\cal O}(\alpha ^\kappa \ln \alpha )+{\cal O}(\frac{\delta \Vert Q\Vert}{\alpha })).
}

\subsection{Grushin problems}\label{gr}

Let $P:{\cal H}\to {\cal H}$ be a bounded operator, where ${\cal H}$
is a complex separable Hilbert space. Following the standard
definitions (see \cite{GoKr}) we define the singular values of $P$ to
be the decreasing sequence $s_1(P)\ge s_2(P)\ge ...$ of eigenvalues of
the self-adjoint operator $(P^*P)^{1/2}$ as long as these eigenvalues
lie above the supremum of the essential spectrum. If there are only
finitely many such eigenvalues, $s_1(P),...,s_k(P)$ then we define
$s_{k+1}(P)=s_{k+2}(P)=...$ to be the supremum of the essential
spectrum of $(P^*P)^{1/2}$. When $\mathrm{dim\,}{\cal H}=M<\infty $
our sequence is finite (by definition); $s_1\ge s_2\ge ...\ge s_M$,
otherwise it is infinite. Using that if $P^*Pu=s_j^2u$, then
$PP^*(Pu)=s_j^2Pu$ and similarly with $P$ and $P^*$ permuted, we see
that $s_j(P^*)=s_j(P)$. Strictly speaking, $P^*P:\,{\cal N}(P)^\perp
\to {\cal N}(P)^\perp$ and $PP^*:\,{\cal N}(P^*)^\perp
\to {\cal N}(P^*)^\perp$ are unitarily equivalent via the map
$P(P^*P)^{-1/2}:\, {\cal N}(P)^\perp \to {\cal N}(P^*)^\perp$ and its
inverse
$P^*(PP^*)^{-1/2}:\, {\cal N}(P^*)^\perp \to {\cal N}(P)^\perp$. (To
check this, notice that the relation $P(P^*P)=(PP^*)P$ on ${\cal
  N}(P)^\perp$ implies $P(P^*P)^\alpha =(PP^*)^\alpha P$ on the same
space for every $\alpha \in {\bf R}$.)

\par In the case when $P$ is a Fredholm operator of index $0$, it will
be convenient to introduce the increasing sequence $0\le t_1(P)\le
t_2(P)\le ...$ consisting first of all eigenvalues of $(P^*P)^{1/2}$
below the infimum of the essential spectrum and then, if there are
only finitely many such eigenvalues, we repeat indefinitely that
infimum. (The length of the resulting sequence is the
dimension of ${\cal H}$.) When $\mathrm{dim\,}{\cal H}=M<\infty $, we
have $t_j(P)=s_{M+1-j}(P)$. Again, we have $t_j(P^*)=t_j(P)$ (as
reviewed in \cite{HaSj08}). Moreover, in the case when $P$ has a 
bounded inverse, we see that 
\ekv{gr.00}{s_j(P^{-1})=\frac{1}{t_j(P)}.}

\par Let $P$ be a Fredholm operator of index $0$. Let $1\le N<\infty $
and let $R_+:{\cal H}\to {\bf C}^N$, $R_-:{\bf C}^N\to {\cal H}$ be
bounded operators. Assume that 
\ekv{gr.01}{
{\cal P}=\left(\begin{array}{ccc}P &R_-\\ R_+ &0 \end{array}\right):
{\cal H}\times {\bf C}^N\to {\cal H}\times {\bf C}^N
}
is bijective with a bounded inverse
\ekv{gr.02}
{
{\cal E}=\left(\begin{array}{ccc}E &E_+\\E_- &E_{-+} \end{array}\right)
}

\par
Recall (for instance from \cite{SjZw2}) that $P$ has a bounded inverse precisely when 
$E_{-+}$ has, and when this happens we have the relations,
\ekv{s.8}{
P^{-1}=E-E_+E_{-+}^{-1}E_-,\quad E_{-+}^{-1}=-R_+P^{-1}R_-.
}
Recall (\cite{GoKr}) that if $A,B$ are bounded operators, then we have
the general estimates of Ky Fan,
\ekv{s.9}
{
s_{n+k-1}(A+B)\le s_n(A)+s_k(B),
}
\ekv{s.10}
{
s_{n+k-1}(AB)\le s_n(A)s_k(B),
}
in particular for $k=1$, we get
$$
s_n(AB)\le \Vert A\Vert s_n(B),\ s_n(AB)\le s_n(A)\Vert B\Vert,\
s_n(A+B)\le s_n(A)+\Vert B\Vert . 
$$ 
Applying this to the second part of \no{s.8}, we get 
$$
s_k(E_{-+}^{-1})\le \Vert R_-\Vert \Vert R_+\Vert s_k(P^{-1}),\ 1\le
k\le N
$$
implying
\ekv{s.11}
{
t_k(P)\le \Vert R_-\Vert \Vert R_+\Vert t_k(E_{-+}),\ 1\le k\le N.
}
By a perturbation argument, we see that this holds also in the
case when $P$, $E_{-+}$ are non-invertible. 

\par Similarly from the first part of \no{s.8}, we get 
$$
s_k(P^{-1})\le \Vert E\Vert +\Vert E_+\Vert \Vert E_-\Vert s_k(E_{-+}^{-1}),
$$
leading to
\ekv{s.12}
{
t_k(P)\ge \frac{t_k(E_{-+})}{\| E\| t_k(E_{-+})+\Vert E_+\Vert\Vert E_-\Vert}.
}
Again this can be extended to the non-necessarily invertible case by
means of small perturbations.

\par Next, we recall from \cite{HaSj08} a natural construction of an
associated Grushin problem to a given operator. Let $P_0:{\cal H}\to
{\cal H}$ be a Fredholm operator of index $0$ as above. Assume that
the first $N$ singular values $t_1(P_0)\le t_2(P_0)\le ...\le t_N(P_0)$
correspond to discrete eigenvalues of $P_0^*P_0$ and assume that $t_{N+1}(P_0)$ is
strictly positive. In the following we sometimes write $t_j$ instead
of $t_j(P_0)$ for short.

\par Recall that $t_j^2$ are the first
eigenvalues both for $P_0^*P_0$ and $P_0P_0^*$.
%
%\par Let $0\le t^2 _1\le ...\le t^2 _N$ be the first $N$
%eigenvalues of $P^*P$ and recall from \cite{HaSj} that they are also
%the first $N$ eigenvalues of $PP^*$. 
Let $e_1,...,e_N$ and
$f_1,...,f_N$ be corresponding orthonormal systems of eigenvectors
of $P_0^*P_0$ and $P_0P_0^*$ respectively. They can be chosen so that
\ekv{gr.1} { P_0e_j=t _jf_j,\ P_0^*f_j=t_je_j.
} Define $R_+:L^2\to {\bf C}^N$ and $R_-:{\bf C}^N\to L^2$ by
\ekv{gr.2}{R_+u(j)=(u|e_j),\ R_-u_-=\sum_1^Nu_-(j)f_j.}  
As in \cite{HaSj08}, the Grushin problem \ekv{gr.3} {
\left\{
\begin{array}{ll}P_0u+R_-u_-=v,\\ R_+u=v_+,
 \end{array} \right.  } has a unique solution $(u,u_-)\in L^2\times
{\bf C}^N$ for every $(v,v_+)\in L^2\times {\bf C} ^N$, given by \ekv{gr.4} {
\left\{\begin{array}{ll} u=E^0v+E_+^0v_+,\\ u_-=E_-^0v+E_{-+}^0v_+,
\end{array}\right.}  where
\begin{eqnarray}\label{gr.4.5} E^0_+v_+=\sum_1^Nv_+(j)e_j,& E^0_-v(j)=(v|f_j),\\
E^0_{-+}=-{\rm diag\,}(t _j),& \Vert E^0\Vert \le
{1\over t_{N+1}}.\nonumber
\end{eqnarray}
$E^0$ can be viewed
as the inverse of $P_0$ as an operator from the orthogonal space
$(e_1,e_2,...,e_N)^\perp$ to $(f_1,f_2,...,f_N)^\perp$.
\par
We notice that in this case, the norms of $R_+$ and $R_-$ are equal to
1, so \no{s.11} tells us that $t_k(P_0)\le t_k(E^0_{-+})$ for $1\le k\le
N$, but of course the expression for $E^0_{-+}$ in \no{gr.4.5} implies
equality. 

\par Let $Q\in {\cal L}({\cal H}, {\cal H}) $ and put $P_\delta
=P_0-\delta Q$ (where we sometimes put a minus sign in front of the
perturbation for notational convenience). We are particularly
interested in the case when $Q=Q_\omega u=q_\omega u$ is the operator of
multiplication with a random function $q_\omega $. Here $\delta >0$
is a small parameter. Choose $R_\pm$ as in \no{gr.2}. Then if $\delta
<t_{N+1}$ and $\Vert Q\Vert\le 1$, 
the perturbed Grushin problem 
\ekv{gr.5}{ \left\{
\begin{array}{ll}P_\delta u+R_-u_-=v,\\ R_+u=v_+,
 \end{array} \right.  } is well posed and has the solution 
\ekv{gr.6}
{ \left\{\begin{array}{ll} u=E^\delta v+E_+^\delta v_+,\\
u_-=E_-^\delta +E_{-+}^\delta v_+,
\end{array} \right.}  where
\ekv{gr.6.5}
{
{\cal E}^\delta =\left(\begin{array}{ccc}E^\delta &E_+^\delta \\
E_-^\delta &E_{-+}^\delta
 \end{array}\right)
}
is obtained from ${\cal E}^0$ by
\ekv{gr.6.6}{
{\cal E}^\delta ={\cal E}^0\left( 1-\delta
\left(\begin{array}{ccc}Q E^0 & Q E_+^0 \\
0&0 \end{array}\right)\right) ^{-1}.
}
Using the Neumann series, we get
\ekv{gr.7} {
E_{-+}^\delta =E_{-+}^0+\delta E_-^0 Q E_+^0+ \delta^2
E_-^0 Q E^0 Q E_+^0+ \delta^3 E_-^0 Q
(E^0 Q )^2 E_+^0+...  }
We also get 
\ekv{gr.7.1}
{
E^\delta =E^0+\sum_1^{\infty }\delta ^kE^0(QE^0)^k
}
\ekv{gr.7.2}
{
E_+^\delta =E_+^0+\sum_1^{\infty }\delta ^k(E^0Q)^kE_+^0
}

\ekv{gr.7.3}
{
E_-^\delta =E_-^0+\sum_1^{\infty }\delta ^kE_-^0(QE^0)^k.
}
\par The leading perturbation in $E_{-+}^\delta $ is
$\delta M $, where $M =E_-^0 Q E_+^0: {\bf C}^N\to
{\bf C}^N$ has the matrix \ekv{gr.8}{ M(\omega )_{j,k}=(Q e_k|f_j),
} which in the multiplicative case reduces to \ekv{gr.9} { M(\omega
)_{j,k}=\int q (x)e_k(x)\overline{f_j(x)}dx.  }

\par Put $\tau _0=t_{N+1}(P_0)$ and recall the 
assumption
\ekv{s.13}
{
\Vert Q\Vert \le 1.
}
Then, if $\delta \le \tau _0/2$, the new Grushin problem is well posed
with an inverse ${\cal E}^{\delta }$ given in
\no{gr.6.5}--\no{gr.7.3}. We get 
\ekv{s.14}
{
\Vert E^\delta \Vert \le \frac{1}{1-\frac{\delta }{\tau _0}}
\Vert E^0\Vert \le \frac{2}{\tau _0},\quad
\| E_\pm ^\delta \| \le \frac{1}{1-\frac{\delta }{\tau _0}}\le 2,  
}
\ekv{s.15}
{
\Vert E_{-+}^\delta -(E_{-+}^0+\delta E_-^0QE_+^0)\Vert \le
\frac{\delta ^2}{\tau _0}\frac{1}{1-\frac{\delta }{\tau_0}}\le 
2\frac{\delta ^2}{\tau _0}.
}
Using this in \no{s.11}, \no{s.12} together with the fact that $t_k(E_{-+}^\delta )
\le 2\tau_0$, we get
\ekv{s.16}
{
\frac{t_k(E^\delta _{-+})}{8}\le t_k(P_\delta )\le t_k(E^\delta _{-+}).
}

\begin{remark}\label{gr0} \rm under suitable assumptions, the 
preceding discussion can be extended to the case of unbounded
operators. This turns out to be the case for our elliptic operator 
$P_\delta $.\end{remark}

\par We next collect some facts from \cite{HaSj08}. The first result
follows from Section 2 in that paper.
\begin{prop}\label{gr1}
Let $P:{\cal H}\to {\cal H}$ be bounded and assume that $P-1$ is of
trace class, so that $P$ is Fredholm of index $0$. Let $R_+,R_-,{\cal
  P}, {\cal E}={\cal P}^{-1}$ be as in \no{gr.01}, \no{gr.02}. Then
${\cal P}$ is also a trace class perturbation of the identity operator
and 
\ekv{grny.1}{\det P=\det {\cal P}\det E_{-+}.
} 
\end{prop}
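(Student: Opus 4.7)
The plan is first to check that $\mathcal{P}$ is indeed a trace class perturbation of the identity on $\mathcal{H}\times\mathbf{C}^N$. Since $P-I$ is trace class on $\mathcal{H}$, the off-diagonal blocks $R_\pm$ have range in (or are defined on) the $N$-dimensional space $\mathbf{C}^N$ and are therefore of finite rank, and the lower-right block differs from $I_{\mathbf{C}^N}$ by $-I_{\mathbf{C}^N}$ (again finite rank), all four block corrections lie in trace class. Hence $\det\mathcal{P}$ is well-defined as a Fredholm determinant.

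Next I would establish the identity under the additional assumption that $P$ is invertible. The standard Schur block-LDU factorization gives
$$
\mathcal{P}=\begin{pmatrix} I & 0\\ R_+P^{-1} & I\end{pmatrix}\begin{pmatrix} P & 0\\ 0 & -R_+P^{-1}R_-\end{pmatrix}\begin{pmatrix} I & P^{-1}R_-\\ 0 & I\end{pmatrix},
$$
and by the second identity in (\ref{s.8}) the lower-right block in the middle factor equals $E_{-+}^{-1}$. The first and third factors are of the form $I+N$ with $N$ nilpotent of finite rank, so their Fredholm determinants equal $1$. The middle factor is block-diagonal and a trace class perturbation of the identity, so its determinant multiplies to $\det P\cdot\det(E_{-+}^{-1})=\det P/\det E_{-+}$. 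Invoking multiplicativity of Fredholm determinants for $I+{\rm trace\;class}$ operators yields $\det\mathcal{P}=\det P/\det E_{-+}$, which is the claimed identity.

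To remove the invertibility hypothesis on $P$, I would argue by analytic continuation. Consider the one-parameter family $P_w:=P-wI$, $w\in\mathbf{C}$, along with the associated Grushin matrix $\mathcal{P}(w)$ obtained by replacing $P$ by $P_w$ in (\ref{gr.01}). Since $\mathcal{P}=\mathcal{P}(0)$ is invertible by hypothesis, $\mathcal{P}(w)$ remains invertible in a neighborhood of $0$, and there $E_{-+}(w)$ is holomorphic; the three scalar functions $\det(P-wI)$, $\det\mathcal{P}(w)$, and $\det E_{-+}(w)$ are all holomorphic on this neighborhood. Because $P-I$ is trace class, the spectrum of $P$ clusters only at $1$, so the set of $w$ near $0$ for which $P-wI$ fails to be invertible is discrete; on the open complement the previous step applies and gives $\det(P-wI)=\det\mathcal{P}(w)\cdot\det E_{-+}(w)$. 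By the identity principle for holomorphic functions this extends to the full neighborhood, and in particular to $w=0$.

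The main technical point to verify carefully is the multiplicativity of the Fredholm determinant across the Schur factorization, since the three factors are operators on the direct sum $\mathcal{H}\times\mathbf{C}^N$ and one must check that each factor is a legitimate trace class perturbation of the identity before invoking $\det(AB)=\det A\det B$. Once that bookkeeping is done, both computing $\det$ of the triangular factors (they equal $1$ because the perturbation is nilpotent of finite rank) and evaluating $\det$ of the block-diagonal middle factor (which splits as $\det P\cdot\det E_{-+}^{-1}$, using that the two summands act on orthogonal subspaces) are routine.
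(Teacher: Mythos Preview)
Your proof is correct. The Schur block-LDU factorization together with multiplicativity of the Fredholm determinant is the standard route to this identity, and your bookkeeping (checking each factor is $I+\mathrm{trace\ class}$, computing the triangular determinants as $1$, splitting the block-diagonal determinant) is accurate. The analytic continuation step to remove the invertibility hypothesis on $P$ is also fine; alternatively one can simply note that when $P$ fails to be invertible both sides of the identity vanish, since $E_{-+}$ is then a non-invertible $N\times N$ matrix.

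The paper itself does not give a proof here but refers to Section~2 of \cite{HaSj08}. The argument there is essentially the same Schur-complement computation you carry out, so your approach matches what the cited reference does.
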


\par Now consider the operator $P_z=P_{0,z}$ in \no{hs.5} 
for $z\in \Omega $, and keep the assumption (\ref{fu.30}).

 Define 
$$
{\cal P}_\delta =\left(\begin{array}{ccc}P_{\delta ,z} &R_{-,\delta
    }\\ R_{+,\delta } &0
 \end{array}\right)
$$
as in \no{gr.1}--\no{gr.3}, so that ${\cal P}={\cal P}_0$. As in (5.10)
in \cite{HaSj08} we have 
\ekv{grny.6}
{
|\det {\cal P}_\delta |^2=\alpha ^{-N}\det 1_\alpha (S_{\delta,z} ),\quad 2\ln |\det
{\cal P}_\delta |=\ln \det 1_\alpha (S_{\delta,z} )+N\ln
\frac{1}{\alpha }, 
}
where $1_\alpha (t)=\max (\alpha ,t)$, $t\ge 0$, 
which with (\ref{fu.43}) and the bound $N={\cal O}(\alpha ^\kappa h^{-n})$ gives
\ekv{grny.9}{
\ln |\det {\cal P}_\delta |=\frac{1}{(2\pi h)^n}(\iint \ln |p_z| dxd\xi 
+{\cal O}(\alpha ^\kappa \ln \frac{1}{\alpha }+\frac{\delta }{\alpha
}\Vert Q\Vert )). 
}

\subsection{Singular values and determinants for certain
matrices associated to $\delta $ potentials}\label{inv} 

\begin{lemma}
\label{inv4} (\cite{Sj08a, Sj08b}) Let $e_1,...,e_N\in C^0(X)$ and put 
$$
\overrightarrow{e}(x)=\left(\begin{array}{ccc}e_1(x)\\e_2(x)\\ ..\\e_N(x) \end{array}\right),\ x\in X.
$$
Let $L\subset {\bf C}^N$ be a linear subspace of dimension $M-1$,
for some $1\le M\le N$. Then there exists $x\in X $ such that 
\ekv{inv.3} { \mathrm{dist\,}(\overrightarrow{e}(x), L)^2 \ge
\frac{1}{\mathrm{vol\,}(X )}\mathrm{tr\,}((1-\pi _L){\cal
  E}_X ) ,}
where ${\cal E}_X =((e_j|e_k)_{L^2(X )})_{1\le j,k\le N}$
and $\pi _L$ is the orthogonal projection from ${\bf C}^N$ onto $L$.
\end{lemma}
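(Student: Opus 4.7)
The plan is to use a mean-value argument: integrate the squared distance over $X$, identify the result with $\mathrm{tr}((1-\pi_L)\mathcal{E}_X)$, and then conclude that the supremum (attained, by continuity, on compact $X$) is at least the mean.

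First I would rewrite the distance in terms of the orthogonal projection. Setting $\pi = 1-\pi_L$ (orthogonal projection onto $L^\perp$), the definition of distance in $\mathbb{C}^N$ gives
$$
\mathrm{dist}(\overrightarrow{e}(x),L)^2 = |\pi\overrightarrow{e}(x)|^2 = (\pi\overrightarrow{e}(x)\mid \pi\overrightarrow{e}(x))_{\mathbb{C}^N} = (\pi\overrightarrow{e}(x)\mid \overrightarrow{e}(x))_{\mathbb{C}^N},
$$
where the last equality uses that $\pi$ is a self-adjoint projection. Writing $\pi = (\pi_{jk})$ in coordinates, this is $\sum_{j,k}\pi_{jk}e_k(x)\overline{e_j(x)}$.

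Next I would integrate over $X$ with respect to the given positive density. Swapping sum and integral yields
$$
\int_X \mathrm{dist}(\overrightarrow{e}(x),L)^2\, dx = \sum_{j,k}\pi_{jk}\int_X e_k(x)\overline{e_j(x)}\,dx = \sum_{j,k}\pi_{jk}(\mathcal{E}_X)_{k,j} = \mathrm{tr}(\pi\mathcal{E}_X),
$$
since $(\mathcal{E}_X)_{k,j} = (e_k\mid e_j)_{L^2(X)}$ by definition. Thus
$$
\frac{1}{\mathrm{vol}(X)}\int_X \mathrm{dist}(\overrightarrow{e}(x),L)^2\,dx = \frac{1}{\mathrm{vol}(X)}\mathrm{tr}((1-\pi_L)\mathcal{E}_X).
$$

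Finally, since $x\mapsto \mathrm{dist}(\overrightarrow{e}(x),L)^2$ is continuous on $X$ and the supremum of a continuous function is at least its mean value, there exists $x\in X$ at which the function attains a value $\geq$ this mean, which is exactly \eqref{inv.3}. There is no real obstacle here: the only mild subtlety is the bookkeeping of conjugates in identifying $\int e_k\overline{e_j}$ with the $(k,j)$-entry of $\mathcal{E}_X$ (so that the sum becomes a trace rather than a trace with a transpose); the self-adjointness of $\pi$ and $\mathcal{E}_X$ makes this harmless.
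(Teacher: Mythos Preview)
Your proof is correct and follows essentially the same mean-value argument as the paper: both compute $\int_X \mathrm{dist}(\overrightarrow{e}(x),L)^2\,dx = \mathrm{tr}((1-\pi_L)\mathcal{E}_X)$ and then bound the integral by $\mathrm{vol}(X)\cdot\sup_x$. The only cosmetic difference is that the paper expresses the integrand using an orthonormal basis $\nu_1,\dots,\nu_N$ adapted to $L$ (so the trace appears as $\sum_{\ell=M}^N(\mathcal{E}_X\nu_\ell\mid\nu_\ell)$), while you expand directly in the matrix entries of $\pi=1-\pi_L$; the arguments are equivalent.
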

\begin{proof} 
Let $\nu _1,...,\nu _N$ be an orthonormal basis in ${\bf C}^N$ such that
$L$ is spanned by $\nu _1,...,\nu _{M-1}$ (and equal to $0$ when
$M=1$). Then by direct calculations,
$$
\int_X \mathrm{dist\,}(\overrightarrow{e}(x),L)^2dx=\sum_{\ell =M}^N ({\cal
  E}_X \nu _\ell|\nu _\ell )= \mathrm{tr\,}((1-\pi _L){\cal
  E}_X ). 
$$
It then suffices to estimate the integral from above by
$$\mathrm{vol\,}(X ) \sup_{x\in X }\mathrm{dist\,}
(\overrightarrow{e}(x),L)^2,$$ 
and we can
find an $x\in X $ satisfying (\ref{inv.3}).
\end{proof}

\par If we make the assumption that
 \ekv{inv.1} { e_1,...,e_N \mbox{ is an orthonormal family in }
L^2(X ), }
then ${\cal E}_X =1$ and \no{inv.3} simplifies to 
\ekv{inv.3.5}
{
\max_{x\in X }\mathrm{dist\,}(\overrightarrow{e}(x),L)^2\ge
\frac{N-M+1}{\mathrm{vol\,}(X )}.
}

In the general case, let $0\le\varepsilon _1\le \varepsilon _2\le ...\le
\varepsilon _N$ denote the eigenvalues of ${\cal E}_X $. Then, using the mini-max principle, one can show that 
\ekv{inv.1.6}
{
\inf_{\mathrm{dim\,}L=M-1}\mathrm{tr\,}((1-\pi _L){\cal E}_X )=
\varepsilon _1+\varepsilon _2+...+\varepsilon _{N-M+1}=:E_M.
}

Now, we can use the lemma to choose successively $a_1,...,a_N\in X
$ such that
\begin{eqnarray*} \| \overrightarrow{e}(a_1)\Vert^2&\ge& {E_1\over {\rm vol\,}(X
)},\\ {\rm dist\,}(\overrightarrow{e}(a_2),{\bf C} \overrightarrow{e}(a_1))^2&\ge& {E_2\over {\rm
vol\,}(X )},\\ &...&\\ {\rm dist\,}(\overrightarrow{e}(a_M),{\bf C} \overrightarrow{e}(a_1)\oplus
...\oplus{\bf C} \overrightarrow{e}(a_{M-1}))^2 &\ge& {E_M\over {\rm vol\,}(X )},\\
&...&
\end{eqnarray*}
\par Let $\nu _1,\nu _2,...,\nu _N$ be the Gram-Schmidt
orthonormalization of the basis $\overrightarrow{e}(a_1), \overrightarrow{e}(a_2),..., \overrightarrow{e}(a_N)$, so that
\ekv{inv.4} {\overrightarrow{e}(a_M)\equiv c_M \nu _M {\rm mod\,}(\nu
_1,...,\nu_{M-1}), \mbox{ where } | c_M | \ge \left(\frac{E_M}{{\rm
vol\,}(X )}\right)^{1\over 2}.}

\par Consider the $N\times N$ matrix $E=(\overrightarrow{e}(a_1)\, \overrightarrow{e}(a_2)\, ...\,
\overrightarrow{e}(a_N))$ where $\overrightarrow{e}(a_j)$ are viewed
as columns. Expressing these vectors
in the basis $\nu _1,...,\nu _N$ will not change the absolute value of
the determinant and $E$ now becomes an upper triangular matrix with
diagonal entries $c_1,...,c_N$. Hence \ekv{inv.6} {| \det E| = |
c_1\cdot ...\cdot c_N| , } and \no{inv.4} implies that 
\ekv{inv.7} {
|\det E| \ge {(E_1E_2...E_N)^{1/2}\over ({\rm vol\,}(X ))^{N/2}}.  }

Let $f_1,f_2,...,f_N$ be a second family of continuous functions on $X$.
Define $M={\bf C}^N\to {\bf C}^N$ by \ekv{inv.0} {
Mu=\sum_1^N (u|\overrightarrow{f}(a_\nu ))\overrightarrow{e}(a_\nu ),\
u\in{\bf C}^N.  }
Then
\ekv{inv.8} {
M=E\circ F^*, } where 
\ekv{inv.9} { F=(\overrightarrow{f}(a_1)...\overrightarrow{f}(a_N)).  }
Now, we assume
\ekv{inv.2} { f_j=\overline{e}_j,\ \forall j.  }
Then $F^*=\trans{E}$, so
\ekv{inv.10} { M=E\circ \trans{E}.  }  
We get from \no{inv.7}, \no{inv.10}, that 
\ekv{inv.10.5}
{
|
\det M | \ge {E_1E_2...E_N\over {\rm vol\,}(X )^N}.
}
Under the assumption \no{inv.1}, this simplifies to
\ekv{inv.11} { |
\det M | \ge {N!\over {\rm vol\,}(X )^N}.  }

\par Using that 
\ekv{inv.11.1}
{
|\det M|=\prod_1^N s_j\le s_1^{k-1}s_k^{N)k+1}\le s_1^N,\hbox{ where } s_j=s_j(M),
}
we get
\begin{prop}\label{inv5} Under the above assumptions,
\ekv{inv.11.2.5}{
s_1\ge \frac{(E_1...E_N)^{\frac{1}{N}}}{\mathrm{vol\,}(X )},
}
\ekv{inv.11.3}{s_k\ge s_1\left(\prod_1^N\left(\frac{E_j}{s_1\mathrm{vol\,}(X )}\right)
\right)^{\frac{1}{N-k+1}}.}
\end{prop}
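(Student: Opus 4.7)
The plan is to combine the lower bound \eqref{inv.10.5} on $|\det M|$ with the elementary upper bounds on $|\det M|$ recorded in \eqref{inv.11.1} in terms of the singular values. Since $\det M = \prod_{j=1}^N s_j$ and $s_1\ge s_2\ge\cdots\ge s_N\ge 0$, the product is trivially dominated by $s_1^N$, and more sharply we can split it as
\[
\prod_{j=1}^N s_j \;=\; \Bigl(\prod_{j=1}^{k-1} s_j\Bigr)\cdot\Bigl(\prod_{j=k}^{N} s_j\Bigr) \;\le\; s_1^{k-1}\,s_k^{N-k+1},
\]
because the first $k-1$ factors are each $\le s_1$ and the remaining $N-k+1$ factors are each $\le s_k$. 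No further machinery is required beyond what is displayed just before the statement.

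First I would prove \eqref{inv.11.2.5}: from \eqref{inv.10.5} and the bound $|\det M|\le s_1^N$, one has $s_1^N\ge E_1\cdots E_N/\mathrm{vol}(X)^N$, and extracting the $N$-th root yields the first claim immediately.

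Next I would prove \eqref{inv.11.3}: using the sharper estimate $|\det M|\le s_1^{k-1}s_k^{N-k+1}$ together with \eqref{inv.10.5} gives
\[
s_k^{N-k+1} \;\ge\; \frac{E_1\cdots E_N}{s_1^{k-1}\,\mathrm{vol}(X)^N},
\]
and then taking the $(N-k+1)$-th root and rewriting the right-hand side as $s_1\cdot\bigl(\prod_{j=1}^N E_j/(s_1\mathrm{vol}(X))\bigr)^{1/(N-k+1)}$ (which is a direct algebraic manipulation, using $1-N/(N-k+1)=-(k-1)/(N-k+1)$) produces the claimed inequality.

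There is essentially no obstacle: both inequalities follow by a two-line chain from results already established. The only thing to verify carefully is the algebraic rewriting in the last step, i.e.\ that
\[
\Bigl(\frac{\prod_{j=1}^N E_j}{s_1^{k-1}\,\mathrm{vol}(X)^N}\Bigr)^{1/(N-k+1)} \;=\; s_1\,\Bigl(\prod_{j=1}^N\frac{E_j}{s_1\,\mathrm{vol}(X)}\Bigr)^{1/(N-k+1)},
\]
which is a matter of bookkeeping in the exponents of $s_1$ and $\mathrm{vol}(X)$.
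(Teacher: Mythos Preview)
Your proof is correct and follows exactly the same route as the paper: combine the lower bound \eqref{inv.10.5} with the inequalities $|\det M|\le s_1^N$ and $|\det M|\le s_1^{k-1}s_k^{N-k+1}$ recorded in \eqref{inv.11.1}, then take roots and regroup the powers of $s_1$. There is nothing to add.
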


\subsection{Singular values of matrices associated to suitable 
admissible potentials}\label{cl}
We let $P,\widetilde{P},p,\widetilde{p}$ be as in the
introduction to this section. 
We also choose $\epsilon _k$, $\mu _k$, $D=D(h)$,
$L=L(h)$ as in and around (\ref{int.6.3}), (\ref{int.6.4}).
\begin{dref}\label{spe01}
An admissible potential is a potential of the form
\ekv{cl.1}
{
q(x)=\sum_{0<\mu _k\le L}\alpha _k\epsilon _k(x),\ \alpha
\in {\bf C}^D.
}
\end{dref}
We shall approximate $\delta $-potentials in $H_h^{-s}$ with admissible
ones and then apply the results of the preceding subsection. As in the introduction we let $s>n/2$,
$0<\epsilon <s-n/2$.
\begin{prop}\label{spe02}
Let $a\in X$. Then $\exists \alpha
\in {\bf C}^D$, $r\in H_h^{-s}$ such that 
\ekv{cl.7}
{
\delta _a(x)=\sum_{\mu _k\le L}\alpha _k\epsilon_k+r(x),
}
where
\ekv{cl.9}
{
\Vert r\Vert_{H_h^{-s}}\le C_{s,\epsilon }
L^{-(s-\frac{n}{2}-\epsilon )}h^{-\frac{n}{2}},
}
\ekv{cl.12}
{
(\sum \vert \alpha _k\vert^2)^{\frac{1}{2}}
\le \langle L\rangle^{\frac{n}{2}+\epsilon} (\sum_{\mu _k\le L}\langle
\mu _k\rangle^{-2(\frac{n}{2}+\epsilon ) }
|\alpha _k|^2)^{\frac{1}{2}}
\le CL^{\frac{n}{2}+\epsilon }  h^{-\frac{n}{2}}.
}
\end{prop}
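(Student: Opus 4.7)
The plan is to expand $\delta_a$ in the orthonormal basis $\{\epsilon_k\}$ of eigenfunctions of $h^2\widetilde{R}$ and split the expansion at $\mu_k = L$. Since $(\delta_a|\epsilon_k)_{L^2} = \overline{\epsilon_k(a)}$, we have the distributional identity $\delta_a = \sum_k \overline{\epsilon_k(a)}\,\epsilon_k$. Setting $\alpha_k := \overline{\epsilon_k(a)}$ for $\mu_k \le L$ (and $0$ otherwise) and $r := \sum_{\mu_k > L} \overline{\epsilon_k(a)}\,\epsilon_k$ immediately gives the decomposition \no{cl.7}.

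The key point is that by Proposition \ref{al2}, the norm on $H_h^t(X)$ is uniformly equivalent to $\|(1+h^2\widetilde{R})^{t/2}u\|_{L^2}$, which on the eigenbasis reads $(\sum_k |c_k|^2 \langle\mu_k\rangle^{2t})^{1/2}$ for $u = \sum c_k\epsilon_k$. I first establish the single master estimate
\ekv{myeq.1}{\sum_{k} |\epsilon_k(a)|^2 \,\langle\mu_k\rangle^{-(n+2\epsilon)} \le C\, h^{-n}.}
This is obtained by duality: Proposition \ref{al1} (valid on $X$ by Remark \ref{al3}) gives $|\phi(a)| \le \|\phi\|_{L^\infty} \le C h^{-n/2}\|\phi\|_{H_h^{n/2+\epsilon}}$ for any $\phi \in H_h^{n/2+\epsilon}$, hence $\|\delta_a\|_{H_h^{-(n/2+\epsilon)}} \le C h^{-n/2}$, which is exactly \no{myeq.1} after squaring.

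Both conclusions of the proposition now follow from \no{myeq.1} by the appropriate weight manipulation. For the remainder, since $s-\tfrac{n}{2}-\epsilon > 0$ and $\mu_k > L$ in the relevant range, we factor out the worst power:
\ekv{myeq.2}{
\|r\|_{H_h^{-s}}^2 = \sum_{\mu_k > L} |\epsilon_k(a)|^2 \langle\mu_k\rangle^{-2s} \le L^{-2(s-\frac{n}{2}-\epsilon)}\sum_{k} |\epsilon_k(a)|^2 \langle\mu_k\rangle^{-(n+2\epsilon)} \le C\, L^{-2(s-\frac{n}{2}-\epsilon)} h^{-n},
}
which after taking square roots is \no{cl.9}. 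For the coefficients, on the range $\mu_k \le L$ we have $1 \le \langle L\rangle^{n+2\epsilon}\langle\mu_k\rangle^{-(n+2\epsilon)}$, so
\ekv{myeq.3}{
\sum_{\mu_k \le L} |\alpha_k|^2 \le \langle L\rangle^{n+2\epsilon}\sum_{\mu_k \le L}|\alpha_k|^2 \langle\mu_k\rangle^{-(n+2\epsilon)} \le \langle L\rangle^{n+2\epsilon}\sum_k|\epsilon_k(a)|^2\langle\mu_k\rangle^{-(n+2\epsilon)} \le C \langle L\rangle^{n+2\epsilon} h^{-n}.
}
Taking square roots gives the outer inequality of \no{cl.12}, while the middle inequality is just the first step of \no{myeq.3} before invoking \no{myeq.1}.

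There is really no serious obstacle here: the argument is a clean spectral-side computation. The only conceptual input is the semiclassical Sobolev embedding of Proposition \ref{al1}, which is what produces the sharp $h^{-n/2}$ bound on $\|\delta_a\|_{H_h^{-(n/2+\epsilon)}}$ and thereby controls both the high-frequency tail and the low-frequency coefficients simultaneously and uniformly in $h$.
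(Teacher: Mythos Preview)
Your proof is correct and follows essentially the same approach as the paper: expand $\delta_a$ in the eigenbasis $\{\epsilon_k\}$, truncate at $\mu_k=L$, and control both pieces via the bound $\|\delta_a\|_{H_h^{-(n/2+\epsilon)}}={\cal O}(h^{-n/2})$ together with the spectral characterization of the $H_h^{\widetilde{s}}$-norms from Proposition~\ref{al2}. The paper only sketches this in two lines, and you have filled in exactly the weight manipulations it leaves implicit.
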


The proof uses that $\delta _a\in H^{-(\frac{n}{2}+\epsilon )}$ with norm ${\cal O}(h^{-n/2})$ and the fact that for any $\widetilde{s}\in {\bf R}$, 
$$\|\sum_1^\infty \alpha _k\epsilon _k\|_{H_h^{\widetilde{s}}}^2\asymp \sum \langle \mu _k\rangle^{2\widetilde{s}}|\alpha _k|^2.$$ It then suffices to truncate the expansion of $\delta _a$ in  the basis $\epsilon _1,\epsilon _2,...$.

\par Let $P_\delta $ be as in \no{hs.3} and assume \no{hs.4}, (\ref{hs.6}).  Let 
${\cal R}(\pi _\alpha )={\bf C}e_1\oplus ...\oplus {\bf C}e_N$ be as
in one of the two cases of subsection \ref{hs}. By the mini-max
principle and standard spectral asymptotics (see \cite{DiSj99}), we know
that $N={\cal O}(h^{-n})$ and if we want to use the assumption
(\ref{int.6.2}) we even have $N={\cal O}((\max (\alpha ,h))^\kappa
h^{-n})$ by(\ref{grny.5.6}). For the moment we shall only use
that $N$ is bounded by a negative power of $h$. Let 
$a=(a_1,...,a_N)\in X^N$ and put 
\ekv{spe.01}
{
q_a(x)=\sum_1^N \delta (x-a_j),
}
\ekv{spe.02}
{
M_{q_a;j,k}=\int q_a(x)e_k(x)e_j(x)dx,\ 1\le j,k\le N.
}
Now \no{hs.8} implies that $\Vert \sum \lambda _ke_k\Vert_{H^s_h}\le {\cal O}(\Vert \lambda \Vert_{\ell^2})$ so \no{al.2} and the fact that $\Vert
q_a\Vert_{H_h^{-s}}={\cal O}(1)Nh^{-n/2}$, imply that for all $\lambda ,\mu \in
{\bf C}^n$,
\begin{eqnarray*}
\langle M_{q_a}\lambda ,\mu \rangle&=&\int q_a(x)(\sum \lambda
_ke_k)(\sum \mu _je_j)dx\\
&=& {\cal O}(1)Nh^{-n}\Vert \lambda \Vert \Vert \mu \Vert
\end{eqnarray*}
and hence
\ekv{spe.03}{s_1(M_{q_a})=\Vert M_{q_a}\Vert_{{\cal L}({\bf C}^N,{\bf
      C}^N)}={\cal O}(1)Nh^{-n}.}

\par We now choose $a$ so that (\ref{inv.11.2.5}), \no{inv.11.3} hold. 
\par The $e_j$ form and orthonormal system, so ${\cal E}_X=1$ and 
\ekv{spe.04}
{
E_j=N-j+1.
}
Then, choosing the $a_j$ as in Subsection \ref{inv}, \no{inv.11.2.5} gives the lower bound
\ekv{spe.05}
{
s_1\ge \frac{(N!)^{\frac{1}{N}}}{\mathrm{vol\,}(X )}
= 
(1+{\cal O}(\frac{\ln N}{N}))\frac{N}{e\,\mathrm{vol\,}(X )}
,
}
where the last identity follows from Stirling's formula.

\par Rewriting \no{inv.11.3} as
$$
s_k\ge s_1^{-\frac{k-1}{N-k+1}}\left( \prod_1^N
  \frac{E_j}{\mathrm{vol\,}(X )}\right)^{\frac{1}{N-k+1}},
$$
and using \no{spe.03}, we get
\ekv{spe.06}
{
s_k\ge \frac{1}{C^{\frac{k-1}{N-k+1}}(\mathrm{vol\,}(X ))^{\frac{N}{N-k+1}}}
\left( \frac{h^n}{N} \right)^{\frac{k-1}{N-k+1}}(N!)^{\frac{1}{N-k+1}}.
}

\par Summing up, we get
\begin{prop}\label{spe1} 
We can find $a_1,...,a_N\in X $ such that if 
$q_a=\sum_1^N \delta (x-a_j)$ and $M_{q_a;j,k}=\int q_a(x)
e_k(x)e_j(x)dx$, then the singular values $s_1\ge s_2\ge ...\ge s_N$ of $M_{q_a}$,
satisfy \no{spe.03}, \no{spe.05} and \no{spe.06}.
\end{prop}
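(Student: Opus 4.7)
The plan is to apply Proposition \ref{inv5} directly, with the family $e_1,\dots,e_N$ taken to be the orthonormal eigenfunctions of $S$ associated with the spectral projection $\pi_\alpha$, and with $f_j=\overline{e}_j$. Under this choice, the matrix $M$ from (\ref{inv.10}) coincides with $M_{q_a}$: the $(j,k)$-entry of $E\circ{}^tE$ is $\sum_{\nu=1}^N e_k(a_\nu)e_j(a_\nu)=\int q_a(x)e_k(x)e_j(x)\,dx$. So the task is to (i) check the upper bound (\ref{spe.03}) on $s_1$, and (ii) extract the values $E_j=N-j+1$ needed to activate the lower bounds (\ref{inv.11.2.5}) and (\ref{inv.11.3}).

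For (i), I would use (\ref{hs.8}) to get $\|\sum\lambda_k e_k\|_{H_h^s}=O(\|\lambda\|_{\ell^2})$, together with $\|q_a\|_{H_h^{-s}}=O(Nh^{-n/2})$ (each Dirac $\delta_{a_\nu}$ has $H_h^{-s}$-norm $O(h^{-n/2})$ since $s>n/2$). Pairing through the multiplicative estimate (\ref{al.2}) and the $H_h^{-s}$/$H_h^s$ duality gives
$$
|\langle M_{q_a}\lambda,\mu\rangle| \le \|q_a\|_{H_h^{-s}}\bigl\|(\textstyle\sum\lambda_k e_k)(\sum\mu_j e_j)\bigr\|_{H_h^s} = O(Nh^{-n})\|\lambda\|\|\mu\|,
$$
which is exactly (\ref{spe.03}). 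For (ii), since the $e_j$ form an orthonormal system, ${\cal E}_X=1$, whose first $N-M+1$ eigenvalues are all equal to $1$; the minimax identity (\ref{inv.1.6}) then gives $E_M=N-M+1$. Now the selection procedure of Lemma \ref{inv4} applied iteratively produces $a_1,\dots,a_N\in X$ fulfilling (\ref{inv.4}).

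Feeding $E_j=N-j+1$ into (\ref{inv.11.2.5}) yields $s_1 \ge (N!)^{1/N}/\mathrm{vol}(X)$, and Stirling's formula $\ln N! = N\ln N - N + O(\ln N)$ converts this to (\ref{spe.05}). Finally, substituting these same $E_j$ into (\ref{inv.11.3}), after rewriting it as
$$
s_k \ge s_1^{-\frac{k-1}{N-k+1}}\Bigl(\prod_1^N \frac{E_j}{\mathrm{vol}(X)}\Bigr)^{\frac{1}{N-k+1}},
$$
and using the upper bound $s_1\le CNh^{-n}$ from (i), gives (\ref{spe.06}). There is no real obstacle here: once one realizes that orthonormality of the $e_j$ reduces the geometric spread function $E_M$ to a trivial arithmetic progression, all remaining steps are assembly — the slightly delicate ingredient is (i), which requires the semiclassical Sobolev multiplication estimate rather than a naive $L^2$ bound.
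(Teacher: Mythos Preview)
Your proof is correct and follows essentially the same route as the paper: the discussion preceding Proposition \ref{spe1} establishes (\ref{spe.03}) via the Sobolev duality and multiplication estimate exactly as you do, then invokes orthonormality to get ${\cal E}_X=1$ and $E_j=N-j+1$, and finally feeds these into (\ref{inv.11.2.5}) and the rewritten form of (\ref{inv.11.3}) together with Stirling and the upper bound on $s_1$. Your explicit verification that $M=E\circ{}^tE$ has the correct matrix entries $\sum_\nu e_j(a_\nu)e_k(a_\nu)=M_{q_a;j,k}$ is a useful sanity check that the paper leaves implicit.
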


\par We next approximate $q_a$ with an admissible potential by applying
Proposition \ref{spe02} to each $\delta $-function in $q_a$:
\ekv{spe.4}
{
q_a=q+r,\ q=\sum_{\mu _k\le L}\alpha _k\epsilon _k,
}
where 
\ekv{spe.4.5}
{
\Vert q\Vert_{H_h^{-s}}\le Ch^{-\frac{n}{2}}N,
}
\ekv{spe.5}
{
\Vert r\Vert_{H_h^{-s}}\le C_\epsilon L^{-(s-\frac{n}{2}-\epsilon )}h^{-\frac{n}{2}}N,
}
\ekv{spe.6}
{
(\sum \vert \alpha _k\vert^2)^{\frac{1}{2}}\le
CL^{\frac{n}{2}+\epsilon }h^{-\frac{n}{2}}N.
}
Below, we shall have $N={\cal O}(h^{\kappa -n})$ so if we choose $L$
as in (\ref{int.6.4}), we get 
$$
|\alpha |_{{\bf C}^D}\le C h^{-(\frac{n}{2}+\epsilon )M+\kappa -\frac{3n}{2}}
$$
and $q$ satisfies
(\ref{int.6.3}), (\ref{int.6.4}).
We get
\ekv{spe.8}
{
\Vert M_r\Vert\le C_\epsilon L^{-(s-\frac{n}{2}-\epsilon )}h^{-n}N.
}

\par For the admissible potential $q$ in \no{spe.4}, we thus obtain
from \no{spe.06}, \no{spe.8}:
\ekv{spe.9}
{
s_k (M_q)\ge  \frac{1}{C^{\frac{k-1}{N-k+1}}(\mathrm{vol\,}(X ))^{\frac{N}{N-k+1}}}
\left( \frac{h^n}{N} \right)^{\frac{k-1}{N-k+1}}(N!)^{\frac{1}{N-k+1}}-C_\epsilon L^{-(s-\frac{n}{2}-\epsilon )}h^{-n}N.
}

\par Similarly, from (\ref{spe.03}), (\ref{spe.8}) we get for $L\ge 1$:
\ekv{spe.10}
{
\Vert M_q\Vert \le CNh^{-n}.
}

\par Using Proposition \ref{al2}, we get
for every $\epsilon >0$,
\ekv{spe.11}
{
\Vert q\Vert_{H_h^s}\le {\cal O}(1)NL^{s+\frac{n}{2}+\epsilon }
h^{-\frac{n}{2}},\ \forall \epsilon >0.
}
Summing up, we have obtained
\begin{prop}\label{spe2}
Fix $s>n/2$ and $P_\delta $ as in \no{hs.3}, \no{hs.4}, (\ref{hs.6}) and let $\pi
_\alpha $, $e_1,...,e_N$  be as in one of the two cases in Subsection \ref{hs}. Choose the $h$-dependent parameter
$L$ with $1\ll L\le {\cal O}(h^{-N_0})$ for some fixed $N_0>0$. Then
we can find an admissible potential $q$ as in \no{spe.4} (different
from the one in \no{hs.3}, \no{hs.4}) such that the matrix $M_q$, defined by
$$
M_{q;j,k}=\int q e_ke_j dx,
$$
satisfies \no{spe.9}, \no{spe.10}. Moreover the $H_h^s$-norm of $q$ 
satisfies \no{spe.11}.
\end{prop}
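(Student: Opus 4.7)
The proposition is essentially a synthesis of what has already been established in the preceding discussion, so the plan is to assemble the pieces with a Ky Fan-type perturbation argument for the singular values.

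First, I would invoke Proposition \ref{spe1} to produce points $a_1, \ldots, a_N \in X$ so that the sum-of-deltas potential $q_a = \sum_{j=1}^N \delta(x-a_j)$ gives rise to a matrix $M_{q_a}$ whose singular values obey \no{spe.03}, \no{spe.05} and \no{spe.06}. Next, for each $j$, apply Proposition \ref{spe02} separately to $\delta_{a_j}$ to write $\delta_{a_j} = \sum_{\mu_k \le L} \alpha_k^{(j)} \epsilon_k + r_j$ with the $H_h^{-s}$ and $\ell^2$-coefficient bounds stated there. Summing in $j$ produces the decomposition $q_a = q + r$ with $q$ admissible and satisfying \no{spe.4.5}, \no{spe.5}, \no{spe.6}.

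The main step is to turn the $H_h^{-s}$ estimate on $r$ into an operator bound on $M_r$. Using \no{hs.8} (i.e., $\pi_\alpha = {\cal O}(1): H_h^{-s} \to H_h^s$), each $e_k$ lies in $H_h^s$ and $\|\sum \lambda_k e_k\|_{H_h^s} = {\cal O}(\|\lambda\|)$. The multiplication estimate \no{al.2} combined with the duality pairing $H_h^s \times H_h^{-s} \to {\bf C}$ then yields
$$
|\langle M_r \lambda, \mu \rangle| = \left|\int r \Bigl(\sum \lambda_k e_k\Bigr)\Bigl(\sum \mu_j e_j\Bigr)\, dx\right| \le C h^{-n/2}\, \|r\|_{H_h^{-s}}\, \|\lambda\|\, \|\mu\|,
$$
which together with \no{spe.5} gives \no{spe.8}. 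Applying the Ky Fan inequality \no{s.9} with $k=1$, namely $s_k(M_q) \ge s_k(M_{q_a}) - \|M_r\|$ and $\|M_q\| \le \|M_{q_a}\| + \|M_r\|$, and combining with \no{spe.03}, \no{spe.06} immediately produces \no{spe.9} and \no{spe.10}.

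Finally, for the $H_h^s$-norm of $q$, I would use Proposition \ref{al2}, which identifies $\|q\|_{H_h^s}$ with $\|(1+h^2 \widetilde{R})^{s/2} q\|_{L^2}$ up to uniform constants. Since $h^2 \widetilde{R}\epsilon_k = \mu_k^2 \epsilon_k$, orthonormality of the $\epsilon_k$ gives
$$
\|q\|_{H_h^s}^2 \asymp \sum_{\mu_k \le L} \langle \mu_k \rangle^{2s} |\alpha_k|^2 \le \langle L \rangle^{2s} \sum |\alpha_k|^2,
$$
so \no{spe.6} yields \no{spe.11}. The only real obstacle is a careful bookkeeping of the $L^{s+n/2+\epsilon}$ factor in \no{spe.11} and the verification that the admissibility of $q$ (that is, $q$ has the form \no{cl.1}) is preserved under the summation over $j$; both are routine since the truncation $\mu_k \le L$ in Proposition \ref{spe02} is the same for every $a_j$.
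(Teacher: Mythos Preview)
Your proposal is correct and follows essentially the same route as the paper: invoke Proposition~\ref{spe1}, approximate each $\delta_{a_j}$ via Proposition~\ref{spe02}, bound $\|M_r\|$ through \no{hs.8} and \no{al.2} to get \no{spe.8}, then use the Ky Fan inequality to pass from $M_{q_a}$ to $M_q$ and obtain \no{spe.9}, \no{spe.10}, and finally use Proposition~\ref{al2} for \no{spe.11}. The paper presents this as a summary of the preceding discussion (``Summing up, we have obtained''), and your write-up simply makes the implicit perturbation step explicit.
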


Notice also that if we choose $\widetilde{R}$ with real coefficients,
then we can choose $q$ real-valued.

\subsection{Lower bounds on the small singular values for 
suitable perturbations}\label{sv}

\par In this subsection, we fix a $z\in \Omega $. We will use Proposition
\ref{spe2} iteratively to construct a special admissible perturbation
$P_\delta $ for which we have nice lower bounds on the small singular
values of $P_\delta -z$, that will lead to similar bounds for the ones
of $P_{\delta ,z}$ and to a lower bound on $|\det P_{\delta ,z}|$. 

\par We will need the symmetry assumption (\ref{int'.7}):
\ekv{sv.2}
{
P^*=\Gamma \circ P\circ\Gamma ,
}
This property remains unchanged if we add a multiplication
operator to $P$.

\par As in the introduction, we let
\ekv{sv.3}
{
V_z(t)=\mathrm{vol\,}(\{ \rho \in T^*X;\, |p(\rho )-z|^2\le
t\}),
}
and assume (for our fixed value of $z$) that (\ref{int.6.2}) holds:
\ekv{sv.4}
{
V_z(t)={\cal O}(t^\kappa
),\ 0\le t\ll 1,
}
for some $\kappa \in ]0,1]$.
Proposition \ref{fu6} gives:
\begin{prop}\label{sv1}
Assume \no{sv.4} and recall Remark
\ref{gr0}. For $0<h\ll \alpha \ll 1$, the number $N(\alpha )$ of
eigenvalues of $(P-z)^*(P-z)$ in $[0,\alpha ]$ satisfies
\ekv{sv.5}
{
N(\alpha )={\cal O}(\alpha ^\kappa h^{-n}).
}
\end{prop}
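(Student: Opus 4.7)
The plan is to derive this as a direct application of Proposition \ref{fu6}, viewed as the unbounded-$S$ version (together with Remark \ref{gr0}). The main point is that Proposition \ref{fu6} gives the trace-class bound
\[
\| \chi(\tfrac{1}{\alpha}S) \|_{\mathrm{tr}} = \mathcal{O}(\alpha^\kappa / h^n),
\]
for any fixed $\chi \in C_0^\infty(\mathbb{R})$, under precisely the volume assumption (\ref{sv.4}). All that remains is to convert this into an eigenvalue count for $S=(P-z)^*(P-z)$.

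First I would choose a nonnegative $\chi \in C_0^\infty(\mathbb{R})$ with $\chi \geq 1$ on $[0,1]$ (for instance a smoothed plateau of height $1$ on $[-1,2]$ with compact support). Since $S$ is selfadjoint and nonnegative (Friedrichs extension from $C^\infty(X)$ as in (\ref{fu.1})), the spectral theorem gives a functional calculus, and for every eigenvalue $\lambda_j \in [0,\alpha]$ of $S$ we have $\chi(\lambda_j/\alpha) \geq \mathbf{1}_{[0,1]}(\lambda_j/\alpha) = 1$. By ellipticity of $P-z$ (or equivalently of $S$ which is elliptic of order $2m$), $S$ has compact resolvent and hence purely discrete spectrum, so $\chi(S/\alpha) \geq 0$ is a selfadjoint operator whose trace is the sum $\sum_j \chi(\lambda_j/\alpha)$. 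Therefore
\[
N(\alpha) = \#\{ j ;\, \lambda_j \leq \alpha \} \leq \sum_j \chi(\lambda_j/\alpha) = \mathrm{tr}\, \chi(S/\alpha) \leq \| \chi(S/\alpha)\|_{\mathrm{tr}}.
\]

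Now I would invoke Proposition \ref{fu6} (applied with $S = (P-z)^*(P-z)$ and this fixed $\chi$), which, via the parametrix construction in Subsection \ref{fu} built on the volume hypothesis (\ref{fu.30}) $=$ (\ref{sv.4}), yields
\[
\| \chi(S/\alpha) \|_{\mathrm{tr}} \leq C \frac{\alpha^\kappa}{h^n},
\]
uniformly for $0 < h \leq \alpha$ small. Combining this with the preceding inequality produces (\ref{sv.5}).

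The only mildly delicate point is that Proposition \ref{fu6} was set up allowing both realizations of $S$ (the unbounded Friedrichs one and the bounded $P_z^* P_z$), and one must check that the symbol calculus of Subsection \ref{fu} with the weight $\Lambda = ((\alpha+s)/(1+s))^{1/2}$ and the principal symbol $s=|p-z|^2$ does apply to the unbounded $S$; but this is precisely the setting of Proposition \ref{fu1}, so the argument goes through unchanged. The reduction to Proposition \ref{fu6} therefore is immediate, and there is no further obstacle.
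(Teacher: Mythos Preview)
Your proposal is correct and is essentially the same as the paper's own argument: the paper simply states that Proposition~\ref{fu6} gives Proposition~\ref{sv1}, and the implicit step---bounding $N(\alpha)$ by $\mathrm{tr}\,\chi(S/\alpha)$ for a nonnegative cutoff $\chi$ equal to $1$ on $[0,1]$---is exactly what you spell out (cf.\ also Remark~\ref{fu7} and \no{grny.4}, where this conclusion is recorded explicitly).
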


\par Let $\epsilon >0$, $s>\frac{n}{2}+\epsilon $ be fixed as in the
introduction and consider
\ekv{sv.5.5}
{
P_0=P+\delta _0(h^{\frac{n}{2}}q_1^0+q_2^0),\hbox{ with }0\le \delta _0\ll h,\ \Vert
q_1^0\Vert _{H_h^s}, \Vert
q_2^0\Vert _{H^s}\le 1.}
 From the mini-max principle, we see that Proposition
\ref{sv1} still applies after replacing $P$ by $P_0$.

\par Choose $\tau_0\in
]0,(Ch)^\frac{1}{2}]$ and let $N={\cal O}(h^{\kappa -n})$ be the number
of singular values of $P_0-z$; $0\le t_1(P_0-z)\le ...\le t_N(P_0-z)<
\tau_0$ in the interval $[0,\tau_0[$. 
As in the introduction we put 
\ekv{sv.14b}
{
N_1=\widetilde{M}+sM+\frac{n}{2},
}
where $M,\widetilde{M}$ are the parameters 
in (\ref{int.6.4}).
Fix $\theta \in ]0,\frac{1}{4}[$ and recall that $N$ is determined by
the property $t_N(P_0-z)<\tau_0\le t_{N+1}(P_0-z)$. Fix $\epsilon
_0>0$.
\begin{prop}\label{sp1} a) If $q$ is an admissible potential as in
  (\ref{int.6.3}), (\ref{int.6.4}), we
  have 
\ekv{sv.14a}
{
\Vert q\Vert_\infty \le Ch^{-n/2}\Vert q\Vert_{H_h^s}\le \widetilde{C}h^{-N_1}.
}
\par\noindent b) If $N$ is sufficiently large, 
there exists such an admissible potential $q$, such
that if 
$$P_\delta =P_0+\frac{\delta h^{N_1}}{\widetilde{C}}q=:P_0+\delta Q,\
\delta =\frac{\tau_0}{C}h^{N_1+n}$$
(so that $\Vert Q\Vert \le 1$) then 
\ekv{sp.15}
{
t_\nu (P_\delta -z )\ge t_\nu (P_0-z)-\frac{\tau_0h^{N_1+n}}{C}\ge
(1-\frac{h^{N_1+n}}{C})t_\nu (P_0-z),\ \nu >N ,
}
\ekv{sp.16}
{
t_\nu (P_{\delta }-z)\ge \tau_0 h^{N_2},\ [N-\theta N] +1 \le \nu \le N.
}
Here, we put 
\ekv{sp.11}
{
N_2=2(N_1+n)+\epsilon _0,
}
and we let $[a]=\max ({\bf Z}\cap ]-\infty ,a])$ denote the integer
part of the real number $a$.
When $N={\cal O}(1)$, we have the same result provided that we replace
(\ref{sp.16}) by 
\ekv{sp.16a}
{
t_N (P_{\delta })\ge \tau_0 h^{N_2}.}
\end{prop}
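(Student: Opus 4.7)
The plan is to treat parts (a) and (b) separately, with (a) being a direct consequence of Sobolev embedding and (b) requiring an iterative application of Proposition \ref{spe2} within the Grushin framework of Subsection \ref{gr}.

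For part (a), any admissible potential $q=\sum_{\mu_k\le L}\alpha_k\epsilon_k$ with $|\alpha|\le R$ satisfies
$$\|q\|_{H_h^s}^2 \asymp \sum_{\mu_k\le L}\langle\mu_k\rangle^{2s}|\alpha_k|^2 \le L^{2s}R^2 \le C h^{-2(sM+\widetilde M)},$$
by Proposition \ref{al2}, and the Sobolev embedding of Proposition \ref{al1} then yields $\|q\|_\infty\le Ch^{-n/2}\|q\|_{H_h^s}\le \widetilde C h^{-N_1}$ with $N_1=\widetilde M+sM+n/2$, which is (\ref{sv.14a}). This bound justifies the normalization $\delta=\tau_0 h^{N_1+n}/C$ in (b), since $\|Q\|=\|h^{N_1}q/\widetilde C\|_\infty\le 1$.

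For part (b), the estimate (\ref{sp.15}) for $\nu>N$ is immediate from the Lipschitz continuity of singular values, which gives $|t_\nu(P_\delta-z)-t_\nu(P_0-z)|\le \|\delta Q\|\le \tau_0 h^{N_1+n}/C$, combined with $t_\nu(P_0-z)\ge \tau_0$ for $\nu>N$. The real content is (\ref{sp.16}): pushing the top $\lceil \theta N\rceil$ small singular values above $\tau_0 h^{N_2}$. I would proceed by iteration. Initialize $P^{(0)}=P_0$, and at step $j\ge 1$ let $N_j$ be the number of singular values of $P^{(j-1)}-z$ still below $\tau_0 h^{N_2}$ (so $N_0=N$). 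Form the Grushin problem of Subsection \ref{gr} with $R_\pm$ built from the corresponding $N_j$ singular vectors, using (\ref{sv.2}) to choose $f_i=\overline{e_i}$ so that the matrix $M_{q^{(j)};i,k}=(Q^{(j)}e_k|f_i)=\int q^{(j)}e_ke_i\,dx$ is exactly the matrix analyzed in Proposition \ref{spe2}. Apply that proposition to obtain an admissible $q^{(j)}$ whose matrix $M_{q^{(j)}}$ satisfies the singular value bound (\ref{spe.9}). A short computation with Stirling's formula shows that for $k\le \theta N_j$, the right side of (\ref{spe.9}) exceeds $c N_j h^{n\theta/(1-\theta)}$, which multiplied by $\delta$ and inserted into the Grushin expansion (\ref{gr.7}) and the singular-value transfer (\ref{s.11})--(\ref{s.12}) pushes the top $\lceil\theta N_j\rceil$ singular values of $P^{(j)}-z$ above $\tau_0 h^{N_2}$. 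Set $P^{(j)}=P^{(j-1)}+\delta Q^{(j)}$ and iterate until $N_j$ drops to $0$ (or to $O(1)$, in which case only the single inequality (\ref{sp.16a}) survives).

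The main obstacle is the bookkeeping of the iteration. Two points need care: first, each subsequent step may shift the singular values already lifted at earlier steps by $\|\delta Q^{(j)}\|\le \tau_0 h^{N_1+n}/C$, so after $O(\log N)=O(\log(1/h))$ iterations the cumulative shift is $O(\tau_0 h^{N_1+n}\log(1/h))$; this must remain $\ll \tau_0 h^{N_2}$, which is precisely what the margin $N_2-(N_1+n)=N_1+n+\epsilon_0>0$ secures. Second, the accumulated potential $q=\sum_j q^{(j)}$ must itself satisfy the admissibility bound $|\alpha|_{{\bf C}^D}\le R$; since each $q^{(j)}$ contributes coefficients controlled by (\ref{spe.6}) and the number of steps is logarithmic, the slack built into (\ref{int.6.4}) via the lower bound on $R$ absorbs this summation. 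The uniformity across iterations of the constants in Proposition \ref{spe2} is automatic, as those constants depend only on $n$, $\mathrm{vol}(X)$, and the parameters $s,L,M,\widetilde M$, none of which change during the procedure.
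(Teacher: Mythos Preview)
Your treatment of part (a) is correct and matches the paper's argument.

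For part (b) there is a genuine gap, and it stems from conflating Proposition \ref{sp1} with the \emph{next} proposition (\ref{sp2}). Proposition \ref{sp1} is a \emph{single-step} statement: it only asserts that one admissible perturbation pushes the top $\theta N$ small singular values (indices $[N-\theta N]+1,\dots,N$) above $\tau_0 h^{N_2}$. The iteration you describe is exactly how Proposition \ref{sp2} is deduced \emph{from} Proposition \ref{sp1}, and there the threshold drops at each step ($\tau_0^{(k)}=\tau_0 h^{kN_2}$); it is not part of the proof of \ref{sp1}.

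Your iteration also fails concretely. You build the Grushin problem at step $j$ from the $N_j$ singular vectors below $\tau_0 h^{N_2}$. Then $t_{N_j+1}$ may be only just above $\tau_0 h^{N_2}$, so $\|E^0\|\le 1/t_{N_j+1}$ can be as large as $(\tau_0 h^{N_2})^{-1}$. The perturbed Grushin problem (\ref{gr.6.6}) requires $\delta<t_{N_j+1}$, but $\delta=\tau_0 h^{N_1+n}/C\gg \tau_0 h^{N_2}$ since $N_1+n<N_2=2(N_1+n)+\epsilon_0$; hence the Neumann series does not converge and the estimates (\ref{s.14})--(\ref{s.15}) are unavailable. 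Relatedly, your parenthetical ``so $N_0=N$'' assumes all $N$ singular values of $P_0-z$ lie below $\tau_0 h^{N_2}$, which is not given: $N$ counts those below $\tau_0$, not below $\tau_0 h^{N_2}$.

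The paper's argument avoids both issues by forming the Grushin problem with \emph{all} $N$ vectors below $\tau_0$, so that $\|E^0\|\le 1/\tau_0$ and the series converges for $\delta\le\tau_0/2$. It then uses a dichotomy. \emph{Case 1}: if $s_j(E^0_{-+})\ge\tau_0 h^{N_2}$ for every $j\le N-[(1-\theta)N]$, then (\ref{sp.16}) already holds and one takes $q=0$. \emph{Case 2}: otherwise $s_k(E^0_{-+})<\tau_0 h^{N_2}$ at $k=N-[(1-\theta)N]$; take the admissible $q$ from Proposition \ref{spe2}, for which the simpler bound $s_j(M_q)\ge h^n/C$ for $j\le N/2$ suffices (your Stirling claim ``$\ge cN_j h^{n\theta/(1-\theta)}$'' is neither needed nor quite right). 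The three-term Ky~Fan inequality then gives, for $\nu\le N-[(1-\theta)N]$,
\[
s_\nu(E^\delta_{-+})\ \ge\ s_{\nu+k-1}(\delta E_-^0QE_+^0)\ -\ s_k(E^0_{-+})\ -\ 2\delta^2/\tau_0,
\]
where $\nu+k-1\le N/2$ since $\theta<1/4$. The first term is $\ge \delta h^{N_1+n}/C\asymp\tau_0 h^{2(N_1+n)}$, the second is $<\tau_0 h^{N_2}=\tau_0 h^{2(N_1+n)+\epsilon_0}$ by Case~2, and the third is $O(\tau_0 h^{2(N_1+n)}/C^2)$ with $C$ large. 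This yields (\ref{sp.12}) and hence (\ref{sp.16}) via (\ref{s.16}), in one step.
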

\begin{proof}
The part a) follows from Subsection \ref{al}, the definition of
admissible potentials in the introduction and from the definition of
$N_1$ in (\ref{sv.14b}). (See also (\ref{spe.11}).) We shall therefore concentrate on the proof
of b).

Let $e_1,...,e_N$ be an
orthonormal family of eigenfunctions corresponding to $t_\nu (P_0-z)$, 
so that 
\ekv{sv.6}
{
(P_0-z)^*(P_0-z)e_j=(t_j(P_0-z))^2e_j.
}
Using \no{int.6} $\Leftrightarrow$ \no{sv.2}, we see that a
corresponding family of eigenfunctions of $(P-z)(P-z)^*$ is given by 
\ekv{sv.7}
{
\widetilde{f}_j=\Gamma e_j.
}
$\widetilde{f}_1,...,\widetilde{f}_N$ and $f_1,...,f_N$ are 
orthonormal families  that span the same space $F_N$. Let $E_N$ be the 
span of $e_1,...,e_N$. We then know that 
\ekv{sv.8}
{
(P_0-z):E_N\to F_N \hbox{ and }(P_0-z)^*:F_N\to E_N
}
have the same singular values $0\le t_1\le t_2\le ...\le t_N$. 

\par Define $R_+:L^2\to {\bf C}^N$, $R_-:{\bf C}^N\to L^2$ by
\ekv{sv.9}
{
R_+u(j)=(u|e_j),\quad R_-u_-=\sum_1^N u_-(j)\widetilde{f}_j.
}
Then 
\ekv{sv.9.5}
{
{\cal P}=\left(\begin{array}{ccc}P_0-z &R_-\\ R_+ &0 \end{array}\right)
:{\cal D}(P_0)\times {\bf C}^N \to L^2\times {\bf C}^N
}
has a bounded inverse 
$$
{\cal E}=\left(\begin{array}{ccc}E &E_+\\ E_-
    &E_{-+} \end{array}\right) .
$$
The singular values of $E_{-+}$
are given by
$
t_j(E_{-+})=t_j(P_0-z),\ 1\le j\le N,
$ 
or equivalently by $s_j(E_{-+})=t_{N+1-j}(P_0-z)$, for $1\le j\le N$.
   
\par We will apply Subsection \ref{gr}, and recall that $N$ is assumed to
be sufficiently large and that $\theta $ has been fixed in $]0,1/4[$.
(The case of bounded $N$ will be treated later.) 
Let $N_2$ be given in (\ref{sp.11}). Since $z$ is fixed it will also be
notationally convenient to assume that $z=0$.

\medskip\par\noindent 
\emph{Case 1.} $s_j(E_{-+})\ge \tau_0h^{N_2}$, 
for $1\le j \le N-[(1-\theta )N]$. Then we get the desired conclusion with
$q =0$, $P_\delta =P_0$.

\medskip
\par\noindent \emph{Case 2.}
\ekv{sp.2}
{
s_j(E_{-+})<\tau_0h^{N_2} \hbox{ for some }j\hbox{ such that }1\le
j \le N-[(1-\theta )N].
}

Recall that for the special 
admissible potential $q$ in \no{spe.4}, we have
\no{spe.9}. For $k\le N/2$, we have $N-k+1>N/2$, so 
$$
\frac{k-1}{N-k+1}\le 1,
$$
and \no{spe.9} gives
$$
s_k(M_q)\ge 
\frac{h^n}{CN}(N!)^\frac{1}{N}-C_\epsilon
L^{-(s-\frac{n}{2}-\epsilon )}\frac{N}{h^n}.
$$
By Stirling's formula, we have $(N!)^\frac{1}{N}\ge N/\mathrm{Const}$,
so for $1\le k\le N/2$, we obtain with a new constant $C>0$:
$$
s_k(M_q)\ge \frac{h^n}{C}-C_\epsilon
L^{-(s-\frac{n}{2}-\epsilon )}\frac{N}{h^n}.
$$
Here, we recall from Proposition \ref{sv1} (which also applies to
$P_0$) that $N={\cal O}(h^{\kappa
  -n})$
and choose $L$ so
that 
$$
L^{-(s-\frac{n}{2}-\epsilon )}h^{\kappa -2n}\ll h^n,
$$
i.e. so that (in agreement with \no{int.6.4}) 
\ekv{sv.10}
{
L\gg h^\frac{\kappa -3n}{s-\frac{n}{2}-\epsilon }.
}
We then get 
\ekv{sv.11}
{
s_k(M_q)\ge \frac{h^n}{C},\ 1\le k\le \frac{N}{2},
}
for a new constant $C>0$.

\par From (\ref{spe.10}) and the fact that $N={\cal O}(h^{\kappa -n})$
we get
\ekv{sv.12}
{
s_1(M_q)\le \Vert M_q\Vert\le CNh^{-n}\le \widetilde{C}h^{\kappa -2n}.
}

\par In addition to the lower bound \no{sv.10} we assume as in 
\no{int.6.4} (in all
cases) that 
\ekv{sv.13}
{
L\le C h^{-M},\mbox{ for some } M\ge \frac{3n-\kappa }{s-\frac{n}{2}-\epsilon }.
}
As we saw after (\ref{spe.6}), $q$ is indeed an admissible potential
as in (\ref{int.6.3}), (\ref{int.6.4}),
so that by (\ref{sv.14a})
\ekv{sv.14}
{
\Vert q\Vert_\infty \le Ch^{-\frac{n}{2}}\Vert q\Vert_{H_h^s}
\le \widetilde{C}h^{-N_1}.}

Put 
\ekv{sp.3}{P_\delta =P_0+\frac{\delta h^{N_1}}{\widetilde{C}}q=P_0+\delta Q,\
Q=\frac{h^{N_1}}{\widetilde{C}}q,\ \Vert Q\Vert \le 1.}
Then, if $\delta \le \tau_0/2$, we can replace $P_0$ by $P_\delta $
in \no{sv.9.5} and we still have a well-posed problem as in Subsection \ref{gr} 
with $Q_\omega =Q$ as
above. Here $E_-^0QE_+^0=h^{N_1}M_q/\widetilde{C}$ 
so according to \no{sv.11},
we have with a new constant $C$
\ekv{sp.4}
{
s_k(\delta E_-^0QE_+^0)\ge \frac{\delta h^{N_1+n}}{C},\ 1\le
k\le \frac{N}{2}.
}

\par Playing with the general estimate \no{s.9}, we get
$$s_\nu (A+B)\ge s_{\nu +k-1}(A)-s_k(B)$$ and for a sum of three operators
$$s_\nu (A+B+C)\ge s_{\nu +k+\ell -2}(A)-s_k(B)-s_\ell (C).$$ We apply this to 
$E_{-+}^\delta $ in \no{s.15} and get 
\ekv{sp.6}
{
s_\nu (E_{-+}^\delta )\ge s_{\nu +k-1}(\delta
E_-^0QE_+^0)-s_k(E_{-+}^0)-2\frac{\delta ^2}{\tau_0}.
} 
Here we use \no{sp.2} with $j=k=N-[(1-\theta )N]$ as well as 
\no{sp.4}, to
get for $\nu  \le N-[(1-\theta )N]$ 
\ekv{sp.7}
{
s_\nu (E_{-+}^\delta )\ge 
\frac{\delta h^{N_1+n}}{C}
-\tau
_0h^{N_2}-2\frac{\delta ^2}{\tau_0}.
}
Recall that $\theta <\frac{1}{4}$.

\par Choose 
\ekv{sp.9}
{
\delta =\frac{1}{C}\tau_0h^{N_1+n},
}
where (the new constant) $C>0$ is sufficiently large.

\par 
Then, with a new constant $C>0$,
we get (for $h>0$ small enough)
\ekv{sp.10}
{
s_\nu (E_{-+}^\delta )\ge \frac{\delta }{C}h^{N_1+n},\ 1\le \nu \le N-
[(1- \theta) N],
}
implying
\ekv{sp.12}
{
s_\nu (E_{-+}^\delta )\ge 8\tau_0h^{N_2},\ 1\le \nu  \le N-
[(1- \theta) N].} For the corresponding
operator $P_\delta $, we have for $\nu >N$:
$$
t_\nu (P_\delta )\ge t_\nu (P_0)-\delta =t_\nu (P_0)-\frac{\tau _0h^{N_1+n}}{C}.
$$ Since $t_\nu (P)\ge \tau_0$ in this case, we get (\ref{sp.15}).

\par From \no{sp.12} and \no{s.16}, we
get (\ref{sp.16}).

\par When $N={\cal O}(1)$, we still get (\ref{sp.7}) with $\nu =1$ and
this leads to (\ref{sp.16a}).

\end{proof}

\par The construction can now be iterated. Assume that $N\gg 1$ and 
replace $(P_0,N,\tau_0)$
by $(P_\delta ,[(1-\theta )N], \tau_0h^{N_2})=:
(P^{(1)},N^{(1)},\tau_0^{(1)})$ and keep on, using the same values
for the exponents $N_1,N_2$. Then we get a sequence $(P^{(k)},N^{(k)},\tau
_0^{(k)})$, $k=0,1,...,k(N)$, where the last value $k(N)$ is
determined by the fact that $N^{(k(N))}$ is of the order of magnitude of
a large constant. Moreover,
\ekv{sp.17}
{
t_\nu (P^{(k)})\ge \tau_0^{(k)},\ N^{(k)}<\nu \le N^{(k-1)},
} 
\ekv{sp.18}
{
t_\nu (P^{(k+1)})\ge t_\nu (P^{(k)})-
\frac{\tau_0^{(k)}h^{N_1+\nu }}{C},\ \nu >N^{(k)},
}
\ekv{sp.18.1}
{
\tau_0^{(k+1)}=\tau_0^{(k)}h^{N_2},
}
\ekv{sp.18.2}
{
N^{(k+1)}=[(1-\theta )N^{(k)}],
}
$$
P^{(0)}=P,\ N^{(0)}=N,\ \tau_0^{(0)}=\tau_0.
$$
Here,
\begin{eqnarray*}
&P^{(k+1)}=P^{(k)}+\delta ^{(k+1)}Q^{(k+1)}=P^{(k)}+\frac{\delta
  ^{(k+1)}h^{N_1}}{\widetilde{C}}q^{(k+1)},&\\ &\Vert Q^{(k+1)}\Vert\le 1,\
\delta ^{(k+1)}=\frac{1}{C}\tau_0^{(k)}h^{N_1+n}.&
\end{eqnarray*}
Notice that $N^{(k)}$ decays exponentially fast with $k$:
\ekv{sp.18.5}
{
N^{(k)}\le (1-\theta )^kN,
}
so we get the condition on $k$ that $(1-\theta )^kN\ge C\gg 1$ which
gives,
\ekv{sp.19}{k\le \frac{\ln \frac{N}{C}}{\ln \frac{1}{1-\theta }}.}
We also have 
\ekv{sp.20}
{
\tau_0^{(k)}=\tau_0\left( h^{N_2} \right)^k .
}

\par For $\nu >N$, we iterate \no{sp.18}, to get
\eekv{sp.23}
{t_\nu (P^{(k)})&\ge& t_\nu (P)- \tau_0 \frac{h^{N_1+n}}{C}\left( 1+
h^{N_2}+h^{2N_2}+...\right)}
{&\ge& t_\nu (P)-\tau_0 {\cal O}(\frac{h^{N_1+n}}{C}).}

\par For $1\ll \nu \le N$, let $\ell=\ell (N)$ be the unique value for
which $N^{(\ell )}<\nu \le N^{(\ell -1)}$, so that 
\ekv{sp.24}
{
t_\nu (P^{(\ell )})\ge \tau_0^{(\ell )},
}
by \no{sp.17}. If $k>\ell $, we get 
\ekv{sp.24.5}
{
t_\nu (P^{(k)})\ge t_\nu (P^{(\ell )})-
\tau_0^{(\ell )}{\cal O}(\frac{h^{N_1+n}}{C}) .
}

\par The iteration above works until we reach a value $k=k_0={\cal O}
(\frac{\ln \frac{N}{C}}{\ln \frac{1}{1-\theta }})$ for which
$N^{(k_0)}={\cal O}(1)$. After that, we continue the iteration further 
by decreasing
$N^{(k)}$ by one unit at each step. 

\medskip

\par Summing up the discussion so far, we have obtained
\begin{prop}\label{sp2}
Let $(P,z)$ satisfy the assumptions in the beginning of this
subsection and choose $P_0$ as in (\ref{sv.5.5}).
 Let
$s>\frac{n}{2}$, $0<\epsilon <s-\frac{n}{2}$, $M\ge \frac{3n-\kappa
}{s-\frac{n}{2}-\epsilon }$, $N_1=\widetilde{M}+sM+\frac{n}{2}$, 
$N_2=2(N_1+n)+\epsilon _0$, where $\epsilon _0>0$. Let $L$ be an
$h$-dependent parameter satisfying
\ekv{sp.24.8}
{
h^{\frac{\kappa -3n}{s-\frac{n}{2}-\epsilon }}\ll L\le C h^{-M}. 
}
Let $0<\tau_0\le \sqrt{h}$ and let $N^{(0)}={\cal O}(h^{\kappa -n})$ 
be the number of singular
values of $P_0-z$ in $[0,\tau_0[$. Let $0<\theta <\frac{1}{4}$ and let
$N(\theta )\gg 1$ be sufficiently large. Define $N^{(k)}$, $1\le k\le
k_1$ iteratively in the following way. As long as $N^{(k)}\ge
N(\theta )$, we put $N^{(k+1)}=[(1-\theta )N^{(k)}]$. Let $k_0\ge 0$
be the last $k$ value we get in this way. For $k>k_0$ put 
$N^{(k+1)}=N^{(k)}-1$, until
we reach the value $k_1$ for which $N^{(k_1)}=1$.

\par Put $\tau_0^{(k)}=\tau_0h^{kN_2}$, $1\le k\le k_1+1$. Then there 
exists an admissible potential $q=q_h(x)$ as in (\ref{int.6.3}), (\ref{int.6.4}), satisfying \no{spe.6}, \no{spe.11}, so that,
$$
\Vert q\Vert_{H_h^s}\le {\cal O}(1)h^{-N_1+\frac{n}{2}}, \
\Vert q\Vert_{L^\infty }\le {\cal O}(1)h^{-N_1}, 
$$
such that if $P_{\delta}=P_0+\frac{1}{C}\tau
_0h^{2N_1+n}q=P_0+\delta Q$, $\delta =\frac{1}{C}h^{N_1+n}\tau_0$, $Q=h^{N_1}q$,
we have the following estimates on the singular values of 
$P_{\delta }-z$:
\begin{itemize}
\item If $\nu >N^{(0)}$, we have 
$t_\nu (P_{\delta }-z)\ge (1-\frac{h^{N_1+n}}{C})t_\nu (P_0-z)$.
\item If $N^{(k)}<\nu \le N^{(k-1)},$ $1\le k\le k_1$, then $
t_\nu (P_\delta -z)\ge (1-{\cal O}(h^{N_1+n}))\tau_0^{(k)}$.

\item Finally, for $\nu =N^{(k_1)}=1$, we have  $
t_1(P_\delta -z)\ge (1-{\cal O}(h^{N_1+n}))\tau_0^{(k_1+1)}$.
\end{itemize}
\end{prop}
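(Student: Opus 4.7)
The plan is to iterate Proposition \ref{sp1}, compounding the admissible perturbations it produces at each stage into a single admissible potential $q$. First I would set $P^{(0)} := P_0$, $N^{(0)} := N$, $\tau_0^{(0)} := \tau_0$, and inductively, having constructed $P^{(k)}$ with $N^{(k)}$ singular values of $P^{(k)}-z$ below the threshold $\tau_0^{(k)} := \tau_0 h^{kN_2}$, apply Proposition \ref{sp1} with $P^{(k)}$ in the role of $P_0$ and $\tau_0^{(k)}$ in the role of $\tau_0$. This yields an admissible $q^{(k+1)}$ with $\|q^{(k+1)}\|_{H^s_h} \le C h^{-N_1+n/2}$ and a perturbation $\delta^{(k+1)} Q^{(k+1)}$, where $Q^{(k+1)} = \widetilde{C}^{-1} h^{N_1} q^{(k+1)}$ is of norm $\le 1$ and $\delta^{(k+1)} = C^{-1} \tau_0^{(k)} h^{N_1+n}$, such that $P^{(k+1)} := P^{(k)} + \delta^{(k+1)} Q^{(k+1)}$ has its $N^{(k+1)} = [(1-\theta) N^{(k)}]$ smallest singular values $\ge \tau_0^{(k+1)}$, while the larger ones are preserved up to the loss (\ref{sp.15}). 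The iteration is run with geometric reduction of $N^{(k)}$ until it drops below $N(\theta)$, and with unit decrements thereafter until $N^{(k_1)} = 1$, for a total of $k_1 = O(\ln(1/h))$ steps.

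The total perturbation $P^{(k_1)} - P_0 = \sum_{j=1}^{k_1} \delta^{(j)} Q^{(j)}$ factors as $\delta Q$ with $\delta := \delta^{(1)} = C^{-1} \tau_0 h^{N_1+n}$ and $Q := \widetilde{C}^{-1} h^{N_1} q$, where
\[
q := \sum_{j=1}^{k_1} h^{(j-1) N_2}\, q^{(j)}.
\]
The set of admissible potentials is a finite-dimensional linear space so $q$ is admissible; the geometric weights $h^{(j-1) N_2}$ decay fast enough to dominate the polylogarithmic number of terms, so $\|q\|_{H^s_h} \le (1+O(h^{N_2}))\|q^{(1)}\|_{H^s_h} = O(h^{-N_1+n/2})$, and the $L^\infty$ bound follows by Proposition \ref{al1}. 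The three singular value estimates in the conclusion then follow by telescoping (\ref{sp.15})--(\ref{sp.18}). For $\nu > N^{(0)}$, the bound $\sum_k \tau_0^{(k)} = O(\tau_0)$ gives $t_\nu(P_\delta - z) \ge t_\nu(P_0 - z) - O(\tau_0 h^{N_1+n}) \ge (1 - O(h^{N_1+n})) t_\nu(P_0 - z)$, using $t_\nu(P_0 - z) \ge \tau_0$ in this range. For $N^{(k)} < \nu \le N^{(k-1)}$, one starts from (\ref{sp.17}) at stage $k$ and iterates (\ref{sp.18}) from stage $k$ onward to get $t_\nu(P_\delta - z) \ge (1 - O(h^{N_1+n})) \tau_0^{(k)}$; the case $\nu = 1 = N^{(k_1)}$ is this same estimate at the last stage.

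The main obstacle I foresee is bookkeeping rather than conceptual: one must verify that at each iteration step the accumulated operator $P^{(k)}$ still fits the template (\ref{sv.5.5}) that Proposition \ref{sp1} requires of its ``$P_0$''. This reduces to checking that the total perturbation from $P_0$ has operator norm $\ll h$, which follows from $\sum_j \delta^{(j)} = O(\tau_0 h^{N_1+n}) \ll h$ under the hypothesis $\tau_0 \le \sqrt{h}$, together with a suitable splitting of the accumulated perturbation into $h^{n/2}$-rescaled $H^s_h$ and plain $H^s$ components so that the relevant norms remain bounded by $1$. The transition from geometric to unit descent at step $k_0$ contributes only $O(N(\theta))$ additional iterations and is harmless, since $N(\theta)$ is an absolute constant once $\theta$ is fixed.
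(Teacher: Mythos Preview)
Your proposal is correct and follows essentially the same approach as the paper: the paper's proof is precisely the iteration of Proposition~\ref{sp1} described in the paragraphs between (\ref{sp.17}) and the statement of Proposition~\ref{sp2}, with the same telescoping of the singular-value estimates via (\ref{sp.23}) and (\ref{sp.24.5}). One small indexing point: to obtain the final bound $t_1(P_\delta-z)\ge(1-{\cal O}(h^{N_1+n}))\tau_0^{(k_1+1)}$ you need one more application of Proposition~\ref{sp1} beyond stage $k_1$ (with $N=1$), so the sum defining $q$ should run to $j=k_1+1$ rather than $k_1$; this does not affect any of the norm estimates since the extra term carries the weight $h^{k_1 N_2}$.
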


Now it is possible to pass from the Grushin problem for $P_\delta -z$ to a suitable one for $P_{\delta ,z}$ and follow up with estimates on the singular values (cf (\ref{s.12})) and obtain:
\begin{prop}\label{sp3}
Proposition \ref{sp2} remains valid if we replace $P_\delta -z$ there
with $P_{\delta ,z}$.
\end{prop}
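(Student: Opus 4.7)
The plan is to transfer the Grushin problem of Proposition \ref{sp2} from $P_\delta-z$ to $P_{\delta,z}$ by left composition with $(\widetilde{P}_\delta-z)^{-1}$, then invoke the abstract singular value comparisons of Subsection \ref{gr}. Starting from $\mathcal{P}=\left(\begin{array}{cc}P_\delta-z & R_-\\ R_+ & 0\end{array}\right)$ with inverse $\mathcal{E}=\left(\begin{array}{cc}E & E_+\\ E_- & E_{-+}\end{array}\right)$, I form
\[
\widetilde{\mathcal{P}}=\left(\begin{array}{cc}(\widetilde{P}_\delta-z)^{-1} & 0\\ 0 & 1\end{array}\right)\mathcal{P}=\left(\begin{array}{cc}P_{\delta,z} & \widetilde{R}_-\\ R_+ & 0\end{array}\right),\quad \widetilde{R}_-=(\widetilde{P}_\delta-z)^{-1}R_-.
\]
Since $(\widetilde{P}_\delta-z)^{-1}\colon L^2\to H^m$ is an $\mathcal{O}(1)$ isomorphism, $\widetilde{\mathcal{P}}\colon L^2\times\mathbb{C}^N\to L^2\times\mathbb{C}^N$ is well-posed, with inverse
\[
\widetilde{\mathcal{E}}=\mathcal{E}\left(\begin{array}{cc}\widetilde{P}_\delta-z & 0\\ 0 & 1\end{array}\right)=\left(\begin{array}{cc}E(\widetilde{P}_\delta-z) & E_+\\ E_-(\widetilde{P}_\delta-z) & E_{-+}\end{array}\right).
\]
The crucial observation is that the bottom-right block is preserved: $\widetilde{E}_{-+}=E_{-+}$, so by (\ref{s.16}) applied to the original Grushin problem, $t_k(\widetilde{E}_{-+})\asymp t_k(P_\delta-z)=\mathcal{O}(\tau_0)$ for $1\le k\le N$.

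The next step is to estimate the $L^2$ operator norms of the four blocks of $\widetilde{\mathcal{E}}$. Trivially $\|R_+\|=1$, $\|\widetilde{R}_-\|=\mathcal{O}(1)$, and $\|\widetilde{E}_+\|=\|E_+\|=\mathcal{O}(1)$. The delicate blocks $\widetilde{E}$ and $\widetilde{E}_-$ formally involve the $L^2$-unbounded operator $\widetilde{P}_\delta-z$; the saving grace is that by the classical ellipticity assumption (\ref{int.3}), the symbol $\widetilde{p}-p$ has compact support in $(x,\xi)$, so $\widetilde{P}-P=\mathrm{Op}_h(\widetilde{p}-p)$ is a smoothing operator bounded between any pair of Sobolev spaces. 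Reading off $(P_\delta-z)E=I-R_-E_-$ from $\mathcal{P}\mathcal{E}=I$ yields
\[
\widetilde{E}=(\widetilde{P}_\delta-z)E=I-R_-E_-+(\widetilde{P}-P)E,
\]
and combined with $\|E\|_{L^2\to L^2}=\mathcal{O}(1/\tau_0)$ from (\ref{s.14}) this gives $\|\widetilde{E}\|_{L^2\to L^2}=\mathcal{O}(1/\tau_0)$. Symmetrically, from $\mathcal{E}\mathcal{P}=I$ one gets $E_-(P_\delta-z)=-E_{-+}R_+$, hence $\widetilde{E}_-=-E_{-+}R_++E_-(\widetilde{P}-P)$, of norm $\mathcal{O}(\tau_0)+\mathcal{O}(1)=\mathcal{O}(1)$.

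Plugging these norms into (\ref{s.11}) and (\ref{s.12}) applied to $\widetilde{\mathcal{P}}$, the upper bound $t_k(P_{\delta,z})\le \|\widetilde{R}_-\|\|R_+\|t_k(\widetilde{E}_{-+})=\mathcal{O}(t_k(P_\delta-z))$ is immediate. For the lower bound, the denominator in (\ref{s.12}) satisfies
\[
\|\widetilde{E}\|\,t_k(\widetilde{E}_{-+})+\|\widetilde{E}_+\|\|\widetilde{E}_-\|=\mathcal{O}(1/\tau_0)\cdot\mathcal{O}(\tau_0)+\mathcal{O}(1)=\mathcal{O}(1),
\]
so $t_k(P_{\delta,z})\ge t_k(\widetilde{E}_{-+})/\mathcal{O}(1)\ge t_k(P_\delta-z)/\mathcal{O}(1)$. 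Every lower bound in Proposition \ref{sp2} therefore carries over to $P_{\delta,z}$ with an absorbable multiplicative constant, establishing Proposition \ref{sp3}.

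The main obstacle is precisely the step of bounding $\widetilde{E}$: without the compact support of the symbol of $\widetilde{P}-P$ there would be no way to control the unbounded factor $\widetilde{P}_\delta-z$ appearing in $\widetilde{E}=E(\widetilde{P}_\delta-z)$, and this is essentially the only point where the classical ellipticity (\ref{int.3}) — rather than merely the semi-classical ellipticity of $p$ — is used in an essential way. Everything else is a bookkeeping exercise on top of the machinery already developed in Subsections \ref{gr} and \ref{sv}.
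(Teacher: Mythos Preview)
Your approach is exactly what the paper has in mind: it only says ``pass from the Grushin problem for $P_\delta-z$ to a suitable one for $P_{\delta,z}$ and follow up with estimates on the singular values (cf (\ref{s.12}))'', and your observation that left-composing with $(\widetilde P_\delta-z)^{-1}$ leaves $E_{-+}$ unchanged, together with the norm bookkeeping feeding into (\ref{s.11})--(\ref{s.12}), is precisely the intended route.

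There is, however, a left/right slip in your handling of $\widetilde E$. From your own display $\widetilde{\mathcal E}=\mathcal E\,\mathrm{diag}(\widetilde P_\delta-z,1)$ you correctly read off $\widetilde E=E(\widetilde P_\delta-z)$, but two lines later you write ``$\widetilde E=(\widetilde P_\delta-z)E$'' and accordingly invoke the $(1,1)$-block of $\mathcal P\mathcal E=I$. These are not the same operator. The identity you need is the $(1,1)$-block of $\mathcal E\mathcal P=I$, namely $E(P_\delta-z)=I-E_+R_+$, which gives
\[
\widetilde E=E(\widetilde P_\delta-z)=E(P_\delta-z)+E(\widetilde P-P)=I-E_+R_++E(\widetilde P-P),
\]
with the same $\mathcal O(1/\tau_0)$ bound (and the same use of the compactly supported symbol of $\widetilde P-P$). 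Note also that $E(\widetilde P_\delta-z)$ is a priori only defined on $H^m$; it is exactly this formula that furnishes the bounded extension to $L^2$, so that $\widetilde{\mathcal E}$ really is the two-sided inverse of $\widetilde{\mathcal P}$ on $L^2\times{\bf C}^N$. Your treatment of $\widetilde E_-$ already uses $\mathcal E\mathcal P=I$ and is correct; once $\widetilde E$ is handled the same way the argument goes through and yields $t_k(P_{\delta,z})\asymp t_k(P_\delta-z)$, which is what Proposition \ref{sp3} asserts.
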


Taking a suitable Grushin problem for $P_{\delta ,z}$ and using Proposition \ref{gr1} we can show
when $\tau_0 =\sqrt{h}$: 
\begin{prop}\label{sp4}
For the special admissible perturbation $P_\delta $ in the
propositions \ref{sp2}, \ref{sp3}, we have 
\eekv{sp.30}
{
&&\ln |\det P_{\delta ,z}|\ge }
{&&\hskip -6mm\frac{1}{(2\pi h)^n}
\left( \iint \ln |p_z|dxd\xi -{\cal O}\left(h^{N_1+n-\frac{1}{2}}
+(h^{\kappa }+h^n\ln \frac{1}{h})(\ln \frac{1}{\tau_0}+
(\ln \frac{1}{h})^2)\right)
\right) .
}
\end{prop}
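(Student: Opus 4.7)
\medskip\noindent\textbf{Proof proposal.} The plan is to set up an auxiliary Grushin problem for $P_{\delta ,z}$ in the spirit of Subsection \ref{gr}, built from the special admissible perturbation of Propositions \ref{sp2} and \ref{sp3}, and to apply Proposition \ref{gr1} to factorize
$$
\ln|\det P_{\delta ,z}| = \ln|\det {\cal P}_\delta |+\ln|\det E_{-+}^\delta |.
$$
For the first piece, specializing \no{grny.9} to $\alpha =\tau _0^2=h$ with $\Vert Q\Vert \le 1$ and $\delta =\tau _0h^{N_1+n}/C=h^{N_1+n+\frac{1}{2}}/C$ turns the error contributions $\alpha ^\kappa \ln(1/\alpha )$ and $\delta \Vert Q\Vert /\alpha $ into ${\cal O}(h^\kappa \ln(1/h))$ and ${\cal O}(h^{N_1+n-\frac{1}{2}})$ respectively, so
$$
\ln|\det {\cal P}_\delta |=\frac{1}{(2\pi h)^n}\Big(\iint \ln|p_z|\,dxd\xi +{\cal O}(h^\kappa \ln(1/h)+h^{N_1+n-\frac{1}{2}})\Big),
$$
which already sits inside the target remainder.

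For the second piece, \no{s.16} applied to this Grushin problem gives $t_\nu (P_{\delta ,z})\le t_\nu (E_{-+}^\delta )$, hence
$$
\ln|\det E_{-+}^\delta |=\sum _\nu \ln t_\nu (E_{-+}^\delta )\ge \sum _{\nu =1}^N \ln t_\nu (P_{\delta ,z}),
$$
where $N=N^{(0)}={\cal O}(h^{\kappa -n})$ by \no{grny.5.6} at $\alpha =h$. The crucial step is now to plug in the layered lower bounds from Proposition \ref{sp3}: for $N^{(k)}<\nu \le N^{(k-1)}$ with $1\le k\le k_1$ one has $\ln t_\nu (P_{\delta ,z})\ge \ln \tau _0+kN_2\ln h+{\cal O}(h^{N_1+n})$, while $\ln t_1\ge \ln \tau _0+(k_1+1)N_2\ln h+{\cal O}(h^{N_1+n})$ at $\nu =N^{(k_1)}=1$. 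Summing and using Abel's identity,
$$
\sum _{k=1}^{k_1} k\,(N^{(k-1)}-N^{(k)})=N-k_1+\sum _{k=1}^{k_1-1}N^{(k)},
$$
combined with the geometric shrinking $N^{(k)}\le (1-\theta )^kN$ for $k\le k_0$ and $k_1={\cal O}(\ln N)$, gives that the residual sum is ${\cal O}(N)$. We deduce
$$
\sum _{\nu =1}^N \ln t_\nu (P_{\delta ,z})\ge N\ln \tau _0-{\cal O}(N\ln(1/h))=-\frac{1}{(2\pi h)^n}\,{\cal O}\!\left(h^\kappa (\ln(1/\tau _0)+\ln(1/h))\right),
$$
which also fits within the target.

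Combining these two estimates in the factorization yields the claimed inequality. The main obstacle in this program is the final summation: a naive bound over the $k_1\asymp \ln N$ levels would cost an extra logarithmic factor per layer and spoil the error. It is precisely the geometric decrease $N^{(k)}\le (1-\theta )^kN$ built into the iteration of Proposition \ref{sp3} that keeps $\sum k\,(N^{(k-1)}-N^{(k)})={\cal O}(N)$, and hence that the remainder stays within $\frac{1}{(2\pi h)^n}{\cal O}\!\big((h^\kappa +h^n\ln(1/h))(\ln(1/\tau _0)+(\ln(1/h))^2)\big)$.
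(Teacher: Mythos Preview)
Your approach is correct and coincides with the paper's: the paper's own proof is summarized in one line just before the proposition (``Taking a suitable Grushin problem for $P_{\delta ,z}$ and using Proposition \ref{gr1}\ldots''), and what you wrote is exactly the natural way to unpack that line---factorize $\ln|\det P_{\delta ,z}|=\ln|\det {\cal P}_\delta |+\ln|\det E_{-+}^\delta |$, control the first term by \no{grny.9} at $\alpha =\tau _0^2=h$, and sum the layered lower bounds of Proposition \ref{sp3} for the second term, using the geometric shrinking $N^{(k)}\le (1-\theta )^kN$ so that $\sum_k k(N^{(k-1)}-N^{(k)})={\cal O}(N)$.

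One small sharpening: you invoke \no{s.16} to get $t_\nu (P_{\delta ,z})\le t_\nu (E_{-+}^\delta )$, but the Grushin problem here is the natural one for $P_{\delta ,z}$ built as in \no{gr.1}--\no{gr.4.5}, so in fact $E_{-+}^\delta =-\mathrm{diag}(t_j(P_{\delta ,z}))$ and you have equality $|\det E_{-+}^\delta |=\prod _j t_j(P_{\delta ,z})$ directly, with no need for \no{s.16}. Also, your bound $\sum _\nu \ln t_\nu \ge -{\cal O}(N(\ln (1/\tau _0)+\ln (1/h)))$ is in fact slightly sharper than the stated remainder; the extra factor $h^n\ln (1/h)$ and the $(\ln (1/h))^2$ in the proposition's error simply absorb the contribution of the last ${\cal O}(1)$ layers (where $N^{(k)}$ drops by one unit at a time and $k_1={\cal O}(\ln N)={\cal O}(\ln (1/h))$), which you have already bounded by the cruder ${\cal O}(N)$.
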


\par We also have the upper bound 
$$
|\det E_{-+}| \le \Vert E_{-+}\Vert ^{N^{(0)}}\le 
\exp (CN^{(0)}),
$$
which leads to
\ekv{sp.31}
{
\ln |\det P_{\delta ,z}|\le \frac{1}{(2\pi h)^n}
\left( \iint \ln |p_z|dxd\xi +{\cal O}\left( h^{N_1+n-\frac{1}{2}}
+h^{\kappa }\ln \frac{1}{h}\right)
\right) .
}
Notice that this bound is more general, it only depends on the
fact that the perturbation of $P$ is of the form $\delta Q$ with $\delta
=\tau_0h^{N_1+n}/C$ and with $\Vert Q\Vert ={\cal O}(1)$.

\par When $\tau_0\le \sqrt{h}$
 we keep the same Grushin problem as before and notice that the
 singular values of $E_{-+}$ that are $\le \tau_0$, obey the estimates
 in Proposition \ref{sp2}. Their contribution to $\ln |\det E_{-+}|$
 can still be estimated from below. The contribution
 from the singular values of $E_{-+}$ that are $>\tau_0$ can be estimated from below by $-{\cal O}(h^{\kappa -n}\ln
 (1/\tau_0 ))$ and this leads to the conclusion that {\it Proposition \ref{sp4} remains valid when $0<\tau_0\le
 \sqrt{h}$. The same holds for the upper bound \no{sp.31}.}

\subsection{Estimating the probability that $\det E_{-+}^\delta $ is
small}
\label{pr} 
In this subsection we keep the assumptions on $(P,z)$ of the beginning of
Subsection \ref{sv} and choose $P_0$ as in (\ref{sv.5.5}). We consider
general $P_\delta $ of the form 
\ekv{pr.1}
{P_\delta =P_0+\delta Q,\ \delta Q=\delta h^{N_1}q(x),\ \delta =\frac{1}{C}h^{N_1+n}\tau_0,
}
where $q$ is an admissible potential as in (\ref{int.6.3}), (\ref{int.6.4}).
Notice that  
$D:=\# \{k;\, \mu _k\le L\}$ satisfies:
\ekv{pr.3}{D \le {\cal O}(L^n h^{-n})\le {\cal O}(h^{-N_3}),\ 
N_3:=n(M+1).
}
With $R$ as in (\ref{int.6.3}), we allow $\alpha $ to vary in the ball
\ekv{pr.4}
{
| \alpha |_{{\bf C}^D}\le 2R={\cal O}(h^{-\widetilde{M}}).
}
(Our probability measure will be supported in $B_{{\bf C}^D}(0,R)$ but
we will need to work in a larger ball.)

\par We consider the holomorphic function 
\ekv{pr.5}
{
F(\alpha )=(\det P_{\delta ,z})\exp (-\frac{1}{(2\pi h)^n}\iint \ln
|p_z| dxd\xi ).
}
 Then by \no{sp.31}, we have
\ekv{pr.6}
{
\ln |F(\alpha )|\le \epsilon _0(h)h^{-n}, \ |\alpha |<2R,
}
and for one particular value $\alpha =\alpha ^0$ with $|\alpha ^0|\le 
\frac{1}{2}R$, corresponding to the special potential in Proposition \ref{sp2}:
\ekv{pr.7}
{
\ln |F(\alpha ^0 )|\ge -\epsilon _0(h)h^{-n},
}
where $\epsilon _0(h)$ is given in (\ref{int.6.7.5}).

\par Let $\alpha ^1\in {\bf C}^D$ with $|\alpha ^1|=R$ and 
consider the holomorphic function of one complex variable
\ekv{pr.9}
{
f(w)=F(\alpha ^0+w\alpha ^1).
}
We will
mainly consider this function for $w$ in the disc 
determined by the condition $|\alpha ^0+w\alpha ^1|<R$:
\ekv{pr.10}
{
D_{\alpha ^0,\alpha ^1}:\left | w+\left( \frac{\alpha ^0}{R} |
\frac{\alpha ^1}{R}\right) \right|^2<1-\left| \frac{\alpha
^0}{R}\right|^2+\left|\left(\frac{\alpha^0}{R}|\frac{\alpha^1}{R}\right)\right|
^2=:r_0^2,}
whose radius is between $\frac{\sqrt{3}}{2}$ and $1$. 

\par From \no{pr.6}, \no{pr.7} we get 
\ekv{pr.11}
{
\ln |f(0)|\ge -\epsilon _0(h) h^{-n},\ 
\ln |f(w)|\le \epsilon _0(h)h^{-n}.
} 
By \no{pr.6}, we may
assume that the last estimate holds in a larger disc, say 
$D(-(\frac{\alpha ^0}{R}|\frac{\alpha ^1}{R}),2r_0)$. Let
$w_1,...,w_M$ be the zeros of $f$ in  
$D(-(\frac{\alpha ^0}{R}|\frac{\alpha ^1}{R}),3r_0/2)$. Then it is
standard to get the factorization 
\ekv{pr.12}
{
f(w)=e^{g(w)}\prod_1^M (w-w_j),\  w\in D(-(\frac{\alpha ^0}{R}|\frac{\alpha ^1}{R}),4r_0/3),
}
together with the bounds
\ekv{pr.13}{|\Re g(w)|\le {\cal O}(\epsilon _0(h)h^{-n}),\ 
M={\cal O}(\epsilon _0(h)h^{-n}).}
See for instance Section 5 in \cite{Sj} where further references are also given.

\par For $0<\epsilon \ll 1$, put 
\ekv{pr.14}
{
\Omega (\epsilon )=\{ r\in [0,r_0[;\, \exists w\in D_{\alpha ^0,\alpha
  ^1}
\hbox{ such that }|w|=r\hbox{ and }|f(w)|<\epsilon \} .
}
If $r\in \Omega (\epsilon )$ and $w$ is a corresponding point in
$D_{\alpha ^0,\alpha ^1}$, we have with $r_j=|w_j|$,
\ekv{pr.14.5}
{\prod_1^M |r-r_j| \le \prod _1^M|w-w_j|\le \epsilon \exp ({\cal
  O}(\epsilon _0(h)h^{-n})).}

Then at least one of the factors $|r-r_j|$ is bounded by 
$ (\epsilon e^{{\cal O}(\epsilon _0(h)h^{-n})})^{1/M}  $. 
In particular, the Lebesgue measure $\lambda (\Omega
(\epsilon ))$ of $\Omega (\epsilon )$ is bounded by 
$2M(\epsilon e^{{\cal O}(\epsilon _0(h)h^{-n})})^{1/M}$. 
Noticing that the last bound increases with $M$ when the last member
of (\ref{pr.14.5}) is $\le 1$,
we get
\begin{prop}\label{pr1} Let $\alpha ^1\in {\bf C}^D$ with 
$|\alpha ^1|=R$ and assume that $\epsilon >0$ is small enough so that the last member of \no{pr.14.5} is
  $\le 1$.
Then 
\eekv{pr.15}
{
\lambda (\{ r\in [0,r_0];\ |\alpha ^0+r\alpha ^1|<R,\ |F(\alpha
^0+r\alpha ^1)|<\epsilon \}) 
\le}
{\frac{\epsilon _0(h)}{h^n}\exp ({\cal O}(1)+\frac{h^n}{{\cal O}(1)
  \epsilon _0(h)}\ln \epsilon ).
}
Here and in the following, the symbol ${\cal O}(1)$ in a denominator
indicates a bounded positive quantity.
\end{prop}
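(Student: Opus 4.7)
The plan is to finish the argument already set up in the paragraph immediately preceding the proposition, which has essentially prepared all the ingredients; what remains is a careful accounting to massage the estimate into the stated form.

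First I would recall the setup: for fixed $\alpha^0$ (the special admissible potential from Proposition \ref{sp4}) and fixed $\alpha^1$ with $|\alpha^1|=R$, the one-variable holomorphic function $f(w)=F(\alpha^0+w\alpha^1)$ satisfies $\ln|f(0)|\ge -\epsilon_0(h)h^{-n}$ and $\ln|f(w)|\le \epsilon_0(h)h^{-n}$ on a disc slightly larger than $D_{\alpha^0,\alpha^1}$, by (\ref{pr.11}) and the extension discussed after it. Standard factorization in such a disc (via Jensen's inequality and a Harnack/Carathéodory estimate on the harmonic conjugate, as used in \cite{Sj}) then gives (\ref{pr.12}) together with (\ref{pr.13}): a bound $M={\cal O}(\epsilon_0(h)h^{-n})$ on the number of zeros $w_1,\dots,w_M$ of $f$ in a slightly smaller disc, and $|\Re g(w)|={\cal O}(\epsilon_0(h)h^{-n})$ on an intermediate disc.

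Next, for $r\in\Omega(\epsilon)$ I pick $w$ with $|w|=r$ and $|f(w)|<\epsilon$, which by (\ref{pr.12}) yields the key inequality (\ref{pr.14.5}). Writing $r_j=|w_j|$, and noting that $|w-w_j|\ge \bigl||w|-|w_j|\bigr|=|r-r_j|$, we conclude that
\begin{equation*}
\prod_{j=1}^M |r-r_j|\le \epsilon\, e^{C\epsilon_0(h)h^{-n}}
\end{equation*}
for some constant $C$. In particular some factor $|r-r_j|$ is bounded by $(\epsilon e^{C\epsilon_0(h)h^{-n}})^{1/M}$, so $\Omega(\epsilon)$ is contained in the union of the $M$ intervals $\{r : |r-r_j|\le (\epsilon e^{C\epsilon_0(h)h^{-n}})^{1/M}\}$. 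This gives the crude measure bound
\begin{equation*}
\lambda(\Omega(\epsilon))\le 2M\bigl(\epsilon\, e^{C\epsilon_0(h)h^{-n}}\bigr)^{1/M}.
\end{equation*}

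Finally I would simplify. Since the bracketed quantity is $\le 1$ by assumption, its $1/M$ power is an increasing function of $M$, so we may replace $M$ by the upper bound $M\le C'\epsilon_0(h)h^{-n}$ on the right. Taking logarithms,
\begin{equation*}
\ln\lambda(\Omega(\epsilon))\le \ln(2M)+\frac{\ln\epsilon+C\epsilon_0(h)h^{-n}}{C'\epsilon_0(h)h^{-n}}={\cal O}(1)+\ln\!\frac{\epsilon_0(h)}{h^n}+\frac{h^n\ln\epsilon}{{\cal O}(1)\,\epsilon_0(h)},
\end{equation*}
and exponentiating yields precisely (\ref{pr.15}). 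The main obstacle is really just bookkeeping: one must verify that the smallness hypothesis on $\epsilon$ justifies using the upper bound on $M$ in the $(\cdot)^{1/M}$ factor (so that increasing $M$ only weakens the estimate), and that the constants from the factorization of $f$ and from the bound on $M$ can be absorbed into the generic ${\cal O}(1)$ in the exponent. No new analytic input is needed beyond what is already contained in (\ref{pr.11})--(\ref{pr.13}).
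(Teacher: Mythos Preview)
Your proposal is correct and follows essentially the same approach as the paper: the argument is precisely the one laid out in the paragraph preceding the proposition, and you have supplied the final bookkeeping (replacing $M$ by its upper bound, using monotonicity of $2M\cdot x^{1/M}$ in $M$ for $0<x\le 1$, and taking logarithms) that the paper leaves implicit. No new idea is needed, and your handling of the constants matches the paper's.
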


\par Typically, we can choose $\epsilon =\exp -\frac{\epsilon
  _0(h)}{h^{n+\alpha }}$ for some small $\alpha >0$ and then the upper
bound in \no{pr.15} becomes 
$$
\frac{\epsilon _0(h)}{h^n}\exp ({\cal O}(1)-\frac{1}{{\cal O}(1)h^{\alpha }}).
$$

\par
Now we equip $B_{{\bf C}^D}(0,R)$ with a probability measure of the
form
\ekv{pr.16}
{
P(d\alpha )=C(h)e^{\Phi (\alpha )}L(d\alpha ),
} 
where $L(d\alpha )$ is the Lebesgue measure, $\Phi $ is a $C^1$
function which depends on $h$ and satisfies
\ekv{pr.17}
{
\vert \nabla \Phi \vert ={\cal O}(h^{-N_4}),
}
and $C(h)$ is the appropriate normalization constant.

\par Writing $\alpha =\alpha ^0+Rr\alpha ^1$, $0\le r<r_0(\alpha ^1)$,
$\alpha ^1\in S^{2D-1}$, $\frac{\sqrt{3}}{2}\le r_0\le 1$, we get 
\ekv{pr.18}
{
P(d\alpha )=\widetilde{C}(h)e^{\phi (r)}r^{2D-1}dr S(d\alpha ^1),
}
where $\phi (r)=\phi _{\alpha ^0,\alpha ^1}(r)=\Phi (\alpha
^0+rR\alpha ^1)$ so that $\phi '(r)={\cal O}(h^{-N_5})$,
$N_5=N_4+\widetilde{M}$. 
Here
$S(d\alpha ^1)$ denotes the Lebesgue measure on $S^{2D-1}$.

\par For a fixed $\alpha ^1$, we consider the normalized measure 
\ekv{pr.19}{
\mu (dr)=\widehat{C }(h)e^{\phi (r)}r^{2D-1}dr
}
on $[0,r_0(\alpha ^1)]$
and we want to show an estimate similar to \no{pr.15} for $\mu $
instead of $\lambda $. Write 
$e^{\phi (r)}r^{2D-1}=\exp (\phi (r)+(2D-1)\ln r)$ and consider
the derivative of the exponent,
$$
\phi '(r)+\frac{2D-1}{r}.
$$
This derivative is $\ge 0$ for $r\le
\frac{h^{[N_5-N_3]_+}}{C}=:2\widetilde{r}_0$, where we may assume that
$2\widetilde{r}_0\le r_0$. Introduce the measure
$\widetilde{\mu }\ge \mu $ by
\ekv{pr.20}
{
\widetilde{\mu }(dr)=\widehat{C}(h)e^{\phi (r_{\rm max} )}r_{\rm max}^{2D-1}dr,\
r_{\rm max} :=\max (r,\widetilde{r}_0).
}
Since $\widetilde{\mu }([0,\widetilde{r}_0])\le \mu ([\widetilde{r}_0,2\widetilde{r}_0])$, we get
\ekv{pr.21}
{
\widetilde{\mu }([0,r(\alpha ^1)])\le {\cal
  O}(1).
}
We can write 
\ekv{pr.22}
{
\widetilde{\mu }(dr)=\widehat{C}(h)e^{\psi (r)}dr,
}
where 
\eekv{pr.23}
{&
\psi '(r)={\cal O}(1)(h^{-N_5}+h^{-N_3+[N_5-N_3]_+})={\cal O}(h^{-N_6}),&
}
{&N_6=\max (N_3,N_5).&}
Cf (\ref{pr.3}).

\par  We now decompose $[0,r_0(\alpha ^1)]$ into $\asymp h^{-N_6}$
intervals of length $\asymp h^{N_6}$. If $I$ is such an interval, we see that 
\ekv{pr.24}
{
\frac{\lambda (dr)}{C\lambda (I)}\le \frac{\widetilde{\mu
  }(dr)}{\widetilde{\mu }(I)}\le C\frac{\lambda (dr)}{\lambda
  (I)}\hbox{ on }I.
}

From \no{pr.15}, \no{pr.24} we get when the right hand side of 
\no{pr.14.5} is $\le 1$,
\begin{eqnarray*}
\widetilde{\mu }(\{ r\in I;\, |F(\alpha ^0+rR\alpha ^1)|<\epsilon
\})/\widetilde{\mu }(I)&\le& \frac{{\cal O}(1)}{\lambda (I)}
\frac{\epsilon _0(h)}{h^n}\exp (\frac{h^n}{{\cal O}(1)\epsilon _0(h)}\ln
\epsilon )\\
&=& {\cal O}(1) h^{-N_6}
\frac{\epsilon _0(h)}{h^n}\exp (\frac{h^n}{{\cal O}(1)\epsilon _0(h)}\ln
\epsilon ).
\end{eqnarray*}
Multiplying with $\widetilde{\mu }(I)$ and summing the estimates over $I$ we get 
\ekv{pr.25}
{
\widetilde{\mu }(\{ r\in [0,r(\alpha ^1)];\, |F(\alpha ^0+rR\alpha
^1)|<\epsilon \})\le {\cal O}(1)h^{-N_6}\frac{\epsilon _0(h)}{h^n}
\exp (\frac{h^n}{{\cal O}(1)\epsilon _0(h)}\ln \epsilon )
.
}
Since $\mu \le \widetilde{\mu }$, we get the same estimate with
$\widetilde{\mu }$ replaced by $\mu $. Then from \no{pr.18} we get
\begin{prop}\label{pr2}
Let $\epsilon >0$ be small enough for 
the right hand side of \no{pr.14.5} to be $\le 1$. Then
\ekv{pr.26}
{
P(|F(\alpha )|<\epsilon )\le {\cal O}(1) h^{-N_6}
\frac{\epsilon _0(h)}{h^n}\exp (\frac{h^n}{{\cal O}(1)\epsilon
  _0(h)}\ln \epsilon ).
}
\end{prop}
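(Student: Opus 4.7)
The plan is to lift the one-dimensional estimate (\ref{pr.25}) to a statement about the full probability measure $P$ on $B_{\mathbf{C}^D}(0,R)$ by disintegrating $P$ along radial rays issuing from the special point $\alpha^0$ that was used to obtain the lower bound (\ref{pr.7}). Essentially all the work has already been done in the passage from holomorphy plus two-sided bounds to (\ref{pr.25}); what remains is a Fubini-type argument.

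First I would write the probability $P(|F(\alpha)|<\epsilon)$ using the decomposition (\ref{pr.18}). For fixed $\alpha^1\in S^{2D-1}$, the conditional (probability) measure in the radial variable is $\mu_{\alpha^1}(dr)=\widehat{C}(h)e^{\phi(r)}r^{2D-1}dr$ introduced in (\ref{pr.19}), and the bound (\ref{pr.25}) reads
\[
\mu_{\alpha^1}\bigl(\{r\in[0,r_0(\alpha^1)];\ |F(\alpha^0+rR\alpha^1)|<\epsilon\}\bigr)
\le\mathcal{O}(1)h^{-N_6}\frac{\epsilon_0(h)}{h^n}\exp\!\left(\tfrac{h^n}{\mathcal{O}(1)\epsilon_0(h)}\ln\epsilon\right).
\]
The crucial point is that this bound is uniform in $\alpha^1$: the quantity $r_0(\alpha^1)\in[\tfrac{\sqrt{3}}{2},1]$ is bounded from above and below independently of $\alpha^1$; the derivative estimate $|\phi'(r)|=\mathcal{O}(h^{-N_5})$ on which the construction of $\widetilde{\mu}$ rests is uniform in $\alpha^1$ by (\ref{pr.17}); and the factorization bounds $|\Re g|,\,M=\mathcal{O}(\epsilon_0(h)h^{-n})$ from (\ref{pr.13}) depend only on the two-sided bounds (\ref{pr.11}), which in turn come from the global estimate (\ref{pr.6}) and the $\alpha^1$-independent lower bound (\ref{pr.7}) at $w=0$.

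Then I would disintegrate: writing $\nu(d\alpha^1)=(\widehat{C}(h)/C(h,\alpha^1))S(d\alpha^1)$ for the marginal probability measure on $S^{2D-1}$ induced by (\ref{pr.18}), Fubini gives
\[
P(|F(\alpha)|<\epsilon)=\int_{S^{2D-1}}\mu_{\alpha^1}\bigl(\{r;\,|F(\alpha^0+rR\alpha^1)|<\epsilon\}\bigr)\,\nu(d\alpha^1),
\]
and since the integrand is bounded by the uniform right-hand side above and $\nu$ has total mass $1$, the desired estimate (\ref{pr.26}) follows immediately.

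The only point that deserves a little care — and which I would spell out in the actual writeup — is that $\widetilde{\mu}_{\alpha^1}$, not $\mu_{\alpha^1}$, is the measure for which a one-sided comparison with Lebesgue measure holds on the partition intervals $I$ (cf.\ (\ref{pr.24})), so the uniform bound for $\mu$ must be extracted via the intermediate inequality $\mu\le\widetilde{\mu}$ and the identity $\widetilde{\mu}([0,r_0])=\mathcal{O}(1)$ from (\ref{pr.21}); all this is already contained in the text leading up to (\ref{pr.25}), so no new obstacle arises and the proposition is essentially a restatement of what has been proved in $\alpha^1$-uniform form.
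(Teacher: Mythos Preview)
Your proposal is correct and follows exactly the same approach as the paper: the paper's argument is literally the one-line remark ``Since $\mu\le\widetilde{\mu}$, we get the same estimate with $\widetilde{\mu}$ replaced by $\mu$. Then from (\ref{pr.18}) we get [the Proposition]'', and you have simply spelled out this disintegration/Fubini step with appropriate attention to the $\alpha^1$-uniformity of the constants.
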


\begin{remark}\label{pr3}
{\rm In the case when $\widetilde{R}$ has real coefficients, we may assume
that the eigenfunctions $\epsilon _j$ are real, and from the observation after
Proposition \ref{spe2} we see that we can choose $\alpha _0$ above to
be real. The discussion above can then be restricted to the case of
real $\alpha ^1$ and hence to real $\alpha $. We can then introduce
the probability measure $P$ as in \no{pr.16} on the real ball $B_{{\bf
    R}^D}(0,R)$. The subsequent discussion goes through without any
changes, and we still have the conclusion of Proposition \ref{pr2}.}
\end{remark}

\subsection{End of the proof of the main result}\label{en}

We now work under the assumptions of Theorem \ref{int1}. For $z$ in a
fixed neighborhood of $\Gamma $,
we rephrase \no{sp.31} as
\ekv{en.1}
{
|\det P_{\delta ,z}| \le \exp \frac{1}{h^n}(\frac{1}{(2\pi )^n}
\iint \ln |p_z| dxd\xi +\epsilon _0(h)),
}
where $\epsilon _0(h)$ is given in \no{int.6.7.5}. 
Moreover, Proposition \ref{pr2} shows that with probability 
\ekv{en.3}
{
\ge 1-{\cal O}(1)h^{-N_6-n}\epsilon _0(h)e^{-\frac{
h^n}{{\cal O}(1)\epsilon _0(h)}\ln \frac{1}{\epsilon }},
}
we have
\ekv{en.4}{
|\det P_{\delta ,z}|\ge \epsilon \exp (\frac{1}{h^n}(\frac{1}{(2\pi
  )^n})
\iint \ln |p_z| dxd\xi ),
}
provided that $\epsilon >0$ is small enough so that
\ekv{en.5}
{
\hbox{The right hand side of \no{pr.14.5} is }\le 1,\,\forall \alpha
^1\in S^{2D-1}.
}

\par Write $\epsilon =e^{-\widetilde{\epsilon }/h^n}$,
$\widetilde{\epsilon }=h^n\ln \frac{1}{\epsilon }$. Then \no{en.5}
holds if 
\ekv{en.9}
{
\widetilde{\epsilon }\ge C\epsilon _0(h),
}
for some large constant $C$. \no{en.3}, \no{en.4} can be
rephrased by saying that with probability
\ekv{en.10}
{
\ge 1-{\cal O}(1)h^{-N_6-n}\epsilon _0(h)e^{-\frac{1}{C}\frac{\widetilde{\epsilon }}{\epsilon _0(h)}},
}
we have
\ekv{en.11}
{
|\det P_{\delta ,z}|\ge \exp \frac{1}{h^n}(\frac{1}{(2\pi )^n}\iint
\ln |p_z|dxd\xi -\widetilde{\epsilon }).
}
This is of interest for $\widetilde{\epsilon }$ in the range
\ekv{en.12}
{
\epsilon _0(h)\ll \widetilde{\epsilon }\ll 1.
}

\par Now, let $\Gamma \Subset \Omega $ be connected with smooth
boundary. Recall that $0<\kappa \le 1$ and that 
\ekv{en.13}
{\hbox{(\ref{int.6.2}) holds uniformly for all }z\hbox{ in some neighborhood of
  }\partial \Gamma .}
Then the function
\ekv{en.14}
{
\phi (z) =\frac{1}{(2\pi )^n}\iint \ln |p_z|dxd\xi 
}
is continuous and subharmonic in a neighborhood of
$\partial \Gamma $. We shall apply Theorem \ref{ze2}, with $0<r\ll 1$ constant, to the holomorphic function 
$$
u(z)=\det P_{\delta ,z}.
$$
Then, according to \no{en.10}, \no{en.11} we know that with
probability
\ekv{en.15}
{
\ge 1-\frac{{\cal O}(1)\epsilon _0(h)}{rh^{N_6+n}}e^{-
\frac{\widetilde{\epsilon }}{{\cal O}(1)\epsilon _0(h)}}
}
we have 
\ekv{en.16}
{
h^n\ln |u(\widetilde{z}_j)|\ge \phi (\widetilde{z}_j)-\widetilde{\epsilon },\ j=1,...,N,\quad N\asymp \frac{1}{r}.
}

\par In a full neighborhood of $\partial \Gamma $ we also have
\ekv{en.17}
{
h^n\ln |u(z)|\le \phi (z)+C\widetilde{\epsilon }.
}
We conclude from Theorem \ref{ze2} that with probability
bounded from below as in \no{en.15}
 we have for every $\widehat{M}>0$:
\eekv{en.18}
{
&&|
\#(u^{-1}(0)\cap \Gamma )-\frac{1}{h^n2\pi }\int_\Gamma \Delta \phi L(dz)
|\le
}
{&&
\frac{{\cal O}(1)}{h^n}\left( \frac{\widetilde{\epsilon }}{r}
+\mu (\partial \Gamma +D(0,r))
 \right),
}
where $\mu $ denotes the measure $\Delta \phi L(dz)$.

According to Section 10 in \cite{HaSj08}, 
the measure $\frac{1}{2\pi
}\Delta \phi L(dz)$ is the push forward under $p$ of $(2\pi )^{-n}$
times the symplectic volume element, and we
can replace $\frac{1}{2\pi
}\Delta \phi L(dz)$ by this push forward in \no{en.18}. Moreover $u^{-1}(0)$ is the set
of eigenvalues of $P_\delta $ so we can rephrase \no{en.18} as 
\eekv{en.19}
{
&&|
\#(\sigma (P_\delta )\cap \Gamma )-\frac{1}{(2\pi h)^n
}\mathrm{vol\,}(p^{-1}(\Gamma ))
|\le
}
{&&
\frac{{\cal O}(1)}{h^n}\left( \frac{\widetilde{\epsilon }}{r}
+\mathrm{vol\,}(p^{-1}(\partial
\Gamma +D(0,r)))
 \right).
}
This concludes the proof of Theorem \ref{int1}, with $P$ replaced by
the slightly more general operator $P_0$.

\section{Almost sure Weyl asymptotics of large eigenvalues}\label{alm}
\setcounter{equation}{0}

\subsection{Introduction}\label{intalm}

W.~Bordeaux Montrieux \cite{Bo} has studied
elliptic systems of differential operators on $S^1$
with random perturbations of the coefficients,
and under some additional assumptions, he showed that the large eigenvalues obey the
Weyl law \emph{almost surely}. His analysis was based on a reduction to
the semi-classical case (using essentially the Borel-Cantelli lemma), 
where he could use and extend the methods of Hager \cite{Ha06b}. 

The purpose of this section is to describe the work of Bordeaux Montrieux and the author \cite{BoSj09} on the almost sure Weyl asymptotics of the large eigenvalues of elliptic operators on compact manifolds. For simplicity, we treat only the scalar case
and the random perturbation is a potential.

Let $X$ be a smooth compact manifold of dimension $n$. Let $P^0$ be an
elliptic differential operator on $X$ of order $m\ge 2$ with smooth
coefficients and with
principal symbol $p(x,\xi )$. In local coordinates we get, using
standard multi-index notation,
\ekv{in.1}
{
P^0=\sum_{|\alpha |\le m}a_\alpha ^0(x)D^\alpha ,\quad 
p(x,\xi )=\sum_{|\alpha |= m}a_\alpha ^0(x)\xi ^\alpha.
}
Recall that the ellipticity of $P^0$ means that $p(x,\xi )\ne 0$ for
$\xi \ne 0$. We assume that
\ekv{in.2}
{
p(T^*X)\ne {\bf C}.
}
Fix a strictly positive smooth density of integration $dx$ on $X$, so
that the $L^2$ norm $\Vert \cdot \Vert$ and inner product $(\cdot
|\cdot \cdot )$ are unambiguously defined. Let $\Gamma :L^2(X)\to
L^2(X)$ be the antilinear operator of complex conjugation, given by
$\Gamma u=\overline{u}$. We need the symmetry assumption
\ekv{in.3}
{
P^*=\Gamma P\Gamma ,\ (\hbox{or equivalently, } P^\mathrm{t}=P)
}
where $P^*$ is the formal complex adjoint of $P$. As in \cite{Sj08b} we
observe that the property (\ref{in.3}) implies that
\ekv{in.4}
{
p(x,-\xi )=p(x,\xi ),
}
and conversely, if (\ref{in.4}) holds, then the operator
$\frac{1}{2}(P+\Gamma P\Gamma )$ has the same principal symbol $p$ and
satisfies (\ref{in.3}).

\par Let $\widetilde{R}$ be an elliptic differential operator on $X$
with smooth coefficients, which is self-adjoint and strictly
positive. Let $\epsilon _0,\epsilon _1,...$ be an orthonormal basis of
eigenfunctions of $\widetilde{R}$ so that 
\ekv{in.5}
{
\widetilde{R}\epsilon _j=(\mu _j^0)^2\epsilon _j,\quad 0<\mu _0^0<\mu
_1^0\le \mu _2^0\le ...
}
Our randomly perturbed operator is 
\ekv{in.6}
{
P_\omega ^0=P+q_\omega ^0(x),
}
where $\omega $ is the random parameter and 
\ekv{in.7}
{
q_\omega ^0(x)=\sum_{0}^\infty \alpha _j^0(\omega )\epsilon _j.
}
Here we assume that $\alpha _j^0(\omega )$ are independent complex
Gaussian random variables of variance $\sigma _j^2$ and mean value 0:
\ekv{in.8}
{
\alpha _j^0\sim {\cal N}(0,\sigma _j^2),
}
where 
\ekv{in.8.5}
{
(\mu _j^0)^{-\rho }e^{-(\mu _j^0)^{\frac{\beta }{M+1}}}\lesssim 
\sigma _j\lesssim (\mu _j^0)^{-\rho },
}
\ekv{in.9}
{M=\frac{3n-\frac{1}{2}}{s-\frac{n}{2}-\epsilon },\ 0\le \beta
  <\frac{1}{2},\ 
\rho >n,
}
where $s$, $\rho $, $\epsilon $ are fixed constants such that
$$
\frac{n}{2}<s<\rho -\frac{n}{2},\ 0<\epsilon <s-\frac{n}{2}.
$$

\par Let $H^s(X)$ be the standard Sobolev space of order $s$. As will
follow from considerations below, we have $q_\omega^0 \in H^s(X)$ almost
surely since $s<\rho -\frac{n}{2}$. Hence $q_\omega^0\in L^\infty $
almost surely, implying that $P_\omega ^0$ has purely discrete
spectrum.

\par Consider the function $F(\omega )=\mathrm{arg\,}p(\omega )$ on
$S^*X$. For given $\theta _0\in S^1\simeq {\bf R}/(2\pi {\bf Z})$,
$N_0\in \dot{{\bf N}}:={\bf N}\setminus \{ 0\}$, we introduce the property:
\ekv{in.10}
{P(\theta _0,N_0):\quad
\sum_1^{N_0}|\nabla ^kF(\omega )|\ne 0\hbox{ on }\{ \omega \in S^*X;\,
F(\omega )=\theta _0\}.
}
Notice that if $P(\theta _0,N_0)$ holds, then $P(\theta ,N_0)$ holds
for all $\theta $ in some neighborhood of $\theta _0$.

\par We can now state our main result.

\begin{theo}\label{in1} (\cite{BoSj09})
Assume that $m\ge 2$. Let $0\le \theta _1\le \theta _2\le 2\pi $ and
assume that $P(\theta _1,N_0)$ and $P(\theta _2,N_0)$ hold for some
$N_0\in\dot{{\bf N}}$. Let $g\in C^\infty ([\theta _1,\theta
_2];]0,\infty [)$ and put 
$$
\Gamma ^g_{\theta _1,\theta _2;0,\lambda }=\{ re^{i\theta } ; \theta
  _1\le \theta \le \theta _2,\ 0\le r\le \lambda g(\theta )\}.
$$
Then for every $\delta \in ]0,\frac{1}{2}-\beta [$ there exists $C>0$ such
that almost surely: $\exists C(\omega )<\infty $ such that for all
$\lambda \in [1,\infty [$:
\eekv{in.11}
{
|\#(\sigma (P_\omega ^0)\cap \Gamma _{\theta _1,\theta _2;0,\lambda }^g)
-\frac{1}{(2\pi )^n}\mathrm{vol\,}p^{-1}(\Gamma ^g_{\theta _1,\theta
  _2;0,\lambda })
|
}
{\le C(\omega )+C\lambda ^{\frac{n}{m}-\frac{1}{m}(\frac{1}{2}-\beta -\delta
    )\frac{1}{N_0+1}}.}
Here $\sigma (P_\omega ^0)$ denotes the spectrum and $\# (A)$ denotes
the number of elements in the set $A$. In (\ref{in.11}) the
eigenvalues are counted with their algebraic multiplicity.
\end{theo}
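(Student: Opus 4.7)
The plan is to reduce Theorem \ref{in1} to a sequence of applications of the semi-classical Theorem \ref{int1} along scales $h_k\to 0$ and to conclude by the Borel--Cantelli lemma.

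First, given $\lambda \gg 1$, I would set $h=\lambda ^{-1/m}$ and introduce the semi-classically rescaled operator $P_h:=h^mP^0_\omega =h^mP+h^mq^0_\omega $, whose semi-classical principal symbol is $p(x,\xi )$. The map $z\mapsto w=h^mz$ carries the sector $\Gamma ^g_{\theta _1,\theta _2;0,\lambda }$ bijectively onto the fixed bounded region $\Gamma ^g_{\theta _1,\theta _2;0,1}$ in the spectral plane of $P_h$, and the eigenvalue counting functions agree. The random perturbation $h^mq^0_\omega $ should play the role of $\delta Q_\omega =\delta h^{N_1}q_\omega $ in Theorem \ref{int1}. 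I would truncate the series $\sum \alpha ^0_j\epsilon _j$ at the semi-classical frequency $\mu _j=h\mu ^0_j\le L(h)$, absorbing the tail into the additional $H^s$-small term $\delta _0q_2^0$ that is permitted by the strengthened version of Theorem \ref{int1} mentioned after (\ref{int.7}); the hypothesis $\rho >s+n/2$ in (\ref{in.9}) ensures that this tail is almost surely $H^s$-summable, via standard Gaussian series estimates.

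Second, I would extract the sharp error exponent from the volume estimate in (\ref{int.7}). The boundary of the rescaled target region consists of two radial segments at arguments $\theta _j$ and of the smooth outer arc $r=g(\theta )$, the latter contributing a benign $O(r)$ term to $\mathrm{vol\,}(p^{-1}(\partial \Gamma +D(0,r)))$. Near each radial segment, the hypothesis $P(\theta _j,N_0)$ asserts that $F:=\arg p$ on $S^*X$ has vanishing order at most $N_0$ on its level set $\{F=\theta _j\}$. A standard Lojasiewicz-type argument then gives $\mathrm{vol\,}(\{|F-\theta _j|<\epsilon \}\cap S^*X)=O(\epsilon ^{1/N_0})$, and the homogeneity of $p$ upgrades this to $\mathrm{vol\,}(p^{-1}(\partial \Gamma +D(0,r)))=O(r^{1/(N_0+1)})$. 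Optimizing $r$ in (\ref{int.7}) against $\widetilde{\epsilon }$ yields a semi-classical error of order $h^{-n}\widetilde{\epsilon }^{1/(N_0+2)}$, which after undoing the scaling $\lambda =h^{-m}$ produces precisely the exponent $\frac{n}{m}-\frac{1}{m}(\frac{1}{2}-\beta -\delta )\frac{1}{N_0+1}$ appearing in (\ref{in.11}).

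Third, I would apply Theorem \ref{int1} along a dyadic sequence $h_k=2^{-k}$, with $\widetilde{\epsilon }(h_k)$ chosen a few logarithms above $\epsilon _0(h_k)$, so that the failure probability in (\ref{int.6.8}) is summable in $k$. Borel--Cantelli then guarantees that almost surely the semi-classical estimate holds for all sufficiently large $k$. For intermediate $\lambda $ with $h_{k+1}^{-m}\le \lambda \le h_k^{-m}$, I would invoke the uniform-in-$\Gamma $ variant of Theorem \ref{int1} stated just after (\ref{int.8}), applied to the family of rescaled sectors; monotonicity of the counting function together with the continuity of the Weyl volume handles the residual gap, which gets absorbed into the additive constant $C(\omega )$ in (\ref{in.11}).

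The main obstacle is the matching of the Gaussian law of the coefficients $\alpha ^0_j$ to the probability density framework (\ref{int.6.5})--(\ref{int.6.6}) of Theorem \ref{int1}, because the variances $\sigma _j^2$ are widely spread. A direct rescaling of these coefficients to a ball $|\alpha |_{{\bf C}^D}\le R$ produces a log-density whose gradient is of order $R/\min _j\sigma _j^2$, which under (\ref{in.8.5}) is only sub-exponentially bounded in $1/h$. The resolution is to choose the perturbation scale $\delta (h)\sim h^{m-N_1}\min _{\mu ^0_j\le L(h)/h}\sigma _j$, transferring the sub-exponential factor $e^{-(L(h)/h)^{\beta /(M+1)}}$ out of the density and into $\delta $. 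The strict inequality $\beta <1/2$ together with the specific choice of $M$ in (\ref{in.9}) is exactly what ensures $\delta \gg e^{-\epsilon _0(h)/h}$, so that Theorem \ref{int1} remains applicable; the slack parameter $\delta $ in $\beta +\delta <1/2$ in (\ref{in.11}) is inherited directly from this margin.
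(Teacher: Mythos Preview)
Your overall architecture---semi-classical reduction plus Borel--Cantelli along dyadic scales---is indeed the paper's, but two of your steps do not go through as written.

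The serious gap is your treatment of the spread of Gaussian variances. You correctly identify that the log-density gradient is of order $R/\min_j\sigma_j^2$, hence sub-exponentially large when $\beta>0$, and you propose to cure this by absorbing $\min_j\sigma_j$ into the perturbation scale $\delta$. But after rescaling $\beta_j=h^{m-N_1}\alpha_j^0/\delta$ the variances become $(\sigma_j/\sigma_{\min})^2$, and to secure $|\beta|_{{\bf C}^D}\le R$ with high probability you still need $R\gtrsim\sqrt{D}\,\sigma_{\max}/\sigma_{\min}\asymp e^{Ch^{-\beta}}$, which violates the polynomial bound $R\le Ch^{-\widetilde M}$ in (\ref{int.6.4}); the gradient condition (\ref{int.6.6}) fails for the same reason. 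The paper explicitly notes that the direct rescaling ``turns out to be impossible without extra assumptions'' and instead \emph{splits each Gaussian} $\alpha_j^0=\alpha_j'+\alpha_j''$ into independent pieces with $\alpha_j'\sim{\cal N}(0,(\sigma')^2)$ for one \emph{uniform} $\sigma'=C^{-1}h^Ke^{-Ch^{-\beta}}$. The component $h^m\sum\alpha_j'\epsilon_j$ then serves as $\delta Q_\omega$ with identically distributed $\beta_j$ (so $R$ and $|\nabla\Phi|$ are polynomial), while $h^m\sum\alpha_j''\epsilon_j$ is thrown together with the high-frequency tail into the deterministic $H^s$-small correction to $P_0$. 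This splitting is the missing key idea.

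A second issue is the way you organize the dyadic step. You rescale the full sector $\Gamma^g_{\theta_1,\theta_2;0,\lambda}$ to $\Gamma^g_{\theta_1,\theta_2;0,1}$, whose boundary passes through $z=0$; but Proposition~\ref{vo1} (giving $\kappa=1/2$) is stated only for $z$ in compact subsets of $\dot{{\bf C}}$, and at $z=0$ homogeneity only yields $V_0(t)=O(t^{n/(2m)})$, which spoils the exponent. The paper avoids this by decomposing $\Gamma_{1,\lambda}$ into dyadic \emph{annuli} $\Gamma_{2^k,2^{k+1}}$, rescaling each (via $h=2^{-k/m}$) to the fixed annulus $\Gamma_{1,2}\Subset\dot{{\bf C}}$, applying Proposition~\ref{sc2} there, and then summing the resulting errors $2^{kn/m}2^{-\frac{k}{m}(\frac12-\beta-2\delta)/(N_0+1)}$ as a geometric series. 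Note also a computational slip: Proposition~\ref{vo3} gives $\mathrm{vol}\,p^{-1}(\gamma+D(0,t))=O(t^{1/N_0})$ for radial segments, not $O(t^{1/(N_0+1)})$; with $\alpha=1/N_0$ in Remark~\ref{int1.5} the optimized error is $\widetilde\epsilon^{\,1/(N_0+1)}$, and that is how the factor $N_0+1$ enters the final exponent.
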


The proof actually allows to have almost surely a simultaneous
conclusion for a whole family of $\theta _1,\theta _2,g$:

\begin{theo}\label{in2}
Assume that $m\ge 2$. Let $\Theta $ be a compact subset of $[0,2\pi ]$. Let
$N_0\in {\bf N}$ and assume that $P(\theta ,N_0)$ holds uniformly for
$\theta \in \Theta $. Let ${\cal G}$ be a subset of $\{(g,\theta
_1,\theta _2);\ \theta _j\in \Theta, \theta _1\le \theta _2,\ g\in 
C^\infty ([\theta _1,\theta
_2];]0,\infty [)
\}$ with the property that $g$ and $1/g$ are uniformly bounded in $C^\infty ([\theta _1,\theta
_2];]0,\infty [)$ when $(g,\theta _1,\theta _2)$ varies in ${\cal
  G}$. Then for every $\delta \in ]0,\frac{1}{2}-\beta [$ 
there exists $C>0$ such
that almost surely: $\exists C(\omega )<\infty $ such that for all
$\lambda \in [1,\infty [$ and all $(g,\theta _1,\theta _2)\in {\cal
  G}$, we have the estimate (\ref{in.11}).
\end{theo}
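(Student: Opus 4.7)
The plan is to reduce Theorem \ref{in2} to the semi-classical Theorem \ref{int1} by rescaling, and then pass to almost sure statements via the Borel--Cantelli lemma. For a dyadic sequence $\lambda_k=2^k$ I set $h_k=\lambda_k^{-1/m}$ and consider $P_k:=h_k^mP^0_\omega$, whose semi-classical principal symbol is just $p(x,\xi)$. Eigenvalues of $P^0_\omega$ in $\Gamma^g_{\theta_1,\theta_2;0,\lambda}$ for $\lambda\asymp\lambda_k$ correspond to eigenvalues of $P_k$ in $h_k^m\Gamma^g_{\theta_1,\theta_2;0,\lambda}$, which lies in a fixed open $\Omega\Subset\C$ not contained in $\Sigma(p)$. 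The symmetry (\ref{int'.7}) is preserved by (\ref{in.3}), so the semi-classical hypotheses of Theorem \ref{int1} are in force at each scale.

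Next I would recast the random potential $q^0_\omega=\sum_j\alpha_j^0(\omega)\epsilon_j$, at each scale $h_k$, as a low-frequency admissible part $q^{(k)}_\omega=\sum_{\mu_j^0\le L(h_k)}\alpha_j^0(\omega)\epsilon_j$ plus a high-frequency tail $r^{(k)}_\omega$. With $L(h_k)$ chosen inside the window (\ref{int.6.4}) and the upper bound $\sigma_j\lesssim(\mu_j^0)^{-\rho}$, $\rho>s+n/2$, a standard Gaussian concentration estimate and Borel--Cantelli (summed in $k$) give that almost surely $\|r^{(k)}_\omega\|_{H^s}={\cal O}(h_k^\infty)$, so $r^{(k)}_\omega$ is absorbed into the harmless perturbation $\delta_0(h^{n/2}q_1^0+q_2^0)$ allowed by the strengthened form of Theorem \ref{int1} mentioned just after its statement. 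The finite-dimensional vector $\alpha^{(k)}=(\alpha_j^0)_{\mu_j^0\le L(h_k)}$ has density $C(h_k)e^{\Phi(\alpha;h_k)}L(d\alpha)$ with respect to Lebesgue measure, and the lower bound $\sigma_j\gtrsim(\mu_j^0)^{-\rho}e^{-(\mu_j^0)^{\beta/(M+1)}}$ in (\ref{in.8.5}) yields exactly $|\nabla\Phi|={\cal O}(h_k^{-N_4})$ required by (\ref{int.6.6}); it is here that the condition $\beta<1/2$ makes $N_4$ acceptably finite. After conditioning on $|\alpha^{(k)}|\le R(h_k)$, an event of probability close to $1$ by the Gaussian tail, $P_k$ is exactly of the form (\ref{int.6.4.5}).

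I then apply the uniform variant of Theorem \ref{int1} with probability estimate (\ref{int.8}) to the family $\{h_k^m\Gamma^g_{\theta_1,\theta_2;0,\lambda};\,(g,\theta_1,\theta_2)\in\mathcal G,\,\lambda\in[\lambda_k,\lambda_{k+1}]\}$; uniformity of the hypotheses is free since $g,1/g$ are uniformly $C^\infty$-bounded. The key geometric input is a Lojasiewicz estimate: the uniform hypothesis $P(\theta,N_0)$ on $\Theta$ means $F=\arg p$ vanishes to order at most $N_0$ on each level set $\{F=\theta\}\cap S^*X$, $\theta\in\Theta$, whence $\mathrm{vol}_{S^*X}\{|F-\theta|<r\}={\cal O}(r^{1/N_0})$. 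Combined with the homogeneity of $p$ of order $m$, this gives
\[
\mathrm{vol}\,p^{-1}\!\bigl(\partial(h_k^m\Gamma^g)+D(0,r)\bigr)\le Cr^{1/N_0}+Cr,
\]
and substituting into (\ref{int.7}) with $\widetilde\epsilon_k$ and $r_k$ suitable small powers of $h_k$ and then optimising leads, after unrescaling, to the exponent $n/m-(1/m)(1/2-\beta-\delta)/(N_0+1)$ in (\ref{in.11}).

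The main obstacle is choreographing the parameters $L,R,\delta,r,\widetilde\epsilon$ so that three requirements are met simultaneously: (a) the admissible-potential framework of Theorem \ref{int1} applies, (b) the density condition (\ref{int.6.6}) survives the stretched exponential lower bound on $\sigma_j$ with exponent $\beta/(M+1)$, and (c) the bad probability (\ref{int.8}) at scale $k$ is summable in $k$. With the choices above, the probability of failure at step $k$ is at most $\exp(-ch_k^{-c'})$ for some $c,c'>0$, which is summable, and Borel--Cantelli gives almost surely the required bound for all $(g,\theta_1,\theta_2)\in\mathcal G$ and all $\lambda\in[\lambda_k,\lambda_{k+1}]$ once $k\ge k(\omega)$. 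Intermediate values of $\lambda$ in the dyadic interval are handled by comparing $\Gamma^g_{\theta_1,\theta_2;0,\lambda}$ to its dyadic endpoints: the symmetric difference is a narrow annular sector whose $p$-preimage volume is controlled by the same Lojasiewicz exponent, so the comparison error is absorbed in the final remainder. The constraint $\beta<1/2$ and the slack $\delta>0$ encode precisely the margin needed to make (b) and (c) compatible, and the finitely many scales $k<k(\omega)$ contribute only the constant $C(\omega)$.
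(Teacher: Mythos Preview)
Your overall architecture — semiclassical rescaling $h_k=2^{-k/m}$, reduction to the uniform form of Theorem \ref{int1}, dyadic decomposition, and Borel--Cantelli summation — is exactly the paper's route, and your use of $P(\theta,N_0)$ to obtain $\mathrm{vol}\,p^{-1}(\partial\Gamma+D(0,r))={\cal O}(r^{1/N_0})$ is correct.

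There is, however, a genuine gap in how you fit the random potential into the framework of Theorem \ref{int1}. You take the low-frequency part $q^{(k)}_\omega=\sum\alpha_j^0\epsilon_j$ directly as the admissible perturbation and assert that the joint density of $\alpha^{(k)}$ satisfies $|\nabla\Phi|={\cal O}(h_k^{-N_4})$. For a product of centred Gaussians, $\Phi(\alpha)=-\sum_j|\alpha_j|^2/\sigma_j^2$, so on $|\alpha|\le R$ one has $|\nabla\Phi|\le 2R/\min_j\sigma_j^2$; but (\ref{in.8.5}) only gives $\sigma_j\gtrsim(\mu_j^0)^{-\rho}e^{-(\mu_j^0)^{\beta/(M+1)}}$, and for $\mu_j^0$ ranging up to $L/h\asymp h^{-(M+1)}$ this allows $\min_j\sigma_j$ as small as a power of $h$ times $e^{-Ch^{-\beta}}$. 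Thus for $\beta>0$ the gradient bound is exponentially large and (\ref{int.6.6}) fails; rescaling by an exponentially small $\tau_0$ to repair this makes the largest variance exponentially large, so $|\beta|\le R$ no longer holds with useful probability. The paper's fix (Subsection \ref{sc}) is an extra step you are missing: write each $\alpha_j^0=\alpha_j'+\alpha_j''$ as a sum of independent Gaussians with $\alpha_j'\sim{\cal N}(0,(\sigma')^2)$ for one \emph{common} $\sigma'=C^{-1}h^Ke^{-Ch^{-\beta}}$, absorb $q''_\omega+q^2_\omega$ into the base operator $P_0$ (this is where the extension of Theorem \ref{int1} to $P_0=P+\delta_0(h^{n/2}q_1^0+q_2^0)$ is actually used), and take $\tau_0=h^{-K}\sigma'$ (or $\tau_0=\sqrt h$ when $\beta=0$). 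The rescaled $\beta_j$ then all have the same polynomially bounded variance, giving (\ref{int.6.6}) and $|\beta|_{{\bf C}^D}\le R$ simultaneously. Relatedly, your reading of $\beta<\tfrac12$ is off: it does not make $N_4$ finite, but enters through $\ln(1/\tau_0)\asymp h^{-\beta}$ in $\epsilon_0(h)\asymp h^{1/2-\beta}$ (up to logs), which must be small since $\kappa=1/2$. A secondary point: apply Theorem \ref{int1} to dyadic annuli $\Gamma^g_{\theta_1,\theta_2;2^k,2^{k+1}}$, which rescale to a fixed compact set in $\dot{{\bf C}}$, rather than to the full sector touching the origin where (\ref{int.6.2}) is not uniform.
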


The condition (\ref{in.8.5}) allows us to choose $\sigma _j$
 decaying faster than any negative power of
$\mu _j^0$. Then from the
discussion below, it will follow that $q_\omega (x)$ is almost
surely a smooth function. A rough and somewhat intuitive
interpretation of Theorem \ref{in2} is then that for 
almost every elliptic
operator of order $\ge 2$ with smooth coefficients on a compact manifold which satisfies the 
conditions (\ref{in.2}), (\ref{in.3}), the large eigenvalues
distribute according to Weyl's law in sectors with limiting directions
that satisfy a weak non-degeneracy condition.

\subsection{Volume considerations}\label{vo}

In the next subsection we shall perform a reduction to a semi-classical
situation and work with $h^mP_0$ which has the semi-classical
principal symbol $p$ in (\ref{in.1}). Again,
\ekv{vo.1}
{
V_z(t)=\mathrm{vol\,}\{ \rho \in T^*X;\, |p(\rho )-z|^2\le t\},\ t\ge 0.
}
\begin{prop}\label{vo1}
For any compact set $K\subset \dot{{\bf C}}={\bf C}\setminus \{ 0\}$, we have
\ekv{vo.2}
{
V_z(t)={\cal O}(t^\kappa ),\hbox{ uniformly for }z\in K,\ 0\le t\ll 1,
}
with $\kappa =1/2$.
\end{prop}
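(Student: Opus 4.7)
\textbf{Plan of proof of Proposition \ref{vo1}.} The plan is to localize the problem to a compact region of phase space, observe that $dp$ is nonvanishing on $p^{-1}(K)$ thanks to homogeneity, and then conclude by a Fubini/coarea argument.

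First I would reduce to a compact subset of $T^*X$. Since $K \subset \dot{\mathbb{C}}$ is compact, there exist $0 < c_1 < c_2 < \infty$ with $c_1 \le |z| \le c_2$ for all $z \in K$. The ellipticity assumption gives $|p(x,\xi)| \ge |\xi|^m/C$, hence if $|p(\rho) - z|^2 \le t$ with $t \le 1$ and $z \in K$, then
\begin{equation*}
|\xi|^m \le C(|z| + 1) \le C(c_2 + 1), \quad |\xi|^m \ge c_1 C^{-1} - 1 > 0
\end{equation*}
once $t$ is sufficiently small. Thus the set defining $V_z(t)$ is contained in the compact region $\Omega = \{(x,\xi) \in T^*X : a \le |\xi| \le b\}$ for some $0 < a < b$ depending on $K$ only, uniformly for $z \in K$.

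Next I would show that $dp \ne 0$ on $\Omega \cap p^{-1}(K)$. By Euler's identity, $m \, p(x,\xi) = \xi \cdot \partial_\xi p(x,\xi)$, so if $p(\rho) = z \ne 0$ then $\partial_\xi p(\rho) \ne 0$, and in particular $dp(\rho) \ne 0$ as a $\mathbb{C}$-valued 1-form on the real manifold $T^*X$. This is equivalent to saying that at each such $\rho$ at least one of the real 1-forms $d\Re p$, $d\Im p$ is nonzero. Using the compactness of $\Omega \cap p^{-1}(K)$ and continuity, I can then find a finite open cover $\{U_j\}$ of a neighborhood $W$ of $\Omega \cap p^{-1}(K)$ in $\Omega$, together with a constant $c>0$, such that on each $U_j$ either $|d\Re p| \ge c$ or $|d\Im p| \ge c$, uniformly in $z \in K$.

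To conclude, I would estimate $V_z(t)$ on each $U_j$ separately. On a piece $U_j$ where, say, $|d\Re p| \ge c$, note that $\{|p-z|^2 \le t\} \cap U_j \subset \{|\Re p - \Re z| \le \sqrt{t}\} \cap U_j$. Choosing adapted coordinates in which $\Re p$ is one of the coordinates (or directly applying the coarea formula with the Lipschitz bound $|\nabla \Re p| \ge c$) yields
\begin{equation*}
\mathrm{vol}\bigl(\{|\Re p - \Re z| \le \sqrt{t}\} \cap U_j\bigr) \le C' \sqrt{t},
\end{equation*}
with $C'$ depending only on the chart and the lower bound $c$. Summing over the finitely many $U_j$ and noting that outside $W$ the defining set is empty for $t$ small, gives $V_z(t) = \mathcal{O}(\sqrt{t})$, uniformly in $z \in K$, which is the claim with $\kappa = 1/2$.

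The only real issue is to ensure that the cover $\{U_j\}$ and the constants $c, C'$ can be chosen uniformly as $z$ ranges over $K$. This follows from the joint compactness of $K$ and the relevant part of $T^*X$ together with the continuity of $d\Re p$ and $d\Im p$, so I do not expect a serious obstacle; the proof is essentially a tubular neighborhood estimate for the codimension-one set $\{\Re p = \Re z\}$ (or $\{\Im p = \Im z\}$), using homogeneity only to exclude vanishing of $dp$ on $p^{-1}(K)$.
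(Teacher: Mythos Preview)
Your argument is correct and close in spirit to the paper's, though the packaging differs. The paper deduces Proposition~\ref{vo1} from the auxiliary Proposition~\ref{vo2}, which says that for any $C^1$ closed curve $\gamma=\{r=g(\theta)\}$ one has $\mathrm{vol}\,p^{-1}(\gamma+D(0,t))=\mathcal{O}(t)$; this is proved using the nonvanishing of the \emph{radial} derivative of $p$ (Euler's identity, exactly as you use it). One then takes $\gamma$ to be the circle through $z$ and notes $D(z,\sqrt{t})\subset\gamma+D(0,\sqrt{t})$, giving $V_z(t)=\mathcal{O}(\sqrt{t})$ uniformly. The advantage of going through Proposition~\ref{vo2} is modularity: the same radial-slicing lemma is reused for Proposition~\ref{vo3} (radial segments). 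Your direct coarea argument on $\Re p$ or $\Im p$ is equally valid and arguably more self-contained for this single statement; the paper's route avoids the finite-cover bookkeeping by slicing along the radial direction in $\xi$, where the derivative is uniformly bounded below by $m\,|p|\ge m c_1/2$ on the relevant set.

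One small correction: your lower bound on $|\xi|$ is miswritten. Ellipticity gives $|p|\ge|\xi|^m/C$, which yields the \emph{upper} bound on $|\xi|$; for the lower bound you need the reverse inequality $|p|\le C'|\xi|^m$ (from homogeneity and compactness of $X$), giving $|\xi|^m\ge(c_1-\sqrt{t})/C'$. This is a slip in the write-up, not in the idea.
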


This follows from:
\begin{prop}\label{vo2}
Let $\gamma $ be the curve $\{ re^{i\theta }\in {\bf C};\, r=g(\theta
),\ \theta \in S^1 \}$, where $0<g\in C^1(S^1)$. Then 
$$
\mathrm{vol\,}(p^{-1}(\gamma +D(0,t)))={\cal O}(t),\ t\to 0.
$$
\end{prop}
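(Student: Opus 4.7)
The strategy is to work fibrewise on $T^*X$ and exploit the homogeneity of $p$ together with the elementary geometry of the curve $\gamma$. Since $g$ is continuous and strictly positive on the compact $S^1$, one has $\gamma \subset \{z\in {\bf C};\, c_1 \le |z| \le c_2\}$ for some $0 < c_1 < c_2 < \infty$. Combined with the ellipticity estimate $|\xi|^m/C \le |p(x,\xi)| \le C|\xi|^m$ (uniform in $x\in X$), this shows that for $t$ sufficiently small one has $p^{-1}(\gamma + D(0,t)) \subset \{\xi;\, \tau_{\min} \le |\xi| \le \tau_{\max}\}$ with $\tau$-bounds independent of $x$. It therefore suffices, after a partition of unity on $X$, to estimate for each $x$ the volume of $\Omega_x(t) := \{\xi\in T_x^*X;\, p(x,\xi)\in \gamma + D(0,t)\}$ and then integrate in $x$.

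Next I would introduce polar coordinates $\xi = \tau \omega$, with $\tau>0$ and $\omega$ in the unit cosphere $S_x^*X$ with respect to some fixed smooth Riemannian metric on $T^*X$, so that $d\xi = \tau^{n-1}\, d\tau\, dS(\omega)$. By homogeneity, $p(x,\tau\omega) = \tau^m q(x,\omega)$ where $q$ is the restriction of $p$ to $S^*X$; by ellipticity $q$ is nowhere vanishing, hence bounded above and below uniformly on the compact $S^*X$. For each fixed $(x,\omega)$ the map $\tau \mapsto \tau^m q(x,\omega)$ parametrises the open half-line of argument $\theta_\omega := \mathrm{arg\,}q(x,\omega)$, which meets $\gamma$ in the unique point $g(\theta_\omega)e^{i\theta_\omega}$.

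The geometric core of the proof is a uniform transversality observation: the tangent to $\gamma$ at $g(\theta)e^{i\theta}$ is $(g'(\theta) + i g(\theta))e^{i\theta}$, so its component perpendicular to the radial direction $e^{i\theta}$ equals $g(\theta) \ge \min g > 0$. Consequently, for each $\theta \in S^1$, the set $\{s>0;\, \mathrm{dist\,}(se^{i\theta},\gamma) \le t\}$ has Lebesgue measure ${\cal O}(t)$, with constant uniform in $\theta$. Changing variables via $s = \tau^m |q(x,\omega)|$ on the annulus $\tau \in [\tau_{\min},\tau_{\max}]$ (with Jacobian $m\tau^{m-1}|q(x,\omega)|$ bounded above and below uniformly in $x,\omega$), the set of admissible $\tau$ also has measure ${\cal O}(t)$, uniformly.

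Integrating ${\cal O}(t)\,\tau^{n-1}$ in $\tau \in [\tau_{\min},\tau_{\max}]$, then in $\omega \in S_x^*X$, and finally in $x \in X$ against the fixed smooth density yields $\mathrm{vol\,}(p^{-1}(\gamma+D(0,t))) = {\cal O}(t)$. The only step that calls for a real idea rather than bookkeeping is the transversality computation in the third paragraph; everything else is a routine unwinding using the homogeneity of $p$ and the uniform positivity of $g$ and $|q|$.
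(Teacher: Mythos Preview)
Your proof is correct and is essentially a detailed unfolding of the paper's one-line argument, which reads in full: ``This follows from the fact that the radial derivative of $p$ is $\ne 0$.'' The homogeneity of $p$ sends radial rays in $\xi$-space to radial rays in ${\bf C}$ with nonvanishing speed (this is the ``radial derivative'' the paper invokes), and your transversality computation $g(\theta)\ge \min g>0$ is exactly what makes the curve $\gamma$ uniformly transverse to those image rays, so the fibrewise measure bound and the subsequent integration are precisely the steps the paper leaves implicit.
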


This follows from the fact that the radial derivative of $p$ is $\ne
0$.

\par Using (\ref{in.10}), we can prove:
\begin{prop}\label{vo3}
Let $\theta _0\in S^1$, $N_0\in\dot{{\bf N}}$ and assume that
$P(\theta _0,N_0)$ holds. Then if $0<r_1<r_2$ and $\gamma $ 
is the radial segment $[r_1,r_2]e^{i\theta _0}$, we have 
$$
\mathrm{vol\,}(p^{-1}(\gamma +D(0,t)))={\cal O}(t^{1/N_0}),\ t\to 0.
$$
\end{prop}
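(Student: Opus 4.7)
The plan is to pass to polar coordinates $\xi = r\omega$, $r = |\xi| > 0$, $\omega \in S^*X$, and exploit the homogeneity $p(x,r\omega) = r^m q(x,\omega)$, where $q := p|_{S^*X}$. By ellipticity \eqref{in.2} and the compactness of $S^*X$, we have $|q(x,\omega)| \asymp 1$ on $S^*X$, so $F = \arg q$ is a well-defined smooth function on $S^*X$.

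First I would observe that if $p(x,r\omega) \in \gamma + D(0,t)$, then writing $\phi := F(x,\omega) - \theta_0$ and $R := r^m |q(x,\omega)|$, we have
\begin{equation*}
|Re^{i\phi} - s|^2 = (R\cos\phi - s)^2 + R^2 \sin^2 \phi \le t^2
\end{equation*}
for some $s \in [r_1,r_2]$. Hence $R|\sin\phi| \le t$, and since $R$ must be bounded away from $0$ and $\infty$ (coming from $R\cos\phi \in [r_1 - t, r_2 + t]$ once $|\phi|$ is small), we get $|\phi| \le Ct$ for some $C$ depending on $r_1, r_2$ and the ellipticity constants. Moreover, for each fixed $(x,\omega)$ in the set $\{|F - \theta_0| \le Ct\}$, the set of $r$ satisfying $r^m q(x,\omega) \in \gamma + D(0,t)$ is contained in an interval in $(0,\infty)$ of bounded length ${\cal O}(1)$. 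Combining with $d\xi = r^{n-1}\,dr\,d\omega$ and the boundedness of $r$, this yields
\begin{equation*}
\mathrm{vol}(p^{-1}(\gamma + D(0,t))) \le C'\, \mathrm{vol}_{X \times S^*X}\{(x,\omega):\ |F(x,\omega) - \theta_0| \le Ct\}.
\end{equation*}

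The main step, and also the main technical obstacle, is the \L{}ojasiewicz-type bound
\begin{equation*}
\mathrm{vol}\{(x,\omega) \in S^*X:\ |F(x,\omega) - \theta_0| \le s\} = {\cal O}(s^{1/N_0}), \quad s \to 0,
\end{equation*}
which is where the hypothesis $P(\theta_0,N_0)$ enters. I would establish it locally: near each $\omega_0 \in F^{-1}(\theta_0)$, let $j_0 \le N_0$ be the smallest integer with $\nabla^{j_0}(F-\theta_0)(\omega_0) \ne 0$, and pick a direction $v$ in which $\partial_v^{j_0}(F-\theta_0)(\omega_0) \ne 0$, which exists because the nonzero symmetric $j_0$-tensor cannot vanish on every diagonal. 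Taking local coordinates $(y_1,y')$ with $\partial_{y_1} = v$, the Malgrange preparation theorem gives a factorization $F - \theta_0 = u(y) \cdot p(y_1, y')$ where $u(0) \ne 0$ and $p$ is a Weierstrass polynomial of degree $j_0$ in $y_1$ with smooth coefficients vanishing at $y' = 0$. Since $p(\cdot, y')$ is a monic polynomial of degree $j_0$, the one-dimensional set $\{y_1 : |p(y_1,y')| \le Cs\}$ is covered by at most $j_0$ intervals of length $2(Cs)^{1/j_0}$, so it has measure ${\cal O}(s^{1/j_0}) \le {\cal O}(s^{1/N_0})$ uniformly in $y'$. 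Integrating over $y'$ and patching via a finite cover of the compact set $\{F = \theta_0\}$ yields the claimed volume bound.

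Combining the two steps gives $\mathrm{vol}(p^{-1}(\gamma + D(0,t))) = {\cal O}(t^{1/N_0})$, as desired. The main obstacle is the Malgrange-preparation step: one has to verify that the finite-order hypothesis $P(\theta_0,N_0)$ (which involves a sum over derivatives rather than a single direction) suffices to pick a good direction of vanishing, and that the Weierstrass-type factorization genuinely controls the measure of sub-level sets locally.
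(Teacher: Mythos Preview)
Your proof is correct and fills in exactly what the paper omits: in the text, Proposition~\ref{vo3} is stated with only the one-line indication ``Using (\ref{in.10}), we can prove:''. Your reduction via polar coordinates and homogeneity to the sublevel-set estimate for $F-\theta_0$ on $S^*X$, followed by Malgrange preparation in a direction where $\partial_v^{j_0}(F-\theta_0)(\omega_0)\ne 0$, is the natural argument and is almost certainly what the authors had in mind; the polarization remark ensuring such a direction exists and the monic-polynomial sublevel bound $\le 2j_0 s^{1/j_0}$ are both standard and correctly invoked.
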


\par Now, let $0\le \theta _1<\theta _2\le 2\pi $, $g\in C^\infty
([\theta _1,\theta _2];]0,\infty [)$ and put 
\ekv{vo.4}
{
\Gamma ^g_{\theta _1,\theta _2;r_1,r_2}=\{ re^{i\theta };\, \theta
_1\le \theta \le \theta _2,\ r_1g(\theta )\le r\le r_2g(\theta )\} ,
}
for $0\le r_1\le r_2 <\infty $. If $0<r_1<r_2<+\infty $ and
$P(\theta _j,N_0)$ hold for $j=1,2$, then the last two propositions
imply that 
\ekv{vo.5}
{
\mathrm{vol\,}p^{-1}(\partial \Gamma ^g_{\theta _1,\theta
  _2;r_1,r_2}+D(0,t))={\cal O}(t^{1/N_0}),\ t\to 0.
}

\subsection{Semiclassical reduction}\label{sc}

We are interested in the distribution of large eigenvalues $\zeta $ of
$P_\omega ^0$, so we make a standard reduction to a semi-classical
problem by letting $0<h\ll 1$ satisfy
\ekv{sc.1}
{
\zeta =\frac{z}{h^m},\ |z|\asymp 1,\ h\asymp |\zeta |^{-1/m},
}
and write
\ekv{sc.2}
{
h^m(P_\omega ^0-\zeta )=h^mP_\omega ^0-z=:P+h^mq^0_\omega -z,
}
where
\ekv{sc.3}
{
P=h^mP^0=\sum_{|\alpha |\le m} a_\alpha (x;h)(hD)^\alpha .
}
Here 
\eekv{sc.4}
{a_\alpha (x;h)&=&{\cal O}(h^{m-|\alpha |})\hbox{ in }C^\infty ,}
{a_\alpha (x;h)&=&a_\alpha ^0(x)\hbox{ when }|\alpha |=m.}
So $P$ is a standard semi-classical differential operator with
semi-classical principal symbol $p(x,\xi )$.

Our strategy will be to decompose the random perturbation
$$
h^mq_\omega ^0=\delta Q_\omega +k_\omega (x),
$$
where the two terms are independent, 
and with probability very close to 1, $\delta Q_\omega $ will be a
semi-classical random perturbation as in Section \ref{mult} while
\ekv{sc.5}
{
\Vert k_\omega \Vert_{H^s}\le h,
}
and 
\ekv{sc.5.5}{
s\in ]\frac{n}{2},\rho -\frac{n}{2}[} is fixed. Then $h^mP_\omega
^0$ will be viewed as a random perturbation of $h^mP^0+k_\omega $.
In order to achieve this without
extra assumptions on the order $m$, we will also have to represent
some of our eigenvalues $\alpha _j^0(\omega )$ as sums of two
independent Gaussian random variables.

\par We start by examining when
\ekv{sc.6}
{
\Vert h^mq_\omega ^0\Vert_{H^s}\le h.
}
\begin{prop}\label{sc1}
There is a constant $C>0$ such that (\ref{sc.6}) holds with
probability
$$
\ge 1-\exp (C-\frac{1}{2Ch^{2(m-1)}}).
$$
\end{prop}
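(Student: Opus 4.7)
The plan is to reduce the estimate $\|h^m q_\omega^0\|_{H^s}\le h$ to a tail bound on a sum of squares of independent complex Gaussian variables, and then apply Proposition \ref{51} (Bordeaux Montrieux's concentration inequality).

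First, I would use the fact that the $\epsilon_j$ form an orthonormal basis of eigenfunctions of the elliptic positive self-adjoint operator $\widetilde{R}$ with eigenvalues $(\mu_j^0)^2$. Standard Sobolev space theory (or Proposition \ref{al2} in the non-semi-classical limit $h=1$) then gives the Parseval-type equivalence
\begin{equation*}
\|q_\omega^0\|_{H^s}^2 \asymp \sum_{j\ge 0} \langle \mu_j^0\rangle^{2s}\,|\alpha_j^0(\omega)|^2.
\end{equation*}
Hence the condition $\|h^m q_\omega^0\|_{H^s}\le h$ is implied by
\begin{equation*}
\sum_{j\ge 0} \langle \mu_j^0\rangle^{2s}\,|\alpha_j^0(\omega)|^2 \le \frac{1}{C_0}\,h^{-2(m-1)}
\end{equation*}
for a suitable constant $C_0>0$ absorbing the implicit constant in the norm equivalence.

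Next, I would introduce the rescaled variables $X_j := \langle \mu_j^0\rangle^{s}\alpha_j^0$, which are independent complex Gaussians of variance $\widetilde\sigma_j^2 := \langle \mu_j^0\rangle^{2s}\sigma_j^2$. Using the upper bound $\sigma_j \lesssim (\mu_j^0)^{-\rho}$ from \eqref{in.8.5} together with Weyl's law for the elliptic operator $\widetilde{R}$, namely $\mu_j^0\asymp j^{1/n}$, I would verify the two quantitative inputs needed for Proposition \ref{51}:
\begin{equation*}
s_1 := \sup_j \widetilde\sigma_j^2 \lesssim 1,\qquad \sum_{j\ge 0}\widetilde\sigma_j^2 \lesssim \sum_{j\ge 1} j^{(2s-2\rho)/n} <\infty,
\end{equation*}
where the series converges because $\rho > s+n/2$ (which is part of the standing assumption on $\rho$ and $s$).

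Applying Proposition \ref{51} to the family $\{X_j\}$ with $x = C_0^{-1}h^{-2(m-1)}$, and passing to the limit $N\to\infty$ by monotone convergence (or by first truncating the sum and using independence), I obtain
\begin{equation*}
\mathbb{P}\!\left(\sum_{j}|X_j|^2 \ge C_0^{-1}h^{-2(m-1)}\right) \le \exp\!\left(\frac{C_0'}{2s_1}\sum_j\widetilde\sigma_j^2 - \frac{h^{-2(m-1)}}{2C_0 s_1}\right),
\end{equation*}
and by absorbing the bounded prefactors into a single constant $C$, this is exactly the bound $\exp(C - \tfrac{1}{2Ch^{2(m-1)}})$ in the complement of the event \eqref{sc.6}. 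There is no real obstacle here: this is a straightforward Gaussian concentration argument, and the only thing to double-check is the summability, which is precisely why the assumption $\rho > n$ (together with $s<\rho - n/2$) was built into the hypotheses of the random perturbation.
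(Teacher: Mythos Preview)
Your proof is correct and follows essentially the same approach as the paper: both use the Parseval-type equivalence $\|h^m q_\omega^0\|_{H^s}^2 \asymp \sum_j (\mu_j^0)^{2s}|h^m\alpha_j^0|^2$, identify the summands as squares of independent Gaussians, verify via Weyl asymptotics and the hypothesis $s<\rho-n/2$ that $\sup_j\widetilde\sigma_j^2$ and $\sum_j\widetilde\sigma_j^2$ are finite, and then apply Proposition~\ref{51}. The only cosmetic difference is that the paper absorbs the factor $h^m$ into the random variables (so the variances carry an $h^{2m}$) while you put the $h$-dependence into the threshold $x=C_0^{-1}h^{-2(m-1)}$; the resulting exponent $x/(2s_1)$ is identical either way.
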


Here is a brief outline of the proof.
We have
\ekv{sc.7}
{
h^mq_\omega ^0=\sum_0^\infty \alpha _j(\omega )\epsilon _j,\quad
\alpha _j=h^m\alpha _j^0\sim {\cal N}(0,(h^m\sigma _j)^2),
}
and the $\alpha _j$ are independent. Now, by the functional
characterization of $H^s$ in Subsection \ref{al}, we get
\ekv{sc.8}
{
\Vert h^mq_\omega ^0\Vert_{H^s}^2\asymp \sum _0^\infty  |(\mu _j^0)^s\alpha _j(\omega )|^2,
}
where $(\mu _j^0)^s\alpha _j\sim {\cal N}(0,(\widetilde{\sigma }_j)^2)$ are independent random variables and $\widetilde{\sigma }_j=(\mu _j^0)^sh^m\sigma _j$.

Combining this with Proposition \ref{51} and standard Weyl asymptotics for $\widetilde{R}$ leads to the result.

\par Write 
\ekv{sc.19}{q_\omega ^0=q_\omega ^1+q_\omega ^2,}
\ekv{sc.20}
{
q_\omega ^1=\sum_{0<h\mu _j^0\le L}\alpha _j^0(\omega )\epsilon _j,\
q_\omega ^2=\sum_{h\mu _j^0 > L}\alpha _j^0(\omega )\epsilon _j.
}
From Proposition \ref{sc1} and its proof, we know that
\ekv{sc.21}
{
\| h^mq_\omega ^2\|_{H^s}\le h\hbox{ with probability }\ge 1-\exp (C_0-\frac{1}{2Ch^{2(m-1)}}).
}
We write
$$
P+h^mq_\omega ^0=(P+h^mq_\omega ^2)+h^mq_\omega ^1,
$$
Theorem \ref{int1} can be applied with $P$
replaced by the perturbation $P+h^mq_\omega ^2$, provided that $\|
h^mq_\omega ^2\|_{H^s}\le h$.

The next question is then wether $h^mq_\omega ^1$ can be written as
$\tau_0h^{2N_1+n}q_\omega $ where $q_\omega =\sum_{0<h\mu _j^0\le
  L}\alpha _j\epsilon _j$ and $\vert \alpha \vert_{{\bf C}^D}\le R$
with probability close to 1. This turns out to be impossible without extra assumptions.

\par In order to avoid such an extra assumption, we shall now
represent $\alpha _j^0$ for $h\mu _j^0\le L$ as the sum of two
independent Gaussian random variables. Let $j_0=j_0(h)$ be the largest $j$
for which $h\mu _j^0\le L$. Put 
\ekv{sc.24}
{
\sigma '=\frac{1}{C}h^Ke^{-Ch^{-\beta }},\hbox{ where }K\ge \rho
(M+1),\ C\gg 1
}
so that $\sigma '\le \frac{1}{2}\sigma _j$ for $1\le j\le j_0(h)$. The
factor $h^K$ is needed only when $\beta =0$.

\par For $j\le j_0$, we may assume that $\alpha _j^0(\omega )=\alpha
_j'(\omega )+\alpha _j''(\omega ),$ where $\alpha _j'\sim {\cal
  N}(0,(\sigma ')^2)$, $\alpha _j''\sim {\cal N}(0,(\sigma _j'')^2)$ are independent
random variables and 
$$
\sigma _j^2=(\sigma ')^2+(\sigma _j'')^2,
$$
so that 
$$
\sigma _j''=\sqrt{\sigma _j^2-(\sigma ')^2}\asymp \sigma _j .
$$

\par Put $q_\omega ^1 =q_\omega '+q_\omega ''$, where
$$
q_\omega '=\sum_{h\mu _j^0\le L}\alpha _j'(\omega )\epsilon _j,\
q_\omega ''=\sum_{h\mu _j^0\le L}\alpha _j''(\omega )\epsilon _j.
$$
Now (cf (\ref{sc.19})) we write
$$
P+h^mq_\omega ^0=(P+h^m(q_\omega ''+q_\omega^2))+h^mq_\omega '. 
$$
Theorem \ref{int1} is valid for random perturbations of
$$
P_0:=P+h^m(q_\omega ''+q_\omega ^2),
$$
provided that $\Vert h^m(q_\omega ''+q_\omega ^2)\Vert_{H^s}\le h$,
which again holds with a probability as in (\ref{sc.21}). The new random
perturbation is now $h^mq_\omega '$ which we write as $\tau
_0h^{2N_1+n}\widetilde{q}_\omega $, where $\widetilde{q}_\omega $
takes the form
\ekv{sc.25}{
\widetilde{q}_\omega (x)=\sum_{0<h\mu _j^0\le L}\beta _j(\omega
)\epsilon _j,
}
with new independent random variables 
\ekv{sc.26}
{
\beta _j=\frac{1}{\tau _0}h^{m-2N_1-n}\alpha _j'(\omega )\sim {\cal
  N}(0,(\frac{1}{\tau _0}h^{m-2N_1-n}\sigma '(h))^2).
}

\par Now, by Proposition \ref{51},
$$
{\bf P}(\vert \beta \vert_{{\bf C}^D}^2>R^2)\le \exp ({\cal
  O}(1)D\frac{h^{m-2N_1-n}\sigma '(h)}{\tau_0}-\frac{R^2\tau _0^2}{{\cal O}(1)(h^{m-2N_1-n}\sigma '(h))^2}).
$$
Here by Weyl's law for the distribution of eigenvalues of elliptic
self-adjoint differential operators, we have $D\asymp (L/h)^n$. Moreover,
$L,R$ behave like certain powers of $h$. 
\begin{itemize}
\item In the case when $\beta =0$, we choose $\tau _0=h^{1/2}$.
Then for any $a>0$ we get 
$$
{\bf P}(\vert \beta \vert_{{\bf C}^D}>R)\le C\exp (-\frac{1}{Ch ^a})
$$  for any given fixed $a$, provided we choose $K$ large enough in 
(\ref{sc.24}).
\item In the case $\beta >0$ we get the same conclusion with $\tau
  _0=h^{-K}\sigma '$ if $K$ is large enough.
\end{itemize}

\par In both cases, we see that the independent random
variables $\beta _j$ in (\ref{sc.25}), (\ref{sc.26}) have a joint
probability density $C(h)e^{\Phi (\alpha ;h)}L(d\alpha )$, satisfying (\ref{int.6.6}) for
some $N_4$ depending on $K$.

\par With $\kappa =1/2$, we put 
$$
\epsilon _0(h)=h^{\kappa } ((\ln \frac{1}{h})^2+\ln \frac{1}{\tau _0}),
$$
where $\tau _0$ is chosen as above. Notice that $\epsilon _0(h)$ is of
the order of magnitude $h^{\kappa -\beta }$ up to a power of $\ln
\frac{1}{h}$. Then Theorem \ref{int1} gives:
\begin{prop}\label{sc2}
There exists a constant $N_4>0$ depending on $\rho ,n,m$ such that the
following holds: Let $\Gamma \Subset \dot{{\bf C}}$ have piecewise
smooth boundary. Then $\exists C>0$ such that for $0<r\le 1/C$,
$\widetilde{\epsilon }\ge C\epsilon _0(h)$, we have with probability
\ekv{sc.27}
{
\ge 1-\frac{C\epsilon _0(h)}{rh^{n+\max
    (n(M+1),N_4+\widetilde{M})}}e^{-\frac{\widetilde{\epsilon
    }}{C\epsilon _0(h)}}-Ce^{-\frac{1}{Ch}},
}
that 
\ekv{sc.28}
{
\vert \# (h^mP^0_\omega )\cap \Gamma )-\frac{1}{(2\pi
  h)^n}\mathrm{vol\,}(p^{-1}(\Gamma ))\vert 
\le \frac{C}{h^n}(\frac{\widetilde{\epsilon }}{r}+\mathrm{vol\,}
(p^{-1}(\partial \Gamma +D(0,r)))).}
\end{prop}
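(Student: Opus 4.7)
The plan is to view this as a direct consequence of Theorem \ref{int1} via the semiclassical reduction set up in Subsection \ref{sc}. Writing $\zeta = z/h^m$ with $|z| \asymp 1$, the question of eigenvalues of $P_\omega^0$ in $\Gamma$ (rescaled) becomes the question of eigenvalues of $h^m P_\omega^0 = P + h^m q_\omega^0$ in the scaled region, and $P$ is an ordinary semiclassical operator with principal symbol $p$ satisfying (\ref{int.2}), (\ref{int.3}). The volume estimate (\ref{int.6.2}) needed for Theorem \ref{int1} is furnished by Proposition \ref{vo1} with $\kappa = 1/2$, since $\Gamma \Subset \dot{\bf C}$ stays away from zero.

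Next, I would use the decomposition $h^m q_\omega^0 = h^m q_\omega' + h^m(q_\omega'' + q_\omega^2)$ already introduced just before the proposition, and regard $P_0 := P + h^m(q_\omega'' + q_\omega^2)$ as the (slightly more general) unperturbed operator to which Theorem \ref{int1} is applied, with the genuinely random perturbation being $\delta Q_\omega := h^m q_\omega' = \tau_0 h^{2N_1+n} \widetilde{q}_\omega$. Two bad events must be excluded. First, one needs $\|h^m(q_\omega'' + q_\omega^2)\|_{H^s} \le h$, which by the proof of Proposition \ref{sc1} applied to $q_\omega'' + q_\omega^2$ (whose variances are of the same order as those of $q_\omega^0$) holds with probability $\ge 1 - C \exp(-1/(Ch^{2(m-1)})) \ge 1 - C e^{-1/(Ch)}$, accounting for the second subtracted term in (\ref{sc.27}). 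Second, one needs $|\beta|_{{\bf C}^D} \le R$ so that the admissible potential $\widetilde{q}_\omega = \sum \beta_j \epsilon_j$ lies in the ball prescribed by (\ref{int.6.3})--(\ref{int.6.4}); the discussion after (\ref{sc.26}), based on Proposition \ref{51} and the appropriate choice of $\tau_0$ (namely $\tau_0 = h^{1/2}$ when $\beta = 0$ and $\tau_0 = h^{-K}\sigma'$ when $\beta > 0$, with $K$ large), shows that this holds with probability $\ge 1 - C e^{-1/(Ch^a)}$ for any prescribed $a$, absorbable into $Ce^{-1/(Ch)}$.

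Now on the complement of these two bad events, the hypothesis of Theorem \ref{int1} is satisfied: $P_0$ has the form (\ref{sv.5.5}) with $\delta_0 \le h$, and the perturbation $\delta Q_\omega$ has the required form with $\delta = \tau_0 h^{N_1+n}/C$, $\|Q_\omega\| \le 1$, and joint density $C(h)e^{\Phi(\alpha;h)}L(d\alpha)$ satisfying (\ref{int.6.6}) with a bound $|\nabla \Phi| = {\cal O}(h^{-N_4})$ for an $N_4$ depending only on $K$ (and hence on $\rho, n, m$). The choice of $\tau_0$ above makes $\epsilon_0(h)$ of Theorem \ref{int1} coincide (up to logarithmic factors) with the $\epsilon_0(h)$ defined in the present subsection with $\kappa = 1/2$. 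Applying Theorem \ref{int1} then gives (\ref{sc.28}) with conditional probability at least $1 - C\epsilon_0(h)/(rh^{n+\max(n(M+1), N_4+\widetilde{M})}) \exp(-\widetilde{\epsilon}/(C\epsilon_0(h)))$, and combining with the exclusion of the two bad events yields exactly the bound (\ref{sc.27}).

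The main technical obstacle, which has essentially been overcome in the preparation above, is ensuring that the splitting of $\alpha_j^0 = \alpha_j' + \alpha_j''$ produces a random variable $\beta_j = \tau_0^{-1} h^{m - 2N_1 - n} \alpha_j'$ whose variance is large enough to keep $|\beta| \le R$ with overwhelming probability yet small enough that the joint density satisfies the polynomial growth bound (\ref{int.6.6}); this is precisely the point where the two regimes $\beta = 0$ and $\beta > 0$ must be handled separately and where the freedom in choosing $K$ in (\ref{sc.24}) is used to keep the bad event probabilities exponentially small. Everything else — the volume growth, the probabilistic counting, and the matching of exponents — is then a bookkeeping consequence of Theorem \ref{int1}.
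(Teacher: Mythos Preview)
Your proposal is correct and follows essentially the same route as the paper: the proposition is not given a separate proof there but is stated as the direct output of the preceding discussion (the decomposition $h^mq_\omega^0 = h^mq_\omega' + h^m(q_\omega'' + q_\omega^2)$, the exclusion of the two bad events via Proposition~\ref{sc1} and Proposition~\ref{51}, the verification that the density of the $\beta_j$ satisfies (\ref{int.6.6}), and the choice of $\tau_0$ and $\kappa=1/2$) combined with Theorem~\ref{int1} applied to $P_0=P+h^m(q_\omega''+q_\omega^2)$. You have correctly identified all of these ingredients and the way they fit together.
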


As already noted, this gives Weyl asymptotics provided that 
\ekv{sc.29}
{
\mathrm{vol\,}p^{-1}(\partial \Gamma +D(0,r))={\cal
  O}(r^\alpha ),
}
for some $\alpha \in ]0,1]$ (which would automatically be the case
if $\kappa $ had been larger than $1/2$ instead of being equal to
$1/2$), and we can then choose $r=\widetilde{\epsilon }^{1/(1+\alpha
  )}$, so that the right hand side of (\ref{sc.28}) becomes $\le
C\widetilde{\epsilon }^{\frac{\alpha }{1+\alpha }}h^{-n}$.

\par As in \cite{Sj08a, Sj08b} we also observe that if $\Gamma $ belongs
to a family ${\cal G}$ of domains satisfying the assumptions of the Proposition
uniformly, then with probability
\ekv{sc.30}
{
\ge 1-\frac{C\epsilon _0(h)}{r^2h^{n+\max
    (n(M+1),N_4+\widetilde{M})}}e^{-\frac{\widetilde{\epsilon
    }}{C\epsilon _0(h)}}-Ce^{-\frac{1}{Ch}},
}
the estimate (\ref{sc.28}) holds uniformly and simultaneously for all
$\Gamma \in {\cal G}$.

\subsection{End of the proof}\label{end}

Let $\theta _1,\theta _2,N_0$ be as in Theorem \ref{in1}, so that
$P(\theta _1,N_0)$ and $P(\theta _2,N_0)$ hold. Combining the
propositions \ref{vo1}, \ref{vo2}, \ref{vo3}, we
see that (\ref{sc.29}) holds with $\alpha =1/N_0$ when
$\Gamma =\Gamma ^g_{\theta _1,\theta _2;1,\lambda }$, $\lambda >0$
fixed, and Proposition \ref{sc2} gives:
\begin{prop}\label{end1}
With the parameters as in Proposition \ref{sc2} and for every $\alpha
\in ]0,\frac{1}{N_0}[$, we have with probability
\ekv{end.1}
{
\ge 1 -\frac{C\epsilon _0(h)}{\widetilde{\epsilon }^{\frac{N_0}{1+N_0
    }}h^{n+\max
    (n(M+1),N_4+\widetilde{M})}}e^{-\frac{\widetilde{\epsilon
    }}{C\epsilon _0(h)}}-Ce^{-\frac{1}{Ch}}
}
that
\ekv{end.2}
{
\vert \# (\sigma (h^mP_\omega )\cap \Gamma ^g_{\theta _1,\theta
  _2;1,\lambda })-\frac{1}{(2\pi h)^n}\mathrm{vol\,}(p^{-1}(\Gamma
^g_{\theta _1,\theta _2;1,\lambda }))\vert \le
C\frac{\widetilde{\epsilon }^{\frac{1}{1+N_0 }}}{h^n}.
}
Moreover, the conclusion (\ref{end.2}) is valid simultaneously for all
$\lambda \in [1,2]$ and all $(\theta _1,\theta _2)$ in a set where
$P(\theta _1,N_0)$, $P(\theta _2,N_0)$ hold uniformly, with probability
\ekv{end.3}
{
\ge 1 -\frac{C\epsilon _0(h)}{\widetilde{\epsilon }^{\frac{2N_0}{1+N_0
    }}h^{n+\max
    (n(M+1),N_4+\widetilde{M})}}e^{-\frac{\widetilde{\epsilon
    }}{C\epsilon _0(h)}}-Ce^{-\frac{1}{Ch}}.
}
\end{prop}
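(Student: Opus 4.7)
The plan is to feed the geometric volume bound into the analytic probabilistic estimate of Proposition \ref{sc2} and then optimize in the auxiliary radius $r$. Concretely, I would start by fixing $\Gamma=\Gamma^g_{\theta_1,\theta_2;1,\lambda}$ and observing that its boundary is the union of two radial segments $[1,\lambda]e^{i\theta_j}g(\theta_j)$, $j=1,2$, and two curved arcs $\{rg(\theta)e^{i\theta};\,\theta_1\le\theta\le\theta_2\}$ with $r=1,\lambda$. Propositions \ref{vo2} and \ref{vo3}, together with the assumed validity of $P(\theta_j,N_0)$, then give
\[
\mathrm{vol}\,p^{-1}\bigl(\partial\Gamma^g_{\theta_1,\theta_2;1,\lambda}+D(0,r)\bigr)\le Cr^{1/N_0}
\]
uniformly in $\lambda\in[1,2]$ and in $(\theta_1,\theta_2,g)$ as long as they stay in a family where $P(\theta_j,N_0)$ holds uniformly and $g,1/g$ are bounded in $C^1$.

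Next I would plug this into the conclusion \eqref{sc.28} of Proposition \ref{sc2} which becomes, with probability at least
\[
1-\frac{C\epsilon_0(h)}{rh^{n+\max(n(M+1),N_4+\widetilde{M})}}e^{-\widetilde\epsilon/(C\epsilon_0(h))}-Ce^{-1/(Ch)},
\]
the bound
\[
\bigl|\#\bigl(\sigma(h^mP^0_\omega)\cap\Gamma\bigr)-(2\pi h)^{-n}\mathrm{vol}\,p^{-1}(\Gamma)\bigr|\le \frac{C}{h^n}\Bigl(\frac{\widetilde\epsilon}{r}+r^{1/N_0}\Bigr).
\]
Balancing the two terms by choosing $r=\widetilde\epsilon^{\,N_0/(N_0+1)}$ (which is admissible for $\widetilde\epsilon$ small, hence for $\widetilde\epsilon\ge C\epsilon_0(h)$ and $h$ small) yields $\widetilde\epsilon/r=r^{1/N_0}=\widetilde\epsilon^{\,1/(N_0+1)}$, giving exactly the right-hand side of \eqref{end.2} and producing the probability lower bound \eqref{end.1}, since substituting $r=\widetilde\epsilon^{N_0/(N_0+1)}$ into the $1/r$ prefactor produces the exponent $N_0/(1+N_0)$ displayed there. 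The freedom in $\alpha\in\,]0,1/N_0[$ is used only if one wants to replace the sharp volume exponent $1/N_0$ by a slightly smaller one, but this makes no essential difference and I would simply take $\alpha=1/N_0$.

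For the uniform statement over $\lambda\in[1,2]$ and $(\theta_1,\theta_2)$ ranging in a compact set where $P(\theta,N_0)$ holds uniformly, I would invoke the second, simultaneous version of Proposition \ref{sc2} (cf.\ \eqref{sc.30} and the analogue of \eqref{int.8} stated just after Theorem~\ref{int1}); applied to the family of boundary Lipschitz contours $\partial\Gamma^g_{\theta_1,\theta_2;1,\lambda}$, this produces exactly the same estimate \eqref{end.2} simultaneously over the family, but with $r$ replaced by $r^2$ in the denominator of the probabilistic correction, accounting for the exponent $2N_0/(1+N_0)$ in \eqref{end.3}. The main technical point to verify, and the one I expect to be slightly delicate, is that the family $\{\Gamma^g_{\theta_1,\theta_2;1,\lambda}\}$ does satisfy the hypotheses of Proposition \ref{sc2} \emph{uniformly} in the parameters, in particular that the constants in the volume estimate and in the Lipschitz regularity of $\partial\Gamma$ are uniform; this follows from the uniformity of $P(\theta,N_0)$ on the compact set of $\theta$, the uniform bounds on $g,1/g$ in $C^\infty$, and the smoothness of $p$ away from the zero section, but should be checked carefully so that the same constant $C$ works throughout.
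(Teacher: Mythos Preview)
Your proposal is correct and follows essentially the same route as the paper: the paper's proof is the one-line remark immediately preceding the proposition, namely that combining Propositions \ref{vo1}--\ref{vo3} gives (\ref{sc.29}) with $\alpha=1/N_0$ for $\Gamma=\Gamma^g_{\theta_1,\theta_2;1,\lambda}$, after which Proposition \ref{sc2} with the choice $r=\widetilde\epsilon^{\,1/(1+\alpha)}=\widetilde\epsilon^{\,N_0/(N_0+1)}$ (as spelled out just after (\ref{sc.29})) yields (\ref{end.1})--(\ref{end.2}), and the simultaneous version (\ref{sc.30}) with $r^2$ in the denominator yields (\ref{end.3}). Your handling of the parameter $\alpha\in\,]0,1/N_0[$ versus taking $\alpha=1/N_0$ directly is fine; the strict inequality in the statement is a harmless bit of slack.
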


\par For $0<\delta \ll 1$, choose $\widetilde{\epsilon
}=h^{-\delta }\epsilon _0\le Ch^{\frac{1}{2}-\beta -\delta }(\ln \frac{1}{h})^2$, so that
$\widetilde{\epsilon }/\epsilon _0=h^{-\delta }$. Then for some $N_5$
we have for every $\alpha \in ]0,1/N_0[$ that 
\ekv{end.4}
{
\vert \# (\sigma (h^mP_\omega )\cap \Gamma ^g_{\theta _1,\theta
  _2;1,\lambda })-\frac{1}{(2\pi h)^n}\mathrm{vol\,}(p^{-1}(\Gamma
^g_{\theta _1,\theta _2;1,\lambda }))\vert \le \frac{C_\alpha
}{h^n}(h^{\frac{1}{2}-\delta -\beta }(\ln \frac{1}{h})^2)^{\frac{1
  }{1+N_0}},
}
simultaneously for $1\le \lambda \le 2$ and all $(\theta _1,\theta _2)$ in a set where
$P(\theta _1,N_0)$, $P(\theta _2,N_0)$ hold uniformly, with probability
\ekv{end.5}
{
\ge 1-\frac{C}{h^{N_5}}e^{-\frac{1}{Ch^\delta }}.
}
The upper bound in (\ref{end.4}) can be replaced by 
$$
\frac{C_\delta }{h^n}h^{(\frac{1}{2}-\beta -2\delta )/(N_0+1)}.
$$

Assuming 
$P(\theta _1,N_0)$, $P(\theta _2,N_0)$, we want to count
the number of eigenvalues of $P_\omega $ in 
$$
\Gamma _{1,\lambda }=\Gamma ^g_{\theta _1,\theta _2;1,\lambda }
$$
when $\lambda \to \infty $. Let $k(\lambda )$ be he largest integer
$k$ for which $2^k\le \lambda $ and decompose
$$
\Gamma _{1,\lambda }=(\bigcup_0^{k(\lambda )-1}\Gamma
_{2^k,2^{k+1}})\cup \Gamma _{2^{k(\lambda )},\lambda }.
$$
In order to count the eigenvalues of $P_\omega ^0$ in $\Gamma
_{2^k,2^{k+1}}$ we define $h$ by $h^m2^k=1$, $h=2^{-k/m}$, so that 
\begin{eqnarray*}
\# (\sigma (P_\omega ^0)\cap \Gamma _{2^k,2^{k+1}})&=&\# (\sigma
(h^mP^0_\omega )\cap \Gamma _{1,2}),\\ 
\frac{1}{(2\pi
  )^n}\mathrm{vol\,}(p^{-1}(\Gamma _{2^k,2^{k+1}}))&=&\frac{1}{(2\pi
 h)^n}\mathrm{vol\,}(p^{-1}(\Gamma _{1,2})).
\end{eqnarray*}
Thus, with probability $\ge 1-C2^{\frac{N_5k}{m}}e^{-2^\frac{\delta k}{m}/C}$ we have
\ekv{end.6}
{
\vert \# (\sigma (P_\omega ^0)\cap \Gamma _{2^k,2^{k+1}})
-\frac{1}{(2\pi )^n}\mathrm{vol\,}p^{-1}(\Gamma _{2^k,2^{k+1}})
\vert \le C_\delta 2^{\frac{kn}{m}}2^{-\frac{k}{m}(\frac{1}{2}-\beta -2\delta
)\frac{1}{N_0+1}}. 
}
Similarly, with probability $\ge 1-C2^{N_5k(\lambda )/m}
e^{-2^{\delta k(\lambda )/m}/C}$, we have
\ekv{end.7}
{
\vert \# (\sigma (P_\omega ^0)\cap \Gamma _{2^{k(\lambda
  )},\widetilde{\lambda }})
-\frac{1}{(2\pi )^n}\mathrm{vol\,}p^{-1}(\Gamma _{2^k(\lambda
  ),\widetilde{\lambda }})
\vert \le C_\delta \lambda ^{\frac{n}{m}}\lambda
^{-\frac{1}{m}(\frac{1}{2}-\beta -2\delta
)\frac{1}{N_0+1}}, 
}
simultaneously for all $\widetilde{\lambda }\in [\lambda ,2\lambda [$.

\par Now, we proceed as in \cite{Bo}, using essentially the Borel--Cantelli lemma. Use that
\begin{eqnarray*}
\sum_\ell^\infty 2^{N_5\frac{k}{m}}e^{-2^{\delta \frac{k}{m}}/C}&=&{\cal
  O}(1) 2^{N_5\frac{\ell}{m}}e^{-2^{\delta \frac{\ell}{m}}/C},\\
\sum_{2^k\le \lambda
}2^{k\frac{n}{m}}2^{-\frac{k}{m}(\frac{1}{2}-\beta -2\delta
  )\frac{1}{N_0+1}}&=&{\cal O}(1)\lambda
^{\frac{n}{m}-\frac{1}{m}(\frac{1}{2}-\beta -2\delta )\frac{1}{N_0+1}},
\end{eqnarray*}
to conclude that with probability $\ge
1-C2^{N_5\frac{\ell}{m}}e^{-2^{\delta \frac{\ell}{m}}/C}$, we have
$$
\vert \# (\sigma (P_\omega ^0)\cap \Gamma _{2^\ell ,\lambda })
-\frac{1}{(2\pi )^n } \mathrm{vol\,}p^{-1}(\Gamma _{2^\ell ,\lambda })
\vert
\le C_\delta \lambda ^{\frac{n}{m}-\frac{1}{m}(\frac{1}{2}-\beta -2\delta
  )\frac{1}{N_0+1}}
$$
for all $\lambda \ge 2^\ell$. This statement implies Theorem
\ref{in1}. \hfill{$\Box$}

\begin{proofof} Theorem \ref{in2}. This is just a minor modification
  of the proof of Theorem \ref{in1}. Indeed, we already used the
  second part of Proposition \ref{sc2}, to get (\ref{end.7}) with the
  probability indicated there. In that estimate we are free to
  vary $(g,\theta _1,\theta _2)$ in ${\cal G}$ and the same holds for
  the estimate (\ref{end.6}). With these modifications, the same proof 
gives Theorem \ref{in2}.
\end{proofof}

\section{Some open problems.}\label{open}
\setcounter{equation}{0}
\begin{itemize}
\item The distribution of {\it resonances} or scattering poles for certain self-adjoint operators like the Schr\"odinger operator is a very intriguing and difficult problem where many basic questions remain unanswered. 

Resonances can be viewed as eigenvalues of a non-self-adjoint operator, obtained from the original self-adjoint one by changing the Hilbert space. If we take a Schr\"odinger operator and add a random perturbation to the potential with support in some fixed compact set, we may ask wether with probability close to one the resonances obey some kind of Weyl asymptotics. In dimension 1 there is a classical result of Zworski \cite{Zw87}, saying that we do have Weyl asymptotics without any random perturbations, but assuming a non-flatness condition near the end points of the convex hull of the support of the potential. In higher dimensions there are results by T.~Christiansen \cite{Chr05, Chr06} and Christiansen and P.~Hislop \cite{ChrHi05} saying that in the ``generic case'' there is (roughly) a lower bound on the number of scattering poles in a sequence of large discs which is of the same order of magnitude as would be prescribed by a reasonable Weyl law. The methods of Christiansen and Hislop use the analysis of several complex variables and it would be interesting to understand the relation with the methods developed in our lectures. 
\item The {\it damped wave equation} in its stationary version is an example of a non-self-adjoint operator which is close to a self-adjoint one. The eigenvalues are confined to a band
  parallel to the real axis and since the work of 
Marcus-Matseev \cite{MaMa79} we know that the real parts obey the Weyl law with a good remainder estimate.
As for the distribution  of imaginary parts many results are known, G.~Lebeau \cite{Leb96}, Sj\"ostrand \cite{Sj00}, N.~Anantharaman \cite{An09}, S.~Nonnenmacher and E.~Schenk \cite{Sch08}, M.~Hitrik, Sj\"ostrand, S.~V\~u Ng\d{o}c, \cite{HiSjVu07}, \cite{HiSj08}.

\par The following problem seems to be open: Suppose we add some randomness to the damping term, and that the underlying geodesic flow  is not ergodic (assuming that we work on a compact manifold without boundary for simplicity). Then with probability close to 1, do the imaginary parts of the eigenvalues distribute according to a Weyl law formulated with the help of the time averages of the damping term? 

\item  What about {\it statistical properties of eigenvalues}? Can it be true, for instance in the simplest one dimensional situations with Gaussian random variables in the perturbation as in \cite{HaSj08}, that we have Poisson distribution of the eigenvalues? What about correlations between eigenvalues? Can we have results similar to the ones that are known for the zeros of random polynomials? (Cf \cite{ShZe99, BlShZe00}.) A step in this direction may be the recent preprint of T.~Christiansen and Zworski \cite{ChrZw09} where the authors study the expectation value of the number of eigenvalues in a domain for certain one-dimensional pseudodifferential operators with doubly periodic symbol.
\item It would be of great interest to have the sharpest possible {\it bounds on the norm of the resolvent}. The general theory of non-self-adjoint operators only gives very weak upper bounds which can be sharpened near the boundary of the range of the symbol (like for instantance in Section \ref{bdy}). In the proofs of the various results on Weyl asymptotics for randomly perturbed operators it is quite clear that the random perturbation has the effect of improving the upper bounds on the resolvent and sometimes in dimension one we can even get a polynomial upper bound, as was observed by Bordeaux Montrieux \cite{Bo}. It is currently not clear how far these improvements go in higher dimensions. Notice that for random matrices there are results showing that we can have a polynomial bound in terms of the size of the matrix. See M.~Rudelson \cite{Ru09}.  
\item Sometimes it is natural to have additional {\it symmetries}. For instance in the case of the Kramers-Fokker-Planck operator one would like to restrict the random perturbations to the class of such operators. Another such class is that of PT-symmetric operators where the question of reality of the spectrum seems to be of great importance. I recently showed that for elliptic PT symmetric operators with PT symmetric random perturbations, we have Weyl asymptotics with probabality close to 1. In particular {\it most PT symmetric operators have most of their eigenvalues away from the real axis}. 
\item A long term project may also be to apply the theory to {\it non-linear problems}.
\end{itemize}


\begin{thebibliography}{30}
\bibitem{Ag62} S.~Agmon, {\it On the eigenfunctions and on the eigenvalues of general elliptic boundary value problems,} Comm. Pure Appl. Math 15(1962), 119--147.
\bibitem{Ag65} S.~Agmon, {\it Lectures on elliptic boundary value problems,} Van Nostrand Mathematical Studies, No. 2 D. Van Nostrand Co., Inc., Princeton, N.J.-Toronto-London 1965
\bibitem{AgMa89}M.S.~Agranovich, A.S.~Markus, 
{ \it On spectral properties of elliptic pseudo-differential operators far from selfadjoint ones,} 
Z. Anal. Anwendungen 8 (1989), no. 3, 237--260.
\bibitem{Ag09} M.S.~Agranovich, Personal communication, October 2009.
\bibitem{An09} N.~Anantharaman, {\it Spectral deviations for the
    damped wave equation,}\\
http://arxiv.org/abs/0904.1736
\bibitem{Av56}V.G.~Avakumovi\'c: {\it Uber die Eigenfunktionen auf geschlossenen Riemannschen
Mannigfaltigkeiten,} Math. Z. 65(1956), 324–344.
\bibitem{Bi05} J.M.Bismut, {\it The hypoelliptic Laplacian on the cotangent bundle,} J. Amer.
Math. Soc. 18(2005), 379-476.
\bibitem{BlShZe00}P.~Bleher, B.~Shiffman, S.~Zelditch, {\it Universality and scaling of correlations between zeros on complex manifolds,}  Invent. Math.  142(2)(2000), 351--395.
\bibitem{Bo} W.~Bordeaux Montrieux, {\it Loi de Weyl presque
s\^ure et r\'esolvante pour des op\'era\-teurs diff\'erentiels
non-autoadjoints,} th\`ese, CMLS, Ecole Polytechnique, 2008.\\http://pastel.paristech.org/5367/
\bibitem{BoSj09} W.~Bordeaux Montrieux, J.~Sj\"ostrand, {\it Almost sure Weyl
asymptotics for non-self-adjoint elliptic operators on compact manifolds,}
http://arxiv.org/abs/0903.2937
\bibitem{Bou} L.S.~Boulton, {\it Non-self-adjoint harmonic oscillator, compact semigroups and pseudospectra,}  J. Operator Theory  47(2)(2002), 
413--429.
\bibitem{Ca36} T.~Carleman, {\it \"Uber die asymptotische Verteilung der Eigenwerte partielle Differentialgleichungen,} Berichten der mathematisch-physisch Klasse der S\"achsischen Akad. der Wissenschaften zu Leipzig, LXXXVIII Band, Sitsung v. 15. Juni 1936.
\bibitem{Ch} T.W.~Cherry, {\it On the solution of Hamiltonian systems
of differential equations in the neighboorhood of a singular point,}
Proc.~London.~Math.~Soc.~27(1928), 151-170.
\bibitem{Chr05} T.~Christiansen, {\it Several complex variables and the distribution of resonances in potential scattering,}  Comm. Math. Phys.  259(3)(2005), 711--728.
\bibitem{Chr06} T.~Christiansen, {\it Several complex variables and the order of growth of the resonance counting function in Euclidean scattering,}  Int. Math. Res. Not.  2006, Art. ID 43160, 36 pp
\bibitem{ChrHi05} T.~Christiansen, P.D.~Hislop, {\it The resonance counting function for Schr\"odinger operators with generic potentials,}  Math. Res. Lett.  12(5--6)(2005), 821--826.
\bibitem{ChrZw09}T.J.~Christiansen, M.~Zworski, 
{\it Probabilistic Weyl laws for quantized tori,}\\
http://arxiv.org/abs/0909.2014
\bibitem{Co} Y.~Colin de Verdi\`ere, {\it Quasi-modes sur les
vari\'et\'es Riemanniennes,} Inv.~Math.~43(1977), 15--52.
\bibitem{Da} E.B.~Davies, {\it Semi-classical states for
    non-self-adjoint Schr\"odinger operators,}  
Comm. Math. Phys. 200(1)(1999), 35--41.
\bibitem{Da2} E.B.~Davies, {\it Pseudospectra, the harmonic oscillator and complex resonances,} Proc. Roy. Soc. 
London Ser. A 455(1999), 585–599.
\bibitem{DaHa09} E.B.~Davies, M.~Hager, {\it Perturbations of Jordan matrices,}  J. Approx. Theory  156(1)(2009), 82--94.
\bibitem{DeSjZw} N.~Dencker, J.~Sj\"ostrand, M.~Zworski, {\it
    Pseudospectra of semiclassical (pseudo-) differential operators,}
  Comm. Pure Appl. Math. 57(3)(2004), 384--415.
\bibitem{DiSj99} M.~Dimassi, J.~Sj\"ostrand, {\it Spectral asymptotics
    in the semi-classical limit,} 
London Mathematical Society Lecture Note Series, 268. Cambridge University Press, Cambridge, 1999.
\bibitem{DeVi} L.~Desvillettes, C.~Villani, {\it On the trend to
global equilibrium in spatially inhomogeneous entropy-dissipating
systems: the linear Fokker-Planck equation,} Comm.~Pure Appl.~Math.,
54(1)(2001), 1--42.
\bibitem{EcHa} J.P.~Eckmann, M.~Hairer, {\it Spectral properties of
hypoelliptic operators,} Comm.~Math.~Phys.~235(2)(2003), 233--253.
\bibitem{GaGaNi08}
I.~Gallagher, Th.~Gallay, F.~Nier, 
{\it Spectral asymptotics for large skew-symmetric perturbations of the harmonic oscillator,}
Int. Math. Res. Not. IMRN 2009, no. 12, 2147--2199.
\bibitem{GoKr} I.C.~Gohberg, M.G.~Krein, {\it Introduction to the theory of linear
  non-selfadjoint operators, }Translations of mathematical monographs,
  Vol 18, AMS, Providence, R.I.~(1969).
\bibitem{GrVi} S.~Graffi, C.~Villegas Blas, {\it A uniform quantum
version of the Cherry theorem,}  Comm. Math. Phys.  278(1)(2008), 101--116. 
\bibitem{GrSj} A.~Grigis, J.~Sj\"ostrand, {\it Microlocal analysis for
  differential operators, }London Math.~Soc.~Lecture Notes
  Ser., 196, Cambridge Univ.~Press, (1994).
\bibitem{Ha05} M.~Hager, {\it Instabilit\'e spectrale 
semiclassique d’op\'erateurs non-autoadjoints, }Thesis (2005), 
\par\noindent 
http://tel.ccsd.cnrs.fr/docs/00/04/87/08/PDF/tel-00010848.pdf
\bibitem{Ha06a} M.~Hager, 
{\it Instabilit\'e spectrale semiclassique pour des op\'erateurs 
non-autoadjoints.~I.~Un mod\`ele,}  
Ann.~Fac.~Sci.~Toulouse Math.~(6)15(2)(2006), 243--280.
\bibitem{Ha06b} M.~Hager, {\it Instabilit\'e spectrale semiclassique 
d'op\'erateurs non-autoadjoints.~II.}  Ann.~Henri Poincar\'e,  
7(6)(2006), 1035--1064.
\bibitem{HaSj08} M.~Hager, J.~Sj\"ostrand, {\it Eigenvalue asymptotics
for randomly perturbed non-selfadjoint operators,} \\
Math.~Annalen, 342(1)(2008), 177--243.
\bibitem{He09} B.~Helffer, {\it On spectral problems related to a time
dependent model in superconductivity with
electric current,} Proceedings of the conference in PDE in Evian, June 2009, to appear.
\bibitem{HelNi} B.~Helffer, F.~Nier, {\it Hypoelliptic estimates and
spectral theory for Fokker-Planck operators and Witten Laplacians,}
Lecture Notes in Mathematics, 1862, Springer-Verlag, Berlin, 2005.
\bibitem{HeSj84} B.~Helffer, J.~Sj\"ostrand, {\it Multiple wells in the semiclassical limit. I.}  Comm. Partial Differential Equations  9(4)  (1984),  337--408.
\bibitem{HelSj2} B.~Helffer, J.~Sj\"ostrand, {Puits multiples en
m\'ecanique semi-classique. IV. \'Etude du complexe de Witten,}
Comm. Partial Differential Equations 10(3)(1985), 245--340.
\bibitem{HeSj86} B.~Helffer, J.~Sj\"ostrand, {\it R\'esonances en limite
  semi-classique,}  M\'em. Soc. Math. France (N.S.)  24--25(1986),
\bibitem{HeHiSj08a}F.~H{\'e}rau, M.~Hitrik, J.~Sj\"ostrand, {\it Tunnel effect for Fokker-Planck type operators,}
Annales Henri Poincar\'e, 9(2)(2008), 209--274.
\bibitem{HeHiSj08b}F.~H{\'e}rau, M.~Hitrik, J.~Sj\"ostrand, {\it Tunnel effect for
Kramers-Fokker-Planck type operators: return to equilibrium and
applications,} 
International Math Res Notices, Vol. 2008, Article ID rnn057, 48p.
\bibitem{HeNi04} F.~H\'erau, F.~Nier,
{\it Isotropic hypoellipticity and trend to equilibrium for the Fokker-Planck equation with a high-degree potential,}
Arch. Ration. Mech. Anal. 171 (2004), no. 2, 151--218. 
\bibitem{HeSjSt05}F.~H{\'e}rau, J.~Sj\"ostrand, C.~Stolk,  {\it Semiclassical
analysis for the Kramers-Fokker-Planck equation,} Comm. PDE
30(5--6)(2005), 689--760.
\bibitem{Hi04} M.~Hitrik, {\it Boundary spectral behavior for semiclassical operators in dimension one,}  Int. Math. Res. Not.  2004,  no. 64, 3417--3438.
\bibitem{HiPr07} M.~Hitrik, L.~Pravda-Starobv, {\it Spectra and semigroup smoothing for non-elliptic quadratic operators,}
 Math. Ann.  344(4)(2009), 801--846. 
\bibitem{HiSj08}M.~Hitrik, J.~Sj\"ostrand, {\it Rational invariant tori, phase space tunneling, and spectra for non-selfadjoint operators in dimension 2,}
Annales Sci ENS, s\'er. 4, 41(4)(2008), 511-571.
\bibitem{HiSjVu07} M.~Hitrik, J.~Sj\"ostrand, S.~V\~u Ng\d{o}c,
{\it Diophantine tori and spectral asymptotics for non-selfadjoint
operators,} Amer. J. Math.  129(1)(2007), 105--182.
\bibitem{Ho60a} L.~H\"ormander, {\it Differential equations without solutions,}  Math. Ann.  140(1960), 169--173. 
\bibitem{Ho60b} L.~H\"ormander, {\it Differential operators of principal type,} Math. Ann. 140(1960), 124--146.
\bibitem{Ho68}L.~H\"ormander, {\it The spectral function of an elliptic operator,}  Acta Math.  121  (1968), 193--218.
\bibitem{Ho71b} L.~H\"ormander,
{\it On the existence and the regularity of solutions of linear pseudo-differential equations,} S\'erie des Conf\'erences de l'Union Math\'ematique Internationale, No. 1. Monographie No. 18 de l'Enseignement Math\'ematique. Secr\'etariat de l'Enseignement Math\'ematique, Universit\'e de Gen\`eve, Geneva, 1971. 69 pp. 
\bibitem{Ho} L.~H\"ormander, {\it  The analysis of linear partial differential operators. I--IV,} Grundlehren der Mathematischen Wissenschaften 256, 257, 274, 275, Springer-Verlag, Berlin, 1983, 1985. 
\bibitem{Je00}D.~Jerison, 
{\it Locating the first nodal line in the Neumann problem,} 
Trans. Amer. Math. Soc. 352(5)(2000), 2301--2317.
\bibitem{Ke51} M.V.~Keldysh, On the eigenvalues and eigenfunctions of certain classes of nonselfadjoint equations, Dokl.~Akad.~Nauk SSSR 77(1951),11--14. English translation in \cite{Ma88}
\bibitem{Ke51b} M.V.~Keldysh, {\it On a Tauberian theorem,} Trudy Mat. Inst. Steklov, 38(1951), 77--86; English transl. in AMS Transl. (2) 102(1973)
\bibitem{Ke71} M.V.~Keldysh, {\it On completeness of the eigenfunctions for certain classes of nonselfadjoint linear operators}, Uspekhi Mat. Nauk 27(1971)nO. 4(160), 15--41, English transl. in Russ. Math. Surv. 27(1971). 
\bibitem{Ku} V.V.~Ku{\v c}erenko, {\it Asymptotic solutions of equations with complex characteristics, } (Russian) 
Mat. Sb. (N.S.) 95(137) (1974), 163–213, 327.
\bibitem{LaSj} B.~Lascar, J.~Sj\"ostrand, {\it Equation de
Schr{\"o}dinger et propagation des   singularit{\'e}s pour des
op{\'e}rateurs pseudodiff{\'e}rentiels {\`a} caract{\'e}ristiques
r{\'e}elles de multiplicit{\'e} variable I,} Ast{\'e}risque,
95(1982), 467--523.
\bibitem{La} V.F.~Lazutkin, {\it KAM theory and semiclassical
approximations to eigenfunctions.}  With an addendum by
A.I.~Shnirelman.~ Ergebnisse der Mathematik und ihrer Grenzgebiete,
24.~Springer-Verlag, Berlin, 1993.
\bibitem{Leb96}
G.~Lebeau, {\it Equation des ondes amorties,} Algebraic and geometric
methods in mathematical physics (Kaciveli, 1993), 73–109,
Math. Phys. Stud., 19, Kluwer Acad. Publ., Dordrecht, 1996.
\bibitem{Le04} G. Lebeau, {\it Le bismutien,} S\'em.
\'e.d.p., Ecole Pol.~2004--05,I.1--I.15
\bibitem{Lev80} B.Ya.~Levin, {\it Distribution of zeros
of entire functions,} English translation, Amer. Math. Soc., Providence, R.I., 1980
\bibitem{Lev56} B.M.~Levitan, {\it Some questions of spectral theory of selfadjoint differential
operators,} 
Uspehi Mat. Nauk (N.S.)  11 no. 6 (72)(1956), 117--144.
\bibitem{LiLi} A.J.~Lichtenberg, M.A.~Lieberman, {\it Regular and
chaotic dynamics,} Second edition.~Springer-Verlag, New York, 1992.
\bibitem{Ma88}  A.S.~Markus, {\it Introduction to the spectral theory of polynomial operator pencils,} Translated from the Russian by H.H.~McFaden. Translation edited by Ben Silver. With an appendix by M. V. Keldysh. Translations of Mathematical Monographs, 71. American Mathematical Society, Providence, RI, 1988.
\bibitem{MaMa79} A.S.~Markus, V.I.~Matseev, {\it Asymptotic behavior of the spectrum of close-to-normal operators,}  Funktsional. Anal. i Prilozhen.  13(3)(1979), 93--94, Functional Anal. Appl. 13(3)(1979), 233--234 (1980).
\bibitem{Mart09}J.~Martinet. {\it Sur les propri\'et\'es spectrales d’op\'erateurs nonautoadjoints
provenant de la m\'ecanique des fluides,} Th\`ese de doctorat
(in preparation).
\bibitem{Mas} V.P.~Maslov, {\it Operational methods,} Translated from the Russian by V. Golo, N. Kulman and G. Voropaeva. Mir Publishers, Moscow, 1976.
\bibitem{Ma} O.~Matte, {\it Correlation asymptotics for non-translation 
invariant lattice spin systems,}  Math. Nachr.  281(5)(2008), 
721--759.
\bibitem{MeSj76} A.~Melin, J.~Sjoestrand, 
{\it Fourier integral operators with complex phase functions and parametrix for an interior boundary value problem,}
Comm. Partial Differential Equations 1(4)(1976), 313--400.
\bibitem{MeSj02} A.~Melin, J.~Sj\"ostrand, {\it Determinants of
pseudodifferential operators and complex deformations of phase space,}
Methods and Applications of Analysis, 9(2)(2002), 177-238.
\bibitem{MeSj03} A.~Melin, J.~Sj\"ostrand, {\it Bohr-Sommerfeld
quantization condition for non-selfadjoint operators in dimension 2,}
Ast{\'e}risque, 284(2003), 181--244.
\bibitem{MenSj} A.~Menikoff, J.~Sj\"ostrand, {\it On the
eigenvalues of a class of hypo-elliptic   operators,} Math. Ann. 235(1978), 55-85.
\bibitem{Mo} J.~Moser, {\it On the generalization of a theorem of
A.~Liapounoff,} Comm.~Pure Appl.~Math.~11(1958), 257–271.
\bibitem{Pr} K.~Pravda Starov, {\it \'Etude du pseudo-spectre d'opérateurs non auto-adjoints,} thesis 2006,\\http://tel.archives-ouvertes.fr/docs/00/10/98/95/PDF/manuscrit.pdf
\bibitem{Pr06} K.~Pravda-Starov, {\it A complete study of the pseudo-spectrum for the rotated harmonic oscillator,}  J. London Math. Soc. (2)  73(3)(2006), 745--761. 
\bibitem{Pr07}K.~Pravda-Starov, {\it Boundary pseudospectral behaviour for semiclassical operators in one dimension,}  Int. Math. Res. Not. IMRN  2007,  no. 9, Art. ID rnm 029, 31 pp.
\bibitem{Pr08} K.~Pravda-Starov, {\it On the pseudospectrum of elliptic quadratic differential operators,}  Duke Math. J.  145(2)(2008), 249--279. 
\bibitem{Ro} D.~Robert, {\it Autour de l'approximation semi-classique,} 
Progress in Mathematics, 68. Birkh\"auser Boston, Inc., Boston, MA, 1987.
\bibitem{Ru09}M.~Rudelson, {\it Invertibility of random matrices: norm of the inverse,}  Ann.of Math. 168(2)(2008), 575--600.
\bibitem{Sch08} E.~Schenk, {\it Resonances distribution in partially
    open quantum chaotic systems,}
(joint work with Stéphane Nonnenmacher)
Talk on November 28th, 2008 at
Nice days of waves in complex media.
\bibitem{ShZe99}B.~Shiffman, S.~Zelditch, {\it Distribution of zeros of random and quantum chaotic sections of positive line bundles,}  Comm. Math. Phys.  200(3)(1999), 661--683.
\bibitem{Se66}  R.T.~Seeley, {\it Complex powers of an elliptic
    operator.}
  1967  Singular Integrals (Proc.~Sympos.~Pure Math., Chicago, Ill.,
  1966)  pp.~288--307 Amer.~Math.~Soc., Providence, R.I.

\bibitem{Se86}R.~Seeley, {\it A simple example of spectral pathology for differential operators,}  Comm. Partial Differential Equations  11(6)(1986), 595--598.
\bibitem{Si84} B.~Simon, {\it Semiclassical analysis of low lying
eigenvalues. II. Tunneling,} Ann. of Math. (2) 120(1)(1984), 89--118.
\bibitem{Sj82} J.~Sj\"ostrand, {\it Singularit{\'e}s analytiques
    microlocales,} Ast{\'e}risque, 95(1982).
\bibitem{Sj} J.~Sj\"ostrand, {\it Resonances for bottles and trace
formulae,} Math.~Nachr., 221(2001), 95--149.
\bibitem{Sj00} J.~Sj\"ostrand, {\it Asymptotic distribution of
eigenfrequencies for damped wave equations,} Publ. of RIMS Kyoto
Univ., 36(5)(2000), 573--611.
\bibitem{Sj08a} J.~Sj\"ostrand, 
{\it Eigenvalue distribution for non-self-adjoint operators with small 
multiplicative random perturbations,} Ann. Fac. Sci Toulouse, to appear\\ http://arxiv.org/abs/0802.3584
\bibitem{Sj08b} J.~Sj\"ostrand,
{\it Eigenvalue distribution for non-self-adjoint operators on compact 
manifolds with small multiplicative random perturbations,}\\
http://arxiv.org/abs/0809.4182
\bibitem{Sj09a} J.~Sj\"ostrand, {\it Resolvent estimates for
  non-self-adjoint operators via semi-groups,}\\ http://arxiv.org/abs/0906.0094
\bibitem{Sj09b} J.~Sj\"ostrand, {\it Counting zeros of holomorphic
functions of exponential growth,} \\
  http://arxiv.org/abs/0910.0346
\bibitem{SjZw07a} J.~Sj{\"o}strand, M.~Zworski,  
{\it Elementary linear algebra for advanced spectral problems,}
Annales Inst. Fourier, 57(7)(2007), 2095--2141.\\  
http://arxiv.org/math.SP/0312166.
\bibitem{SjZw} J.~Sj\"ostrand, M.~Zworski, 
{\it Fractal upper bounds on the
density of semiclassical resonances,}  Duke Math J,
137(3)(2007), 381-459. 

\bibitem{SjZw2} J.~Sj\"ostrand, M.~Zworski, {\it Elementary linear
    algebra for advanced spectral problems, }
  http://arxiv.org/abs/math/0312166, Ann.~Inst.~Fourier, to appear.
\bibitem{TaTaKu06} J.~Tailleur, S.~Tanase-Nicola, J.~Kurchan, {\it Kramers equation and supersymmetry,} J. Stat. Phys. 122(4)(2006), 557–595.
\bibitem{Tr} L.N.~Trefethen,  {\it Pseudospectra of linear operators,}  
SIAM Rev.  39(3)(1997), 383--406.
\bibitem{TrEm} L.N.~Trefethen, M.~Embree, {\it Spectra and pseudospectra. The behavior of nonnormal matrices and operators,} Princeton University Press, Princeton, NJ, 2005. 
\bibitem{Vi} C.~Villani, {\it Hypocoercivity,} preprint.
\bibitem{Vu06} S.~V\~u Ng\d oc, {\it Syst\`emes int\'egrables 
semi-classiques: du local au global,} Panoramas et Synth\`eses, 22. Soci\'et\'e Math\'ematique de France, Paris, 2006.
\bibitem{We12} H.~Weyl,  {\it Das asymptotische Verteilungsgesetz
der Eigenwerte linearer partieller Differentialgleichungen (mit einer
Anwendung auf die Theorie der Hohlraumstrahlung),}  Math. Ann.,
71(4)(1912), 441--479
\bibitem{WuZw} J.~Wunsch, M.~Zworski, {\it The FBI transform on compact
  $C^\infty $ manifolds, } Trans.~A.M.S., 353(3)(2001), 1151--1167.
\bibitem{Zw87}M.~Zworski, {\it Distribution of poles for scattering on the real line,}  J. Funct. Anal.  73(2)(1987), 277--296.
\bibitem{Zw} M.~Zworski, {\it A remark on a paper of E. B Davies:
    ``Semi-classical states for non-self-adjoint Schr\"odinger
    operators''}, 
Comm. Math. Phys. 200(1)(1999), 35--41
 Proc. Amer. Math. Soc.  129(10)(2001), 2955--2957

\end{thebibliography}
\end{document}